\title[$N=1$ super Virasoro tensor categories]{$N=1$ super Virasoro tensor categories}
\author[T. Creutzig]{Thomas Creutzig}
\address{(Thomas Creutzig) Department Mathematik, Friedrich-Alexander Universit\"at Erlangen-N\"urnberg, 
91058 Erlangen, Germany}\email{thomas.creutzig@fau.de}
\author[R. McRae]{Robert McRae} 
\address{(Robert Mcrae) Yau Mathematical Sciences Center, Tsinghua University, Beijing 100084, China}\email{rhmcrae@tsinghua.edu.cn}
\author[F.~Orosz Hunziker]{Florencia Orosz Hunziker}
\address{(Florencia Orosz Hunziker)
  Department of Mathematics\\
  University of Colorado Boulder\\
  Boulder, CO 80309-0395, United States of America.}
\email{florencia.orosz@colorado.edu}
\author[J. Yang]{Jinwei Yang}
\address{(Jinwei Yang) School of Mathematical Sciences, Shanghai Jiao Tong University, Shanghai 200240, China}
\email{jinwei2@sjtu.edu.cn}
\newtheorem{theorem}{Theorem}[section]
\newtheorem{lemma}[theorem]{Lemma}
\newtheorem{prop}[theorem]{Proposition}
\newtheorem{cor}[theorem]{Corollary}
\theoremstyle{definition}
\newtheorem{defn}[theorem]{Definition}
\newtheorem{remark}[theorem]{Remark}
\newtheorem{nota}[theorem]{Notation}
\def\CC{\mathbb{C}}
\def\NN{\mathbb{N}}
\def\QQ{\mathbb{Q}}
\def\RR{\mathbb{R}}
\def\ZZ{\mathbb{Z}}
\def\calY{\mathcal{Y}}
\def\vac{\mathbf{1}}
\newcommand\cA{\mathcal{A}}
\newcommand\cC{\mathcal{C}}
\newcommand\cE{\mathcal{E}}
\newcommand\cF{\mathcal{F}}
\newcommand\cJ{\mathcal{J}}
\newcommand\cM{\mathcal{M}}
\newcommand\cO{\mathcal{O}}
\newcommand\cR{\mathcal{R}}
\newcommand\cS{\mathcal{S}}
\newcommand\cW{\mathcal{W}}
\newcommand\cY{\mathcal{Y}}
\newcommand\tilC{\widetilde{C}}
\newcommand\tilW{\widetilde{W}}
\newcommand\id{\textup{id}}
\newcommand\End{\textup{End}}
\newcommand{\btimes}{\boxtimes}
\newcommand\quash[1]{}
\newcommand\ot{\otimes}
\newcommand\one{\mathbf{1}}
\renewcommand\a\alpha
\renewcommand\b\beta
\newcommand\g\gamma
\renewcommand\d\delta
\newcommand\D\Delta
\newcommand{\om}{\omega}
\newcommand{\NS}{\mathfrak{ns}}
\newcommand{\oc}{\mathcal{O}_{c}}
\newcommand{\ocfin}{ \oc^{\textup{fin}}}
\newcommand{\U}{\mathcal{U}}
\newcommand{\W}{\widetilde{W}}
\date{\today}
\subjclass[2020]{Primary 17B69, 17B65, 18M15, 81R10, 81T40}
\begin{document}

\numberwithin{equation}{section}

\begin{abstract}
    We show that the category of $C_1$-cofinite modules for the universal $N=1$ super Virasoro vertex operator superalgebra $\mathcal{S}(c,0)$ at any central charge $c$ is locally finite and admits the vertex algebraic braided tensor category structure of Huang-Lepowsky-Zhang. For central charges $c^{\mathfrak{ns}}(t)=\frac{15}{2}-3(t+t^{-1})$ with $t\notin\mathbb{Q}$, we show that this tensor category is semisimple, rigid, and slightly degenerate, and we determine its fusion rules. For central charge $c^{\mathfrak{ns}}(1)=\frac{3}{2}$, we show that this tensor category is rigid and that its simple modules have the same fusion rules as $\mathrm{Rep}\,\mathfrak{osp}(1\vert 2)$, in agreement with earlier fusion rule calculations of Milas. Finally, for the remaining central charges $c^{\mathfrak{ns}}(t)$ with $t\in\QQ^\times$, we show that the simple $\mathcal{S}(c^{\mathfrak{ns}}(t),0)$-module $\mathcal{S}_{2,2}$ of lowest conformal weight $h^{\mathfrak{ns}}_{2,2}(t)=\frac{3(t-1)^2}{8t}$ is rigid and self-dual, except possibly when $t^{\pm 1}$ is a negative integer or when $c^{\mathfrak{ns}}(t)$ is the central charge of a rational $N=1$ superconformal minimal model.
    As $\mathcal{S}_{2,2}$ is expected to generate the category of $C_1$-cofinite $\mathcal{S}(c^{\mathfrak{ns}}(t),0)$-modules under fusion, rigidity of $\mathcal{S}_{2,2}$ is the first key step to proving rigidity of this category for general $t\in\mathbb{Q}^\times$.
\end{abstract}

\maketitle
\thanks{\it Thomas Creutzig, Department Mathematik, Friedrich-Alexander Universit\"at Erlangen-N\"urnberg, 
91058 Erlangen, Germany, 0000-0002-7004-6472, thomas.creutzig@fau.de.

Robert McRae,  Yau Mathematical Sciences Center, Tsinghua University, Beijing 100084, China, rhmcrae@tsinghua.edu.cn.

Florencia Orosz Hunziker
Department of Mathematics, University of Colorado Boulder, Boulder, CO 80309-0395, United States of America, 0000-0003-2593-3694, florencia.orosz@colorado.edu (corresponding author).

Jinwei Yang, School of Mathematical Sciences, Shanghai Jiao Tong University, Shanghai 200240, China, jinwei2@sjtu.edu.cn.}

\vskip2cm
\noindent\textbf{Acknowledgments.}
We thank Andrew Linshaw for helpful discussions.
TC's work  is supported by a Natural Sciences and Engineering Research Council of Canada grant and a Deutsche Forschungsgemeinschaft grant.
RM's work is partially supported by a startup grant from Tsinghua University and by a research fellowship from the Alexander von Humboldt Foundation. RM also thanks
 the Universit\"{a}t Hamburg for hospitality while part of this research was performed. 
 FOH's work is supported by the United States National Science Foundation
under Grant No. DMS-2102786.
JY's work is supported by National Natural Science Foundation of China Grant No. 12371030 and a startup grant from Shanghai Jiao Tong University. 

\tableofcontents

\section{Introduction}

A vertex operator (super)algebra (VOA) is the chiral algebra of a two-dimensional (super)conformal field theory (CFT). Thus VOAs and their representation categories provide a bridge between representation theory and the physics of two-dimensional CFT and related physical theories such as string theory. 
The axiomatization of two-dimensional rational CFT led Moore and Seiberg to propose what are now called modular tensor categories \cite{MS88}. The VOA of a rational CFT is called strongly rational, and the highlight of the theory of strongly rational VOAs is Yi-Zhi Huang's theorem that their representation categories are indeed modular tensor categories \cite{HMod}, and that Verlinde's conjecture \cite{V} is true \cite{HVer}. In particular, strongly rational VOAs have only finitely many non-isomorphic simple modules, and every module is completely reducible. Most VOAs are not strongly rational, as they usually have indecomposable but reducible modules as well as an infinite number of inequivalent simple modules. Nonetheless, one still expects that non-rational VOAs often admit categories of modules which form rigid braided tensor categories that satisfy a non-semisimple variant of Verlinde's formula. It is an ongoing major effort to put this expectation on a solid foundation, in particular to show that Verlinde's formula holds under a certain natural setup \cite{Cr2}.

This work is concerned with the important example of the universal vertex operator superalgebras associated to the $N=1$ superconformal algebra, also known as the $N=1$ super Virasoro algebra. While this Lie superalgebra was used earlier by physicists, the corresponding VOAs were introduced by Kac and Wang \cite{KW2}, and by Barron \cite{B1, B2} shortly afterwards.
The universal $N=1$ super Virasoro VOAs are strongly and freely generated by a Virasoro field of some central charge $c$ together with a single odd field of conformal weight $\frac{3}{2}$.
For a discrete set of central charges, namely for
\begin{align} \label{cpq}
c=c^\NS_{p,q}:= \frac{15}{2}-3\left(\frac{p}{q}+\frac{q}{p}\right)
\end{align} 
for $p,q\in \ZZ_{\geq 2}$ such that $p-q\in 2\mathbb{Z}$ and $\gcd(\frac{p-q}{2},q)=1,$ 
Adamovi\'{c} \cite{A1} showed that the simple quotients of the universal $N=1$ superconformal VOAs are strongly  rational. Adamovi\'{c} also classified the simple modules for these rational VOAs and showed that they correspond to the $N=1$ superconformal minimal models of physics. Later, Huang and Milas studied the tensor category structure on the representation categories of the rational $N=1$ superconformal VOAs \cite{HM1, HM2}. Another proof of their rationality and of the classification of their simple modules was given in \cite{BMRW}. Beyond the rational minimal models, the existence of tensor structure on module categories for the $N=1$ superconformal algebra has remained open, though some results on fusion rules were obtained in \cite{M} for $c=\frac{3}{2}$ and in \cite{AM-super-trip} for $c=\frac{15}{2}-3(p+\frac{1}{p})$, $p\in\ZZ_{\geq 3}$ odd.

In the current work, we apply the logarithmic tensor category theory of Huang, Lepowsky, and Zhang \cite{HLZ1}-\cite{HLZ8} to the universal $N=1$ super Virasoro VOA $\cS(c,0)$ for any central charge $c$. These vertex operator superalgebras are  neither rational nor $C_2$-cofinite, as they admit infinitely many simple modules as well as non-semisimple modules. The first problem to solve is the mere existence of tensor category structure on a suitable category of $\cS(c,0)$-modules. That is, we need a category of $\cS(c,0)$-modules that satisfies the rather extensive list of sufficient conditions in \cite{HLZ1}-\cite{HLZ8} for the tensor category construction there to work. In the series of papers \cite{CHY, CJORY,CY,  Mc2}, we have found improved sufficient conditions that guarantee a tensor category structure on the category of $C_1$-cofinite modules for a VOA. Here,
since we are interested in modules for vertex operator \textit{super}algebras, we first ensure that such conditions also apply  in the superalgebra setting:
\begin{theorem} [Theorem \ref{thm:vosa-btc}]\label{intro:vosa-btc}
Let $\cC_1(V)$ be the category of $C_1$-cofinite grading-restricted generalized modules for a vertex operator superalgebra $V$. If $\cC_1(V)$ is closed under contragredient modules, then $\cC_1(V)$ admits the braided tensor category structure of \cite{HLZ1}-\cite{HLZ8}. In particular, $\cC_1(V)$ is a braided tensor category if the following two conditions hold:
\begin{enumerate}
    \item The contragredient $W'$ of any simple $C_1$-cofinite $V$-module $W$ is $C_1$-cofinite.
    \item $\cC_1(V)$ is equal to the category of finite-length $V$-modules whose composition factors are $C_1$-cofinite.
\end{enumerate}
\end{theorem}
To apply this theorem to $\cS(c,0)$, we carefully study its category of $C_1$-cofinite modules in Section \ref{sec:tensor}. Our main result is:
\begin{theorem} [Theorem \ref{thm:mainthm}]\label{intro:mainthm}
The category $\cC_1(\cS(c,0))$ of $C_1$-cofinite grading-restricted generalized $\cS(c,0)$-modules is the same as the category $\cO_c^{\mathrm{fin}}$ of finite-length grading-restricted generalized $\cS(c,0)$-modules whose composition factors are not isomorphic to Verma modules or their parity reversals.
\end{theorem}
As a consequence, we obtain the existence of the Huang-Lepowsky-Zhang braided tensor category structure:
\begin{cor}[Corollary \ref{cor:tensor}]\label{intro-cor:tensor}
    For any central charge $c\in\CC$, the category $\cO_c^{\mathrm{fin}}$ of finite-length grading-restricted generalized $V$-modules with $C_1$-cofinite composition factors admits the braided tensor category structure of \cite{HLZ1}-\cite{HLZ8}.
\end{cor}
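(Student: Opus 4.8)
The plan is to obtain the corollary by specializing Theorem \ref{thm:vosa-btc} to the vertex operator superalgebra $\cS(c,0)$ and verifying its two numbered hypotheses. Once both conditions hold, Theorem \ref{thm:vosa-btc} gives the braided tensor category structure on $\cC_1(\cS(c,0))$, and Theorem \ref{thm:mainthm} identifies this category with $\cO_c^{\mathrm{fin}}$, which is exactly the assertion.

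First I would check condition (2). Theorem \ref{thm:mainthm} already provides the equality $\cC_1(\cS(c,0)) = \cO_c^{\mathrm{fin}}$, whose objects are the finite-length generalized modules whose composition factors are simple modules not isomorphic to a Verma module or its parity reversal. Applying Theorem \ref{thm:mainthm} to simple modules shows that a simple $\cS(c,0)$-module is $C_1$-cofinite if and only if it is not a (parity reversal of a) Verma module. Consequently the composition factors of an object of $\cO_c^{\mathrm{fin}}$ are precisely its $C_1$-cofinite composition factors, and $\cC_1(\cS(c,0))$ equals the category of finite-length $C_1$-cofinite modules all of whose composition factors are $C_1$-cofinite, which is condition (2).

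It then remains to verify condition (1), namely that the contragredient $W'$ of a simple $C_1$-cofinite module $W$ is again $C_1$-cofinite. Here I would invoke the standard anti-involution of the $N=1$ super Virasoro algebra determined by $L_n\mapsto L_{-n}$ and $G_r\mapsto G_{-r}$; with respect to it, the contragredient of a simple highest-weight module is a simple highest-weight module of the same central charge and lowest conformal weight, so that $W'\cong W$ up to parity reversal. Since the previous paragraph shows that $W$ is not a parity reversal of a Verma module, the same holds for $W'$, and hence $W'$ is $C_1$-cofinite.

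The substance of this corollary is carried entirely by Theorem \ref{thm:mainthm} and Theorem \ref{thm:vosa-btc}; granting those, the only genuine point to check is the self-duality of simple modules behind condition (1). I expect this to be the step demanding the most care, as it depends on the explicit description of the contragredient of a highest-weight module through the anti-involution rather than on any formal closure property of $\cC_1$.
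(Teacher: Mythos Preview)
Your proposal is correct and matches the paper's own argument essentially line for line: the paper derives the corollary from Theorem \ref{thm:vosa-btc} by invoking Theorem \ref{thm:mainthm} for condition (2) and the self-contragredience of simple $\cS(c,0)$-modules for condition (1). The only cosmetic difference is that the paper establishes $L^\NS(c,h)'\cong L^\NS(c,h)$ (and $\Pi(L^\NS(c,h))'\cong\Pi(L^\NS(c,h))$) on the nose in Section 2.3 by observing via \eqref{eqn:contra} that the contragredient is again simple with the same lowest conformal weight and parity, rather than phrasing it through an anti-involution; your ``up to parity reversal'' is therefore a slight overcaution, but harmless.
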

Having obtained vertex algebraic braided tensor categories of modules for the $N=1$ super Virasoro algebra, one next wants to understand fusion rules (decompositions of tensor products as direct sums of indecomposable modules) and rigidity (existence of duals in the tensor categorical sense). In principle, both could be computed directly, but often indirect methods turn out to be much more efficient. We use the fact that the vector space tensor product of $\cS(c,0)$ with the vertex operator superalgebra of one free fermion is a conformal extension of the tensor product of two Virasoro VOAs \cite{CGL}. In Proposition \ref{prop:decomp}, the precise decomposition of the extension is described for all central charges of the form $c^\NS(t) := \frac{15}{2} - 3(t+t^{-1})$ for $t \in \mathbb C \setminus \mathbb Q$.  
Then, due to the theory of VOA extensions in direct limit completions of vertex algebraic tensor categories \cite{CMY-completions}, fusion rules and rigidity for $\cS(c,0)$-modules are inherited from those of the Virasoro subalgebras \cite{CJORY}: 
\begin{theorem} [Theorems \ref{thm:t-irrational-simple} and \ref{thm:t-irrational-properties}]
    Let $c=c^{\NS}(t)$ for $t\in\CC\setminus \QQ$. Then:
    \begin{itemize}
    \item [(1)] The category $\cO_c^{\rm fin}$ is semisimple with simple objects $\cS_{r,s}$, for $r,s\in\ZZ_{\geq 1}$ such that $r-s\in 2\ZZ$, and their parity reversals, where $\cS_{r,s}$ is the simple highest-weight module for the $N=1$ super Virasoro algebra of central charge $c$ and lowest conformal weight $h_{r,s}^\NS(t):=\frac{r^2-1}{8}t-\frac{rs-1}{4}+\frac{s^2-1}{8}t^{-1}$. 
        \item[(2)] The category $\cO_{c}^{\rm fin}$ is rigid and slightly degenerate, that is, its M\"{u}ger center is semisimple with only two simple objects, namely $\cS(c, 0)$ itself and its parity reversal.
        \item[(3)] The fusion rules for simple modules in $\cO_c^{\rm fin}$ are given by \eqref{eqn:irr-fus-rules}.
        \end{itemize}
    \end{theorem}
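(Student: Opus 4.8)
The plan is to transport all three statements from the Virasoro vertex operator algebras at irrational central charge across the conformal extension $A:=\cS(c,0)\otimes F$ of Proposition \ref{prop:decomp}, where $F$ is the free fermion vertex operator superalgebra and $A$ contains $\vir_{c_1}\otimes\vir_{c_2}$ as a conformal subalgebra with the two Virasoro central charges $c_1,c_2$ determined by $t$. By \cite{CJORY}, for irrational central charge each Virasoro factor has a semisimple, rigid braided tensor category $\ocfin$ with simple objects $L_{r,s}$ ($r,s\geq 1$) obeying the Virasoro fusion rules; hence the Deligne product $\cC:=\ocfin(\vir_{c_1})\boxtimes\ocfin(\vir_{c_2})$ is semisimple and rigid, with simple objects $L_{r_1,s_1}\boxtimes L_{r_2,s_2}$ and product fusion rules. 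First I would record, from Proposition \ref{prop:decomp}, that $A$ is a commutative (super)algebra object in the direct limit completion $\mathrm{Ind}(\cC)$.

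Next, by the extension theory of \cite{CMY-completions} the category $\Rep^0(A)$ of local $A$-modules is a braided tensor category, and the induction functor $\cF\colon\cC\to\Rep(A)$ is a braided tensor functor. Since $\cC$ is semisimple and rigid and $A$ is of étale type, $\cF$ sends each simple object to a direct sum of rigid simple local $A$-modules, so $\Rep^0(A)$ is semisimple and rigid, and its fusion rules are obtained by pushing the product Virasoro fusion rules through the monoidal functor $\cF$ and decomposing. Concretely, I would compute $\cF(L_{r_1,s_1}\boxtimes L_{r_2,s_2})$, read off the simple local $A$-modules together with their lowest conformal weights, and record the resulting fusion coefficients.

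To descend from $A=\cS(c,0)\otimes F$ to $\cS(c,0)$, I would use that $F$ is strongly rational, so every object of $\Rep^0(A)$ factors as a finite direct sum $\bigoplus_i M_i\otimes N_i$ of external products of a $\cS(c,0)$-module $M_i$ and an $F$-module $N_i$; this identifies $\ocfin$ up to the well-understood semisimple, rigid factor $\Rep F$. Transporting semisimplicity and rigidity across this factorization gives statement (1) and the rigidity half of (2), while matching lowest conformal weights identifies the simple objects: the values $h^\NS_{r,s}(t)=\frac{r^2-1}{8}t-\frac{rs-1}{4}+\frac{s^2-1}{8}t^{-1}$ occur exactly as the weights of the modules $\cS_{r,s}$ and their parity reversals, the constraint $r-s\in 2\ZZ$ arising because $F$ enters through its Neveu--Schwarz sector. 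The fusion rules \eqref{eqn:irr-fus-rules} then follow by stripping the fermion factor off the transported rules while tracking parities. For the slight degeneracy in (2), the parity-reversed unit $\Pi\cS(c,0)$ is automatically transparent in the braided tensor supercategory $\ocfin$, so $\mathrm{sVec}$ embeds in the M\"uger center; for equality I would show that no $\cS_{r,s}$ with $(r,s)\neq(1,1)$ is transparent by computing the monodromy scalar $e^{2\pi i(h_{\mathrm{out}}-h^\NS_{r,s}(t)-h^\NS_{r',s'}(t))}$ on a simple summand of $\cS_{r,s}\boxtimes\cS_{r',s'}$ and observing that, because the weights are affine-linear in $t$ and $t^{-1}$ with rational coefficients, for $t\notin\QQ$ these exponents are generically irrational once $(r,s)\neq(1,1)$, so some double braiding genuinely differs from the symmetry.

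I expect the main obstacle to be the bookkeeping in the descent step: cleanly disentangling the free fermion factor — in particular matching Neveu--Schwarz versus Ramond sectors and parities between local $A$-modules and $\cS(c,0)\otimes F$-modules — and verifying that this procedure yields \emph{exactly} the simple objects $\cS_{r,s}$ with no spurious or missing simples, and that semisimplicity descends to $\cS(c,0)$ itself rather than only to $A$. By contrast, once this dictionary is in place the transport of rigidity and of the fusion rules, and the monodromy computation underlying slight degeneracy, should be essentially mechanical.
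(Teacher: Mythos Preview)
Your overall strategy for rigidity, the fusion rules, and slight degeneracy matches the paper: one sets up the extension $A=\cS(c,0)\otimes\cF(1)$ over $L(c_a,0)\otimes L(c_b,0)$ (Proposition~\ref{prop:decomp}), uses the monoidal induction functor from the Deligne product of Virasoro categories to identify a tensor subcategory of $\cO_c^{\mathrm{fin}}$ with $(\cC^{Vir}_{c_a}\boxtimes\cC^{Vir}_{c_b})_0$, and then transports rigidity and the fusion rules from \cite{CJORY}. For slight degeneracy the paper likewise computes monodromy scalars, using $\cS_{2,2}$ as the specific test object against which every $\cS_{r,s}$ with $(r,s)\neq(1,1)$ is shown to be non-transparent.

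Where you and the paper diverge is part (1). The paper does \emph{not} extract semisimplicity from the extension; it proves it directly (Theorem~\ref{thm:t-irrational-simple}) by showing that every short exact sequence of simples in $\cO_c^{\mathrm{fin}}$ splits. This uses only the Verma module embedding diagram for irrational $t$ (Proposition~\ref{vermadiag}(3)): a proper highest-weight quotient of $M^\NS_{r,s}$ at irrational $t$ is already simple, and Verma modules are excluded from $\cO_c^{\mathrm{fin}}$, so a length-two object with a highest-weight vector of minimal weight must split (the case $h_2-h_1\in\tfrac12\ZZ_{\geq1}$ is reduced to this by taking contragredients, and the case $h_1=h_2$ is handled by \cite[Lemma~6.2.2]{GK}). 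This short argument completely sidesteps the descent problem you flag: once semisimplicity and the list of simple objects are established independently, the identification $F(L(c_a,h_{r,1}(a))\otimes L(c_b,h_{s,1}(b)))\cong\Pi^{(r+s-2)/2}(\cS_{r,s})\otimes\cF(1)$ is just a matter of matching lowest weights, and the essential image of induction is then automatically all of $\cO_c^{\mathrm{fin}}$ up to parity reversal.

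Your proposed route for (1) has a real gap where you anticipate it. From ``induced simples decompose into simple local $A$-modules'' you cannot conclude that $\Rep^0(A)$, let alone $\cO_c^{\mathrm{fin}}$, is semisimple: you still need to know that every object of $\cO_c^{\mathrm{fin}}$, after tensoring with $\cF(1)$, restricts to an object of the \emph{semisimple} Deligne product rather than merely of its direct limit completion, and that no nontrivial extensions survive there. Establishing this is essentially reproving semisimplicity by other means. The paper's direct argument is both shorter and logically prior to the extension machinery, which is then used only for (2) and (3).
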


    This theorem provides an essentially complete description of $\cO_{c^{\NS}(t)}^{\rm fin}$ for irrational $t$.
For rational $t$, we give complete results only in the special case $t=1$, that is, $c=\frac{3}{2}$, in Theorem \ref{thm:t=1-properties}. Using the fact that $\cS(\frac{3}{2}, 0)$ is the $SO(3)$-orbifold of the vertex operator superalgebra of three free fermions, we show that although $\cO_{3/2}^{\rm fin}$ is not semisimple, it is rigid and its simple objects generate a semisimple tensor subcategory with the same fusion rules as $\mathrm{Rep}\,\mathfrak{osp}(1\vert 2)$.
This recovers the fusion rules of simple $\cS(\frac{3}{2},0)$-modules calculated in \cite{M}.

For $t\in\QQ\setminus\lbrace 0,1\rbrace$, computing fusion rules and studying rigidity of $\cO_{c^\NS(t)}^{\rm fin}$ will require case-by-case analysis depending on whether $t$ is positive or negative, and depending on whether $t$ or $t^{-1}$ is an integer or not. We mainly leave such analysis to future work, contenting ourselves here with studying rigidity of the simple module $\cS_{2, 2}$ of lowest conformal weight $h^\NS_{2,2}(t)=\frac{3(t-1)^2}{8t}$. When $t$ is irrational, $\cS_{2,2}$ generates the entire category $\cO_{c^\NS(t)}^{\rm fin}$ under tensor products, and we expect the same when $t$ is rational. Thus, proving that $\cS_{2,2}$ is rigid should be the key first step to proving that all of $\cO_{c^\NS(t)}^{\rm fin}$ is rigid.

Proving rigidity in tensor categories of modules for a VOA is a true challenge in general. For the module $\cS_{2,2}$ in the tensor category $\cO_{c^\NS(t)}^{\rm fin}$, proving rigidity directly
amounts to computing a certain $4$-point correlation function explicitly. It is possible to use singular vectors that vanish in $\cS_{2,2}$ to derive a fourth-order BPZ-type differential equation for this correlation function,
but solving this differential equation directly is highly technical (see \cite{Na1} for recent work in this direction that appeared after the first version of this paper was completed and that was applied to the different but related super-triplet VOA).
 Fortunately, for irrational $t$, we know how the tensor product of $\cS_{2,2}$ with the vertex operator superalgebra of one free fermion decomposes as a module for two commuting Virasoro VOAs (see Theorem \ref{thm:t-irrational-properties}(4)).
 Using this decomposition, we can express the required super Virasoro correlation function as a linear combination of products of two Virasoro correlation functions.
 These Virasoro correlation functions satisfy second-order BPZ differential equations that can be solved explicitly in terms of hypergeometric functions, and hence we get the $N=1$ super Virasoro correlation function explicitly for irrational $t$. Extending to rational $t$ by analytic continuation, we can then prove that $\cS_{2,2}$ is rigid for almost all $t$:
\begin{theorem}\textup{(Theorem \ref{thm:S22-rigid})}\label{intro-thm:S22-rigid}
    Assume that $t^{\pm1}\notin\ZZ_{\leq 0}$ and that $t\neq \frac{p}{q}$ for any $p,q\in\ZZ_{\geq 2}$ such that $p-q\in 2\ZZ$ and $\gcd(\frac{p-q}{2},q)=1$. Then $\cS_{2,2}$ is rigid and self-dual in $\cO_{c^\NS(t)}^{\mathrm{fin}}$.
\end{theorem}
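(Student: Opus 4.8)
The plan is to leverage the decomposition of $\cS_{2,2}$ tensored with a free fermion as a module over two commuting Virasoro algebras, reducing the super Virasoro rigidity question to explicit Virasoro correlation functions for irrational $t$, and then to extend the resulting rigidity to generic rational $t$ by analytic continuation. Concretely, I would first recall from Theorem \ref{thm:t-irrational-properties}(4) how $\cS_{2,2}\otimes F$ (where $F$ is the one free fermion vertex operator superalgebra) decomposes into tensor products $L(c_1,h_1)\boxtimes L(c_2,h_2)$ of Virasoro modules at the two central charges $c_1,c_2$ determined by $t$. The key point is that the simple Virasoro modules appearing in this decomposition are precisely the degenerate modules whose lowest-weight vectors are annihilated by a singular vector at level two, so each associated intertwining operator correlation function satisfies a second-order BPZ differential equation.

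The main computation is to write the candidate evaluation and coevaluation morphisms for $\cS_{2,2}$ in $\cO_{c^\NS(t)}^{\mathrm{fin}}$ and to show that the composition of the relevant rigidity morphisms is a nonzero scalar multiple of the identity. Following the strategy of \cite{CJORY}, rigidity reduces to verifying that a particular four-point super Virasoro correlation function, obtained by composing intertwining operators associated to $\cS_{2,2}$, does not vanish identically. Using the free fermion extension, this correlation function is a bilinear combination of products of two Virasoro four-point functions, each of which solves an explicit hypergeometric second-order ODE. The point is that the relevant hypergeometric solutions can be evaluated in closed form (in terms of ratios of Gamma functions via the standard connection coefficients), so one obtains an explicit expression for the super Virasoro correlation function as a function of $t$ for all irrational $t$.

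Having an explicit closed-form expression for irrational $t$, the final step is to observe that both sides depend real-analytically (indeed meromorphically) on $t$, so the identity that witnesses rigidity---nonvanishing of the composition of evaluation and coevaluation---continues to hold by analytic continuation for all $t$ outside a discrete exceptional locus. The exceptional locus is exactly where the Gamma-function factors in the explicit formula develop poles or zeros, and one checks that this locus is contained in the set where $t^{\pm1}\in\ZZ_{\leq 0}$ together with the rational minimal-model central charges $t=p/q$ with $p,q\in\ZZ_{\geq 2}$, $p-q\in 2\ZZ$, $\gcd(\tfrac{p-q}{2},q)=1$; these are precisely the excluded values in the statement. Self-duality follows because $\cS_{2,2}$ is its own contragredient, which can be read off from its lowest conformal weight being the unique one in its block.

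The hard part will be making the analytic continuation rigorous at the level of the tensor category rather than merely for scalar functions: one must ensure that the morphisms themselves, and not just numerical correlation functions, vary analytically in $t$, and that the nonvanishing scalar produced for irrational $t$ genuinely equals the composite of categorical evaluation and coevaluation at rational $t$. This requires carefully tracking the normalization constants coming from the intertwining operators and the free fermion extension, and confirming that no degeneration in the fusion structure invalidates the hypergeometric computation as $t$ crosses into the rational regime. Controlling exactly which rational $t$ must be excluded---distinguishing genuine failures of rigidity from mere artifacts of the analytic method---is where the precise constraints $t^{\pm1}\notin\ZZ_{\leq 0}$ and the minimal-model exclusion enter.
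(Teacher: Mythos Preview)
Your proposal is correct and follows essentially the same approach as the paper: reduce the super Virasoro four-point function to a product of two Virasoro correlation functions via the free-fermion decomposition for irrational $t$, solve the resulting second-order hypergeometric equations explicitly, and extend to rational $t$ by showing (Proposition~\ref{analyticcoefficents}) that the matrix coefficients are rational functions of $t$. One refinement worth noting: the exclusion $t^{\pm1}\notin\ZZ_{\leq -1}$ in the paper arises not from Gamma-function poles in the connection coefficients but earlier, from the construction of the coevaluation map (Proposition~\ref{prop:coev}), which requires that $(\cS_{2,2}\boxtimes\cS_{2,2})^{\bar 0}$ have no negative-integer conformal weights and $(\cS_{2,2}\boxtimes\cS_{2,2})^{\bar 1}$ have no negative strict half-integer weights; the minimal-model exclusion enters because the evaluation map needs $\cS_{1,1}$ to be simple (hence self-contragredient).
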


For the values of $t$ excluded in this theorem, note that for $p,q\in\ZZ_{\geq 2}$ such that $p-q\in 2\ZZ$ and $\gcd(\frac{p-q}{2},q)=1$, $c^\NS(\frac{p}{q})=c^\NS_{p,q}$ is a rational $N=1$ superconformal minimal model central charge. Thus the tensor category of modules for the rational simple quotient of $\cS(c^\NS_{p,q},0)$ is rigid, but $\cO_{c^\NS_{p,q}}^{\rm fin}$ will not be since $\cS(c^\NS_{p,q},0)$ is not simple and thus not self-contragredient as a module for itself.
For $c=c^\NS(t)$, $t\in\ZZ_{\leq -1}$, we expect that $\cO^{\rm fin}_{c}$ will be rigid. But extra work will be required to prove that $\cS_{2,2}$ is rigid in this case, since it is not clear how to get a coevaluation map $\mathcal{S}(c,0)\rightarrow\cS_{2,2}\boxtimes\cS_{2,2}$. This is related to the fact that when $t\in\ZZ_{\leq -1}$, $\cS_{2,2}\boxtimes\cS_{2,2}$ might contain direct summands with negative integer lowest conformal weights.

Note that both our fusion rule and rigidity results use the fact that the tensor product of $\cS(c, 0)$ with the vertex operator superalgebra of one free fermion is a conformal extension of a specific type of two Virasoro VOAs. This extension was found in \cite{CGL} as a particularly nice example of isomorphisms of corner VOAs; see \cite{CG, FG} for the much more general picture and its connection to the quantum geometric Langlands program. 
Such correspondences of seemingly different VOAs are useful because they allow one to transport structure from the representation theory of one of the VOAs involved to another one.
As another example, a correspondence between the affine VOA of $\mathfrak{sl}_2$ and the $N=2$ superconformal algebra was recently instrumental in getting braided tensor category structure on weight modules of the affine VOA of $\mathfrak{sl}_2$ at any admissible level \cite{C}. In two-dimensional CFT, correspondences are usually exploited to obtain relations between correlation functions \cite{HS}, and this is exactly what we do here to prove the rigidity of $\cS_{2,2}$ in $\cO_c^{\rm fin}$. 

We conclude this introduction with a discussion of future directions, motivated by recent progress on tensor categories of modules for the Virasoro algebra. In this paper, we have achieved the same (and more) for the $N=1$ super Virasoro algebra as was achieved in \cite{CJORY} for the Virasoro algebra. In \cite{CJORY}, it was shown that the category of $C_1$-cofinite modules for the universal Virasoro VOA at any central charge is a vertex algebraic braided tensor category, and this result has enabled a series of works \cite{MY1, MY2, MS, LMY} exploring the detailed tensor structure of these categories at various central charges. These results on Virasoro tensor categories provide a rigorous mathematical foundation for the study of logarithmic minimal models in CFT, and based on Corollary \ref{intro-cor:tensor} and Theorem \ref{intro-thm:S22-rigid}, similar results on the detailed tensor structure of $\cO_{c^\NS(t)}^{\rm fin}$ for $t\in\QQ^\times$ should be possible. In particular, we conjecture that $\cO_{c^\NS(t)}^{\rm fin}$ is rigid for all $t\in\QQ^\times$ except for those giving the rational minimal model central charges $c^\NS_{p,q}$. It should also be possible to compute fusion rules in $\cO_{c^\NS(t)}^{\rm fin}$, $t\in\QQ^\times$, and construct logarithmic $\cS(c^\NS(t),0)$-modules that would be projective in a suitable subcategory of $\cO_{c^\NS(t)}^{\rm fin}$.

It would be especially interesting to explore the super Virasoro central charges $c^\NS_m:=c^\NS(2m+1)$, $m\in\ZZ_{\geq 1}$. The structure of $\cO^{\rm fin}_{c^\NS_m}$ should be analogous to that of the tensor category of $C_1$-cofinite modules for the Virasoro algebra at central charge $c_{p,1}:=13-6p-6p^{-1}$, $p\in\ZZ_{\geq 2}$. This Virasoro tensor category has a natural tensor subcategory containing all simple objects \cite{MY1} that is equivalent to the category of finite-dimensional weight modules for Lusztig's big quantum group of $\mathfrak{sl}_2$ at a $2p$th root of unity \cite{GN}. The Virasoro VOA at central charge $c_{p,1}$ also has two interesting extensions: the triplet VOA $\cW(p)$, which contains the Virasoro algebra as an $SO(3)$-orbifold, and the singlet VOA $\cM(p)$, which is the $U(1)$-orbifold of $\cW(p)$.
The triplet algebra $\cW(p)$ is non-rational and $C_2$-cofinite and has a non-semisimple modular tensor category of representations \cite{TW, GN} which is equivalent to a quasi-Hopf modification of the category of finite-dimensional representations for the small quantum group of $\mathfrak{sl}_2$ at a $2p$th root of unity \cite{GN, CLR1, CLR2}. The singlet algebra $\cM(p)$ is not rational or $C_2$-cofinite but has a rigid braided tensor category of representations \cite{CMY-singlet, CMY-fullsinglet} which is equivalent to the category of finite-dimensional weight modules for the unrolled small quantum group of $\mathfrak{sl}_2$ at a $2p$th root of unity \cite{CLR2, GN}.

It turns out that $\cS(c^\NS_m,0)$ has super-triplet and super-singlet extensions $\cS\cW(m)$ and $\cS\cM(m)$, analogous to $\cW(p)$ and $\cM(p)$, such that $\cS\cW(m)$ is $C_2$-cofinite but non-rational \cite{AM-super-trip}. Since the first version of this paper was completed, Nakano \cite{Na1} has given a description of the category of $\cS\cW(m)$-modules, including structure of projective modules, fusion rules, and rigidity. Note that while we here establish braided tensor structure on the category $\ocfin$ of $C_1$-cofinite $\cS(c,0)$-modules for every central charge $c$, the super-triplet extensions studied in \cite{Na1} occur only at central charge $c^{\NS}_m$ for $m\in\ZZ_{\geq 1}$, and because $\cS\cW(m)$ is $C_2$-cofinite, the category of $\cS\cW(m)$-modules already admits braided tensor structure due to \cite{HuC2}. 
Perhaps the most interesting difference between our work here and that in \cite{Na1} is in the proofs of rigidity. Here, we prove that the basic simple module $\cS_{2,2}$ in $\ocfin$ is rigid for almost all $c=c^{\NS}(t)$ (including $c^{\NS}_m$) using analytic continuation of $N=1$ super Virasoro correlation functions that are computed in the $t\notin\QQ$ case, while in \cite{Na1}, rigidity of the analogous simple $\cS\cW(m)$-module is proved by directly finding connection formulas for the above-mentioned fourth-order BPZ-type differential equation. 
Probably the direct methods of \cite{Na1} could also be used to prove rigidity of $\cS_{2,2}$ in $\ocfin$, although this would be complicated since these methods involve technical analysis of Dotsenko-Fateev integrals. On the other hand, our results on $\ocfin$ here at $c=c^{\NS}_m$ could also be combined with the VOA extension theory of \cite{CKM-exts} to recover the results of \cite{Na1} on $\cS\cW(m)$. 


Nakano also showed in \cite{Na1} that as abelian categories, the category of $\cS\cW(m)$-modules is equivalent to the category of modules for the small quantum group of $\mathfrak{sl}_2$ at a $(2m+1)$st root of unity, thus making important progress towards the conjecture in \cite{AM-super-trip} that these categories are tensor equivalent.
We expect that a detailed understanding of the tensor structure of $\ocfin$ at $c=c^\NS_m$ would enable a full proof of this conjectural tensor equivalence and would in addition pave the way for a  complete description of the representation category of the super-singlet $\cS\cM(m)$, similar to the work in  \cite{CMY-singlet, CMY-fullsinglet} for the singlet algebra $\cM(p)$. It would then also be interesting to find quantum group correspondences for $\cS\cM(m)$ and $\cS(c^\NS_m,0)$.

It would also be interesting to study the Ramond sector of parity-twisted $\cS(c,0)$-modules at any central charge $c$. This will be challenging, however, because our proof of Theorem \ref{intro:mainthm} heavily uses the embedding diagrams of Verma modules for the $N=1$ super Virasoro algebra from \cite{IK3}, and these diagrams are more complicated in the Ramond sector (see \cite[Theorem 4.4]{IK3}). Especially, there are non-injective homomorphisms between some Ramond sector Verma modules, which would cause serious trouble in the proof of Proposition \ref{desclarge} below. Moreover, since Ramond sector modules are not ordinary untwisted $\cS(c,0)$-modules, we cannot apply Theorem \ref{intro:vosa-btc} to get braided tensor category structure in this case.
Indeed, in view of \cite{McR-G-equiv}, we expect that $\cO^{\rm fin}_c$ together with a suitable subcategory of parity-twisted $\cS(c,0)$-modules forms a braided $\ZZ/2\ZZ$-crossed tensor category rather than a braided tensor category. At central charge $c^\NS_m$ for $m\in\ZZ_{\geq 1}$, we also expect $\ZZ/2\ZZ$-crossed tensor structure involving the twisted modules for the super-triplet algebra $\cS\cW(m)$ classified in \cite{AM-twisted}.

Finally, we note that the literature is full of examples of VOAs that are conformal extensions of the $N=1$ super Virasoro algebra. These include supersymmetric affine VOAs \cite{KT}, $W$-superalgebras obtained via quantum Hamiltonian reduction associated to an odd nilpotent element in an $\mathfrak{osp}_{1|2}$-subalgebra \cite{KRW, MRS, LSS} (which especially include the $N=2, 3, 4$ superconformal algebras), the algebras of global sections of chiral de Rham complexes \cite{BHS}, and more recently VOAs coming from 4d/2d-correspondences \cite{BMR}. Usually these conformal extensions are not objects of  $\cO^{\rm fin}_{c}$, but it would be very interesting to explore special examples that are, such as the already-mentioned super-singlet and super-triplet algebras.


Somewhat more generally, one could look for VOAs that are conformal extensions of an $N=1$ super Virasoro algebra tensored with a Heisenberg VOA, and that decompose into $C_1$-cofinite modules for the tensor product VOA. These would be analogous to certain special subregular $W$-algebras of $\mathfrak{sl}_n$ and principal $W$-superalgebras of $\mathfrak{sl}_{n|1}$ studied in \cite{C1, CRW, ACGY, ACKR}, which are conformal extensions of singlet VOAs tensored with a Heisenberg algebra.
One expects similar extensions for the $N=1$ super Virasoro algebra, namely subregular $W$-algebras of $\mathfrak{so}_{2n+1}$ and principal $W$-superalgebras of $\mathfrak{osp}_{2|2n}$ as conformal extensions of the super-singlet algebras times a Heisenberg VOA. For example the $p=3$ super-singlet tensored with a Heisenberg conformally
extends to $L_{-2/3}(\mathfrak{sl}_2)$ \cite{ACR}.
It would be interesting to establish and further explore these possible extensions. 


\section{Preliminaries}

In this section, we recall the $N=1$ super Virasoro algebra and its Verma modules, the vertex operator superalgebras associated to the $N=1$ super Virasoro algebra, and the Huang-Lepowsky-Zhang construction of braided tensor categories of modules for a vertex operator (super)algebra.

\subsection{The \texorpdfstring{$N=1$}{N=1} super Virasoro algebra and its representations}
We introduce the $N=1$ super Virasoro (Neveu-Schwarz) Lie superalgebra and its highest weight modules following the exposition in \cite{IK2}.
\begin{defn} \label{Liedef}
The $N=1$ \textit{super Virasoro algebra} (or \textit{Neveu-Schwarz algebra}) $\NS$ is the Lie superalgebra 
\begin{equation}
\NS: = \bigoplus_{n \in \ZZ}\CC L_n \oplus \bigoplus_{m \in \frac{1}{2}+ \ZZ}\CC G_m \oplus \CC \underline{c},
\end{equation}
where the parity of the basis elements is given by
\begin{align*}
|L_n| & = |\underline{c}| = 0,\; n \in \mathbb{Z} \\
|G_m| & =1,\; m \in \frac{1}{2}+\mathbb{Z},
\end{align*}
and the (anti)-commutation relations of the basis elements are given by
\begin{align*}
[L_m,L_n] & =(m-n)L_{m+n}+\frac{m^3-m}{12}\d_{m+n,0}\underline{c},\\
 [G_m, L_n] & =\left(m - \frac{n}{2}\right)G_{m+n},\\
\{G_m,G_n\} & =2L_{m+n}+\frac{1}{3}\left(m^2-\frac{1}{4}\right)\d_{m+n,0}\underline{c},\\
[\NS, \underline{c}] & =0.
\end{align*}
\end{defn}

Fix a triangular decomposition $\NS=\NS^-\oplus \NS^0\oplus \NS^+$, where
\begin{align*}
\NS^\pm & :=\bigoplus_{n\in \ZZ_{\geq 1}} \mathbb{C}L_{\pm n}\oplus \bigoplus_{ m\in \frac{1}{2}+\ZZ_{\geq 0}}  \mathbb{C}G_{\pm m},\\
\NS^0 & := \mathbb{C}L_0 \oplus \mathbb{C}\underline{c},
\end{align*}
We also define the non-negative subalgebra $\NS^{\geq0}:=\NS^+\oplus \NS^{0}$. For $(c,h)\in \mathbb{C}^2$,
the Verma module $M^\NS(c,h)$ of highest weight $(c,h)$ is given by
\begin{align*} 
M^\NS(c,h):=\mathcal{U}(\NS)\otimes_{\mathcal{U}\left(\NS^{\geq 0}\right)} \mathbb{C}\vac_{c,h},
\end{align*}
where $\mathcal{U}(\cdot)$ denotes the universal enveloping superalgebra and $\mathbb{C}\vac_{c,h}$ is the $1$-dimensional $\NS^{\geq 0}$ module determined by
\begin{equation*}
\NS^+.\vac_{c,h}=0,\;\;\; L_0. \vac_{c,h}=h\vac_{c,h}, \;\;\; \underline{c}.\vac_{c,h}=c\vac_{c,h}.
\end{equation*}
The operator $L_0$ acts semisimply on $M^\NS(c,h)$ with eigenspace decomposition
\begin{align*} 
M^\NS(c,h)=\bigoplus_{i=0}^{\infty} M^\NS(c,h)_{({\frac{i}{2}})},
\end{align*}
where $L_0$ acts by the scalar $h+\frac{i}{2}$ on $M^\NS(c,h)_{({\frac{i}{2}})}$. Every highest-weight module with highest weight $(c,h)$ is a quotient of the Verma module $M^\NS(c,h)$. We denote by $J^\NS(c,h)$ the (possibly $0$) maximal proper $\mathbb{Z}/2\ZZ$-graded submodule of $M^\NS(c,h)$ and by $L^\NS(c,h)$ its irreducible quotient $L^\NS(c,h):=M^\NS(c,h)/J^\NS(c,h)$.

For $t \in \mathbb{C}\setminus\{0\}$ and $r,s \in \mathbb{Z}$, set
\begin{align} \label{ct}
c^\NS(t) & := \frac{15}{2}-3(t+t^{-1}),\\
h^\NS_{r,s}(t) & :=\frac{1}{8}(r^2-1)t-\frac{1}{4}(rs-1)+\frac{1}{8}(s^2-1)t^{-1}.\label{hrs}
\end{align}
Note that $h^\NS_{r,s}=h^\NS_{-r,-s}$ for all $r,s\in \ZZ$.
From the Kac-Wakimoto determinant formula, given in equation $(6.2)$ in \cite{KW1} (see also Theorem 3.1 and Lemma 4.1 in \cite{IK3}), we have the following result:
 \begin{prop}[\cite{KW1}] \label{Redu}
The Verma module $M^\NS(c^\NS(t), h)$ is reducible if and only if $h=h^\NS_{r,s}(t)$ for some $r,s\in \mathbb{Z}_{\geq 1}$ such that $r-s\in 2\mathbb{Z}$. 
\end{prop}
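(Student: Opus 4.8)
The plan is to reduce the reducibility question to the degeneracy of the contravariant form on $M^\NS(c^\NS(t),h)$ and then read off the answer from the Kac--Wakimoto determinant formula cited in \cite{KW1}. First I would introduce the contravariant (Shapovalov) form: the assignment $L_n\mapsto L_{-n}$, $G_m\mapsto G_{-m}$, $\underline{c}\mapsto\underline{c}$ extends to an anti-involution $\omega$ of $\uea{\NS}$, and there is a unique bilinear form $\langle\,\cdot\,,\,\cdot\,\rangle$ on $M^\NS(c,h)$ with $\langle\vac_{c,h},\vac_{c,h}\rangle=1$ and $\langle xu,v\rangle=\langle u,\omega(x)v\rangle$ for $x\in\uea{\NS}$. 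Since $\omega$ fixes $L_0$, this form restricts to each finite-dimensional graded piece $M^\NS(c,h)_{(i/2)}$ and pairs distinct graded pieces trivially. Its radical is a graded submodule not containing $\vac_{c,h}$, and a standard argument for highest-weight modules shows it coincides with the maximal proper submodule $J^\NS(c,h)$. Hence $M^\NS(c,h)$ is reducible if and only if the form is degenerate on some $M^\NS(c,h)_{(i/2)}$ with $i\geq1$, i.e.\ if and only if the Gram determinant $\det_{N}(c,h)$ vanishes for some level $N\in\tfrac12\ZZ_{\geq1}$.

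Next I would invoke the Kac--Wakimoto determinant formula (equation $(6.2)$ of \cite{KW1}; see also \cite{IK3}), which asserts that, up to a nonzero constant independent of $h$,
\begin{equation*}
\det\nolimits_{N}\bigl(c^\NS(t),h\bigr)=\prod_{\substack{r,s\in\ZZ_{\geq1},\,r-s\in2\ZZ\\ rs\leq 2N}}\bigl(h-h^\NS_{r,s}(t)\bigr)^{p_{r,s}(N)},
\end{equation*}
where each exponent $p_{r,s}(N)$ (a Neveu--Schwarz partition count of $N-\tfrac{rs}{2}$) is strictly positive exactly when $rs\leq 2N$. Thus the linear factors occurring in the determinants are indexed precisely by the admissible pairs $(r,s)$, each appearing once $N$ reaches $rs/2$.

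Both implications then follow immediately. If $h=h^\NS_{r,s}(t)$ for some $r,s\in\ZZ_{\geq1}$ with $r-s\in2\ZZ$, choosing any $N\geq rs/2$ gives $\det_N(c^\NS(t),h)=0$, so the form degenerates at level $N$ and the Verma module is reducible. Conversely, if $M^\NS(c^\NS(t),h)$ is reducible then $\det_N$ vanishes for some $N\geq\tfrac12$, and the factorization forces $h=h^\NS_{r,s}(t)$ for one of the admissible pairs with $rs\leq 2N$. This is exactly the claimed equivalence.

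Finally, a word on where the difficulty lies. Granting the determinant formula, the argument is purely formal; the substantive content is the formula itself. Proving it from scratch would require identifying all its roots with the correct multiplicities --- producing singular vectors (equivalently, nonzero Verma-to-Verma homomorphisms) at each weight $h^\NS_{r,s}(t)$ to locate the roots, and matching a degree-and-leading-coefficient count in $h$ to pin down the exponents. Since this is established in \cite{KW1}, I would simply cite it and treat the determinant formula as the key external input.
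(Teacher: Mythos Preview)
Your proposal is correct and matches the paper's approach: the paper does not give its own proof but simply cites the Kac--Wakimoto determinant formula (equation~(6.2) of \cite{KW1}, with additional references to Theorem~3.1 and Lemma~4.1 of \cite{IK3}) as the source of the result. Your write-up actually supplies more detail than the paper---you spell out the Shapovalov form argument linking reducibility to vanishing of the Gram determinant---whereas the paper treats the proposition as a direct consequence of the cited formula without further elaboration.
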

This gives us, for any fixed central charge $c\in \mathbb{C}$, the complete set $H_c$ of the conformal weights $h\in \CC$ such that $M^\NS(c,h)$ is reducible, that is,
\begin{align}
H_c=\{h\in \CC\mid \textrm{$M^\NS(c,h)$ is not irreducible} \}.   \label{Hc}
\end{align}
Recall that a \textit{singular vector} in a Verma module is an $L_0$-eigenvector $\mathbf{w}$ such that $L_n\mathbf{w}=G_{m-\frac{1}{2}}\mathbf{w}=0$ for all $m,n\in\ZZ_{\geq 1}$. The following theorem controls the existence and shape of singular vectors in $M^\NS(c,h)$:
\begin{theorem}[\cite{As}] \label{AsTh}
For any $n\in \frac{1}{2}\mathbb{Z}_{\geq 0}$, there is at most one singular vector $\mathbf{w}\in M^\NS(c,h)_{(n)}$ up to scaling. If such a $\bf{w}$ exists, then it is given by
\begin{align}
{\bf w}=(G_{-\frac{1}{2}})^{2n}{\vac}_{c,h}+\sum_{\substack{2i_{1}+\cdots +2i_{k}+j_{1}+\cdots +j_l= 2n,\\ i_k\geq \cdots \geq i_{1} \geq 2   \text{ with $i_s$ even}   \\  or\; j_l \geq \cdots  \geq j_1 \geq 1 \;\text{with }j_l\geq 3\\  \text{ and $j_s$ odd}}} P^{(n)}_{i_{1}, \cdots i_{k}, j_1, \cdots, j_l}(c,h) 
L_{-i_k}\cdots L_{-i_1} G_{-\frac{j_{l}}{2}} \cdots G_{-\frac{j_1}{2}}\vac_{c,h} \label{svector}
\end{align}
up to scaling, where the $P^{(n)}_{i_{1}, \cdots i_{k}, j_1, \cdots, j_l}(c,h)$ are polynomials in $c$ and $h$. 
\end{theorem}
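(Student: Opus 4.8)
The plan is to reduce the entire statement to a single injectivity lemma about how the raising operators act on the degree-$n$ subspace of the Verma module. First I would record that $\NS^+$ is generated as a Lie superalgebra by $G_{\frac12}$ and $G_{\frac32}$: indeed $\{G_{\frac12},G_{\frac12}\}=2L_1$ and $\{G_{\frac12},G_{\frac32}\}=2L_2$ recover the positive Virasoro modes, and the remaining $G_m$ with $m\geq\frac52$ and $L_n$ with $n\geq3$ are obtained by iterated brackets. Hence $\mathbf w\in M^\NS(c,h)_{(n)}$ is singular if and only if $G_{\frac12}\mathbf w=G_{\frac32}\mathbf w=0$. I fix the PBW basis of $M^\NS(c,h)_{(n)}$ consisting of vectors $L_{-i_k}\cdots L_{-i_1}G_{-\frac{j_l}{2}}\cdots G_{-\frac{j_1}{2}}\vac_{c,h}$ with $i_k\geq\cdots\geq i_1\geq1$ and $j_l>\cdots>j_1\geq1$ odd. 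Using $G_{-\frac12}^2=L_{-1}$, the extremal monomial $(G_{-\frac12})^{2n}\vac_{c,h}$ is precisely the PBW vector $(L_{-1})^n\vac_{c,h}$ when $n$ is integral and $(L_{-1})^{n-\frac12}G_{-\frac12}\vac_{c,h}$ when $n$ is half-integral; this is the unique basis vector not involving some $L_{-i}$ with $i\geq2$ or some $G_{-\frac{j}{2}}$ with $j\geq3$, matching the range of the sum in \eqref{svector}.

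The heart of the argument is the linear map
\[
\Phi\colon M^\NS(c,h)_{(n)}\longrightarrow M^\NS(c,h)_{(n-\frac12)}\oplus M^\NS(c,h)_{(n-\frac32)},\qquad \mathbf w\longmapsto \bigl(G_{\frac12}\mathbf w,\,G_{\frac32}\mathbf w\bigr),
\]
whose kernel is exactly the space of singular vectors at level $n$. I would prove the key claim that $\Phi$ is injective on the span $N$ of the \emph{non-extremal} PBW monomials. Granting this, both assertions of the theorem follow at once: if $\mathbf w_1,\mathbf w_2$ are nonzero singular vectors, their coefficients $a_1,a_2$ of the extremal monomial must be nonzero (otherwise $\mathbf w_i\in N\cap\ker\Phi=0$), and then $a_2\mathbf w_1-a_1\mathbf w_2$ is a singular vector with vanishing extremal coefficient, hence zero, giving ``at most one up to scaling.'' Normalizing the extremal coefficient to $1$, the remaining coefficients are then uniquely determined, which is the content of the displayed form \eqref{svector}.

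To establish the injectivity claim I would set up a monomial order in which the extremal vector is the unique minimum. A natural first statistic is the defect $\rho(u)=n-\bigl(k+\tfrac{l}{2}\bigr)=\sum_s(i_s-1)+\sum_s\tfrac{j_s-1}{2}$, where $k$ and $l$ count the $L$- and $G$-factors; one checks $\rho\geq0$, with equality exactly for the extremal monomial. Tracking the elementary contractions produced when $G_{\frac12}$ is commuted through a monomial toward $\vac_{c,h}$ (where it dies), namely $[G_{\frac12},L_{-i}]=\tfrac{i+1}{2}\,G_{-\frac{2i-1}{2}}$, the collapse $\{G_{\frac12},G_{-\frac{j}{2}}\}=2L_{-\frac{j-1}{2}}$ for $j\geq3$, and $\{G_{\frac12},G_{-\frac12}\}=2L_0$ acting as a scalar on $L_0$-eigenspaces, one finds that $G_{\frac12}$ either strictly lowers $\rho$ (the $j\geq3$ move) or preserves it while interchanging an $L$-mode and a $G$-mode or stripping a $G_{-\frac12}$. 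Since $\rho$ alone is not strictly decreased, I would refine it to a genuine total order, for instance by ordering the multiset of mode indices lexicographically within each fixed value of $\rho$, and verify that against this refined order the combined action of $G_{\frac12}$ and $G_{\frac32}$ is triangular on $N$ with nonzero \emph{numerical} pivots coming from the structure constants $\tfrac{i+1}{2}$ and $2$ (the scalar $2L_0$ contributing only to same-defect, off-pivot entries). Triangularity with invertible diagonal gives the injectivity; running the identical back-substitution over the polynomial ring $\CC[c,h]$ — where every structure constant is manifestly polynomial, the $c$-dependence entering only through central terms and the $h$-dependence only through $L_0\vac_{c,h}=h\vac_{c,h}$ — shows that the determined coefficients are the polynomials $P^{(n)}_{i_1,\dots,i_k,j_1,\dots,j_l}(c,h)$ of \eqref{svector}.

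The main obstacle is precisely the combinatorial bookkeeping of this last step. Because $G_{\frac12}$ simultaneously reorders modes past one another (with sign and scalar corrections from the (anti)commutators) and collapses $G_{-\frac12}^2$ to $L_{-1}$, the naive defect grading is not strictly monotone, and one must construct the refined order carefully enough that the induced matrix of $\Phi$ on $N$ is genuinely triangular with invertible numerical diagonal — keeping the potentially vanishing scalars $2(h+\cdot)$ off the pivots. This is the technical core of \cite{As}. An alternative would be to transport uniqueness and the leading form from the corresponding Virasoro statement via the free-fermion extension relating $\cS(c,0)$ to a pair of commuting Virasoro algebras, but the direct linear-algebra route above is more elementary and self-contained.
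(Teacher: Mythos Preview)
The paper does not supply its own proof of this theorem; it is quoted from Astashkevich \cite{As} and used as a black box. So there is no in-paper argument to compare against, and your proposal should be read as an independent sketch of how one might prove the result.

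Your reduction is correct and standard: $\NS^+$ is generated by $G_{1/2}$ and $G_{3/2}$, so singularity is equivalent to $\Phi(\mathbf w)=0$, and the ``extremal/non-extremal'' splitting together with injectivity of $\Phi|_N$ would indeed give both the uniqueness up to scalar and the shape \eqref{svector}. Your defect computations are also right: commuting $G_{1/2}$ past $L_{-i}$ preserves $\rho$, past $G_{-j/2}$ with $j\ge3$ lowers $\rho$ by one, and past $G_{-1/2}$ produces an $L_0$-scalar while preserving $\rho$. The claim that back-substitution over $\CC[c,h]$ yields polynomial coefficients is valid \emph{provided} the pivots are numerical and nonzero.

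The genuine gap is exactly the one you flag: constructing a refinement of $\rho$ on which the combined action of $G_{1/2}$ and $G_{3/2}$ is triangular on $N$ with pivots independent of $(c,h)$. This is not a routine bookkeeping exercise. Within a fixed $\rho$-stratum the $G_{1/2}$-action mixes monomials nontrivially (an $L_{-i}$ is traded for a $G$-mode of the same defect, and a $G_{-1/2}$ is stripped with an $h$-dependent factor), and you must simultaneously control the $G_{3/2}$-branch so that every non-extremal monomial acquires at least one pivot entry coming from a purely numerical structure constant. You assert this can be done but do not exhibit the order, and this is precisely the content one would have to supply; without it the argument is incomplete. Your alternative suggestion, transporting the statement from the Virasoro case via the free-fermion extension, is not viable as stated: that extension (used elsewhere in the paper) depends on specific central charges and does not give module-level control of arbitrary Verma modules $M^\NS(c,h)$.
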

\begin{remark}
For integer values of the degree $n$, the formula \eqref{svector} becomes
\begin{equation*}
{\bf w}=(L_{-1})^{n}{\vac}_{c,h}+\sum_{\substack{2i_{1}+\cdots +2i_{k}+j_{1}+\cdots +j_l= 2n,\\ i_k\geq \cdots \geq i_{1} \geq 2   \text{ with $i_s$ even }  \\  \text{or}\; j_l \geq \cdots  \geq j_1 \geq 1 \;\text{with }j_l\geq 3\\  \text{ and $j_s$ odd}}} P^{(n)}_{i_{1}, \cdots i_{k}, j_1, \cdots, j_l}(c,h) 
L_{-i_k}\cdots L_{-i_1} G_{-\frac{j_{l}}{2}} \cdots G_{-\frac{j_1}{2}}\vac_{c,h}
\end{equation*}
because $G_{-\frac{1}{2}}^2=L_{-1}$ on $M^\NS(c,h)$. But for half-integer degree $n$, the more general formula \eqref{svector} is needed to describe the degree $n$ singular vector.
\end{remark}

As with the Virasoro algebra, any non-zero homomorphism between $N=1$ super Virasoro Verma modules is injective (see \cite[Proposition 3.3]{IK3}). For brevity, we generally denote the Verma module $M^\NS(c^\NS(t), h^\NS_{r,s}(t))$ by $M^\NS_{r,s}$ in the rest of the paper, when $t$ is understood.

\begin{prop} [\cite{As}, \cite{IK3}] \label{vermadiag}
The following embedding diagrams show all embeddings of $N=1$ super Virasoro Verma modules $M^\NS_{r,s}$ for $r,s\in\ZZ$:
\begin{enumerate}
\item
If $t=\frac{p}{q}\in \mathbb{Q}_{>0}$ with $p-q\in 2\mathbb{Z}$, {\rm gcd}$(\frac{p-q}{2},q)=1$:
\begin{enumerate}
\item 
For $0<r<q$ and $0<s<p$, we have the embedding diagram
\begin{equation*}
\begin{tikzpicture}[->,>=latex,scale=1.4]
\node (b0) at (-0.5,1/2){$M^\NS_{r,s}$};
\node (b1) at (1, 0){$M^\NS_{2q-r,s}$};
\node (a1) at (1, 1){$M^\NS_{q+r,p-s}$};
\node (b2) at (3, 0){$M^\NS_{3q-r,p-s}$};
\node (a2) at (3, 1){$M^\NS_{2q+r,s}$};
\node (b3) at (5, 0){$M^\NS_{4q-r, s}$};
\node (a3) at (5, 1){$M^\NS_{3q+r, p-s}$};
\node (b4) at (7, 0){$M^\NS_{5q-r,p-s}$};
\node (a4) at (7, 1){$M^\NS_{4q+r,s}$};
\node (b5) at (9, 0){$\cdots$};
\node (a5) at (9, 1){$\cdots$};
\draw[] (b1) -- node[left]{} (b0);
\draw[] (a1) -- node[left]{} (b0);
\draw[] (b2) -- node[left]{} (b1);
\draw[] (b2) -- node[left]{} (a1);
\draw[] (a2) -- node[left]{} (b1);
\draw[] (a2) -- node[left]{} (a1);
\draw[] (b3) -- node[left]{} (b2);
\draw[] (b3) -- node[left]{} (a2);
\draw[] (a3) -- node[left]{} (b2);
\draw[] (a3) -- node[left]{} (a2);
\draw[] (b4) -- node[left]{} (b3);
\draw[] (b4) -- node[left]{} (a3);
\draw[] (a4) -- node[left]{} (b3);
\draw[] (a4) -- node[left]{} (a3);
\draw[] (b5) -- node[left]{} (b4);
\draw[] (b5) -- node[left]{} (a4);
\draw[] (a5) -- node[left]{} (b4);
\draw[] (a5) -- node[left]{} (a4);
\end{tikzpicture}
\end{equation*}
\item
For $r=q$ and $0<s<p$, we have the embedding diagram
\begin{equation*}
M^\NS_{q,s} \longleftarrow M^\NS_{2q,p-s} \longleftarrow M^\NS_{3q,s} \longleftarrow M^\NS_{4q,p-s} \longleftarrow M^\NS_{5q,s} \longleftarrow \cdots
\end{equation*}
\item For $0<r<q$ and $s=p$, we have the embedding diagram 
\begin{equation*}
M^\NS_{r,p} \longleftarrow M^\NS_{2q+r,p} \longleftarrow M^\NS_{4q-r,p} \longleftarrow M^\NS_{4q+r,p} \longleftarrow M^\NS_{6q-r,p} \longleftarrow \cdots
\end{equation*}
\item For $(r,s)=(iq,p)$ where $i=1,2$, we have the embedding diagram
\begin{equation*}
M^\NS_{iq,p}  \longleftarrow M^\NS_{(i+2)q,p} \longleftarrow M^\NS_{(i+4)q,p} \longleftarrow M^\NS_{(i+6)q,p} \longleftarrow M^\NS_{(i+8)q,p},\longleftarrow \cdots
\end{equation*}
where the $i=2$ case only occurs if $\gcd(p,q)=2$.

\end{enumerate}

\item If $t=-\frac{p}{q}\in \mathbb{Q}_{<0}$ with $p-q \in 2\mathbb{Z}$, $\gcd(\frac {p-q}{2},q)=1$ then the embedding diagram for $M^\NS_{r,s}$, $r,s\in\ZZ_{\geq 1}$, has one of the following two forms:
If $r$ and $s$ are not multiples of $q$ and $p$, respectively, then the diagram has the form
\begin{equation}
\begin{tikzpicture}[->,>=latex,scale=1.4]
\node (b0) at (-0.5,1/2){$M^\NS_{r,s}$};
\node (b1) at (1, 0){$\bullet$};
\node (a1) at (1, 1){$\bullet$};
\node (b2) at (2.5, 0){$\bullet$};
\node (a2) at (2.5, 1){$\bullet$};
\node (bm) at (4,0){$\cdots$};
\node (am) at (4,1) {$\cdots$};
\node (b3) at (5.5, 0){$\bullet$};
\node (a3) at (5.5, 1){$\bullet$};
\node (b4) at (7, 0){$\bullet$};
\node (a4) at (7, 1){$\bullet$};
\node (a5) at (8.5, 0.5){$\bullet$};
\draw[] (b1) -- node[left]{} (b0);
\draw[] (a1) -- node[left]{} (b0);
\draw[] (b2) -- node[left]{} (b1);
\draw[] (b2) -- node[left]{} (a1);
\draw[] (a2) -- node[left]{} (b1);
\draw[] (a2) -- node[left]{} (a1);
\draw[] (bm) -- node[left]{} (b2);
\draw[] (bm) -- node[left]{} (a2);
\draw[] (am) -- node[left]{} (b2);
\draw[] (am) -- node[left]{} (a2);
\draw[] (b3) -- node[left]{} (bm);
\draw[] (b3) -- node[left]{} (am);
\draw[] (a3) -- node[left]{} (bm);
\draw[] (a3) -- node[left]{} (am);
\draw[] (b4) -- node[left]{} (b3);
\draw[] (a4) -- node[left]{} (b3);
\draw[] (b4) -- node[left]{} (a3);
\draw[] (a4) -- node[left]{} (a3);
\draw[] (a5) -- node[left]{} (b4);
\draw[] (a5) -- node[left]{} (a4);
\end{tikzpicture}
\end{equation}
If $r$ is a multiple of $q$ or $s$ is a multiple of $p$, then the diagram has the form
\begin{equation}
{M^\NS_{r,s}} \longleftarrow \bullet \longleftarrow \bullet \longleftarrow \bullet \longleftarrow \cdots \longleftarrow \bullet
\end{equation}
\item If $t\notin \mathbb{Q}$, then we have the embedding diagram $M^\NS_{r,s} \longleftarrow M^\NS_{-r, s}.$
 \end{enumerate}
 \end{prop}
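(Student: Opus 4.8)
The plan is to reduce the determination of embeddings to a combinatorial analysis of singular-vector weights, and then to solve the resulting quadratic conditions separately in the three regimes $t\notin\QQ$, $t\in\QQ_{>0}$, and $t\in\QQ_{<0}$. First I would record the standard dictionary between embeddings and singular vectors. A singular vector $\mathbf{w}$ of weight $h'$ in $M^\NS(c,h)$ determines a homomorphism $M^\NS(c,h')\to M^\NS(c,h)$ sending $\vac_{c,h'}\mapsto\mathbf{w}$; since this map is nonzero it is injective, as nonzero homomorphisms between $N=1$ super Virasoro Verma modules are injective. Conversely, the image of any embedding is a highest-weight submodule, hence generated by a singular vector. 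Thus the entire embedding diagram is encoded by the set of weights carrying singular vectors, together with the ``is a submodule of'' partial order, and by Theorem~\ref{AsTh} there is at most one singular vector in each degree, so no multiplicities arise.

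Next I would locate the candidate weights. Writing $\alpha=\sqrt{t}$ and using the identity
\[
8\,h^\NS_{r,s}(t)=(r\alpha-s\alpha^{-1})^2-(\alpha-\alpha^{-1})^2,
\]
a singular vector of weight $h^\NS_{r',s'}(t)$ inside $M^\NS_{r,s}$ requires the degree $h^\NS_{r',s'}(t)-h^\NS_{r,s}(t)$ to lie in $\tfrac12\ZZ_{\geq0}$, that is,
\[
(r'\alpha-s'\alpha^{-1})^2-(r\alpha-s\alpha^{-1})^2\in4\ZZ_{\geq0},
\]
and actual existence is then detected, via the Kac--Wakimoto determinant formula, by Proposition~\ref{Redu}: $M^\NS_{r',s'}$ is reducible precisely when $h^\NS_{r',s'}(t)$ can be written with both indices in $\ZZ_{\geq1}$ of equal parity, which tells me when the chain continues and when it terminates. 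For $t\notin\QQ$ the displayed condition forces $r'=\pm r$ and $s'=\pm s$, so the only weight strictly above $h^\NS_{r,s}$ is $h^\NS_{-r,s}=h^\NS_{r,-s}$, occurring at degree $rs/2$; since $h^\NS_{-r,s}$ cannot be written with two positive indices, $M^\NS_{-r,s}$ is irreducible and the chain stops, giving case (3). For $t=p/q>0$ one has $(r\alpha-s\alpha^{-1})^2=(rp-sq)^2/(pq)$, so the degree condition becomes the congruence $(r'p-s'q)^2\equiv(rp-sq)^2\pmod{4pq}$ with $(r'p-s'q)^2\geq(rp-sq)^2$; its solutions are generated by the reflections $r\mapsto2q-r$ and $s\mapsto2p-s$, whose infinite-dihedral orbit produces the doubly-infinite ladder when $0<r<q$ and $0<s<p$ and degenerates to single chains exactly when $r$ is a multiple of $q$ or $s$ is a multiple of $p$ --- this is cases (1a)--(1d). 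For $t=-p/q<0$ the same computation gives $(r\alpha-s\alpha^{-1})^2=-(rp+sq)^2/(pq)$, so that $h^\NS_{r,s}$ now \emph{decreases} as $r,s$ grow; embedded submodules must have smaller $rp+sq$, which bounds the indices and forces the diagram to terminate after finitely many steps, yielding the two shapes in case (2) according to whether $(r,s)$ lies in the interior or on a wall $r\in q\ZZ$, $s\in p\ZZ$.

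The main obstacle is the passage from these necessary conditions to the \emph{exact} diagrams. The determinant formula and the congruences only predict \emph{candidate} singular vectors; one must still prove that each predicted vector genuinely exists, that no additional embeddings occur, and that the drawn arrows are covering relations rather than composites. This is the heart of the Feigin--Fuchs theory, carried out for the super case in \cite{As, IK3}. The cleanest route is a specialization argument: the irrational-$t$ structure is rigid and already known, and one degenerates $t\to p/q$ while tracking the jumps in the spaces of singular vectors, using the explicit form \eqref{svector} to separate the even (bosonic) singular vectors from the odd (fermionic) ones. This even/odd bookkeeping, together with the genuinely half-integer degrees $rs/2$ that appear when $r$ and $s$ are both odd, is the feature with no Virasoro analogue and is where the argument requires the most care.
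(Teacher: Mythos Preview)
The paper does not actually prove this proposition: it is stated with attribution to \cite{As} and \cite{IK3} and used as input. So there is no ``paper's own proof'' to compare against; your proposal is an outline of how one would reprove the cited literature results.

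As such an outline, your approach is the standard one and is essentially correct in spirit: reduce embeddings to singular vectors via Theorem~\ref{AsTh} and injectivity of Verma maps, parametrize candidate weights by the quadratic identity $8h^\NS_{r,s}(t)=(r\alpha-s\alpha^{-1})^2-(\alpha-\alpha^{-1})^2$, and solve the resulting congruence case by case. You also correctly identify the genuine difficulty: the determinant formula and the congruence analysis give only \emph{necessary} conditions on the singular-vector spectrum, and the bulk of the work in \cite{As,IK3} lies in proving that every predicted singular vector actually exists and that none are spurious. Your proposed ``degenerate from irrational $t$'' route is indeed Astashkevich's method (Jantzen-type filtration and continuity in $t$), but your sketch underplays how much machinery this requires; it is not merely ``bookkeeping'' but a careful analysis of when singular vectors collide or vanish in the limit, and in the $N=1$ case the half-integral grading introduces subtleties absent from the Virasoro story. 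A reader of your outline would not be able to fill in this step without essentially reading \cite{As}.

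One small correction: in your irrational case you say $h^\NS_{-r,s}$ ``cannot be written with two positive indices,'' but what you need is that it cannot be written as $h^\NS_{r',s'}$ with $r',s'\in\ZZ_{\geq1}$ and $r'-s'\in2\ZZ$ \emph{and} $h^\NS_{r',s'}$ sitting strictly above it; the irreducibility of $M^\NS_{-r,s}$ follows from Proposition~\ref{Redu} once you observe that $h^\NS_{-r,s}=h^\NS_{r,s}+\tfrac{rs}{2}$ has no further representation of the required form when $t\notin\QQ$.
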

\begin{remark}
    The restriction gcd$(\frac{p-q}{2}, q)=1$ above is taken to avoid repeating values of $t$. Namely, if gcd$(\frac{p-q}{2}, q)=m>1$, it follows that $m$ divides $p$ and then  $(p,q)$ and $(\frac{p}{m}, \frac{q}{m})$ yield the same value of $t=\frac{p}{q}.$
 \end{remark}
 \begin{remark}
    In case (1) of Proposition \ref{vermadiag}, we have used the symmetries $h^\NS_{r,s}(\frac{p}{q})=h^\NS_{-r,-s}(\frac{p}{q})$ and $h^\NS_{r,s}(\frac{p}{q})=h^\NS_{q+r, p+s}(\frac{p}{q})$ for $r,s\in \ZZ$ to rewrite the embedding diagrams from \cite{IK3}. As a result, it is clear that every Verma module $M_{r,s}^\NS$ appears in exactly one of the embedding diagrams in case (1).
\end{remark}

The Verma module embedding diagrams contain all information about the structure of $M^\NS_{r,s}$ since every submodule of a Verma module is generated by its singular vectors by \cite[Theorem D]{As} and \cite[Theorem 4.2]{IK3}. The following results then follow easily from the embedding diagrams:
\begin{cor} \label{coro}
\begin{enumerate}
    \item Every non-zero $\NS$-submodule of a Verma module is either a Verma module or the sum of two Verma submodules.
    \item Every highest-weight $\NS$-module which is isomorphic to a quotient of a Verma module by a non-zero submodule has finite length.
    \item Every Verma module $M^\NS(c,h)$ appearing in the embedding diagram of a reducible Verma module has conformal weight $h\in H_c$ except possibly for the socle which, if non-zero, is of the form $M^\NS(c,h')$ for $h'\notin H_c$.   
    \item If a submodule $J$ of a Verma module is the sum of two Verma submodules, then the two singular vectors which generate $J$ have a common descendant.    
    \end{enumerate}
\end{cor}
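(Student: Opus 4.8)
The plan is to reduce all four statements to a combinatorial analysis of the embedding diagrams in Proposition \ref{vermadiag}. The structural inputs I would use throughout are: (a) every submodule of a Verma module is generated by its singular vectors (by \cite{As} and \cite{IK3}, as noted just before the statement), and (b) any nonzero homomorphism between Verma modules is injective. Together with Theorem \ref{AsTh} (at most one singular vector per degree), these let me identify the singular vectors of a Verma module $M$ with the nodes of its embedding diagram: each singular vector $\mathbf{w}$ generates a highest-weight submodule $\mathcal{U}(\NS)\mathbf{w}$, which is an honest Verma module by (b), and Proposition \ref{vermadiag} asserts that these exhaust the Verma submodules of $M$. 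Writing $M_X$ for the Verma submodule at a node $X$ and ordering nodes by inclusion, a submodule $N\subseteq M$ corresponds exactly to the set $S_N=\{X : M_X\subseteq N\}$, which is closed under passing to smaller submodules (an order ideal for $\subseteq$), and $N$ is generated by the singular vectors at the $\subseteq$-maximal nodes of $S_N$. With this dictionary in place, the corollary becomes a statement about order ideals in each diagram.

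For (1) and (4) I would read off the diagrams directly. In each diagram of Proposition \ref{vermadiag} the nodes fall into columns indexed by a depth $i\geq 0$, each column containing at most two nodes, and every node in column $i$ contains every node in column $i+1$. Consequently any order ideal has at most two $\subseteq$-maximal nodes, both lying in a single column; hence $N$ is generated by one or two singular vectors, i.e.\ $N$ is a Verma module or a sum of two Verma submodules, which is (1). For (4), when $J$ has two maximal nodes $X_1,X_2$ they occupy the same column, say of depth $i$, and both contain the node(s) of depth $i+1$; thus $M_{X_1}\cap M_{X_2}\neq 0$. This intersection is again a submodule, so by (a) it contains a sub-Verma whose highest-weight vector lies in both $\mathcal{U}(\NS)\mathbf{w}_1$ and $\mathcal{U}(\NS)\mathbf{w}_2$, and that vector is the common descendant.

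For (2), a nonzero submodule $N$ has nonempty $S_N$, so $S_N$ contains every node smaller than some fixed node; since each diagram has only finitely many nodes larger than any given one, the complement of $S_N$ is a finite order filter. Passing from the node-filtration of $M$ to $M/N$, the composition factors of $M/N$ are the irreducibles $L^\NS(c,h_X)$ indexed by the finitely many nodes $X\notin S_N$, so $M/N$ has finite length. For (3), observe that a node $X$ is reducible, i.e.\ $h_X\in H_c$, precisely when it has a strictly smaller node, and irreducible precisely when it is $\subseteq$-minimal. Reading off Proposition \ref{vermadiag}: for $t\in\QQ_{>0}$ the diagram descends infinitely, so every node has a smaller node, every node lies in $H_c$, and there is no simple submodule, so the socle is $0$; for $t\in\QQ_{<0}$ and for irrational $t$ the diagram has a unique minimal node, which is therefore the socle, and it alone is irreducible (so its weight $h'\notin H_c$) while every other node is reducible.

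The main obstacle I anticipate is bookkeeping rather than conceptual difficulty: one must verify the ``at most two maximal nodes'' and ``nonzero meet'' claims separately for each diagram shape in Proposition \ref{vermadiag} (the generic grids of cases $1(a)$ and $2$, and the various linear chains), and one must make precise the identification of the composition factors of $M/N$ with the nodes outside $S_N$, i.e.\ that distinct nodes yield distinct nonzero subquotients. The latter again rests on injectivity of Verma homomorphisms and uniqueness of singular vectors per degree. Once the node/order-ideal dictionary is set up carefully, each of the four parts is a short inspection of the diagrams.
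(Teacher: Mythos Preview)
Your proposal is correct and matches the paper's approach: the paper gives no proof beyond the remark that the corollary follows easily from the embedding diagrams of Proposition \ref{vermadiag} together with the fact (from \cite{As}, \cite{IK3}) that every submodule of a Verma module is generated by its singular vectors, which is exactly the dictionary you set up and then read off diagram-by-diagram. Your order-ideal framework and the column-depth observation make this precise, and the bookkeeping obstacles you flag (checking each diagram shape, and identifying the subquotients of $M/N$ with nodes outside $S_N$) are indeed all that remains.
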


We say that an $\NS$-module $W$ is \textit{restricted} if for any $w\in W$, $L_n w =0$ and $G_{m+\frac{1}{2}} w=0$ for $m,n\in\ZZ$ sufficiently positive.

\subsection{Vertex operator superalgebras and their modules}

For several slightly different definitions of vertex operator superalgebra, see for example \cite{DL, KW2, Xu, CKL}. Here we present what is essentially the definition of \cite{CKL}:
\begin{defn} \label{SVOADef}
A \textit{vertex operator superalgebra} is a $\frac{1}{2}\mathbb{Z}$-graded superspace $V= V^{\bar{0}}\oplus V^{\bar{1}}=\bigoplus_{n\in \frac{1}{2}\mathbb{Z}}V_{(n)}$, together with an even linear map called the vertex operator,
\begin{align*}
Y: V\otimes V & \rightarrow V((x))\\
 u\otimes v & \mapsto Y(u,x)v=\sum_{n\in \mathbb{Z}}u_nv\,x^{-n-1}, 
\end{align*}
and two distinguished vectors $\one \in V_{(0)}\cap V^{\bar{0}}$ called the vaccum vector and $\om \in V_{(2)}\cap V^{\bar{0}}$ called the conformal vector. These data satisfy the following axioms:
\begin{enumerate}
\item For all $n\in\frac{1}{2}\ZZ$, $\dim V_{(n)}<\infty$ and $V_{(n)}=(V^{\bar{0}}\cap V_{(n)})\oplus (V^{\bar{1}}\cap V_{(n)})$. Moreover, $V_{(n)}=0$ for all sufficiently negative $n\in\frac{1}{2}\ZZ$.

\item $Y(\one,x)=\id_V$, and for any $v \in V$, $Y(v,x)\one \in V[[x]]$ with constant term $v$.
\item The Jacobi identity: For any parity homogeneous $u,v\in V$,
\begin{align*}
 x_0^{-1}\delta\bigg(\frac{x_1-x_2}{x_0}\bigg)Y(u,x_1)Y(v, x_2)
&-(-1)^{|u||v|}x_0^{-1}\delta\bigg(\frac{x_2-x_1}{-x_0}\bigg)Y(v, x_2)Y(u,x_1)\\
& \qquad = x_2^{-1}\delta\bigg(\frac{x_1-x_0}{x_2}\bigg)Y(Y(u,x_0)v, x_2),
\end{align*}
where $\delta(x)=\sum_{n\in\ZZ} x^n$.

\item If we write $Y(\omega,x)=\sum_{n \in \mathbb{Z}}L_n\,x^{-n-2}$, then
\begin{align*}
[L_m,L_n]=(m-n)L_{m+n}+\frac{m^3-m}{12}\delta_{m+n,0}c\cdot\id_V,
\end{align*}
where $c\in \mathbb{C}$ is called the central charge of $V$. Moreover, $L_0v = nv$ for any $n \in \frac{1}{2}\ZZ$ and $v\in V_{(n)}$; in this case, we say that $n$ is the conformal weight of $v$.

\item The $L_{-1}$-derivative property:  $Y(L_{-1}v,x) = \frac{d}{dx} Y(v,x)$ for $v\in V$.
\end{enumerate}
\end{defn}

\begin{remark}
In this paper, we will mainly consider vertex operator superalgebras such that $V^{\bar{i}}=\bigoplus_{n\in\frac{i}{2}+\ZZ} V_{(n)}$ for $i=0,1$. Such vertex operator algebras are said to have ``correct statistics'' in \cite{CKL}.
\end{remark}

Let $V$ be a vertex operator superalgebra. We next recall the definitions of various types of modules for $V$:
\begin{itemize}
\item A \textit{weak $V$-module} is a superspace $W=W^{\bar{0}}\oplus W^{\bar{1}}$ with an even vertex operator map
\begin{align*}
Y_W:  V\otimes W & \rightarrow W((x))\\
v\otimes w &\mapsto Y_W(v,x)w =\sum_{n \in \ZZ}v_nw\, x^{-n-1}
\end{align*}
satisfying the following properties:
\begin{enumerate}
\item $Y_W(\one, x) = \id_W$.
\item The Jacobi identity: For any parity homogeneous $u,v\in V$,
\begin{align*}
 x_0^{-1}\delta\bigg(\frac{x_1-x_2}{x_0}\bigg)Y_W(u,x_1)Y_W(v, x_2) & -(-1)^{|u||v|}x_0^{-1}\delta\bigg(\frac{x_2-x_1}{-x_0}\bigg)Y_W(v, x_2)Y_W(u,x_1)\\
& = x_2^{-1}\delta\bigg(\frac{x_1-x_0}{x_2}\bigg)Y_W(Y(u,x_0)v, x_2).
\end{align*}

\item The $L_{-1}$-derivative property: $Y_W(L_{-1}v,x) = \frac{d}{dx} Y_W(v,x)$ for $v \in V$.

\end{enumerate}

\item A weak $V$-module $W$ is \textit{$\frac{1}{2}\NN$-gradable} if there exists a $\frac{1}{2}\NN$-grading $W=\bigoplus_{i=0}^\infty W\left(\frac{i}{2}\right)$
 such that for $v \in V$ of conformal weight wt $v$,
$$v_m\cdot W(n)\subset W({\rm wt} \  v+n-m-1)$$
for $m \in \ZZ$, $n\in\frac{1}{2}\NN$.

\item A weak $V$-module $W$ is a \textit{generalized  $V$-module} if it has a $\CC$-grading $W = \bigoplus_{h \in \CC}W_{[h]}$ such that:
\begin{enumerate}
    \item For any $h\in\CC$, $W_{[h]}=(W^{\bar{0}}\cap W_{[h]})\oplus (W^{\bar{1}}\cap W_{[h]})$.
    \item For any $h\in\CC$, $W_{[h]}$ is the generalized $L_0$-eigenspace with generalized eigenvalue $h$, where $L_0$ is the coefficient of $x^{-2}$ in $Y_W(\omega,x)$.
\end{enumerate}

\item A generalized $V$-module $W$ is \textit{lower bounded} if $W_{[h]}=0$ for $\mathrm{Re}(h)$ sufficiently negative.

\item A lower-bounded generalized $V$-module is \textit{grading restricted} if $\dim W_{[h]}<\infty$ for all $h\in\CC$.

\item A generalized module $W$ has \textit{finite length} if there is a filtration of generalized submodules $0 =W_0 \subset W_1 \subset \cdots \subset W_{n-1} \subset W_n = W$ such that $W_i/W_{i-1}$, $i = 1, \dots, n$ are irreducible grading restricted $V$-modules.
\end{itemize}
From now on, for brevity, we will use \textit{$V$-module} with no further qualification to mean a grading-restricted generalized $V$-module.

\begin{remark}\label{rem:half-N-grading}
Any lower-bounded generalized module $W$ for a vertex operator superalgebra $V$ is $\frac{1}{2}\NN$-gradable. Indeed, given $W=\bigoplus_{h\in \CC}W_{[h]}$,
let $I$ be the set of cosets $i\in\CC/(\frac{1}{2}\ZZ)$ such that $W_{[h]}\neq 0$ for some $h\in i$.
Then
\begin{align} \label{ngrading}
W=\bigoplus_{i\in I}\bigoplus_{n=0}^{\infty}W_{[h_i+\frac{n}{2}]},
\end{align}
where $h_i$ is the conformal weight with minimal real part in the coset $i\in I$. Each summand $W_i=\bigoplus_{n=0}^\infty W_{[h_i+\frac{n}{2}]}$ is a $V$-submodule of $W$. In particular, if $W\neq 0$ is indecomposable, then $|I|=1$, and if $W$ is finitely generated, then $I$ is finite. Now the decomposition \eqref{ngrading} implies that $W$ has a $\frac{1}{2}\NN$-grading  $W=\bigoplus_{n=0}^\infty W\left(\frac{n}{2}\right)$ where $W\left(\frac{n}{2}\right)=\bigoplus_{i\in I} W_{[h_i+\frac{n}{2}]}$.
\end{remark}

Following \cite[Definition 1.5]{KW2}, we say a vertex operator superalgebra $V$ is \textit{rational} if every $\frac{1}{2}\NN$-gradable $V$-module is a direct sum of irreducible $\frac{1}{2}\NN$-gradable $V$-modules. Moreover, $V$ is \textit{$C_2$-cofinite} if $\dim V/C_2(V)<\infty$, where 
    \[
    C_2(V) = \mathrm{span} \{u_{-2}v\mid u,v \in V\}.    
    \]

If $V$ is a vertex operator superalgebra and $W=\bigoplus_{h\in\CC} W_{[h]}$ is a generalized $V$-module, then $W'=\bigoplus_{h\in\CC} W_{[h]}^*$ is the graded dual superspace. If $W$ is lower bounded, then $W'$ admits the structure of lower-bounded generalized $V$-module, called the \textit{contragredient} of $W$, by a superalgebra generalization of the proof for vertex operator algebras in \cite[Section 5]{FHL}. Specifically, the vertex operator $Y_{W'}$ can be defined by
\begin{equation}\label{eqn:contra}
\langle Y_{W'}(v,x)w', w\rangle =(-1)^{|v||w'|} \langle w', Y_W(e^{xL_1}(e^{\pi i} x^{-2})^{L_0} v,x^{-1})w\rangle
\end{equation}
for $v\in V$, $w\in W$, and $w'\in W'$. Actually, there are several definitions of contragredient modules for a vertex operator superalgebra in the literature (see \cite[Remark 3.5]{CKM-exts} for a brief discussion), but all definitions yield isomorphic contragredient modules.

If $W_1$ and $W_2$ are (weak) modules for a vertex operator superalgebra $V$, there are two options for defining $V$-module homomorphisms $f: W_1\rightarrow W_2$. First, we can require $f$ to be an even linear map (that is, $f(W_1^{\bar{i}})\subset W_2^{\bar{i}}$ for $i=0,1$) such that 
\begin{equation*}
f(Y_{W_1}(v,x)w_1)=Y_{W_2}(v,x)f(w_1)
\end{equation*}
for $v\in V$ and $w_1\in W_1$. Alternatively, we can also allow odd linear maps $f$ such that
\begin{equation*}
    f(Y_{W_1}(v,x)w_1)=(-1)^{\vert v\vert} Y_{W_2}(v,x)f(w_1)
\end{equation*}
for parity-homogeneous $v\in V$. Then a $V$-module homomorphism would be any sum of an even and an odd homomorphism.

In this paper, to avoid excessive sign factors, we will only allow even $V$-module homomorphisms. This means we will need to distinguish between $V$-modules and their parity reversals. Namely, if $W$ is a (weak) $V$-module, then $(\Pi(W),Y_{\Pi(W)})$ is the (weak) $V$-module defined by $\Pi(W)^{\bar{i}}=W^{\overline{i+1}}$ for $i=0,1$, and $Y_{\Pi(W)}=Y_W$. The parity involution
\begin{equation*}
    P_W=\id_{W^{\bar{0}}}\oplus(-\id_{W^{\bar{1}}})
\end{equation*}
defines an odd $V$-module isomorphism $W\cong\Pi(W)$, but there is usually no even $V$-module isomorphism between $W$ and $\Pi(W)$. If $f: W_1\rightarrow W_2$ is an even $V$-module homomorphism, then $f$ also defines an even homomorphism from $\Pi(W_1)$ to $\Pi(W_2)$. Thus $\Pi$ is a functor, which we call the parity-reversal functor.

\begin{remark}
    If we allow both even and odd homomorphisms between $V$-modules, then the category of all weak $V$-modules is a $\Pi$-supercategory in the terminology of \cite{BE}, that is, a category enriched in vector superspaces and equipped with a parity-reversal functor. If we allow only even homomorphisms, then we get the ``underlying category'' of this supercategory, which is a $\Pi$-category in the terminology of \cite{BE}.
\end{remark}

\begin{remark}\label{rem:odd-to-even-morphism}
    If $f: W_1\rightarrow W_2$ is an odd $V$-module homomorphism, then $P_{W_2}\circ f: W_1\rightarrow\Pi(W_2)$ is an even $V$-module homomorphism. Thus we do not lose anything essential by allowing only even $V$-module homomorphisms.
\end{remark}

\subsection{Vertex operator superalgebras from the \texorpdfstring{$N=1$}{N=1} super Virasoro algebra}

We now recall the vertex operator superalgebras and their modules based on representations of the Neveu-Schwarz algebra. Neveu-Schwarz vertex operator superalgebras and their modules have been studied extensively in both the mathematics and physics literatures and were formulated mathematically by Kac-Wang \cite{KW2} and Barron \cite{B1, B2}.

To begin, $G_{-\frac{1}{2}}\vac_{c,0}$ is a singular vector of the Verma module $M^\NS(c,0)$ for any $c\in \mathbb{C}$. Then
\begin{align*}
\cS(c,0):=M^\NS(c,0)/\langle G_{-\frac{1}{2}}\vac_{c,0}\rangle
\end{align*}
has an $L_0$-eigenspace decomposition 
\begin{align*}
\cS(c,0)=\bigoplus_{n\in \frac{1}{2}\mathbb{Z}} \cS(c,0)_{(n)},
\end{align*}
where
$$\cS(c,0)_{(n)} = \textrm{span} \{L_{-i_{1}}\cdots L_{-i_{k}}G_{-j_{1}}\cdots G_{-j_{l}}\vac_{c,0}\mid  i_1+\cdots + i_k+j_1+\cdots +j_l = n\}.$$
Moreover, $\cS(c,0)$ is a vector superspace such that
$$|L_{-i_{1}}\cdots L_{-i_{k}}G_{-{j_{1}}}\cdots G_{-j_{l}}\vac_{c,0}|  =0\;\;\; (\textrm{respectively} \;1),$$ when $l$ is even (respectively odd).

Define the conformal vector $\omega: = L_{-2}\vac_{c,0}$ and Neveu-Schwarz vector $\tau: =G_{-\frac{3}{2}}\vac_{c,0}$ in $\cS(c,0)$, as well as the corresponding fields
\begin{align*}
L(x):=\sum_{n\in \mathbb{Z}}L_n\,x^{-n-2},\qquad G(x):= \sum_{n\in \mathbb{Z}}G_{n+\frac{1}{2}}\,x^{-n-2}.
\end{align*}
The vertex operator $Y: \cS(c,0) \rightarrow \End(\cS(c,0))[[x, x^{-1}]]$ is the unique linear map such that 
\begin{equation*}
Y(\vac_{c,0}, x)=\id_V,\qquad Y(\tau,x)=G(x), \qquad Y(\omega,x)=L(x),
\end{equation*}
and the Jacobi identity holds. In particular,
\begin{align}\label{operator}
&Y(L_{-i_{1}-2}\cdots L_{-i_{k}-2}G_{-{j_{1}}-\frac{3}{2}}\cdots G_{-j_{l}-\frac{3}{2}}\vac_{c,0},x) \nonumber\\
&\quad\quad\quad \quad\quad\quad  =\frac{1}{i_1!\cdots i_k!j_1!\cdots j_l!} :\partial^{i_1}L(x)\cdots \partial^{i_k} L(x) \partial^{j_1} G(x) \cdots\partial^{j_l} G(x):
\end{align}
for $i_1, \dots, i_k, j_1, \dots, j_l \in \ZZ_{\geq 0}$, where $k,l \in \ZZ_{\geq 0}$.
By \cite[Theorem~3.1]{KW2}, the map defined in \eqref{operator} gives $\cS(c,0)$ and its simple quotient $L^\NS(c,0)$ the structure of vertex operator superalgebras.

For fixed $c \in \mathbb{C}$, a weak $\cS(c,0)$-module is the same as a restricted $\NS$-module of central charge $c$. Thus the Verma and simple $\NS$-modules $M^\NS(c,h)$ and $L^\NS(c,h)$ are (grading-restricted generalized) $\cS(c,0)$-modules for all $h\in\CC$, and every irreducible $\cS(c,0)$-module is isomorphic to either $L^\NS(c,h)$ or its parity reversal $\Pi(L^\NS(c,h))$ for some $h\in\CC$. Both $L^\NS(c,h)$ and its parity reversal are generated by a vector $\one_{c,h}$ of conformal weight $h$; our convention will be that $L^\NS(c,h)$ denotes the module in which $\one_{c,h}$ is even, while $\Pi(L^\NS(c,h))$ denotes the module in which $\one_{c,h}$ is odd. These simple modules are self-contragredient,
\begin{equation*}
L^\NS(c,h)'\cong  L^\NS(c,h),\qquad \Pi(L^\NS(c,h))'\cong\Pi(L^\NS(c,h)),
\end{equation*}
because (using the $v=\omega$ case of \eqref{eqn:contra}) both $L^\NS(c,h)$ and $L^\NS(c,h)'$ are simple $\cS(c,0)$-modules with even lowest conformal weight space of $L_0$-eigenvalue $h$.

For a simple $\cS(c,0)$-module $W=\Pi^i(L^\NS(c,h))$, $i=0,1$, let $f: W\rightarrow W'$ be an $\cS(c,0)$-module isomorphism. Then we can define an even non-degenerate invariant bilinear form $\langle\cdot,\cdot\rangle$ on $W$ such that $\langle w',w\rangle =\langle f(w'),w\rangle$ for $w',w\in W$. Such a bilinear form is also symmetric:
\begin{lemma}\label{lem:bilinear-form-symmetric}
    If $\langle\cdot,\cdot\rangle$ is an even invariant bilinear form on a simple $\cS(c,0)$-module $W$, then $\langle\cdot,\cdot\rangle$ is symmetric.
\end{lemma}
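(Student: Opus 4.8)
The plan is to exploit that on a simple module the space of even invariant bilinear forms is one-dimensional, and then to show that the transpose of a given form is a scalar multiple of itself, with the scalar forced to be $+1$ by evaluating on the lowest conformal weight space.

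First I would dispose of the zero form (trivially symmetric) and assume $\langle\cdot,\cdot\rangle\neq 0$. Since the radical of an invariant form is an $\cS(c,0)$-submodule of $W$, simplicity forces a nonzero invariant form to be nondegenerate. Next, an even invariant bilinear form is the same datum as an even module map $W\to W'$, namely $w_1\mapsto\langle w_1,\cdot\rangle$, so by Schur's lemma (for even morphisms, $\End(W)=\CC\,\id_W$) together with $W\cong W'$, the space of even invariant forms on $W$ is one-dimensional. Taking $v=\omega$ in the invariance relation yields $\langle L_0 w_1,w_2\rangle=\langle w_1,L_0 w_2\rangle$, so the form pairs generalized $L_0$-eigenspaces of distinct eigenvalues trivially; in particular it restricts to a nondegenerate pairing of the one-dimensional lowest conformal weight space $\CC\one_{c,h}$ with itself, whence $\langle\one_{c,h},\one_{c,h}\rangle\neq 0$.

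The heart of the argument is to show that the transpose $\langle w_1,w_2\rangle^{t}:=\langle w_2,w_1\rangle$ is again an even invariant form. Writing $A_x(v):=e^{xL_1}(e^{\pi i}x^{-2})^{L_0}v$, the invariance coming from \eqref{eqn:contra} reads $\langle Y_W(v,x)w_1,w_2\rangle=(-1)^{|v||w_1|}\langle w_1,Y_W(A_x(v),x^{-1})w_2\rangle$. I would first establish the conjugation identity $A_{x^{-1}}\circ A_x=e^{2\pi i L_0}$ on $\cS(c,0)$ by a formal computation using $[L_0,L_1]=-L_1$ to commute $e^{\pm xL_1}$, $x^{\pm 2L_0}$, and $e^{\pi i L_0}$ past one another. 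Here the essential super-phenomenon appears: since $\cS(c,0)$ has correct statistics, $e^{2\pi i L_0}$ acts on $\cS(c,0)$ exactly as the parity involution $v\mapsto(-1)^{|v|}v$, rather than as the identity as in the purely even Virasoro case. Applying the invariance relation twice to rewrite $\langle w_2,Y_W(v,x)w_1\rangle$, this extra factor $(-1)^{|v|}$ enters the parity bookkeeping and, after accounting for the constraint $|w_1|=|v|+|w_2|$ forced by evenness of the form, exactly cancels the would-be anomalous sign, so that $\langle\cdot,\cdot\rangle^{t}$ satisfies the very same invariance relation. I expect this sign computation — verifying $A_{x^{-1}}\circ A_x=e^{2\pi i L_0}=(-1)^{|v|}$ and checking it restores the correct invariance sign for $\langle\cdot,\cdot\rangle^{t}$ — to be the main obstacle, and it is the one place where the superalgebra structure genuinely intervenes; were the involution trivial, the transpose would only be invariant up to an uncancelled sign and the conclusion would fail.

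Granting this, $\langle\cdot,\cdot\rangle^{t}$ lies in the one-dimensional space of even invariant forms, so $\langle\cdot,\cdot\rangle^{t}=\lambda\langle\cdot,\cdot\rangle$ for some $\lambda\in\CC$; transposing twice is the identity, forcing $\lambda^2=1$. Finally, evaluating at the lowest conformal weight vector gives $\lambda\langle\one_{c,h},\one_{c,h}\rangle=\langle\one_{c,h},\one_{c,h}\rangle^{t}=\langle\one_{c,h},\one_{c,h}\rangle\neq 0$, so $\lambda=1$. Hence $\langle w_2,w_1\rangle=\langle w_1,w_2\rangle$ for all $w_1,w_2\in W$, i.e.\ the form is symmetric.
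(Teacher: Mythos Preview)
Your proof is correct and takes a genuinely different route from the paper's. The paper argues by induction on conformal weight: the form is trivially symmetric on the one-dimensional lowest weight space, and the inductive step writes a general vector as $L_{-m}w'$ or $G_{-m}w'$ and uses the explicit adjunction relations $\langle L_{-m}w',w\rangle=\langle w',L_m w\rangle$ and $\langle G_{-m}w',w\rangle=-i(-1)^{|w'|}\langle w',G_m w\rangle$ coming from \eqref{eqn:contra} for $v=\omega,\tau$; the odd case closes because evenness of the form forces $|w|+|w'|$ odd. Your argument is more conceptual: you show the transpose is again even and invariant via the identity $A_{x^{-1}}\circ A_x=e^{2\pi i L_0}$, which on a superalgebra with correct statistics is the parity involution, so the extra $(-1)^{|v|}$ combines with $|w_1|+|w_2|\equiv|v|$ to cancel; Schur's lemma then forces $\langle\cdot,\cdot\rangle^t=\lambda\langle\cdot,\cdot\rangle$ with $\lambda^2=1$, and evaluating on $\one_{c,h}$ gives $\lambda=1$. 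The paper's proof is elementary and self-contained, using only the two generators $\omega,\tau$ and avoiding any appeal to Schur's lemma; your proof has the advantage of applying verbatim to any simple self-contragredient module (with one-dimensional top and countable dimension, so that $\End(W)=\CC$) over any vertex operator superalgebra with correct statistics, not just $\cS(c,0)$. One small point: you should note that both the left and right radicals are submodules (the invariance relation gives both), so a nonzero even form is nondegenerate on both sides and indeed defines an even isomorphism $W\to W'$.
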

\begin{proof}
Let $W\cong \Pi^i(L^\NS(c,h))$ for some $h\in\CC$ and $i=0,1$. We will prove that the restriction of $\langle\cdot,\cdot\rangle$ to the conformal weight space $W_{[h+n]}$ is symmetric by induction on $n\in\frac{1}{2}\ZZ_{\geq 0}$. The base case $n=0$ holds because $W_{[h]}$ is one dimensional.

For $n>0$, $W_{[h+n]}$ is spanned by vectors of the form $L_{-m}w'$ for $m\in\ZZ_{\geq 1}$, $w'\in W_{[h+n-m]}$ and $G_{-m}w'$ for $m\in\ZZ_{\geq 0}+\frac{1}{2}$, $w'\in W_{[h+n-m]}$. Thus it is enough to show $\langle L_{-m} w',w\rangle =\langle w,L_{m}w'\rangle$ and $\langle G_{-m}w',w\rangle =\langle w,G_{m}w'\rangle$
for all $w\in W_{[h+n]}$. Indeed, the $v=\omega,\tau$ cases of \eqref{eqn:contra} yield
\begin{equation*}
    \langle L_{-m} w', w\rangle =\langle w',L_m w\rangle,\qquad \langle G_{-m} w',w\rangle =-i(-1)^{\vert w'\vert}\langle w', G_m w\rangle,
\end{equation*}
so using the induction hypothesis,
\begin{align*}
    \langle L_{-m}w',w\rangle =\langle w',L_m w\rangle =\langle L_m w,w'\rangle =\langle w,L_{-m}w'\rangle
\end{align*}
as required, and
\begin{equation*}
    \langle G_{-m}w',w\rangle  =-i(-1)^{\vert w'\vert}\langle w',G_m w\rangle =-i(-1)^{\vert w'\vert}\langle G_m w,w'\rangle
    =(-1)^{\vert w\vert+\vert w'\vert+1}\langle w,G_{-m}w'\rangle.
\end{equation*}
This equals $\langle w, G_{-m}w'\rangle$ because $\langle\cdot,\cdot\rangle$ is even.

\end{proof}

\begin{nota}
Recall the notation $c^\NS(t)$ and $h^\NS_{r,s}(t)$ from \eqref{ct} and \eqref{hrs}. For brevity, and when $t$ is understood, we will use $\cS_{r,s}$ to denote the simple $\NS$-module $L(c^\NS(t), h_{r,s}^\NS(t))$.
\end{nota}

Recall from \eqref{cpq} the conformal weights $c_{p,q}^\NS$ for $p,q\in\ZZ_{\geq 2}$ such that $p-q\in 2\ZZ$ and $\gcd\left(\frac{p-q}{2},q\right)=1$. From the embedding diagrams for Verma $\NS$-modules, $\cS(c,0)\neq L^\NS(c,0)$ if and only if $c= c_{p,q}$ for some such $p,q$. Kac and Wang conjectured \cite[Conjecture~3.1]{KW2}, and Adamovi\'{c} proved, that the simple vertex operator superalgebras at these central charges are rational:
\begin{theorem}[\cite{A1}]
    The vertex operator superalgebra $L^\NS(c_{p,q}^\NS,0)$ is rational, and its irreducible modules are given by 
    \[
    \{L(c_{p,q}^\NS, h_{r,s}^\NS) \mid 0<r<p,\; 0<s<q,\; r-s \in 2\ZZ\}.
    \]
\end{theorem}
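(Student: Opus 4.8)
The plan is to read off the whole module category of $V:=L^\NS(c^\NS_{p,q},0)$ from its Zhu algebra $A(V)$. By the Kac--Wang adaptation of Zhu's theory to the superalgebra setting \cite{KW2}, isomorphism classes of irreducible lower-bounded (equivalently $\frac12\NN$-gradable) $V$-modules, up to parity reversal, are in bijection with irreducible $A(V)$-modules, and this is the mechanism I would use for the classification. First I would compute the Zhu algebra of the \emph{universal} superalgebra $\cS(c,0)$. The bottom space of any lower-bounded module is annihilated by $\NS^+$, and the zero-mode of the odd generator $\tau=G_{-3/2}\one$ is $o(\tau)=\tau_{1/2}=G_0\notin\NS$, hence acts as $0$; so the algebra acting on bottom spaces is generated by $o(\omega)=L_0$ alone. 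Concretely $A(\cS(c,0))\cong\CC[x]$ with $x=[\omega]$ acting by the lowest conformal weight $h$, so the universal superalgebra has exactly one irreducible module $L^\NS(c,h)$ (and its parity reversal) for each $h\in\CC$.

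Next I would pass to the simple quotient at $c=c^\NS_{p,q}$. The image in $A(\cS(c^\NS_{p,q},0))\cong\CC[x]$ of the maximal proper submodule (generated, by Corollary \ref{coro} and the embedding diagrams of Proposition \ref{vermadiag}, by singular vectors of the explicit shape in Theorem \ref{AsTh}) is an ideal, necessarily principal since $\CC[x]$ is a PID, so $A(V)\cong\CC[x]/(P(x))$ for a polynomial $P$. An irreducible $V$-module is then precisely an $L^\NS(c^\NS_{p,q},h)$ on whose bottom space the singular vectors act by $0$, i.e.\ for which $P(h)=0$. The computational heart is to identify the roots of $P$: using the Kac--Wakimoto reducibility criterion (Proposition \ref{Redu}) together with the symmetries of $h^\NS_{r,s}(p/q)$ reflected in Proposition \ref{vermadiag}, I would show that they are exactly the weights $h^\NS_{r,s}(p/q)$ with $0<r<p$, $0<s<q$, $r-s\in2\ZZ$, and that these are pairwise distinct. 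Distinctness of the roots makes $A(V)\cong\CC^{\,N}$ semisimple and yields the stated classification of irreducibles.

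For rationality I would first establish $C_2$-cofiniteness of $V$: the vanishing of $G_{-1/2}\one$ and of the singular vectors in $V$ lets one bound $\dim V/C_2(V)$, so $V$ is $C_2$-cofinite. In particular $V$ has only finitely many irreducibles, and every finitely generated module has finite length with $C_2$-cofinite composition factors. It then remains to prove complete reducibility of all $\frac12\NN$-gradable modules. I would deduce this from semisimplicity of the higher Zhu algebras $A_n(V)$, $n\geq0$ (which by the super-analogue of the Dong--Li--Mason criterion is equivalent to rationality), or equivalently by ruling out $L_0$-nonsemisimple (logarithmic) modules and nonsplit extensions among the $L^\NS(c^\NS_{p,q},h^\NS_{r,s})$; the bottom-level obstruction already vanishes because $A(V)$ is semisimple, and one then uses the possible lowest weights $h^\NS_{r,s}$ together with the Verma embedding structure to exclude the remaining extensions.

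The main obstacle is exactly this last step. The classification reduces to a single polynomial identity in $\CC[x]$ and is essentially mechanical once the relevant singular vectors are known, whereas rigorous complete reducibility of every admissible module is not formal: $C_2$-cofiniteness together with semisimplicity of $A(V)$ does not by itself preclude logarithmic modules (as the triplet algebras show). The real work lies in controlling the higher Zhu algebras $A_n(V)$, or equivalently all $\mathrm{Ext}^1$ groups between irreducibles, and it is here that the detailed structure of the $N=1$ minimal model, and ultimately the explicit singular vectors of $\cS(c^\NS_{p,q},0)$, must be used.
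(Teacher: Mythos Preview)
The paper does not give its own proof of this statement; it is quoted as a result of Adamovi\'{c} \cite{A1}, so there is no argument in the paper to compare against directly. That said, your outline is broadly faithful to how the result is actually proved in \cite{A1}. The classification half is exactly right: Kac--Wang show $A(\cS(c,0))\cong\CC[x]$ (this is \cite[Lemma~3.1]{KW2}, also quoted in the paper), and the image of the extra singular vector at $c=c^\NS_{p,q}$ cuts this down to $\CC[x]/(P(x))$ with $P$ having the claimed root set; Adamovi\'{c} computes this polynomial explicitly.

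You are also right that the rationality half is where the genuine work lies, and your self-diagnosis of the obstacle is accurate: semisimplicity of $A(V)$, even together with $C_2$-cofiniteness, does not by itself exclude logarithmic modules or nontrivial extensions. Adamovi\'{c}'s actual argument does not go through higher Zhu algebras $A_n(V)$ (these were introduced later); instead it follows the pattern of Wang's proof for the Virasoro minimal models, using the explicit singular vector and the structure of Verma modules to show directly that any $\frac{1}{2}\NN$-gradable module decomposes as a sum of highest-weight modules, each of which is then forced to be irreducible. So your strategy of ``controlling all $\mathrm{Ext}^1$ via the Verma embedding structure'' is closer to the original than the $A_n(V)$ route, and is the one that actually succeeds.

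One small correction: your justification that $o(\tau)=G_0\notin\NS$ is not quite the right reason that $A(\cS(c,0))$ is generated by $[\omega]$ alone. The point is rather that for a $\frac{1}{2}\ZZ$-graded vertex operator superalgebra with correct statistics, the Kac--Wang definition of $O(V)$ already contains all of $V^{\bar 1}$, so odd elements contribute nothing to $A(V)$ regardless of their zero modes.
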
 
 On the other hand, $\cS(c,0)$ (which is simple if and only if $c\neq c_{p,q}$) is never rational or $C_2$-cofinite (see \cite[Corollary 3.1]{KW2}). These non-rational vertex operator superalgebras will be the focus of this paper.

\subsection{Intertwining operators and tensor product modules}

To define the (fusion) tensor product of modules for a vertex operator superalgebra $V$, we need intertwining operators. In general, intertwining operators can be even or odd linear maps, or a sum of even and odd linear maps. But since we only consider even $V$-homomorphisms in this paper, we will also only consider even intertwining operators. The following is the natural superalgebra generalization of \cite[Definition 3.10]{HLZ2}:
\begin{defn}\label{def:intw_op}
Given (weak) $V$-modules $W_1, W_2$, and $W_3$, an \textit{even logarithmic intertwining operator} of type $\binom{W_3}{W_1 \, W_2}$ is an even linear map
\begin{align*}
\calY: W_1 \ot W_2 & \longrightarrow W_3[\log x]\{x\}\\
            w_1\ot w_2 & \longmapsto \calY(w_1, x)w_2=\sum_{h \in \CC} \sum_{k \in \ZZ_{\geq 0}} (w_1)^{\calY}_{h;k}w_2\,x^{-h-1}(\log x)^k 
\end{align*}
satisfying the following conditions:
\begin{enumerate}
    \item For any $w_1\in W_1, w_2\in W_2$ and $h\in \CC$, $(w_1)^{\calY}_{h+n;k}w_2=0$ for all sufficiently positive $n\in\ZZ$, independently of $k$.

    \item The Jacobi identity: For any parity homogeneous $v\in V$ and $w_1\in W_1$ 
\begin{align*}
x_0^{-1}\delta\bigg(\frac{x_1-x_2}{x_0}\bigg)Y_{W_3}(v,x_1)\calY (w_1, x_2) & -(-1)^{|v||w_1|}x_0^{-1}\delta\bigg(\frac{-x_2+x_1}{x_0}\bigg)\calY (w_1, x_2)Y_{W_2}(v,x_1)\\
& = x_2^{-1}\delta\bigg(\frac{x_1-x_0}{x_2}\bigg)\calY (Y_{W_1}(v,x_0)w_1, x_2).
\end{align*}
\item The $L_{-1}$-derivative property: $\calY(L_{-1}w_1,x) = \frac{d}{dx} \calY(w_1,x)$ for $w_1\in W_1$.
\end{enumerate}
\end{defn}

Since we only consider even intertwining operators, and since we always allow logarithmic terms in the formal series expansion of an intertwining operator, from now on, we will abbreviate ``even logarithmic intertwining operator'' as ``intertwining operator.'' 
An intertwining operator $\mathcal{Y}$ of type $\binom{W_3}{W_1\,W_2}$ is \textit{surjective} if $W_3$ is spanned by elements of the form $(w_1)^\calY_{h;k} w_2$ for $w_1\in W_1$, $w_2\in W_2$, $h\in\CC$, and $k\in\ZZ_{\geq 0}$. More generally, we call the submodule of $W_3$ spanned by such elements the image of $\cY$, denoted $\mathrm{Im}\,\cY$.

Extracting the coefficient of $x_0^{-1}x_1^{-n-1}$ in the intertwining operator Jacobi identity yields the (anti)-commutator formula
\begin{align}
v_n\cY(w_1,x) = (-1)^{|v||w_1|}\cY(w_1,x)v_n + \sum_{i\geq 0}\binom{n}{i}x^{n-i}\cY(v_iw_1,x),
\end{align}
for parity-homogeneous $v\in V$, $w_1\in W_1$. When $v = \omega$, this becomes 
\begin{align}\label{eqn:comm1}
L_n\cY(w_1,x) = \cY(w_1,x)L_n + \sum_{i\geq 0}\binom{n+1}{i}x^{n+1-i}\cY(L_{i-1}w_1,x),
\end{align}
and when $V$ is the Neveu-Schwarz vertex operator superalgebra $\cS(c,0)$, the case $v=\tau=G_{-\frac{3}{2}}\vac_{c,0}$ yields
\begin{equation}\label{eqn:comm2}
G_{n-\frac{1}{2}}\cY(w_1,x) = (-1)^{|w_1|}\cY(w_1,x)G_{n-\frac{1}{2}} + \sum_{i\geq 0}\binom{n}{i}x^{n-i}\cY(G_{i-\frac{1}{2}}w_1,x).
\end{equation}
Similarly, the coefficient of $x_0^{-n-1} x_1^{-1}$ in the Jacobi identity yields the associator formula
\begin{align*}
\cY(v_nw_1,x) = \sum_{i\geq 0}(-1)^i\binom{n}{i}(v_{n-i}x^i\cY(w_1,x)-(-1)^{|v||w_1|+n}x^{n-i}\cY(w_1,x)v_i).    
\end{align*}
When $v =\omega$, this becomes
\begin{align}\label{eqn:asso1}
\cY(L_nw_1,x) = \sum_{i\geq 0}(-1)^i\binom{n+1}{i}(L_{n-i}x^i\cY(w_1,x)+(-1)^{n}x^{n+1-i}\cY(w_1,x)L_{i-1}),    
\end{align}
while the case $v = \tau$ gives
\begin{equation}\label{eqn:asso2}
   \cY(G_{n-\frac{1}{2}}w_1,x) = \sum_{i\geq 0}(-1)^i\binom{n}{i}(G_{n-i-\frac{1}{2}}x^i\cY(w_1,x)-(-1)^{|w_1|+n}x^{n-i}\cY(w_1,x)G_{i-\frac{1}{2}}) 
\end{equation}
for $\cS(c,0)$-modules.

We now define the tensor product of $V$-modules, following \cite[Section 4.1]{HLZ3}. Let $\cC$ be an appropriate subcategory of $V$-modules. 
For objects $W_1$ and $W_2$ of $\cC$, their tensor product is, if it exists, an object $W_1\boxtimes W_2$ of $\cC$ equipped with a distinguished intertwining operator $\calY_{W_1,W_2}$ of type $\binom{W_1\boxtimes W_2}{W_1\,W_2}$. The pair $(W_1\boxtimes W_2,\calY_{W_1,W_2})$ is characterized by a universal property: for any object $W_3$ of $\cC$ and intertwining operator $\cY$ of type $\binom{W_3}{W_1\,W_2}$, there is a unique (even) $V$-module homomorphism $f: W_1\boxtimes W_2\rightarrow W_3$ such that $f\circ\calY_{W_1,W_2}=\calY$. 
We call $\calY_{W_1,W_2}$ the tensor product intertwining operator of type $\binom{W_1\boxtimes W_2}{W_1\,W_2}$, and it is surjective (see \cite[Proposition 4.23]{HLZ3}).

\begin{remark}
    If we considered $\cC$ as a supercategory, that is, if we allowed odd $V$-module homomorphisms in $\cC$, then we would also allow odd intertwining operators. But if $\cY$ is an odd intertwining operator of type $\binom{W_3}{W_1\,W_2}$, then, similar to Remark \ref{rem:odd-to-even-morphism}, $P_{W_3}\circ\cY$ is an even intertwining operator of type $\binom{\Pi(W_3)}{W_1\,W_2}$. Thus if the tensor product $(W_1\boxtimes W_2,\cY_{W_1,W_2})$ exists in $\cC$, then it also satisfies the correct universal property for odd intertwining operators. In particular, there is a unique even $V$-module homomorphism $f: W_1\boxtimes W_2\rightarrow \Pi(W_3)$ such that $f\circ\cY_{W_1,W_2}=P_{W_3}\circ\cY$, and then $P_{W_3}\circ f: W_1\boxtimes W_2\rightarrow W_3$ is the unique odd $V$-module homomorphism such that $(P_{W_3}\circ f)\circ\cY_{W_1,W_2}=\cY$. Thus the existence and isomorphism class of the tensor product $W_1\boxtimes W_2$ does not depend on whether we consider $\Pi$-supercategories of $V$-modules or their underlying $\Pi$-categories (using terminology of \cite{BE}).
\end{remark}

Now suppose $\cC$ is closed under tensor products and the parity-reversal functor $\Pi$. For objects $W_1$ and $W_2$ of $\cC$ and $n_1,n_2\in\ZZ$, note that 
\begin{equation}\label{eqn:Pi_ten_prod_reln}
\Pi^{n_1}(W_1)\otimes\Pi^{n_2}(W_2)=\Pi^{n_1+n_2}(W_1\otimes W_2)
\end{equation}
as vector superspaces because for $j=0,1$,
\begin{equation*}
    \left(\Pi^{n_1}(W_1)\otimes\Pi^{n_2}(W_2)\right)^{\bar{j}} =\bigoplus_{i_1+i_2\in j-n_1-n_2+2\ZZ} W_1^{\overline{i_1}}\otimes W_2^{\overline{i_2}} = \Pi^{n_1+n_2}(W_1\otimes W_2)^{\bar{j}}.
\end{equation*}
We show that the same relation holds for $V$-module tensor products:
\begin{prop}\label{prop:Pi-and-fusion}
    Let $V$ be a vertex operator superalgebra and $\cC$ a category of $V$-modules that is closed under $\boxtimes$ and $\Pi$. Then for objects $W_1$, $W_2$ of $\cC$ and for $n_1,n_2\in\ZZ$, there is a unique $V$-module isomorphism
    \begin{equation*}
        F: \Pi^{n_1}(W_1)\boxtimes\Pi^{n_2}(W_2)\longrightarrow\Pi^{n_1+n_2}(W_1\boxtimes W_2)
    \end{equation*}
    such that $F\circ\calY_{\Pi^{n_1}(W_1),\Pi^{n_2}(W_2)} =\cY_{W_1,W_2}\circ(\id_{W_1}\otimes P_{W_2}^{n_1})$, where $P_{W_2}$ is the parity involution of the vector superspace $W_2$.
\end{prop}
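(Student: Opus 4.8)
The plan is to build $F$ from the universal property of the tensor product $\Pi^{n_1}(W_1)\boxtimes\Pi^{n_2}(W_2)$. The key observation, which I will isolate as a general fact, is that sign-twisting an intertwining operator by a power of the parity involution in its second variable converts it into an intertwining operator between parity-reversed modules. Concretely: \emph{for any even intertwining operator $\cY$ of type $\binom{X_3}{X_1\,X_2}$ and any $m_1,m_2\in\ZZ$, the composite $\cY\circ(\id_{X_1}\otimes P_{X_2}^{m_1})$ is an even intertwining operator of type $\binom{\Pi^{m_1+m_2}(X_3)}{\Pi^{m_1}(X_1)\,\Pi^{m_2}(X_2)}$.} Granting this, I apply it with $\cY=\cY_{W_1,W_2}$, $X_i=W_i$, and $m_i=n_i$ to conclude that $\cY':=\cY_{W_1,W_2}\circ(\id_{W_1}\otimes P_{W_2}^{n_1})$ is an even intertwining operator of type $\binom{\Pi^{n_1+n_2}(W_1\boxtimes W_2)}{\Pi^{n_1}(W_1)\,\Pi^{n_2}(W_2)}$. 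The universal property then yields a unique even $V$-module homomorphism $F\colon\Pi^{n_1}(W_1)\boxtimes\Pi^{n_2}(W_2)\to\Pi^{n_1+n_2}(W_1\boxtimes W_2)$ with $F\circ\cY_{\Pi^{n_1}(W_1),\Pi^{n_2}(W_2)}=\cY'$, which is exactly the asserted relation, and its uniqueness is the uniqueness clause of the universal property.

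The verification of the general fact is where the real content lies, and the sign bookkeeping in the Jacobi identity is the main obstacle. The lower-truncation condition and the $L_{-1}$-derivative property are immediate, since passing to parity reversals changes neither the underlying spaces, the conformal-weight grading, nor the actions of $L_{-1}$ and $L_0$, and $P_{X_2}$ is even. For evenness one checks that on parity-homogeneous inputs the output $(-1)^{m_1|w_2|}\cY(w_1,x)w_2$ lands in conformal-weight-homogeneous pieces of parity $|w_1|+|w_2|+m_1+m_2$ in $\Pi^{m_1+m_2}(X_3)$, which matches the sum $(|w_1|+m_1)+(|w_2|+m_2)$ of the input parities in $\Pi^{m_1}(X_1)$ and $\Pi^{m_2}(X_2)$. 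The crux is the Jacobi identity: reversing the first factor by $m_1$ replaces the sign $(-1)^{|v||w_1|}$ appearing in Definition \ref{def:intw_op}(2) by $(-1)^{|v|(|w_1|+m_1)}$, while inserting $P_{X_2}^{m_1}$ multiplies each term by $(-1)^{m_1|w_2|}$ but also introduces an extra factor $(-1)^{m_1|v|}$ in the cross term, because $Y_{X_2}(v,x_1)w_2$ has parity $|v|+|w_2|$. A direct comparison shows that every term of the target Jacobi identity equals $(-1)^{m_1|w_2|}$ times the corresponding term of the original Jacobi identity for $\cY$, the discrepancy $(-1)^{m_1|v|}$ in the middle term being exactly cancelled by the shift $(-1)^{|v|\,m_1}$ coming from the reversal of the first variable; this is precisely why the power of $P_{X_2}$ must equal the reversal $m_1$ of the first factor.

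Finally, to see that $F$ is an isomorphism I construct a two-sided inverse by running the same construction in reverse. Applying the general fact to $\cY_{\Pi^{n_1}(W_1),\Pi^{n_2}(W_2)}$ with $X_1=\Pi^{n_1}(W_1)$, $X_2=\Pi^{n_2}(W_2)$ and $m_i=n_i$, and using $\Pi^2=\id$, produces an even intertwining operator of type $\binom{\Pi^{n_1+n_2}(\Pi^{n_1}(W_1)\boxtimes\Pi^{n_2}(W_2))}{W_1\,W_2}$. The universal property of $W_1\boxtimes W_2$ gives an even homomorphism $\widetilde G$, and applying $\Pi^{n_1+n_2}$ (again using $\Pi^2=\id$) yields $G\colon\Pi^{n_1+n_2}(W_1\boxtimes W_2)\to\Pi^{n_1}(W_1)\boxtimes\Pi^{n_2}(W_2)$. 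Since the tensor-product intertwining operators $\cY_{W_1,W_2}$ and $\cY_{\Pi^{n_1}(W_1),\Pi^{n_2}(W_2)}$ are surjective (their coefficients span the respective tensor products), it suffices to check $F\circ G=\id$ and $G\circ F=\id$ on these spanning coefficients; on such elements each composite multiplies by the sign $(-1)^{n_1|w_2|}$ twice, and $(-1)^{2n_1|w_2|}=1$, so both composites are the identity. Equivalently, one computes $(F\circ G)\circ\cY_{W_1,W_2}=\cY_{W_1,W_2}$ and $(G\circ F)\circ\cY_{\Pi^{n_1}(W_1),\Pi^{n_2}(W_2)}=\cY_{\Pi^{n_1}(W_1),\Pi^{n_2}(W_2)}$ and invokes uniqueness in the universal property. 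Hence $F$ is the desired $V$-module isomorphism.
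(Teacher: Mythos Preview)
Your approach matches the paper's: both verify that $\cY_{W_1,W_2}\circ(\id\otimes P_{W_2}^{n_1})$ is an even intertwining operator of the correct type via the Jacobi-identity sign bookkeeping, apply the universal property to obtain $F$, and then construct a candidate inverse $G$ by running the same recipe in reverse.

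There is, however, a sign slip in your final step. You assert that each composite multiplies by $(-1)^{n_1|w_2|}$ twice and hence equals the identity, but the second application of your general fact has $X_2=\Pi^{n_2}(W_2)$, so the parity involution entering there is $P_{\Pi^{n_2}(W_2)}$, not $P_{W_2}$. Since $P_{\Pi^{n_2}(W_2)}=(-1)^{n_2}P_{W_2}$, one gets $P_{W_2}^{n_1}\,P_{\Pi^{n_2}(W_2)}^{n_1}=(-1)^{n_1n_2}\id_{W_2}$, and hence $F\circ G=G\circ F=(-1)^{n_1n_2}\id$ rather than $\id$. The paper makes exactly this observation (recording $P_{\Pi(W_2)}=-P_{W_2}$) and concludes that $F^{-1}=(-1)^{n_1n_2}G$. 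Your conclusion that $F$ is an isomorphism is of course unaffected; only the identification of the inverse needs the extra sign.
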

\begin{proof}
    Set $\widetilde{\calY}=\cY_{W_1,W_2}\circ(\id_{W_1}\otimes P_{W_2}^{n_1})$; by \eqref{eqn:Pi_ten_prod_reln}, we can view $\widetilde{\calY}$ as an even linear map
    \begin{equation*}
        \Pi^{n_1}(W_1)\otimes\Pi^{n_2}(W_2)\longrightarrow\Pi^{n_1+n_2}(W_1\boxtimes W_2)[\log x]\lbrace x\rbrace.
    \end{equation*}
    We claim that $\widetilde{\cY}$ is an intertwining operator of type $\binom{\Pi^{n_1+n_2}(W_1\boxtimes W_2)}{\Pi^{n_1}(W_1)\,\Pi^{n_2}(W_2)}$. Indeed, condition (1) of Definition \ref{def:intw_op} is immediate and the $L_{-1}$-derivative property (3) follows because $Y_{\Pi^{n_1}(W_1)}(\omega,x)=Y_{W_1}(\omega,x)$. Then the Jacobi identity for $\cY_{W_1,W_2}$ composed with $P_{W_2}^{n_1}$ yields an equality involving the three terms
    \begin{align*}
        Y_{W_1\boxtimes W_2}(v,x_1)\cY_{W_1,W_2}(w_1,x_2)P_{W_2}^{n_1} & = Y_{\Pi^{n_1+n_2}(W_1\boxtimes W_2)}(v,x_1)\widetilde{\cY}(w_1,x_2),\\
        (-1)^{\vert v\vert\vert w_1\vert}\cY_{W_1,W_2}(w_1,x_2)Y_{W_2}(v,x_1)P_{W_2}^{n_1} & =(-1)^{\vert v\vert(\vert w_1\vert+n_1)}\widetilde{\cY}(w_1,x_2)Y_{\Pi^{n_2}(W_2)}(v,x_1),\\
       \cY_{W_1,W_2}(Y_{W_1}(v,x_0)w_1,x_2)P_{W_2}^{n_1} &=\widetilde{\cY}(Y_{\Pi^{n_1}(W_1)}(v,x_0)w_1,x_2).
    \end{align*}
    This yields the Jacobi identity for $\widetilde{\cY}$ because the parity of $w_1$ changes to $\vert w_1\vert +n_1$ (mod $2$) in $\Pi^{n_1}(W_1)$. Thus $\widetilde{\cY}$ is an intertwining operator, and the existence and uniqueness of the $V$-module homomorphism $F$ follows from the universal property of tensor products.

    To show that $F$ is an isomorphism, we can replace $W_1$, $W_2$ with $\Pi^{n_1}(W_1)$, $\Pi^{n_2}(W_2)$ in the above argument to get a unique $V$-module homomorphism 
    \begin{equation*}
        G: W_1\boxtimes W_2\longrightarrow\Pi^{n_1+n_2}\left(\Pi^{n_1}(W_1)\boxtimes\Pi^{n_2}(W_2)\right)
    \end{equation*}
    such that $G\circ\cY_{W_1,W_2} =\cY_{\Pi^{n_1}(W_1),\Pi^{n_2}(W_2)}\circ(\id_{\Pi^{n_1}(W_1)}\otimes P_{\Pi^{n_2}(W_2)}^{n_1})$. We can equally well view $G$ as a $V$-module homomorphism from $\Pi^{n_1+n_2}(W_1\boxtimes W_2)$ to $\Pi^{n_1}(W_1)\boxtimes\Pi^{n_2}(W_2)$, and then the definitions yield
    \begin{align*}
        F\circ G\circ\cY_{W_1,W_2} &=F\circ \cY_{\Pi^{n_1}(W_1),\Pi^{n_2}(W_2)}\circ(\id_{\Pi^{n_1}(W_1)}\otimes P_{\Pi^{n_2}(W_2)}^{n_1})\nonumber\\
        &= \cY_{W_1,W_2}\circ(\id_{W_1}\otimes P_{W_2}^{n_1}P_{\Pi^{n_2}(W_2)}^{n_1})\nonumber\\
        &=(-1)^{n_1n_2}\cY_{W_1,W_2}
    \end{align*}
since $P_{\Pi(W_2)}=-P_{W_2}$, and similarly
    \begin{equation*}
        G\circ F\circ\cY_{\Pi^{n_1}(W_1),\Pi^{n_2}(W_2)} =(-1)^{n_1n_2}\cY_{\Pi^{n_1}(W_1),\Pi^{n_2}(W_2)}.
    \end{equation*}
    Because $\cY_{W_1,W_2}$ and $\cY_{\Pi^{n_1}(W_1),\Pi^{n_2}(W_2)}$ are surjective, it follows that $F$ is an isomorphism with inverse $(-1)^{n_1n_2}G$.
    
\end{proof}

\subsection{Vertex tensor categories and \texorpdfstring{$C_1$}{C1}-cofinite modules}

Let $V$ be a vertex operator superalgebra and let $\cC$ be a category of $V$-modules which contains $V$ and is closed under $\boxtimes$ and $\Pi$. After adding sign factors appropriately, the work for vertex operator algebras in \cite{HLZ1}-\cite{HLZ8} shows that under suitable conditions, $\boxtimes$ gives $\cC$ the structure of a braided tensor category (with a parity-reversal functor). See also \cite[Section 3.3]{CKM-exts} for an explicit description of the braided tensor category structure in the superalgebra setting.

For $\cC$ to be a braided tensor category, we need a tensor product of $V$-module homomorphisms, left and right unit isomorphisms, braiding isomorphisms, and associativity isomorphisms. The first three are automatic given that $\cC$ is closed under $\boxtimes$:
\begin{itemize}
    \item For (even) $V$-module homomorphisms $f_1: W_1\rightarrow X_1$ and $f_2: W_2\rightarrow X_2$ in $\cC$,
    \begin{equation*}
        f_1\boxtimes f_2: W_1\boxtimes W_2\longrightarrow X_1\boxtimes X_2
    \end{equation*}
    is the unique $V$-module homomorphism (guaranteed by the universal property of $(W_1\boxtimes W_2,\cY_{W_1,W_2})$ such that
    \begin{equation*}
        (f_1\boxtimes f_2)\circ\cY_{W_1,W_2} =\cY_{X_1,X_2}\circ(f_1\otimes f_2).
    \end{equation*}
    Note that $f_1\boxtimes f_2$ is even because $f_1$ and $f_2$ are even, and because $\cY_{W_1,W_2}$ and $\cY_{X_1,X_2}$ are even and surjective.

    \item For an object $W$ in $\cC$, the left and right unit isomorphisms $l_W: V\boxtimes W\rightarrow W$ and $r_W: W\boxtimes V\rightarrow W$ are the unique $V$-module homomorphisms such that
    \begin{align}
        l_W(\cY_{V,W}(v,x)w) & = Y_W(v,x)w \label{eqn:left_unit}\\
        r_W(\cY_{W,V}(w,x)v) & = (-1)^{\vert v\vert\vert w\vert}e^{xL_{-1}} Y_W(v,-x)w\label{eqn:right_unit}
    \end{align}
    for parity-homogeneous $v\in V$, $w\in W$.

    \item For objects $W_1$, $W_2$ in $\cC$, the braiding isomorphism $\cR_{W_1,W_2}: W_1\boxtimes W_2\rightarrow W_2\boxtimes W_1$ is the unique $V$-module homomorphism such that
    \begin{equation}\label{eqn:braiding}
\cR_{W_1,W_2}(\cY_{W_1,W_2}(w_1,x)w_2) = (-1)^{|w_1||w_2|}e^{xL_{-1}}\cY_{W_2,W_1}(w_2, e^{\pi i} x)w_1
    \end{equation}
    for parity-homogeneous $w_1\in W_1$, $w_2\in W_2$. Here, $e^{\pi i}x$ means that we substitute $x^h\mapsto e^{\pi i h}x^h$ and $\log x\mapsto \log x+\pi i$ in the intertwining operator $\cY_{W_2,W_1}$.
    
\end{itemize}
To get associativity isomorphisms, we need to impose further conditions on $\cC$.

To describe the associativity isomorphisms in $\cC$, if they exist, we introduce some notation. If $W=\bigoplus_{h\in\CC} W_{[h]}$ is a graded vector space, then its \textit{algebraic completion} is $\overline{W}=\prod_{h\in\CC} W_{[h]}$, and if $f: W\rightarrow X$ is a grading-preserving linear map between graded vector spaces, then there is a natural extension $\overline{f}:\overline{W}\rightarrow\overline{X}$ between algebraic completions.
Now if $\cY$ is a $V$-module intertwining operator of type $\binom{W_3}{W_1\,W_2}$, then we can replace the formal variable $x$ in $\cY$ with a complex number $z\in\CC^\times$ to obtain a \textit{$P(z)$-intertwining map} $W_1\otimes W_2\rightarrow\overline{W}_3$, in the terminology of \cite{HLZ3}. In particular, for $r\in\RR_+$, $\cY(\cdot,e^{\ln r})$ denotes the $P(r)$-intertwining map obtained by substituting $x^h\mapsto r^h$ and $\log x\mapsto\ln r$ in $\cY$.

Now following \cite[Section 12.2]{HLZ8} and \cite[Section 3.3.5]{CKM-exts}, the associativity isomorphisms, if they exist, can be described as follows. For $V$-modules $W_1$, $W_2$, and $W_3$ in $\cC$, the associativity isomorphism
\[
\cA_{W_1, W_2, W_3}: W_1\boxtimes (W_2 \boxtimes W_3) \rightarrow (W_1 \boxtimes W_2) \boxtimes W_3
\]
is characterized by the equality
\begin{align}\label{eqn:assoc}
   \langle w', \overline{\cA_{W_1, W_2, W_3}} & (\cY_{W_1, W_2\boxtimes W_3}(w_1, e^{\ln\; r_1})\cY_{W_2, W_3}(w_2, e^{\ln\; r_2})w_3)\rangle \nonumber\\
& = \langle w', \cY_{W_1\boxtimes W_2, W_3}(\cY_{W_1, W_2}(w_1, e^{\ln(r_1-r_2)})w_2, e^{\ln\; r_2})w_3\rangle
\end{align}
for $w_1 \in W_1$, $w_2 \in W_2$, $w_3 \in W_3$, $w' \in ((W_1 \boxtimes W_2) \boxtimes W_3)'$, and $r_1, r_2 \in \RR$ such that $r_1 > r_2 > r_1-r_2 >0$. By \cite[Proposition 3.32]{CKM-exts}, $\cA_{W_1,W_2,W_3}$ does not depend on the choice of $r_1$ and $r_2$.

For $\cA_{W_1,W_2,W_3}$ to exist, the product of two intertwining operators, as on the left side of \eqref{eqn:assoc} must converge absolutely to an element of $\overline{W_1\boxtimes(W_2\boxtimes W_3)}$, and the left side of \eqref{eqn:assoc} should analytically extend to a suitable multivalued analytic function on the region $\vert z_2\vert>\vert z_1-z_2\vert >0$. Similarly, the iterate of two intertwining operators, as on the right side of \eqref{eqn:assoc}, should analytically extend to a suitable function on the region $\vert z_1\vert >\vert z_2\vert >0$.
 This is called convergence and extension property for products and iterates of intertwining operators; see \cite[Section 11.1]{HLZ7} for the precise statement. 
 
 Now, \cite[Theorem 10.3]{HLZ6}, the proof of \cite[Theorem 11.4]{HLZ7}, and \cite[Theorem 3.1]{H1} show that in addition to the convergence and extension property,
 one more condition is sufficient for existence of the associativity isomorphisms in $\cC$. Namely, for objects $W_1$, $W_2$ of $\cC$ and $z \in \CC^{\times}$, \cite[Theorem 5.4]{HLZ4} shows that there is a weak $V$-module $\mathrm{COMP}_{P(z)}((W_1\otimes W_2)^*)$ which is realized as the subspace of linear functionals in $(W_1\otimes W_2)^*$ that satisfy the \textit{$P(z)$-compatibility condition} of \cite{HLZ4}. Then the additional sufficient condition is that any singly-generated lower-bounded generalized $V$-module $W\subset \mathrm{COMP}_{P(z)}((W_1\otimes W_2)^*)$ should be an object of $\cC$. That is, the associativity isomorphisms in $\cC$ exist if this condition and the convergence and extension property hold. Further, by \cite[Theorem 12.15]{HLZ8}, the triangle, pentagon, and hexagon axioms of a braided tensor category hold in $\cC$ if a suitable convergence condition for the product of three intertwining operators also holds; see \cite[Assumption 12.2]{HLZ8} for the precise statement.

 To summarize, the category $\cC$ of $V$-modules is a vertex tensor category in the sense of \cite{HLZ8}, and also a braided tensor category, if it is abelian, contains $V$, and is closed under $\boxtimes$ and $\Pi$, and if the convergence and extension condition for products and iterates of two intertwining operators holds, along with the convergence condition for products of three intertwining operators and the additional condition on lower-bounded submodules of $\mathrm{COMP}_{P(z)}((W_1\otimes W_2)^*)$ described above. A natural candidate for such a category $\cC$ is the category of $C_1$-cofinite $V$-modules:
 \begin{defn} \label{cc1}
If $W$ is a weak module of a vertex operator superalgebra $V=\bigoplus_{n\in \frac{1}{2}\mathbb{Z}}V_{(n)}$, define
\begin{equation*}
    C_1(W) = \mathrm{span}\{v_{-1}w\mid v \in V_+,\,w\in W\},
\end{equation*}
where $V_+ = \bigoplus_{n\in \frac{1}{2}\mathbb{Z}_{>0}}V_{(n)}$.
Then $W$ is \textit{$C_1$-cofinite} if $\dim W/C_1(W) < \infty$.
\end{defn}

The definition of $C_1$-cofinite modules essentially goes back to Nahm \cite{Na}, who called such modules ``quasi-rational.'' Let $\cC_1(V)$ denote the category of $C_1$-cofinite grading-restricted generalized $V$-modules; as in \cite[Proposition 2.2]{CMY-completions}, this is identical to the category of $C_1$-cofinite $\frac{1}{2}\NN$-gradable weak $V$-modules. The following theorem was essentially proved by Nahm \cite{Na} in physics language and was proved in the setting of vertex operator algebras by Miyamoto \cite{Mi}; the proof also holds for vertex operator superalgebras as long as suitable sign factors are added whenever the Jacobi identity is used:

\begin{theorem}[\cite{Mi} Key Theorem]
If $W_1$, $W_2$ are objects of $\cC_1(V)$ for a vertex operator superalgebra $V$, and if $\cY$ is a surjective intertwining operator of type $\binom{X}{W_1
\, W_2}$ for some $\frac{1}{2}\NN$-gradable weak $V$-module $X$, then $X$ is $C_1$-cofinite. Specifically,
\begin{equation*}
    \dim (X/C_1(X)) \leq \dim(W_1/C_1(W_1))\dim(W_2/C_1(W_2)).
\end{equation*}

\end{theorem}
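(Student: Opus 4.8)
The plan is to follow Miyamoto's original argument, inserting the sign factors dictated by the super Jacobi identity, and then to check that these signs do not affect any of the subspace-membership or dimension-counting steps. First I would fix homogeneous vectors $u_1,\dots,u_{d_1}\in W_1$ and $w_1,\dots,w_{d_2}\in W_2$ whose images form bases of $W_1/C_1(W_1)$ and $W_2/C_1(W_2)$, where $d_i=\dim W_i/C_1(W_i)$. A preliminary observation is that $v_{-m}X\subseteq C_1(X)$ for every $v\in V_+$ and every $m\ge1$: the $L_{-1}$-derivative property in $V$ gives $(L_{-1}v)_n=-n\,v_{n-1}$, so $v_{-m}$ is a scalar multiple of $(L_{-1}^{\,m-1}v)_{-1}$ and $L_{-1}^{\,m-1}v\in V_+$. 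Consequently $C_1(X)$ is the graded subspace spanned by all $v_{-m}X$ with $v\in V_+$ homogeneous and $m\ge1$. Since $\cY$ is surjective, $X$ is spanned by the coefficients $(w_1)^{\cY}_{h;k}w_2$.

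Next I would reduce the two arguments of $\cY$ to the fixed representatives. Writing a general first argument as $w_1=\sum_a\lambda_a u_a+v_{-1}w_1'$ with $v\in V_+$, the associator formula (the $n=-1$ case of the identity for $\cY(v_nw_1,x)$) expresses $\cY(v_{-1}w_1',x)w_2$ as a sum of terms $v_{-1-i}x^{i}\cY(w_1',x)w_2$, which lie in $C_1(X)$ by the preliminary observation, plus terms $\pm x^{-1-i}\cY(w_1',x)v_iw_2$ in which the first argument $w_1'$ has strictly smaller conformal weight than $w_1$. Because $W_1$ is lower bounded, inducting on the conformal weight of the first argument terminates and shows that $X/C_1(X)$ is spanned by the images of coefficients $(u_a)^{\cY}_{h;k}\widetilde w$ with $\widetilde w\in W_2$ arbitrary. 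A symmetric reduction of the second argument, now using the (anti)-commutator formula to push a factor $v_{-1}$ ($v\in V_+$) from the second slot onto $X$ (landing in $C_1(X)$) at the cost of replacing the first argument $u_a$ by $v_iu_a$, lowers the conformal weight of the second argument. Interleaving the two reductions then shows that $X/C_1(X)$ is spanned by coefficients $(u_a)^{\cY}_{h;k}w_b$ with both arguments among the representatives. The sign factors $(-1)^{|v||w_1|}$ produced by the super Jacobi identity are harmless here, since membership in the subspace $C_1(X)$ and the bookkeeping of conformal weights are both insensitive to the overall sign of a term.

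The crux, and the step I expect to be the main obstacle, is to pass from ``spanned by representative coefficients $(u_a)^{\cY}_{h;k}w_b$'' (still infinitely many, as $h$ ranges over values of arbitrarily large weight in $X$) to the sharp bound $d_1d_2$. The cleanest way I would organize this is through Li's filtration: the reductions above are precisely the statement that the associated graded space $\mathrm{gr}^0(X)=X/C_1(X)$ is, as a module over the commutative algebra $V/C_1(V)$, a quotient of $\big(W_1/C_1(W_1)\big)\otimes\big(W_2/C_1(W_2)\big)$, via a surjection sending $\overline{u_a}\otimes\overline{w_b}$ to a canonical (e.g.\ leading) coefficient of $\cY(u_a,x)w_b$. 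Proving this map is well defined --- independent of the lifts $u_a,w_b$ and of the chosen canonical coefficient --- requires showing that all coefficients attached to a single pair collapse to one another modulo $C_1(X)$, and this is where the combinatorics of the associator and commutator relations is genuinely needed and where one must verify that the induction on conformal weight terminates despite the two reductions pulling the two arguments' weights in opposite directions. Once the surjection is in hand, $\dim(X/C_1(X))\le d_1d_2$ is immediate. Throughout, the only modification relative to the non-super setting is the insertion of the signs coming from \eqref{eqn:comm2} and \eqref{eqn:asso2}, which, as noted, do not alter any subspace-membership or weight-counting argument; hence Miyamoto's proof applies verbatim up to these signs.
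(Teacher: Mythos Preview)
The paper does not supply a proof: it simply cites \cite{Mi} and states that Miyamoto's argument carries over to vertex operator superalgebras once the sign factors from the super Jacobi identity are inserted. Your proposal takes the same line but goes further by actually sketching the argument. The shape of your sketch --- the associator reduction on the first argument, the commutator reduction on the second, and the interleaved induction on the combined conformal weight $\mathrm{wt}\,w_1+\mathrm{wt}\,w_2$ so that the process terminates despite each reduction raising the weight of the other argument --- is faithful to Miyamoto's proof, and you are right that the parity signs $(-1)^{|v||w_1|}$ only change overall signs of individual terms and therefore never disturb membership in $C_1(X)$ or the weight bookkeeping.

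Two small remarks. First, the formulas \eqref{eqn:comm2} and \eqref{eqn:asso2} you cite at the end are the $v=\tau$ instances peculiar to $\cS(c,0)$; for a general vertex operator superalgebra you want the general (anti)-commutator and associator identities derived from the super Jacobi identity, namely those displayed immediately before them in the paper, which carry the sign $(-1)^{|v||w_1|}$. Second, your identification of the passage from ``spanned by all coefficients $(u_a)^{\cY}_{h;k}w_b$'' to the sharp bound $d_1d_2$ as the crux is accurate, and your associated-graded reformulation via Li's filtration is a legitimate way to package that step; Miyamoto does not use that vocabulary, but the content --- that the same combined-weight induction shows the assignment $[u_a]\otimes[w_b]\mapsto$ (class of a designated coefficient) is well defined and surjective onto $X/C_1(X)$ --- is the same.
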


We can strengthen this result by removing the $\frac{1}{2}\NN$-gradability assumption on $X$:
\begin{prop}[\cite{CMY-completions} Lemma 2.13, Corollary 2.14]\label{prop:Miy-Key-Thm}
If $W_1$, $W_2$ are objects of $\cC_1(V)$ and $\mathcal{Y}$ is a surjective intertwining operator of type $\binom{X}{W_1\, W_2}$ for some generalized $V$-module $X$, then $X$ is an object of $\cC_1(V)$.
\end{prop}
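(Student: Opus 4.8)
The plan is to reduce the statement to Miyamoto's Key Theorem stated above, which already yields the conclusion when $X$ is $\frac{1}{2}\NN$-gradable. Three ingredients are available: that Key Theorem; Remark \ref{rem:half-N-grading}, which makes every lower-bounded generalized module $\frac{1}{2}\NN$-gradable; and the identification of $\cC_1(V)$ with the category of $C_1$-cofinite $\frac{1}{2}\NN$-gradable weak modules (\cite[Proposition 2.2]{CMY-completions}). Granting these, it suffices to prove two things about the generalized module $X$: that it is lower bounded, and that it is $C_1$-cofinite. The first feeds into Remark \ref{rem:half-N-grading} and hence lets us apply the Key Theorem, and the two together exhibit $X$ as a $C_1$-cofinite $\frac{1}{2}\NN$-gradable weak module, i.e.\ an object of $\cC_1(V)$.

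The heart of the argument, and the step I expect to be the main obstacle, is lower-boundedness, since $X$ is a priori only a generalized module with no control over negative weights. By surjectivity, $X$ is spanned by the vectors $(w_1)^\cY_{h;k}w_2$, and for homogeneous $w_1\in(W_1)_{[h_1]}$ and $w_2\in(W_2)_{[h_2]}$ such a vector is a generalized $L_0$-eigenvector of eigenvalue $h_1+h_2-h-1$. Condition (1) of Definition \ref{def:intw_op} bounds $\mathrm{Re}(h)$ from above, hence bounds $h_1+h_2-h-1$ from below, but only for each fixed pair $(w_1,w_2)$; what I really need is a bound that is uniform over all of $W_1$ and $W_2$. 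I would obtain it from the usual grading behavior of intertwining operators: repeatedly applying the associator formulas \eqref{eqn:asso1}, \eqref{eqn:asso2} to strip creation modes off $w_1$ and the commutator formulas \eqref{eqn:comm1}, \eqref{eqn:comm2} to strip them off $w_2$, one rewrites $\cY(w_1,x)w_2$ as a finite combination of terms in which $\cY$ is evaluated only on the finite-dimensional lowest conformal weight spaces of $W_1$ and $W_2$, while the stripped-off modes $L_{-n}$ and $G_{-m}$ either act on the $X$-side (where, being negative, they raise conformal weight) or remain on $W_2$ (where they stay inside the lower-bounded module). Because the lower truncation of $\cY$ on the finite-dimensional lowest weight spaces is automatically uniform, and because the contributions of the borderline modes $L_{-1}$ and $G_{-1/2}$ can be organized via the $L_{-1}$-derivative property (note $G_{-1/2}^2=L_{-1}$), this yields a single lower bound for all conformal weights occurring in $X$. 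Hence $X$ is lower bounded, and Remark \ref{rem:half-N-grading} makes it $\frac{1}{2}\NN$-gradable.

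With lower-boundedness in hand, Miyamoto's Key Theorem applies to the now $\frac{1}{2}\NN$-gradable module $X$ and gives $\dim(X/C_1(X))\leq\dim(W_1/C_1(W_1))\dim(W_2/C_1(W_2))<\infty$, so $X$ is $C_1$-cofinite. Therefore $X$ is a $C_1$-cofinite $\frac{1}{2}\NN$-gradable weak module, which is exactly an object of $\cC_1(V)$ by \cite[Proposition 2.2]{CMY-completions}. Concretely, the remaining grading-restriction property $\dim X_{[h]}<\infty$ is the content of the standard $C_1$-cofiniteness spanning argument (carried out with the usual sign factors in the superalgebra setting): one lifts a basis of the finite-dimensional space $X/C_1(X)$ to homogeneous vectors and spans $X$ by ordered products of weight-raising creation modes of the finite strong-generating set $\{\omega,\tau\}$, each of which raises conformal weight by at least $\frac{1}{2}$, so that only finitely many such products reach any prescribed weight $h$. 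This completes the identification $X\in\cC_1(V)$.
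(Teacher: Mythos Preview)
Your overall plan is sound, but the lower-boundedness step has a genuine gap. You claim that by stripping creation modes one reduces $\cY(w_1,x)w_2$ to the case where $w_1$ and $w_2$ lie in the \emph{lowest conformal weight spaces} of $W_1$ and $W_2$. This would require those spaces to generate $W_1$ and $W_2$ under the action of $V$, which is false for general objects of $\cC_1(V)$: for instance, an indecomposable extension $0\to L(h_1)\to W\to L(h_2)\to 0$ with $h_1<h_2$ and $h_2-h_1\in\tfrac{1}{2}\ZZ_{>0}$ has $W(0)=W_{[h_1]}$ generating only the submodule $L(h_1)$. Relatedly, the formulas \eqref{eqn:comm2}, \eqref{eqn:asso2} and the ``finite strong-generating set $\{\omega,\tau\}$'' you invoke are specific to $\cS(c,0)$, whereas the proposition is stated for an arbitrary vertex operator superalgebra $V$.

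The fix is to replace ``lowest weight spaces'' by finite-dimensional graded complements $T_i\subset W_i$ to $C_1(W_i)$, which exist precisely because $W_1,W_2$ are $C_1$-cofinite. Writing any homogeneous $w_i$ as $t_i+\sum_j (v^{(j)})_{-1}w_i^{(j)}$ with $t_i\in T_i$ and each $w_i^{(j)}$ of strictly lower weight, one can peel via the general associator and commutator formulas and run the induction on total weight $\mathrm{wt}\,w_1+\mathrm{wt}\,w_2$; the base case is the finite-dimensional space $T_1\otimes T_2$, on which the lower truncation of $\cY$ is automatically uniform. This is exactly the inductive mechanism of Miyamoto's Key Theorem itself---that proof uses only the gradings on $W_1$ and $W_2$, never on $X$---so one may equally well argue (as in \cite{CMY-completions}) that Miyamoto's bound $\dim(X/C_1(X))\leq\dim(W_1/C_1(W_1))\dim(W_2/C_1(W_2))$ already holds for generalized $X$, and then deduce lower-boundedness and the grading restriction from $C_1$-cofiniteness by the spanning argument in your final paragraph (carried out with a graded lift of a basis of $X/C_1(X)$, not with $\{\omega,\tau\}$).
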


As an application of these results, we have the following crucial theorem:
\begin{prop}[\cite{Mi} Main Theorem]\label{prop:vosa-miyamoto}
   If $W_1$, $W_2$ are objects of $\cC_1(V)$ for a vertex operator superalgebra $V$, then there is a tensor product $(W_1\boxtimes W_2,\cY_{W_1,W_2})$ of $W_1$ and $W_2$ in the category of generalized $V$-modules, and $W_1\boxtimes W_2$ is $C_1$-cofinite. In particular, $\cC_1(V)$ is closed under tensor products. 

\end{prop}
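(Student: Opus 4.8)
The plan is to realize $W_1\boxtimes W_2$ through the double-dual construction of \cite{HLZ3,HLZ4} and then to read off $C_1$-cofiniteness from Proposition \ref{prop:Miy-Key-Thm}; note that only existence and $C_1$-cofiniteness are claimed here, not associativity, so the full convergence-and-extension machinery is not needed. Fixing $z\in\CC^\times$, I would first put the adjoint weak $P(z)$-action on the full dual $(W_1\otimes W_2)^*$, defined by transposing the defining $\delta$-function identity for a $P(z)$-intertwining map and inserting the sign $(-1)^{|v||\cdot|}$ whenever two parity-homogeneous vectors are interchanged, exactly as in the Jacobi identity of Definition \ref{def:intw_op}. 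By \cite[Theorem 5.4]{HLZ4} the compatible functionals $\mathrm{COMP}_{P(z)}((W_1\otimes W_2)^*)$ form a weak $V$-module, and the candidate tensor product is the contragredient $D'$ of the submodule $D$ generated by all singly-generated lower-bounded generalized submodules; the tensor-product intertwining operator $\cY_{W_1,W_2}$ and the universal property come from the canonical pairing, and $\cY_{W_1,W_2}$ is surjective by \cite[Proposition 4.23]{HLZ3}.

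The crux is to verify that this construction produces a genuine \emph{grading-restricted} generalized $V$-module rather than an object with infinite-dimensional weight spaces; this is precisely the finiteness that $C_1$-cofiniteness supplies, and it is the only place where the hypothesis $W_1,W_2\in\cC_1(V)$ is used. For each singly-generated lower-bounded generalized submodule $W\subseteq\mathrm{COMP}_{P(z)}((W_1\otimes W_2)^*)$, the canonical pairing gives a surjective intertwining operator of type $\binom{W'}{W_1\,W_2}$, so by the Key Theorem of \cite{Mi} one has $\dim(W'/C_1(W'))\le\dim(W_1/C_1(W_1))\,\dim(W_2/C_1(W_2))$, a \emph{uniform} bound. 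A spanning-set argument in the style of \cite{Mi,CMY-completions} then shows that each such $W'$, being lower-bounded and $C_1$-cofinite, has finite-dimensional weight spaces controlled by this bound. Crucially, the proof of the Key Theorem uses only the (anti)commutator and associator formulas such as \eqref{eqn:comm1}--\eqref{eqn:asso2}, so the dimension count is \emph{insensitive} to the super sign factors; summing over generators shows that $D'=W_1\boxtimes W_2$ is lower-bounded and grading-restricted, hence an object of the category of generalized $V$-modules carrying the surjective intertwining operator $\cY_{W_1,W_2}$ with the correct universal property.

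Once existence is in hand, $C_1$-cofiniteness is immediate: since $\cY_{W_1,W_2}$ is a surjective intertwining operator of type $\binom{W_1\boxtimes W_2}{W_1\,W_2}$ with $W_1,W_2\in\cC_1(V)$, Proposition \ref{prop:Miy-Key-Thm} gives $W_1\boxtimes W_2\in\cC_1(V)$, so $\cC_1(V)$ is closed under $\boxtimes$. The \textbf{main obstacle}, relative to the vertex operator algebra arguments of \cite{Mi,CMY-completions}, is not conceptual but a matter of bookkeeping: one must check that the signs $(-1)^{|u||v|}$ dictated by the Jacobi identity propagate consistently through the adjoint $P(z)$-action and the $P(z)$-compatibility condition, so that $\mathrm{COMP}_{P(z)}((W_1\otimes W_2)^*)$ really is a weak submodule and $\cY_{W_1,W_2}$ really is an even intertwining operator. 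Since the dimension estimate that forces grading restriction is sign-independent, no new difficulty arises beyond this sign tracking.
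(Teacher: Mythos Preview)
The paper does not supply its own proof of this proposition: it is stated with attribution to \cite{Mi} (Main Theorem) and no argument is given, beyond the surrounding remark that Miyamoto's proofs carry over to the super setting once the sign $(-1)^{|u||v|}$ is inserted wherever the Jacobi identity is invoked. So there is no detailed proof in the paper to compare against; your proposal is effectively a reconstruction of the argument behind the citation.

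Your outline is broadly sound and follows the expected route: build the candidate tensor product via the HLZ $\mathrm{COMP}_{P(z)}$ construction, use Miyamoto's Key Theorem for the uniform bound $\dim(W'/C_1(W'))\le \dim(W_1/C_1(W_1))\cdot\dim(W_2/C_1(W_2))$, and then invoke Proposition~\ref{prop:Miy-Key-Thm} once the tensor product exists. Your observation that the sign bookkeeping is the only super-specific issue, and that the dimension counts are sign-insensitive, is exactly the point the paper makes in passing.

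The one step you gloss over is the sentence ``summing over generators shows that $D'=W_1\boxtimes W_2$ is lower-bounded and grading-restricted.'' A uniform bound on $\dim(W'/C_1(W'))$ for each singly-generated $W\subset\mathrm{COMP}_{P(z)}$ does \emph{not} by itself bound the weight spaces of the sum $D$: an infinite family of singly-generated submodules, each with small $C_1$-quotient, could in principle contribute independently to a fixed weight space. What actually makes $D$ grading-restricted is that the uniform $C_1$-bound, together with the spanning-set argument of \cite{Mi}, forces the lowest conformal weight spaces of every such $W'$ to be quotients of a single finite-dimensional $A(V)$-module (essentially $A(W_1)\otimes_{A(V)}W_2(0)$ and its higher-$A_n$ analogues), so that all the $W'$ are simultaneously quotients of one finitely-generated lower-bounded module. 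This is the substance of Miyamoto's Main Theorem, and it is the step that promotes ``each piece is small'' to ``the sum is small.'' Once you make this explicit, your argument is complete.
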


It is easy to see that $\cC_1(V)$ contains $V$ and is closed under $\Pi$. As for the convergence and extension property for products and iterates of intertwining operators, \cite[Theorem 1.6]{H2} and \cite[Theorem 11.6]{HLZ7} show that compositions of intertwining operators such as
\begin{align*}
    \langle w_0', \cY_1(w_{0}, z_1)\cdots \cY_n(w_{n}, z_n)w_{n+1}\rangle,\nonumber\\
    \langle w_0', \cY^1(\cY^2(w_1,z_1-z_2)w_2,z_2)w_3\rangle,
\end{align*}
where the vectors $w_0', w_1, w_2,\ldots, w_n$ are elements of $C_1$-cofinite grading-restricted generalized modules for a vertex operator algebra, satisfy systems of partial differential equations with regular singular points at $z_i =0$, $z_i =\infty$, $z_i=z_j$ for some $i,j=1,2,\ldots, n$, $i\neq j$. After checking Proposition 1.1, Theorem 1.4, Proposition~2.1, and Theorem~2.3 of \cite{H2}, one sees that their proofs still hold for $C_1$-cofinite modules of a vertex operator superalgebra $V$ after adding suitable sign factors whenever the Jacobi identity is used. It then follows from the theory of regular singular point differential equations that compositions of intertwining operators in $\cC_1(V)$ satisfy the necessary convergence and extension properties.

Finally, we need to address whether $\cC_1(V)$ satisfies the additional condition on submodules of $\mathrm{COMP}_{P(z)}((W_1\otimes W_2)^*)$. The following result is proved in \cite[Theorem~4.2.5]{CJORY}, \cite[Theorem~3.6]{CY}, and also \cite[Theorem~2.3]{Mc2}, in the setting of vertex operator algebras, but the result also holds for superalgebras:
\begin{prop}\label{prop:vosa-huang}
Assume that $\cC_1(V)$ is closed under contragredient modules. Then if $W_1$, $W_2$ are objects of $\cC_1(V)$, any singly-generated lower-bounded generalized $V$-submodule $W\subset\mathrm{COMP}_{P(z)}((W_1\otimes W_2)^*)$ is $C_1$-cofinite.
 
\end{prop}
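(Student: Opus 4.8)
The plan is to reduce the statement to the superalgebra form of Miyamoto's key theorem (Proposition~\ref{prop:Miy-Key-Thm}) together with the standing hypothesis that $\cC_1(V)$ is closed under contragredients, following the vertex operator algebra arguments of \cite{CJORY, CY, Mc2} and inserting parity signs wherever the Jacobi identity or the contragredient pairing is used. The key object is an intertwining operator of type $\binom{W'}{W_1\,W_2}$ manufactured from $W$ itself, whose source modules $W_1,W_2$ are $C_1$-cofinite by hypothesis; Miyamoto's theorem will then make $W'$ (and hence, via contragredients, $W$) $C_1$-cofinite.

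First I would construct this intertwining operator from the tautological pairing. Since $W\subseteq\mathrm{COMP}_{P(z)}((W_1\otimes W_2)^*)$, evaluation gives a pairing $W\times(W_1\otimes W_2)\to\CC$, hence a linear map $W_1\otimes W_2\to\overline{W'}$ sending $w_1\otimes w_2$ to the functional $\lambda\mapsto\lambda(w_1\otimes w_2)$; each weight-homogeneous component of this functional is supported in a single degree of $W$ and so lies in the graded dual $W'$. By the realization of $\mathrm{COMP}_{P(z)}$ in \cite[Theorem~5.4]{HLZ4}, the $P(z)$-compatibility condition satisfied by the elements of $W$ is precisely what makes this map a $P(z)$-intertwining map, and hence, after fixing a branch of $\log z$, an even intertwining operator $\cY$ of type $\binom{W'}{W_1\,W_2}$ in the sense of Definition~\ref{def:intw_op}. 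The super case differs from the vertex operator algebra case only in that the compatibility condition, the intertwining-operator Jacobi identity, and the contragredient pairing~\eqref{eqn:contra} carry the parity signs fixed by our even-morphism conventions; as these enter uniformly, the construction goes through verbatim.

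Next I would bootstrap surjectivity of $\cY$ and grading restriction of $W$. Let $X=\mathrm{Im}\,\cY\subseteq W'$, so that $\cY$ is a surjective intertwining operator of type $\binom{X}{W_1\,W_2}$; by Proposition~\ref{prop:Miy-Key-Thm} and the $C_1$-cofiniteness of $W_1,W_2$, the module $X$ is $C_1$-cofinite, and being lower bounded it is grading restricted. By construction, the value of $\lambda\in W$ on a coefficient $u=(w_1)^{\cY}_{h;k}w_2$ of $\cY$ is the corresponding component of $\lambda(w_1\otimes w_2)$; hence a $\lambda$ annihilating $X$ vanishes on all $w_1\otimes w_2$ and so is zero, i.e. the pairing of $X$ with $W$ is nondegenerate in the $W$-variable. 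Dualizing the inclusion $X\hookrightarrow W'$ and composing with $W\to W''$ then yields an injective $V$-module map $W\hookrightarrow X'$. Since $X'$ is grading restricted, so is its submodule $W$; and then, for each $h$, the finite-dimensional space $X_{[h]}\subseteq W_{[h]}^*$ separates points of the finite-dimensional space $W_{[h]}$, which forces $X_{[h]}=W_{[h]}^*$ and hence $X=W'$. Thus $W'=X$ is $C_1$-cofinite and grading restricted, so $W'\in\cC_1(V)$, and the closure of $\cC_1(V)$ under contragredients gives $W\cong W''=(W')'\in\cC_1(V)$. In particular $W$ is $C_1$-cofinite. (The argument in fact applies to any lower-bounded submodule $W$; single generation is simply the case needed to verify the Huang--Lepowsky--Zhang condition.)

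I expect the main obstacle to be the systematic tracking of parity signs rather than any new structural idea: one must check that every use of the Jacobi identity in \cite[Theorem~5.4]{HLZ4} and in the proof of Miyamoto's key theorem, together with the contragredient formula~\eqref{eqn:contra} used to identify $W''$ with $W$, acquires the correct sign so that the operator $\cY$ produced is genuinely an \emph{even} intertwining operator of the stated type and so that the pairings remain invariant. Once these signs are in place, every step above is formally identical to the vertex operator algebra arguments in \cite{CJORY, CY, Mc2}.
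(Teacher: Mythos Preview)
Your proposal is correct and follows essentially the same approach as the paper's proof: construct the intertwining operator $\cY$ of type $\binom{W'}{W_1\,W_2}$ from the tautological pairing, apply Miyamoto's key theorem to make $\mathrm{Im}\,\cY$ $C_1$-cofinite, and then use a nondegeneracy/contragredient argument to identify $W$ with $(\mathrm{Im}\,\cY)'$. The paper phrases the last step as showing that the composite $\delta:W\hookrightarrow W''\twoheadrightarrow(\mathrm{Im}\,\cY)'$ is an isomorphism, while you argue directly that $\mathrm{Im}\,\cY=W'$ by a dimension count once $W$ is known to be grading restricted; these are the same argument in slightly different packaging, and your observation that single generation is not actually used is also implicit in the paper's proof.
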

\begin{proof}
    The proof is a superalgebra generalization of \cite[Theorem~2.3]{Mc2}.
    The inclusion $W \hookrightarrow (W_1\otimes W_2)^*$ induces an even linear map $I: W_1\otimes W_2 \rightarrow W^* = \overline{W'}$ such that
    \[
    \langle I(w_1\otimes w_2), f\rangle = (-1)^{|f|(|w_1|+|w_2|)}f(w_1\otimes w_2)
    \]
    for linear functionals $f \in W$. Since $W$ is lower bounded, $W'$ is also a lower-bounded generalized module, and then the $P(z)$-compatibility condition of \cite{HLZ4} (or rather, its superalgebra generalization), implies that $I$ is a $P(z)$-intertwining map. Thus by \cite[Proposition 4.8]{HLZ3}, $I = \cY(\cdot, z)$ for some intertwining operator $\cY$ of type $\binom{W'}{W_1\; W_2}$.

    By Proposition \ref{prop:Miy-Key-Thm}, Im $\cY$ is an object in $\cC_1(V)$, and thus $({\rm Im}\; \cY)'$ is also $C_1$-cofinite by assumption. Thus we have a $V$-module homomorphism
    \[
    \delta: W \hookrightarrow W'' \twoheadrightarrow ({\rm Im}\; \cY)'
    \]
    of $W$ to a grading-restricted generalized $V$-module, characterized by
    \[
   \langle \delta(f), I(w_1\otimes w_2)\rangle = (-1)^{|f|(|w_1|+|w_2|)}\langle I(w_1\otimes w_2),f \rangle = f(w_1\otimes w_2)
    \]
    for $f \in W$, $w_1 \in W_1$, and $w_2 \in W_2$. It is clear from the above that $\delta$ is injective, so $W$ is a grading-restricted generalized $V$-module. It then follows that the map $W \rightarrow W''$ is an isomorphism, so that $\delta$ is also surjective. Hence $W \cong ({\rm Im}\; \cY)'$ is an object of $\cC_1(V)$.
    
\end{proof}

Note that $\cC_1(V)$ is an abelian category if it is closed under contragredients. Indeed, it is easy to see that $\cC_1(V)$ is closed under quotients and finite direct sums, and closure under quotients and contragredients implies closure under submodules. Thus from the above results and discussion, we obtain:

\begin{theorem}\label{thm:vosa-btc}
Let $\cC_1(V)$ be the category of $C_1$-cofinite grading-restricted generalized modules for a vertex operator superalgebra $V$. If $\cC_1(V)$ is closed under contragredients, then $\cC_1(V)$ admits the braided tensor category structure of \cite{HLZ1}-\cite{HLZ8}. In particular, $\cC_1(V)$ is a braided tensor category if the following two conditions hold:
\begin{enumerate}
    \item The contragredient $W'$ of any simple $C_1$-cofinite $V$-module $W$ is $C_1$-cofinite.
    \item $\cC_1(V)$ is equal to the category of finite-length $V$-modules whose composition factors are $C_1$-cofinite.
\end{enumerate}
\end{theorem}

\begin{remark}
    In the next section, we will use Theorem \ref{thm:vosa-btc} to show that the category of $C_1$-cofinite $\cS(c,0)$-modules is a braided tensor category. However, in future examples, it may be difficult to determine the entire category of $C_1$-cofinite modules. Thus it is useful to generalize Theorem \ref{thm:vosa-btc} to subcategories $\cC\subset\cC_1(V)$ that satisfy suitable conditions. Specifically, let $\cC$ be a full subcategory of the category $\cC_1(V)$ of $C_1$-cofinite grading-restricted generalized $V$-modules which contains $V$ and is closed under finite direct sums, quotient modules, contragredients, and tensor products. Then $\cC$ is abelian and admits the braided tensor category structure of \cite{HLZ1}-\cite{HLZ8}. 
    
    Indeed, $\cC$ is abelian because closure under quotients and contragredients implies that $\cC$ contains $0$ and is closed under submodules. Also, because $\cC$ contains $V$ and is closed under tensor products, $\cC$ has tensor products of morphisms, unit isomorphisms, and braiding isomorphisms. For the associativity and coherence properties, the necessary convergence and extension properties for compositions of intertwining operators in $\cC$ follow from \cite{H2, HLZ7} as previously because all objects of $\cC$, as well as their contragredients, are $C_1$-cofinite. Finally, since $\cC$ is closed under contragredients, the proof of Proposition \ref{prop:vosa-huang} goes through with $\cC$ replacing $\cC_1(V)$, provided that $\mathrm{Im}\,\cY$ is an object of $\cC$ for any intertwining operator $\cY$ of type $\binom{X}{W_1\,W_2}$ where $X$ is a generalized $V$-module and $W_1, W_2$ are objects of $\cC$. But this holds because $\mathrm{Im}\,\cY$ is $C_1$-cofinite by Proposition \ref{prop:Miy-Key-Thm} and thus is a homomorphic image of the $C_1$-cofinite tensor product $W_1\boxtimes W_2$; then $\mathrm{Im}\,\cY$ is an object of $\cC$ because $\cC$ is closed under tensor products and quotients. Now $\cC$ admits the braided tensor category structure of \cite{HLZ1}-\cite{HLZ8} as in Theorem \ref{thm:vosa-btc}.
\end{remark}

\section{Tensor categories for the \texorpdfstring{$N=1$}{N=1} super Virasoro algebra}\label{sec:tensor}

In this section, we will apply Theorem \ref{thm:vosa-btc} to the $N=1$ superconformal vertex operator superalgebra $\cS(c,0)$. Since simple $\cS(c,0)$-modules are self-contragredient, it is sufficient to show that the category $\cC_1(\cS(c,0))$ of $C_1$-cofinite $\cS(c,0)$-modules equals the category of finite-length modules with $C_1$-cofinite composition factors. We will then study $\cC_1(\cS(c,0))$ in the special cases $c=\frac{15}{2}-3t-3t^{-1}$ for $t\notin\QQ$ and $t=1$.

\subsection{\texorpdfstring{$C_1$}{C1}-cofinite modules for the \texorpdfstring{$N=1$}{N=1} super Virasoro algebra}

For $V = S(c,0)$ the conformal vector $\omega=L_{-2}\vac_{c,0}$ and the Neveu-Schwarz vector $\tau=G_{-\frac{3}{2}}\vac_{c,0}$ are homogeneous elements (even and odd, respectively) of $\cS(c,0)_+$, with $\omega_{-1}=L_{-2}$ and $\tau_{-1}=G_{-\frac{3}{2}}$.
Recalling Definition \ref{cc1}, we can prove:
\begin{prop}\label{prop:C1W-span-set}
    For any $\cS(c,0)$-module $W$,
\begin{align} \label{c1W}
C_1(W)=\sum_{n \in \mathbb{Z}_{\geq 2}} L_{-n}W+ \sum_{m\in \mathbb{Z}_{\geq 1}}G_{-m-\frac{1}{2}}W.
\end{align} 
\end{prop}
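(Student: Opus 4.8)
Write $D(W):=\sum_{n\in\ZZ_{\geq2}}L_{-n}W+\sum_{m\in\ZZ_{\geq1}}G_{-m-\frac12}W$ for the right-hand side of \eqref{c1W}. The plan is to prove the two inclusions $D(W)\subseteq C_1(W)$ and $C_1(W)\subseteq D(W)$ separately. Both rest on one elementary observation: the $L_{-1}$-derivative property gives $(L_{-1}u)_j=-j\,u_{j-1}$ for any $u\in\cS(c,0)$ and $j\in\ZZ$, and iterating this yields $(L_{-1}^{k-1}v)_{-1}=(k-1)!\,v_{-k}$, hence
\[
v_{-k}=\tfrac{1}{(k-1)!}\,(L_{-1}^{k-1}v)_{-1},\qquad k\in\ZZ_{\geq1}.
\]
Since $L_{-1}$ raises conformal weight, $L_{-1}^{k-1}v\in\cS(c,0)_+$ whenever $v\in\cS(c,0)_+$, so $v_{-k}W\subseteq C_1(W)$ for every $v\in\cS(c,0)_+$ and $k\geq1$. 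Applying this to the homogeneous generators $\omega\in\cS(c,0)_{(2)}$ and $\tau\in\cS(c,0)_{(\frac32)}$ and using $\omega_{-k}=L_{-k-1}$, $\tau_{-k}=G_{-k-\frac12}$ gives $L_{-n}W\subseteq C_1(W)$ for $n\geq2$ and $G_{-m-\frac12}W\subseteq C_1(W)$ for $m\geq1$; that is, $D(W)\subseteq C_1(W)$.

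For the reverse inclusion it is enough to check $v_{-1}w\in D(W)$ as $v$ runs over a spanning set of $\cS(c,0)_+$. Because $\cS(c,0)$ is strongly generated by $\omega$ and $\tau$, such a spanning set is given by the monomials $v=a^{(1)}_{-n_1}\cdots a^{(s)}_{-n_s}\vac_{c,0}$ with $s\geq1$, each $a^{(j)}\in\{\omega,\tau\}$, and each $n_j\geq1$. I would in fact prove the stronger statement that $v_{-m}W\subseteq D(W)$ for all such $v$ and all $m\geq1$, by induction on the number $s$ of generators. For the base case $s=1$ one has $v=a_{-n}\vac_{c,0}=\tfrac{1}{(n-1)!}L_{-1}^{n-1}a$, and the identity $(L_{-1}^{j}a)_{-m}=\tfrac{(m+j-1)!}{(m-1)!}a_{-m-j}$ shows that $v_{-m}$ is a scalar multiple of $a_{-(m+n-1)}$ with $m+n-1\geq1$; this lies in $D(W)$ since $\omega_{-\ell}=L_{-\ell-1}$ and $\tau_{-\ell}=G_{-\ell-\frac12}$ for $\ell\geq1$.

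For the inductive step, write $v=u_{-n_1}v'$ with $u\in\{\omega,\tau\}$ and $v'$ a monomial in $s-1$ generators, and expand $(u_{-n_1}v')_{-m}$ using the associativity (iterate) formula for the module vertex operator $Y_W$, i.e.\ the superalgebra analogue of \eqref{eqn:asso1}--\eqref{eqn:asso2} obtained from the Jacobi identity. Extracting the coefficient of the relevant power of $x$ expresses $(u_{-n_1}v')_{-m}$, up to signs that are irrelevant for membership in the subspace $D(W)$, as a sum of terms $u_{-n_1-i}\big(v'_{i-m}w\big)$ and terms $v'_{-m-n_1-i}\big(u_i w\big)$ with $i\geq0$. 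The terms of the first type lie in $D(W)$ directly, since $u_{-n_1-i}$ is a negative mode of a generator with $n_1+i\geq1$. The terms of the second type lie in $D(W)$ by the induction hypothesis applied to $v'$, since $m+n_1+i\geq1$ and $v'$ involves only $s-1$ generators; when $s=1$ these terms involve $\vac_{c,0}$ and vanish because $(\vac_{c,0})_{-k}=0$ for $k\geq2$. This closes the induction and yields $C_1(W)\subseteq D(W)$.

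I expect the reverse inclusion to be the main obstacle. The difficulty is that controlling $v_{-1}w$ for a general monomial $v$ forces one to apply the iterate formula, which unavoidably produces modes $v'_{-m-n_1-i}$ of the shorter factor with index strictly below $-1$. This is precisely why the induction must be carried out with the strengthened hypothesis covering all negative modes $v'_{-m}$, $m\geq1$, rather than only $v'_{-1}$; the bookkeeping needed to confirm that every term produced remains a negative-index mode either of a generator or of a strictly shorter monomial is the technical heart of the argument, while the sign factors from the super Jacobi identity play no role since $D(W)$ is a subspace.
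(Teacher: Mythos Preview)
Your proof is correct and follows essentially the same strategy as the paper: both establish $D(W)\subseteq C_1(W)$ via the $L_{-1}$-derivative identity $v_{-k}=\tfrac{1}{(k-1)!}(L_{-1}^{k-1}v)_{-1}$, and both prove $C_1(W)\subseteq D(W)$ by peeling off a leading generator $u_{-n_1}$ from a monomial and applying the iterate (associativity) formula, noting that the ``first'' terms land directly in $D(W)$ while the ``second'' terms are handled by induction. The only cosmetic difference is that the paper inducts on $\mathrm{wt}\,v$ whereas you induct on the number $s$ of generators; these are equivalent here since stripping one factor strictly decreases both.
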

\begin{proof}
Set $\tilC = \sum_{n \in \mathbb{Z}_{\geq 2}} L_{-n}W+ \sum_{m\in \mathbb{Z}_{\geq 1}}G_{-m-\frac{1}{2}}W$, so that we need to show $C_1(W)=\tilC$. By \eqref{operator}, which also applies to the module vertex operator $Y_W$,
\begin{align*}
    Y_W(L_{-n}\vac,x)& =\frac{1}{(n-2)!}\left(\frac{d}{dx}\right)^{n-2}\sum_{i\in\ZZ} L_i\,x^{-i-2} =\sum_{i\in\ZZ} \binom{-i-2}{n-2} L_i\,x^{-i-n},\nonumber\\
    Y_W(G_{-m-\frac{1}{2}},x) & = \frac{1}{(m-1)!}\left(\frac{d}{dx}\right)^{m-1}\sum_{i\in\ZZ} G_{i+\frac{1}{2}}\,x^{-i-2} =\sum_{i\in\ZZ}\binom{-i-2}{m-1} G_{i+\frac{1}{2}}\,x^{-i-m-1}
\end{align*}
for $n\geq 2$ and $m\geq 1$. This implies
\begin{equation}\label{eqn:-j-1_mode}
    (L_{-n}\vac)_{-j-1}= \binom{j+n-2}{n-2} L_{-n-j},\qquad(G_{-m-\frac{1}{2}}\vac)_{-j-1} =\binom{j+m-1}{m-1}G_{-m-j-\frac{1}{2}}
\end{equation}
for $j\in\ZZ$, and the $j=0$ case shows $\tilC\subset C_1(W)$.

    For the converse, the $L_{-1}$-derivative property implies that $v_{-j-1} =\frac{1}{j!}(L_{-1}^j v)_{-1}$ for $v\in\cS(c,0)$ and $j\in\ZZ_{\geq 0}$. Thus $C_1(W)$ is spanned by elements of the form $v_{-j-1}w$ for $j\in\ZZ_{\geq 0}$, $w\in W$, and $v\in\cS(c,0)_+$ (if $j=0)$ or $v\in\cS(c,0)$ (if $j>0$). Thus since $\cS(c,0)_+$ is spanned by vectors of the form 
    $$L_{-i_1}\cdots L_{-i_k} G_{-j_1-\frac{1}{2}}\cdots G_{-j_l-\frac{1}{2}}\vac$$
    for $i_1,\ldots, i_k\geq 2$ and $j_1,\ldots j_l\geq 1$, $C_1(W)$ is spanned by vectors of the form
    \begin{equation*}
        (L_{-n}v)_{-j-1} w,\quad (G_{-m-\frac{1}{2}}v)_{-j-1}w
    \end{equation*}
    for $n\geq 2$, $m\geq 1$, $v\in\cS(c,0)$, $w\in W$, and $j\geq 0$. Thus it is sufficient to show 
    $(L_{-n}v)_{-j-1}w, (G_{-m-\frac{1}{2}}v)_{-j-1}w\in\tilC$ by induction on $\mathrm{wt}\,v$. 
    
    The base case $v=\vac$ is immediate from \eqref{eqn:-j-1_mode}. For the inductive step, the associator formulas \eqref{eqn:asso1} and \eqref{eqn:asso2}  imply
    \begin{align*}
        (L_{-n}v)_{-j-1} w & =\sum_{i\geq 0} (-1)^i\binom{-n+1}{i}\left(L_{-n-i} v_{i-j-1} w +(-1)^n v_{-n-i-j-2}L_{i-1} w\right)\nonumber\\
        (G_{-m-\frac{1}{2}}v)_{-j-1} & =\sum_{i\geq 0} (-1)^i\binom{-m}{i}\left(G_{-m-i-\frac{1}{2}} v_{i-j-1} w +(-1)^{|v|+n} v_{-m-i-j-1}G_{i-\frac{1}{2}}w\right)
    \end{align*}
In both sums, the first term of each summand is clearly in $\tilC$, and the second term of each summand is in $\tilC$ by induction. This shows $C_1(W)\subset\tilC$.

\end{proof}

The following result was proved in \cite{H4} for vertex operator algebras, but its proof is the same for superalgebras:
\begin{lemma}[\cite{H4}] \label{C1H}
Let $V$ be a vertex operator superalgebra and $W$ a generalized $V$-module.
\begin{enumerate}
    \item If $U$ is a generalized $V$-submodule of $W$, then
     $$\frac{W/U}{C_1(W/U)}\cong \frac{W}{C_1(W)+U}.$$
    
    \item If $W$ has finite length and its composition factors are $C_1$-cofinite, then $W$ is $C_1$-cofinite.
\end{enumerate}
\end{lemma}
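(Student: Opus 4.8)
The plan is to prove the two parts in turn, using only elementary module-theoretic bookkeeping once the behavior of $C_1$ under quotients is understood. The computations of Proposition \ref{prop:C1W-span-set} are not needed; only the definition of $C_1$ as the span of the vectors $v_{-1}w$ with $v\in V_+$ is used.

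\textbf{Part (1).} First I would record that $C_1$ is functorial under surjections. Let $\pi\colon W\to W/U$ be the canonical projection; it is a $V$-module homomorphism, so $\pi(v_{-1}w)=v_{-1}\pi(w)$ for all $v\in V_+$ and $w\in W$. Since $\pi$ is surjective, it carries the spanning set $\{v_{-1}w\}$ of $C_1(W)$ onto the spanning set $\{v_{-1}\pi(w)\}$ of $C_1(W/U)$, whence $C_1(W/U)=\pi(C_1(W))=(C_1(W)+U)/U$. The claimed isomorphism then follows immediately from the third isomorphism theorem, identifying $\dfrac{W/U}{(C_1(W)+U)/U}$ with $\dfrac{W}{C_1(W)+U}$.

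\textbf{Part (2).} I would induct on the length $n$ of $W$. The case $n=1$ is exactly the hypothesis that composition factors are $C_1$-cofinite. For $n>1$, fix a composition series and set $U=W_{n-1}$, so that $U$ has length $n-1$ (hence is $C_1$-cofinite by the inductive hypothesis) and $W/U$ is a composition factor (hence $C_1$-cofinite by hypothesis). By part (1),
\[
\dim\frac{W}{C_1(W)+U}=\dim\frac{W/U}{C_1(W/U)}<\infty .
\]
To pass from this to $\dim W/C_1(W)$ I would use the short exact sequence of vector spaces
\[
0\longrightarrow\frac{C_1(W)+U}{C_1(W)}\longrightarrow\frac{W}{C_1(W)}\longrightarrow\frac{W}{C_1(W)+U}\longrightarrow 0,
\]
so that it remains only to bound the left-hand term. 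The key point is the inclusion $C_1(U)\subseteq U\cap C_1(W)$: every generator $v_{-1}u$ with $v\in V_+$ and $u\in U$ lies in $U$ (as $U$ is a submodule) and in $C_1(W)$. This yields a surjection $U/C_1(U)\twoheadrightarrow U/(U\cap C_1(W))\cong (C_1(W)+U)/C_1(W)$, whose source is finite-dimensional because $U$ is $C_1$-cofinite. Adding the two finite dimensions gives $\dim W/C_1(W)<\infty$.

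The only subtle point, and it is mild, is that one must use the inclusion $C_1(U)\subseteq U\cap C_1(W)$ in the correct direction: the reverse inclusion can fail, so the comparison $U/C_1(U)\twoheadrightarrow U/(U\cap C_1(W))$ is merely a surjection, which is nonetheless all that is required to transfer finite-dimensionality. No analytic or convergence input is needed; everything reduces to the standard isomorphism theorems together with this single inclusion.
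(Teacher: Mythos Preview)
Your proof is correct. The paper does not supply its own proof of this lemma; it simply cites \cite{H4} and remarks that the vertex operator algebra argument carries over to the superalgebra setting. Your argument is precisely the standard one: functoriality of $C_1$ under surjections plus the third isomorphism theorem for part (1), and for part (2) an induction on length using the short exact sequence together with the inclusion $C_1(U)\subseteq U\cap C_1(W)$ to bound $\dim W/C_1(W)$ by $\dim U/C_1(U)+\dim(W/U)/C_1(W/U)$. Your caution about the direction of the inclusion is well placed and correctly handled.
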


The next result concens $C_1$-cofinite highest-weight $\cS(c,0)$-modules:
\begin{lemma} \label{lemc1}
\begin{enumerate}
\item A highest-weight $\cS(c,0)$-module is $C_1$-cofinite if and only if it is not isomorphic to a Verma module or a parity reversal of a Verma module. In particular, the irreducible module $L^\NS(c,h)$ is  $C_1$-cofinite if and only if $J^\NS(c,h)\neq 0$, that is, if and only if $h \in H_c$. 
\item Any finite-length $\cS(c,0)$-module whose composition factors are irreducible quotients of reducible Verma modules or their parity reversals is $C_1$-cofinite.
\item Any $C_1$-cofinite highest-weight $\cS(c,0)$-module has finite length with composition factors of the form $\Pi^n(L^\NS(c,h))$ for $h \in H_c$ and $n=0,1$.
\end{enumerate}
\end{lemma}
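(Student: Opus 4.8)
The plan is to work throughout with the explicit description of $C_1(W)$ from Proposition \ref{prop:C1W-span-set}, namely $C_1(W)=\sum_{n\geq 2}L_{-n}W+\sum_{m\geq 1}G_{-m-\frac12}W$. Call the operators $L_{-n}$ ($n\geq 2$) and $G_{-m-\frac12}$ ($m\geq 1$) the \emph{bad} generators of $\NS^-$, and $L_{-1},G_{-\frac12}$ the \emph{good} ones; since $G_{-\frac12}^2=L_{-1}$, the good monomials are exactly the powers $G_{-\frac12}^k$. For a highest-weight module $W$ with highest-weight vector $v$, the first step is to show that the classes of $G_{-\frac12}^kv$, $k\geq 0$, span $W/C_1(W)$. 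By PBW, $W$ is spanned by ordered monomials $Xv$, and a direct check of the (anti)commutation relations shows that the bracket of a bad generator with any generator of $\NS^-$ is again a scalar multiple of a single bad generator, with all relevant central terms vanishing. Hence any $Xv$ containing at least one bad factor can be rewritten modulo $C_1(W)$ by sliding a bad factor to the far left, where it lands in $C_1(W)$; an induction on the number of good factors preceding the first bad factor shows every such $Xv$ lies in $C_1(W)$. The surviving monomials are precisely $L_{-1}^aG_{-\frac12}^\varepsilon v=G_{-\frac12}^{2a+\varepsilon}v$, proving the spanning claim.

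\textbf{Part (1).} A highest-weight module is either isomorphic to $\Pi^n(M^\NS(c,h))$ or a proper quotient of it, so it suffices to treat these two cases. For the Verma module, the same bracket computation shows conversely that $C_1(M^\NS(c,h))$ is exactly the span of the PBW monomials containing a bad factor; since in each conformal weight $h+\frac k2$ there is a unique good PBW monomial, namely $G_{-\frac12}^kv$, the quotient $M^\NS(c,h)/C_1(M^\NS(c,h))$ is one-dimensional in every degree, hence infinite-dimensional, and so $M^\NS(c,h)$ and its parity reversal are not $C_1$-cofinite. For a proper quotient $W=M^\NS(c,h)/K$ with $K\neq 0$, I invoke Theorem \ref{AsTh}: $K$ contains a singular vector $\mathbf{w}$ of some degree $n>0$, whose correction terms are all built from bad generators and hence lie in $C_1(M^\NS(c,h))$, so $\mathbf{w}\equiv G_{-\frac12}^{2n}v$ modulo $C_1$. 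Since one checks $G_{-\frac12}\cdot C_1(M^\NS(c,h))\subseteq C_1(M^\NS(c,h))$, applying $G_{-\frac12}^k$ to $\mathbf{w}$ gives $G_{-\frac12}^{2n+k}v\in C_1(M^\NS(c,h))+K$ for all $k\geq 0$; with Lemma \ref{C1H}(1) this shows $W/C_1(W)$ is spanned by the finitely many classes $G_{-\frac12}^jv$ with $0\leq j<2n$, so $W$ is $C_1$-cofinite. The ``in particular'' statement follows since $L^\NS(c,h)$ is itself a Verma module exactly when $M^\NS(c,h)$ is irreducible, i.e. when $h\notin H_c$.

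\textbf{Parts (2) and (3).} Part (2) is then immediate: by part (1) the irreducible quotients of reducible Verma modules are $C_1$-cofinite, parity reversal does not affect the computation of $C_1$, and Lemma \ref{C1H}(2) upgrades this to any finite-length module with such composition factors. For part (3), a $C_1$-cofinite highest-weight module $W$ is by part (1) a proper quotient $\Pi^n(M^\NS(c,h))/K$ with $K\neq 0$, so $M^\NS(c,h)$ is reducible and Corollary \ref{coro}(2) gives that $W$ has finite length. It remains to see that each composition factor $\Pi^{n'}(L^\NS(c,h'))$ has $h'\in H_c$. Every composition factor of $W$ is a composition factor of the ambient Verma module, and by Corollary \ref{coro}(3) the only weight appearing in the embedding diagram that can fail to lie in $H_c$ is that of the socle, which when nonzero is an irreducible Verma module. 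The crux is thus to rule out this socle as a composition factor of $W$: using Corollary \ref{coro}(1) together with the embedding diagrams of Proposition \ref{vermadiag}, the socle is the unique minimal nonzero submodule and hence is contained in every nonzero submodule, in particular in $K$; so it contributes to $K$ rather than to $W=M/K$, and all composition factors of $W$ have weight in $H_c$.

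\textbf{Main obstacle.} I expect the bookkeeping in the first step to be the technical heart, since it is what forces $W/C_1(W)$ to collapse onto powers of $G_{-\frac12}$: one must verify both that the bracket of a bad generator with an arbitrary $\NS^-$ generator is again bad, and that the singular-vector correction terms in Theorem \ref{AsTh} are exactly the bad monomials. The subtler conceptual point is the socle-containment claim in part (3), which must be extracted case-by-case from Proposition \ref{vermadiag}, using that the finite embedding diagrams have a nonzero socle (whose weight lies outside $H_c$) while the infinite ones have socle zero.
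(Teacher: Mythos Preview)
Your proposal is correct and follows essentially the same approach as the paper: both arguments hinge on the decomposition $M^\NS(c,h)=\mathrm{Span}\{G_{-\frac12}^k\vac_{c,h}\}\oplus C_1(M^\NS(c,h))$ obtained from the PBW basis and the (anti)commutation relations, then use the leading $G_{-\frac12}^{2n}$ term of the singular vector from Theorem~\ref{AsTh} together with $G_{-\frac12}C_1\subset C_1$ to kill all but finitely many classes in a proper quotient. Your treatment of part~(3) is slightly more explicit than the paper's---the paper simply reads the conclusion off the embedding diagrams of Proposition~\ref{vermadiag}, whereas you isolate the socle-containment step via Corollary~\ref{coro}(3)---but this is a presentational rather than mathematical difference.
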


\begin{proof}
To prove (1), we claim that as vector superspaces,
\begin{align} \label{Vnotc1}
M^\NS(c,h) = \mathrm{Span}_{\CC}\big\{(G_{-\frac{1}{2}})^j\vac_{c,h} \ | \ j\geq 0 \big\} \oplus C_1(M^\NS(c,h)).
\end{align} 
Indeed, $M^\NS(c,h)$ has a PBW-type basis consisting of elements
\begin{equation*}
    L_{-i_1}\cdots L_{-i_k}G_{-j_1-\frac{1}{2}}\cdots G_{-j_l-\frac{1}{2}}\vac_{c,h}
\end{equation*}
for $i_1\geq\cdots\geq i_k\geq 2$ and $j_1\geq\cdots\geq j_l\geq 0$ (recalling that $L_{-1}=G_{-\frac{1}{2}}^2$). By Proposition \ref{prop:C1W-span-set}, the basis elements such that $k\geq 1$ or $j_1\geq 1$ are contained in $C_1(M^\NS(c,h))$, and thus a (possibly non-direct) sum decomposition as in \eqref{Vnotc1} holds. Moreover, if $w$ is any basis element, then the (anti)-commutation relations in $\NS$ show that $L_{-n} w$ for $n\geq 2$ and $G_{-m-\frac{1}{2}}w$ for $m\geq 1$ are linear combinations of basis elements such that $k\geq 1$ or $j_1\geq 1$. Thus if some non-zero linear combination of $G_{-\frac{1}{2}}^j\vac_{c,h}$, $j\geq 0$, were contained in $C_1(M^\NS(c,h)$,  this would imply by Proposition \ref{prop:C1W-span-set} that some non-zero linear combination of PBW-type basis elements in $M^\NS(c,h)$ would be $0$. Since this is impossible, the sum decomposition in \eqref{Vnotc1} is direct.

It is now clear from \eqref{Vnotc1} that Verma $\cS(c,0)$-modules (and their parity reversals) are not  $C_1$-cofinite, and thus any $C_1$-cofinite highest-weight $\cS(c,0)$-module is not isomorphic to a Verma module or parity reversal of a Verma module. Conversely, if $W$ is a highest-weight $\cS(c,0)$-module that is not isomorphic to a Verma module or its parity reversal, then up to a parity reversal, $W\cong M^\NS(c,h)/J$ for some $h \in H_c$ and $J\neq 0$. By Lemma \ref{C1H}(1), 
\[
W/C_1(W) \cong M^\NS(c,h)/(C_1(M^\NS(c,h))+J)
\]
as vector superspaces, and by Theorem \ref{AsTh}, $J$ contains a singular vector of the form \eqref{svector}. Consequently, $G_{-\frac{1}{2}}^{j}\vac_{c,h}\in C_1(M^\NS(c,h))+J$ for some $j\geq 0$, and then $G_{-\frac{1}{2}}^n\vac_{c,h}\in C_1(M^\NS(c,h))+J$ for all $n\geq j$ because the (anti)-commutation relations in $\NS$ together with Proposition \ref{prop:C1W-span-set} show that $G_{-\frac{1}{2}}C_1(M^\NS(c,h))\subset C_1(M^\NS(c,h))$. This implies that $W$ is $C_1$-cofinite, completing the proof of (1).

Part (2) of the lemma now follows from (1) together with Lemma \ref{C1H}(2) since the composition factors in this case are $C_1$-cofinite.

To prove (3), it follows from part (1) that a $C_1$-cofinite highest-weight $\cS(c,0)$-module $W$ must be isomorphic to $ M^\NS(c,h)/J$ or $\Pi(M^\NS(c,h)/J)$ for some $J\neq 0$, so $h\in H_c$ by Proposition \ref{Redu}. Moreover, by Corollary \ref{coro}(1), $J$ is either a Verma submodule or a sum of two Verma submdoules, and then from the embedding diagrams in Proposition \ref{vermadiag}, $W\cong M^\NS(c,h)/J$ or $\Pi(M^\NS(c,h)/J)$ has finite length and its composition factors are irreducible modules $L^\NS(c,h')$ or $\Pi(L^\NS(c,h'))$ for certain $h'\in H_c$.

\end{proof}

\begin{defn}
We define $\ocfin$ to be the category of finite-length grading-restricted generalized $\cS(c,0)$-modules whose irreducible composition factors are not isomorphic to Verma modules or their parity reversals.
\end{defn}
We will need the following technical result to show that any (not necessarily highest-weight) lower-bounded $C_1$-cofinite module  has finite length. The proof is essentially the same as that of \cite[Lemma 3.2.1]{CJORY}, with some adjustments for the super Virasoro case:
\begin{prop} \label{desclarge}
Let $W$ be a lower-bounded $C_1$-cofinite $\cS(c,0)$-module and let $\widetilde{W}\subset W$ be a submodule such that $W/\widetilde{W}$ is generated by a highest-weight vector $w + \widetilde{W}$ for some $w\in W$. Then any element in $\mathcal{U}(\NS^-)w\cap \widetilde{W}$ of sufficiently high conformal weight is in $C_1(\widetilde{W})$.

\end{prop}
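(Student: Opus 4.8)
The plan is to reinterpret $C_1(-)$ as the zeroth homology of a Lie sub-superalgebra and then reduce everything to a computation with singular vectors. Let $\mathfrak{n}\subset\NS^-$ be the span of the modes $L_{-n}$ for $n\geq 2$ and $G_{-m-\frac12}$ for $m\geq 1$; the (anti)commutation relations show this is a Lie sub-superalgebra, and by Proposition \ref{prop:C1W-span-set} we have $C_1(X)=\mathfrak{n}X$ for every $\cS(c,0)$-module $X$, i.e.\ $X/C_1(X)=H_0(\mathfrak{n},X)$. First I would reduce the proposition to the following claim $(\star)$:
\[
(\widetilde{W}\cap C_1(W))_{[\lambda]}\subseteq C_1(\widetilde{W})\qquad\text{for all sufficiently large }\lambda.
\]
Indeed, since $W$ is $C_1$-cofinite, $W/C_1(W)$ is finite-dimensional and hence supported in finitely many conformal weights; so any homogeneous $\eta\in\mathcal{U}(\NS^-)w\cap\widetilde{W}$ of sufficiently high weight lies in $W_{[\lambda]}=C_1(W)_{[\lambda]}$, and then $(\star)$ places it in $C_1(\widetilde{W})$. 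Note that $(\star)$ does not even use the special form $\mathcal{U}(\NS^-)w$ of the intersection.

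For $(\star)$, set $U=W/\widetilde{W}$ and observe that $(\widetilde{W}\cap C_1(W))/C_1(\widetilde{W})$ is exactly the kernel of the map $H_0(\mathfrak{n},\widetilde{W})\to H_0(\mathfrak{n},W)$ induced by inclusion. The long exact sequence in $\mathfrak{n}$-homology attached to $0\to\widetilde{W}\to W\to U\to 0$ exhibits this kernel as a quotient of $H_1(\mathfrak{n},U)$, so $(\star)$ will follow once I show that $H_1(\mathfrak{n},U)$ vanishes in high weight. This is exactly where the hypothesis is used: $U$ is a quotient of $W$, hence $C_1$-cofinite, and it is a highest-weight module since it is generated by $\overline{w}$, so by Lemma \ref{lemc1}(3) it has finite length with composition factors of the form $\Pi^{n}(L^\NS(c,h'))$ with $h'\in H_c$. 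Running the long exact sequence once more along a composition series (and noting that $\Pi$ does not change $\mathfrak{n}$-homology as a graded space) reduces the problem to showing that $H_1(\mathfrak{n},L^\NS(c,h))$ vanishes in high weight for each $h\in H_c$.

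This last point is the main obstacle, and it is where the singular-vector structure does the work. Writing $L:=L^\NS(c,h)=M^\NS(c,h)/J$ with $J\neq 0$, the vector-space decomposition $\NS^-=\mathfrak{n}\oplus(\CC L_{-1}\oplus\CC G_{-\frac12})$ together with $L_{-1}=G_{-\frac12}^2$ shows, via PBW, that $M^\NS(c,h)$ is free as a $\mathcal{U}(\mathfrak{n})$-module on $\{G_{-\frac12}^{a}\vac_{c,h}\}_{a\geq 0}$; hence $H_{>0}(\mathfrak{n},M^\NS(c,h))=0$ and $H_1(\mathfrak{n},L)\cong\ker\big(H_0(\mathfrak{n},J)\to H_0(\mathfrak{n},M^\NS(c,h))\big)$. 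By Theorem \ref{AsTh} every singular vector $\mathbf{w}\in J$ of degree $n$ satisfies $\mathbf{w}\equiv G_{-\frac12}^{2n}\vac_{c,h}\pmod{\mathfrak{n}M^\NS(c,h)}$, and since $G_{-\frac12}\,\mathfrak{n}M^\NS(c,h)\subseteq\mathfrak{n}M^\NS(c,h)$, the $G_{-\frac12}$-tower over $\mathbf{w}$ maps to the linearly independent classes $[G_{-\frac12}^{b}\vac_{c,h}]$ in $M^\NS(c,h)/\mathfrak{n}M^\NS(c,h)$; this forces $H_1(\mathfrak{n},L)=0$ whenever $J$ is generated by a single singular vector. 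The hard remaining case is when $J$ is a sum of two Verma submodules: the two $G_{-\frac12}$-towers then have overlapping images, and one must use that the two generating singular vectors have a common descendant (Corollary \ref{coro}(4))---equivalently, the two-step Verma resolution of $L$ read off from the embedding diagrams of Proposition \ref{vermadiag}---to recognize that this overlap is already a relation in $H_0(\mathfrak{n},J)$. Tracking these relations should show that the kernel is finite-dimensional and hence supported in bounded weight, which yields $(\star)$ and completes the proof.
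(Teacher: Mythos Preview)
Your approach is correct and genuinely different from the paper's. The key insight that $\mathfrak{n}=\mathrm{span}\{L_{-n},G_{-m-\frac12}:n\geq 2,\,m\geq 1\}$ is actually an \emph{ideal} of $\NS^-$ (so that $H_0(\mathfrak{n},-)$ is functorial in $\NS^-$-modules and Verma modules are $\cU(\mathfrak{n})$-free) is what makes the homological reduction work. Two comments on the last step: you do not need to pass to simple composition factors---for the highest-weight quotient $U=M/J$ with $J\neq 0$ one computes $H_1(\mathfrak{n},U)$ directly from $0\to J\to M\to U\to 0$; and when $J=J_1+J_2$ you should not assume $J_1\cap J_2$ is Verma (in the type-(1)(a) diagrams it is again a sum of two Vermas). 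What you actually need, and what you correctly cite from Corollary \ref{coro}(4), is a single common descendant $\mathbf{w}_3$: the classes $([G_{-1/2}^{2(n_3-n_1)+c}\mathbf{w}_1],-[G_{-1/2}^{2(n_3-n_2)+c}\mathbf{w}_2])$ lie in the kernel of $H_0(\mathfrak{n},J_1)\oplus H_0(\mathfrak{n},J_2)\to H_0(\mathfrak{n},J)$ for all $c\geq 0$, which kills all but finitely many of the elements $([G_{-1/2}^{2(n_2-n_1)+b}\mathbf{w}_1],-[G_{-1/2}^{b}\mathbf{w}_2])$ spanning the kernel of the map to $H_0(\mathfrak{n},M)$. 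So $H_1(\mathfrak{n},U)$ is finite-dimensional and $(\star)$ follows.

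For comparison, the paper's proof is an explicit element-level argument with the same two cases: it writes $(G_{-1/2})^{M}w\in C_1(W)$ for large $M$, uses the singular-vector normalization of Theorem \ref{AsTh} to peel off the $\mathfrak{n}$-part, and in Case 2 invokes the common descendant to combine the two towers. Your homological packaging is cleaner and in fact proves more: since $H_0(\mathfrak{n},W)$ is finite-dimensional and the kernel of $H_0(\mathfrak{n},\widetilde{W})\to H_0(\mathfrak{n},W)$ is a quotient of the finite-dimensional $H_1(\mathfrak{n},U)$, you get directly that $\widetilde{W}$ is $C_1$-cofinite---which is exactly what Proposition \ref{desclarge} is used to prove in Proposition \ref{prop:mainprop}(2). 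The paper's hands-on argument, on the other hand, avoids any appeal to Lie-superalgebra homology and stays closer to the PBW combinatorics.
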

\begin{proof}
Let $\pi: W \rightarrow W/\widetilde{W}$ be the obvious projection.
Because $W/\widetilde{W}$ is generated by the highest-weight vector $\pi(w)$, we have $W/\widetilde{W} = \mathcal{U}(\NS^-)\pi(w)$. It follows that
\begin{align} \label{seq}
W=\U(\NS^-)w+\W    
\end{align}
and therefore by \eqref{c1W},
\begin{align} \label{C1Wsum}
C_1(W)=\sum_{n \in \mathbb{Z}_{\geq 2}} L_{-n}\U(\NS^-)w+ \sum_{m\in \mathbb{Z}_{\geq 1}}G_{-m-\frac{1}{2}}\U(\NS^-)w+C_1(\W).
\end{align}
Since $W$ is $C_1$-cofinite, $W/\tilW$ is a $C_1$-cofinite highest-weight $\cS(c,0)$-module, and thus by Lemma \ref{lemc1}(1), $W/\W\cong M^\NS(c,h)/J$ (up to a parity reversal) for some $h\in H_c$ and non-zero submodule $J\subset M^\NS(c,h)$. By Corollary \ref{coro}(1), there are two cases to consider: 
\begin{enumerate}
    \item $J\cong M^\NS(c,h')$ for some $h'\in \mathbb{C}$, or
    \item $J\cong M^\NS(c,h_1')+ M^\NS(c,h_2')$    for some $h_1', h_2'\in \mathbb{C}$.
\end{enumerate}
We will show that an element in $\mathcal{U}(\NS^-)w\cap \widetilde{W}$ of sufficiently high conformal weight is in $C_1(\widetilde{W})$ in either case.

    \textbf{Case 1:} $J\cong M^\NS(c,h')$ for $h'\in \mathbb{C}$, so there is an isomorphism $\phi:  M^\NS(c,h)/M^\NS(c,h') \xrightarrow{\sim} W/\W $. By Theorem \ref{AsTh} there exists $U\in \U(\NS^-)$ of weight $h'-h  \in \frac{1}{2}\mathbb{Z}$ such that $U\vac_{c,h}$ is the unique (up to scale) singular vector in $M^\NS(c,h)$ which generates $M^\NS(c,h').$ Then $U\pi(w)=0$ in $W/\W$ since $\pi(w)=\phi (\vac_{c,h}+M^\NS(c,h'))$, and thus $Uw\in \W$. 
    
    Next, we show that any descendant of $w$ in $\W$ must also be a descendant of $Uw$. Indeed, if $\widetilde{U}w\in\W$ for some $\widetilde{U}\in\U(\NS^-)$, then 
\begin{equation*}
    \widetilde{U}\vac_{c,h}+M^\NS(c,h') =\phi^{-1}(\widetilde{U}\pi(w)) =0,
\end{equation*}
so $\widetilde{U}\vac_{c,h}\in M^\NS(c,h')$. The embedding diagrams in Proposition \ref{vermadiag} together with Theorem \ref{AsTh} imply that $M^\NS(c,h')=\U(\NS^-) U\vac_{c,h}$ as a submodule of $M^\NS(c,h)$, so we can write
\begin{align*}
    \widetilde{U}\vac_{c,h}=T U \vac_{c,h}
    \end{align*}
    for some $T\in \U(\NS^-)$. This implies $\widetilde{U}=TU$ since $M^\NS(c,h)$ has a PBW basis, so $\widetilde{U}w=TUw$,  showing that every descendant of $w$ that is in $\W$ must in fact be a descendant of $Uw$.
That is, 
    \begin{align}\label{eqn:intersection}
        \U(\NS^-)w\cap \W=\U(\NS^-)Uw.
    \end{align}
    After normalizing so that $U$ is of the form \eqref{svector}, we can assume that the coefficient of $(G_{-\frac{1}{2}})^{2(h'-h)}$ in $U$ is $1$.

 On the other hand, since $W$ is $C_1$-cofinite, we know that $(G_{-\frac{1}{2}})^{M}w \in C_1(W)$ for $M$ sufficiently large. Thus by \eqref{C1Wsum}, there exist $U^{(n)}, F^{(m)}\in \U(\NS^-)$ and $w'\in C_1(\W)$ such that 
 \begin{align*} 
     (G_{-\frac{1}{2}})^{M}w=\sum_{n\geq 2} L_{-n} U^{(n)} w+ \sum_{m\geq 1} G_{-m-\frac{1}{2}}F^{(m)}w+w',
 \end{align*}
 so that 
\begin{align*}
w'=(G_{-\frac{1}{2}})^{M}w-\sum_{n\geq 2} L_{-n} U^{(n)} w- \sum_{m\geq 1} G_{-m-\frac{1}{2}}F^{(m)}w \in \U(\NS^-)w\cap \W.
\end{align*}
The previous paragraph shows that there exists $T\in\U(\NS^-)$ of weight $K=\frac{M}{2}-h'+h$ such that 
\begin{equation*}
    TU = (G_{-\frac{1}{2}})^{M}-\sum_{n\geq 2} L_{-n} U^{(n)} - \sum_{m\geq 1} G_{-m-\frac{1}{2}}F^{(m)}.
\end{equation*}
Then the coefficient of $(G_{-\frac{1}{2}})^{2K}$ in $T$ is $1$, so we can write
\begin{align} \label{u2}
T=(G_{-\frac{1}{2}})^{2K}+T'
\end{align}
where $T' \in \sum_{n \in \mathbb{Z}_{\geq 2}} L_{-n}\U(\NS^-)+ \sum_{m\in \mathbb{Z}_{\geq 1}}G_{-m-\frac{1}{2}}\U(\NS^-)$. In particular, $T'Uw\in C_1(\W)$ because $Uw\in \widetilde{W}$. Now using \eqref{u2},
\begin{align*}
(G_{-\frac{1}{2}})^{2K}Uw=TUw-T'Uw=w'-T'Uw \in C_1(\W).  
\end{align*}
By \eqref{eqn:intersection}, this is enough to show that any descendant of $w$ of the form $\widetilde{U}w$ that is in $\W$ will also be in $C_1(\W)$ if the weight of $\widetilde{U}\in \U(\NS^-)$ is large enough.

 \textbf{Case 2:} $J\cong M^\NS(c, h_1')+ M^\NS(c, h_2')$. We modify the proof of Case 1 to this slightly more complicated setting. With an analogous argument we first obtain two elements  $U_1, U_2\in\U(\NS^-)$ (instead of just $U$ as in Case 1) such that  
\begin{align} \label{suf}
\U(\NS^-)w\cap \W=\U(\NS^-)U_1w+\U(\NS^-)U_2w,
\end{align}
with
\begin{align*}
U_i & \in(G_{-\frac{1}{2}})^{2(h_i'-h)}+\sum_{n \in \mathbb{Z}_{\geq 2}} L_{-n}\U(\NS^-)+ \sum_{m\in \mathbb{Z}_{\geq 1}}G_{-m-\frac{1}{2}}\U(\NS^-)
\end{align*}
for $i=1,2$. Thus for all sufficiently large $M$, we have
\begin{align} \label{Mchoice} 
     (G_{-\frac{1}{2}})^{M}w=\sum_{n=2}^{\infty}L_{-n} U^{(n)} w+ \sum_{m=1}^\infty G_{-m-\frac{1}{2}}F^{(m)}w+w'
 \end{align}
as before, where we can now write $w'\in C_1(\W)$ as
\begin{align*}
w'=T_1U_1w+T_2U_2w
\end{align*}
for elements $T_1, T_2 \in \U(\NS^-)$ of weights $K_i=\frac{M}{2}-h_i'+h$ for $i=1,2$.

In this case we can only conclude that there exist $a, b \in \mathbb{C}$ such that $a+b=1$ and  
\begin{align} \label{inc1}
    a(G_{-\frac{1}{2}})^{2K_1}U_1w+b(G_{-\frac{1}{2}})^{2K_2}U_2w   \in C_1(\W).                   
\end{align}
From the embedding diagrams (see Corollary \ref{coro}(4)), $U_1\vac_{c,h}$ and $U_2\vac_{c,h}$ in $M^\NS(c,h)$ have a common descendant $U_3\vac_{c,h}$, and thus there exist $S_1, S_2 \in \U(\NS^-)$ such that 
\begin{align} \label{decu3}
U_3=S_1U_1=S_2U_2.    
\end{align}
By Theorem \ref{svector}, we have 
\begin{align*}
S_i\in (G_{-\frac{1}{2}})^{2(h_3-h_i')}+\sum_{n \in \mathbb{Z}_{\geq 2}} L_{-n}\U(\NS^-)+ \sum_{m\in \mathbb{Z}_{\geq 1}}G_{-m-\frac{1}{2}}\U(\NS^-)   
\end{align*}
for $i=1,2$, where $h_3$ is the weight of $U_3$. Moreover, if we assume $M$ in \eqref{Mchoice} is large enough, then we can multiply $U_3$ by a power of $G_{-\frac{1}{2}}$ if necessary so that $h_3-h_i'=K_i$ for $i=1,2$. This leads to
\begin{equation}\label{last}
(G_{-\frac{1}{2}})^{2K_i}U_iw\in S_iU_iw +C_1(\W)
\end{equation}
for $i=1,2$. Combining \eqref{inc1}, \eqref{decu3}, and \eqref{last}, we then get
\begin{align*}
    S_iU_iw & =(a+b)S_iU_iw = aS_1U_1 w+ bS_2 U_2 w\nonumber\\
    &\in a(G_{-\frac{1}{2}})^{2K_1}U_1w+b(G_{-\frac{1}{2}})^{2K_2}U_2w  + C_1(\W) = C_1(\W)
\end{align*}
for $i=1,2$. This plus \eqref{last} then implies $(G_{-\frac{1}{2}})^{2K_i}U_iw\in C_1(\W)$ for $i=1,2$. This holds for all sufficiently large $K_1$ and $K_2$, so using \eqref{suf} we find that any element in $\mathcal{U}(\NS^-)w\cap \widetilde{W}$ of sufficiently large weight is in $C_1(\widetilde{W})$, proving the proposition.

\end{proof}

We now proceed to show that any $C_1$-cofinite lower-bounded generalized $\cS(c,0)$-module has finite length and that its composition factors must be $C_1$-cofinite.
The next result was proved in \cite{CJORY} for vertex operator algebras; the proof for superalgebras is essentially the same, so we omit the proof.
\begin{lemma} [\cite{CJORY} Lemma 3.1.1]\label{le}
Let $V$ be a vertex operator superalgebra and let $W$ be a generalized $V$-module.
\begin{enumerate}
    \item If $0\neq w\in W$ has $L_0$-eigenvalue $h \in \mathbb{C}$ and $h-\frac{1}{2}n$ is not an $L_0$-eigenvalue for any $n\in \mathbb{Z}_{\geq 1}$, then $w\notin C_1(W)$.
    \item If $W$ is lower bounded and $W=C_1(W)$, then $W=0$.
\end{enumerate}
\end{lemma}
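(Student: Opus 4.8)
The plan is to prove (1) directly from the conformal-weight grading and then deduce (2) by a descending-chain argument, exactly as in \cite{CJORY}; since both arguments concern only the $L_0$-grading and never invoke the Jacobi identity, no sign subtleties arise in the superalgebra setting.

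For part (1), I would first recall that for homogeneous $v\in V$ of conformal weight $\mathrm{wt}\,v$, the mode $v_{-1}$ raises conformal weight by $\mathrm{wt}\,v$, i.e.\ $v_{-1}$ maps the generalized $L_0$-eigenspace $W_{[h']}$ into $W_{[h'+\mathrm{wt}\,v]}$ (this follows from the relation $[L_0,v_{-1}]=(\mathrm{wt}\,v)\,v_{-1}$, which holds for $v$ of either parity since $L_0$ is even). Now suppose $0\neq w\in W_{[h]}$ lies in $C_1(W)$, and write $w=\sum_i (v^i)_{-1}w^i$ with each $v^i\in V_+$ homogeneous of conformal weight $k_i\in\frac12\ZZ_{>0}$ and $w^i\in W$. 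Decomposing each $w^i$ into its finitely many generalized $L_0$-eigencomponents and projecting the identity onto $W_{[h]}$, only the components lying in $W_{[h-k_i]}$ survive, so that $w=\sum_i (v^i)_{-1}w^i_{h-k_i}$ with $w^i_{h-k_i}\in W_{[h-k_i]}$. Since $k_i\in\frac12\ZZ_{>0}$, each $h-k_i$ is of the form $h-\frac12 n$ with $n=2k_i\in\ZZ_{\geq1}$, so the hypothesis forces $W_{[h-k_i]}=0$ and hence $w=0$, a contradiction. Thus $w\notin C_1(W)$.

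For part (2), assume $W\neq 0$ and let $T=\{h\in\CC\mid W_{[h]}\neq 0\}$, which is nonempty. The key point is to produce a single $h_0\in T$ with $h_0-\frac12 n\notin T$ for all $n\in\ZZ_{\geq1}$; part (1) then applies to any $0\neq w\in W_{[h_0]}$ to give $w\notin C_1(W)=W$, the desired contradiction. To find such an $h_0$, I would argue by contradiction: if every $h\in T$ admitted some $n\geq 1$ with $h-\frac12 n\in T$, then starting from any element of $T$ one could build an infinite sequence in $T$ whose real parts decrease by at least $\frac12$ at each step and hence tend to $-\infty$, contradicting the lower-boundedness of $W$.

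The arguments are almost entirely bookkeeping; the only point requiring genuine care is in part (1), namely verifying that the conformal-weight shift produced by $v_{-1}$ matches exactly the excluded values $h-\frac12 n$, and that one may freely pass to generalized eigencomponents before projecting. In part (2), the one thing to be careful about is to avoid assuming that the infimum of $\mathrm{Re}(T)$ is attained (which need not hold in general); the descending-chain formulation sidesteps this entirely, using only that the real parts are bounded below.
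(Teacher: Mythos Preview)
Your proposal is correct and follows essentially the same approach as the paper: for (1), write $w$ as a sum of $v^i_{-1}w^i$ and use that $v^i_{-1}$ shifts conformal weight by $\mathrm{wt}\,v^i\in\frac12\ZZ_{>0}$ to force $w=0$; for (2), use lower-boundedness to find a minimal conformal weight and apply (1). Your descending-chain formulation in (2) is slightly more explicit than the paper's one-line appeal to lower-boundedness, but the content is identical.
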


The proof of the following result is essentially the same as that of \cite[Propositions 3.1.2 and 3.1.3]{CJORY}, with some adjustments for the super Virasoro case:
\begin{prop}\label{prop:mainprop}
If $W$ is a $C_1$-cofinite lower-bounded generalized $\cS(c,0)$-module, then:
\begin{enumerate}
    \item There exists a sequence of lower-bounded generalized $\cS(c,0)$-submodules
    $$0=W_0\subset W_1\subset \dots \subset W_{n-1}\subset W_n=W$$ such that $W_i/W_{i-1}$ is a  highest-weight $\cS(c,0)$-module for $i=1, \dots n.$
    \item For any such filtration of $W$, each $W_i$ is $C_1$-cofinite.
\end{enumerate}
\end{prop}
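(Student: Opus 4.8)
For part (1) the plan is to induct on the finite integer $d:=\dim W/C_1(W)$, peeling off a highest-weight submodule at each stage and checking that $d$ strictly decreases. For part (2) I would isolate a single ``key lemma'' describing how $C_1$-cofiniteness passes to a submodule $U\subseteq X$ when $X/U$ is highest weight, and then apply it repeatedly via a downward induction along the given filtration. The engine of the key lemma is Proposition \ref{desclarge}.

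\textbf{Part (1).} The base case $d=0$ gives $W=C_1(W)$, hence $W=0$ by Lemma \ref{le}(2), and the empty filtration works. For $d\geq 1$ we have $W\neq 0$, so since $W$ is lower bounded there is a conformal weight $h$ of $W$ that is of minimal real part within its coset modulo $\frac12\ZZ$; such a weight exists and is unique in its coset because weights in one coset differ by real half-integers. Choosing a genuine $L_0$-eigenvector $0\neq w\in W_{[h]}$, minimality forces $L_n w=G_{n-\frac12}w=0$ for all $n\in\ZZ_{\geq1}$, so $W_1:=\U(\NS^-)w=\U(\NS)w$ is a highest-weight submodule. Moreover $h-\frac{n}{2}$ is not a weight of $W$ for any $n\in\ZZ_{\geq1}$, so Lemma \ref{le}(1) gives $w\notin C_1(W)$; hence the image of $W_1$ in $W/C_1(W)$ is nonzero. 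Using Lemma \ref{C1H}(1) we get $(W/W_1)/C_1(W/W_1)\cong W/(C_1(W)+W_1)$, which therefore has dimension strictly less than $d$. Since $W/W_1$ is again lower bounded and $C_1$-cofinite, the induction hypothesis supplies a finite filtration of $W/W_1$ by highest-weight quotients, and pulling it back along $W\twoheadrightarrow W/W_1$ and prepending $0\subset W_1$ yields the desired filtration of $W$.

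\textbf{Part (2).} I first record that by part (1) $W$ admits a finite filtration with highest-weight quotients; since each highest-weight module is a quotient of a Verma module it is grading restricted and supported in a single coset modulo $\frac12\ZZ$, so $W$ and hence every $W_i$ is grading restricted and meets only finitely many cosets. The statement then follows from the key lemma: \emph{if $X$ is a lower-bounded $C_1$-cofinite $\cS(c,0)$-module and $U\subseteq X$ is a submodule with $X/U$ a highest-weight module, then $U$ is $C_1$-cofinite.} Granting this, I argue by downward induction: $W_n=W$ is $C_1$-cofinite by hypothesis, and if $W_i$ is $C_1$-cofinite then applying the key lemma to $X=W_i$ and $U=W_{i-1}$, using that $W_i/W_{i-1}$ is highest weight, shows $W_{i-1}$ is $C_1$-cofinite.

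\textbf{Proof plan for the key lemma and the main obstacle.} Write $X/U=\U(\NS^-)(w+U)$, so that $X=\U(\NS^-)w+U$. The goal is to show $U_{[h]}\subseteq C_1(U)$ for all $h$ with $\mathrm{Re}(h)$ sufficiently large; since $C_1(U)$ is a graded subspace (by Proposition \ref{prop:C1W-span-set}) and $U$ is grading restricted with finitely many cosets, this at once bounds $\dim U/C_1(U)$. To establish the claim I use $C_1$-cofiniteness of $X$ to fix a constant $H_0$ with $X_{[h]}\subseteq C_1(X)$ whenever $\mathrm{Re}(h)>H_0$. For such $h$ and any $v\in U_{[h]}$, I expand $v\in C_1(X)$ via Proposition \ref{prop:C1W-span-set} in weight-homogeneous form and split each coefficient in $X$ into its $\U(\NS^-)w$-part and its $U$-part; this yields $v=Pw+c$ with $Pw\in\U(\NS^-)w$ of weight $h$ and $c\in C_1(U)$, whence $Pw=v-c\in \U(\NS^-)w\cap U$. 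Proposition \ref{desclarge} then forces $Pw\in C_1(U)$ once $\mathrm{Re}(h)$ exceeds the threshold provided there, so $v=Pw+c\in C_1(U)$. The main obstacle is precisely this last step: one needs that \emph{all} sufficiently-high-weight elements of $\U(\NS^-)w\cap U$ lie in $C_1(U)$, which is exactly the content of Proposition \ref{desclarge} and is where the detailed singular-vector analysis (Theorem \ref{AsTh}, Corollary \ref{coro}) is really doing the work; the remaining bookkeeping—keeping every decomposition weight-homogeneous and using that $C_1(U)$ is graded—is routine.
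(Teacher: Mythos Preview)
Your proof is correct and follows essentially the paper's approach: part (1) is identical (induction on $\dim W/C_1(W)$, peeling off a highest-weight submodule at a minimal conformal weight via Lemma \ref{le}), and part (2) uses the same downward induction powered by Proposition \ref{desclarge}. Your packaging of (2) as a single key lemma---showing every sufficiently high-weight element of $U$ lies in $C_1(U)$---is slightly cleaner than the paper's explicit tracking of spanning vectors $(G_{-\frac12})^N w_j$ for $W_i/C_1(W_i)$, but the substance is the same.
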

\begin{proof}
We omit the proof of part (1) since it is almost exactly the same as that of \cite[Proposition 3.1.2]{CJORY} (see also the proof of \cite[Theorem 3.7]{CY}).



To prove (2), we will show that if $W_{i+1}$ is $C_1$-cofinite for some fixed $i=1,\ldots, n-1$, then $W_{i}$ is also $C_1$-cofinite. Choose $w_{j}\in W_j$ for $1\leq j\leq n$ such that $w_j+W_{j-1}\in W_j/W_{j-1}$ is a generating highest-weight vector. If $j<i+1$, then $W_j\subset W_{i+1}$, so because $W_{i+1}$ is $C_1$-cofinite, this implies $(G_{-\frac{1}{2}})^{N_j}w_j\in C_1(W_{i+1})$ for $N_j$ large enough. By (\ref{C1Wsum}) this implies
\begin{align} \label{C1i+1}
(G_{-\frac{1}{2}})^{N_j}w_j=\sum_{n\geq 2} L_{-n} U_j^{(n)} w_{i+1}+ \sum_{m\geq 1} G_{-m-\frac{1}{2}}F_j^{(m)}w_{i+1}+w_j'   
\end{align}
for certain $U_j^{(n)}, F_j^{(m)}\in \U(\NS^{-})$ and $w_j'\in C_1(W_j)$. Thus, if $j<i+1$, then 
\begin{align*}
\sum_{n\geq 2} L_{-n} U_j^{(n)} w_{i+1}+ \sum_{m\geq 1} G_{-m-\frac{1}{2}}F_j^{(m)}w_{i+1}=(G_{-\frac{1}{2}})^{N_j}w_j-w_j'\in \U(\NS^-)w_{i+1}\cap W_j.
\end{align*}
Applying Proposition \ref{desclarge} with $W=W_{i+1}, \widetilde{W}=W_i$ and $w=w_i$, we get 
$$\sum_{n\geq 2} L_{-n} U_j^{(n)} w_{i+1}+ \sum_{m\geq 1} G_{-m-\frac{1}{2}}F_j^{(m)}w_{i+1}\in C_1(W_i)$$ 
for sufficiently large $N_j$. This together with \eqref{C1i+1} implies that for all $j<i+1$,
\begin{align} \label{j<i+1}
 (G_{-\frac{1}{2}})^{N_j}w_j\in C_1(W_i)
\end{align}
for sufficiently large $N_j$.
Iterating \eqref{seq} yields 
\begin{align*}
&W_{i}=\sum_{j=1}^i \U(\NS^-)w_j, \textrm{ and }\\ &C_1(W_i)=    \sum_{j=1}^i\sum_{n \in \mathbb{Z}_{\geq 2}} L_{-n} \U(\NS^-)w_j+ \sum_{j=1}^i\sum_{m\in \mathbb{Z}_{\geq 1}} G_{-m-\frac{1}{2}}\U(\NS^-)w_j.
\end{align*}
This together with \eqref{j<i+1} implies that the vectors $(G_{-\frac{1}{2}})^N w_j+C_1(W_i)$ for $1\leq j\leq i$ and  $0\leq N< N_j$ span $W_i/C_1(W_i)$. Thus $W_i$ is $C_1$-cofinite if $W_{i+1}$ is, and this proves part (2) of the proposition since $W_n=W$ is $C_1$-cofinite.

\end{proof} 

Using the above proposition, we now obtain the main theorem of this section:
\begin{theorem}\label{thm:mainthm}
The category $\cC_1(\cS(c,0))$ of $C_1$-cofinite grading-restricted generalized $\cS(c,0)$-modules is the same as the category $\cO_c^{\mathrm{fin}}$ of finite-length grading-restricted generalized $\cS(c,0)$-modules whose composition factors are not isomorphic to Verma modules or their parity reversals.
\end{theorem}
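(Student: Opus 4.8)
The plan is to prove that the two categories coincide by checking they have the same objects, since both are full subcategories of the category of grading-restricted generalized $\cS(c,0)$-modules. The essential content has already been extracted in Proposition \ref{prop:mainprop}, so the theorem should follow by assembling the earlier lemmas, and I would organize the argument as a proof of the two inclusions.

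For the inclusion $\ocfin\subseteq\cC_1(\cS(c,0))$, I would start with $W\in\ocfin$. By definition its composition factors are irreducible and not isomorphic to Verma modules or their parity reversals. Every irreducible $\cS(c,0)$-module has the form $\Pi^n(L^\NS(c,h))$, and $L^\NS(c,h)$ is a Verma module exactly when $h\notin H_c$ (Proposition \ref{Redu}); hence each composition factor of $W$ has $h\in H_c$ and is therefore $C_1$-cofinite by Lemma \ref{lemc1}(1). Since $W$ has finite length with $C_1$-cofinite composition factors, Lemma \ref{C1H}(2) gives that $W$ is $C_1$-cofinite, so $W\in\cC_1(\cS(c,0))$.

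For the reverse inclusion $\cC_1(\cS(c,0))\subseteq\ocfin$, I would take a $C_1$-cofinite grading-restricted generalized module $W$. Being grading restricted, $W$ is lower bounded, so Proposition \ref{prop:mainprop}(1) supplies a finite filtration $0=W_0\subset W_1\subset\cdots\subset W_n=W$ whose successive quotients $W_i/W_{i-1}$ are highest-weight modules, and Proposition \ref{prop:mainprop}(2) guarantees that each $W_i$ is $C_1$-cofinite. By Lemma \ref{C1H}(1) every quotient $W_i/W_{i-1}$ is then a $C_1$-cofinite highest-weight module, so Lemma \ref{lemc1}(3) shows it has finite length with composition factors of the form $\Pi^m(L^\NS(c,h))$ for $h\in H_c$ and $m\in\{0,1\}$. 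Concatenating these finite filtrations shows that $W$ itself has finite length, and since $h\in H_c$ means $M^\NS(c,h)$ is reducible, no composition factor $L^\NS(c,h)$ (nor its parity reversal) is a Verma module. Hence $W\in\ocfin$.

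In this sense the theorem is essentially a corollary of Proposition \ref{prop:mainprop}; the genuinely nontrivial input is that $C_1$-cofiniteness propagates down the filtration, which was proved via the descendant argument of Proposition \ref{desclarge}. The main obstacle, already overcome in the preceding results, was precisely establishing that descent of $C_1$-cofiniteness along highest-weight subquotients; once it is in hand, the present statement follows formally by combining it with the structural classification of composition factors of $C_1$-cofinite highest-weight modules in Lemma \ref{lemc1}(3). The only point I would take care with is confirming that ``not a Verma module'' is equivalent to $h\in H_c$ for the relevant composition factors, so that Lemma \ref{lemc1}(1) and Lemma \ref{lemc1}(3) interlock correctly.
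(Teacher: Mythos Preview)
Your proposal is correct and follows essentially the same approach as the paper: both directions are handled via the same lemmas, with Proposition \ref{prop:mainprop} plus Lemma \ref{lemc1}(3) giving $\cC_1(\cS(c,0))\subset\ocfin$, and Lemma \ref{C1H}(2) (together with Lemma \ref{lemc1}(1)) giving the reverse inclusion. Your write-up is slightly more explicit than the paper's in justifying why each highest-weight subquotient $W_i/W_{i-1}$ is itself $C_1$-cofinite (via Lemma \ref{C1H}(1)), but this is a cosmetic difference rather than a different route.
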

\begin{proof}
Proposition \ref{prop:mainprop} and Lemma \ref{lemc1}(3) show that any module in $\cC_1(\cS(c,0))$ has finite length with composition factors of the form $\Pi^n(L^\NS(c,h))$ for $h \in H_c$ and $n=0,1$. Thus $\cC_1(\cS(c,0))\subset\cO_c^{\mathrm{fin}}$. Conversely, Lemma \ref{C1H}(2) shows that any finite-length module with $C_1$-cofinite composition factors is $C_1$-cofinite, so $\cO^{\mathrm{fin}}_c\subset\cC_1(\cS(c,0))$.

\end{proof}

Since every simple grading-restricted $\cS(c,0)$-module is self-contragredient, and since Theorem \ref{thm:mainthm} shows $\cC_1(\cS(c,0))$ equals the category $\cO^{\mathrm{fin}}_c$ finite-length $\cS(c,0)$-modules whose composition factors are $C_1$-cofinite, Theorem \ref{thm:vosa-btc} implies that $\cO_c^{\mathrm{fin}}$ is a braided tensor category:

\begin{cor}\label{cor:tensor}
    For any central charge $c\in\CC$, the category $\cO_c^{\mathrm{fin}}$ of finite-length grading-restricted generalized $V$-modules with $C_1$-cofinite composition factors admits the braided tensor category structure of \cite{HLZ1}-\cite{HLZ8}.
\end{cor}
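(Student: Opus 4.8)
The plan is to obtain Corollary \ref{cor:tensor} by feeding the structural result of Theorem \ref{thm:mainthm} into the general criterion of Theorem \ref{thm:vosa-btc}, applied to the vertex operator superalgebra $V=\cS(c,0)$. First I would invoke Theorem \ref{thm:mainthm} to identify $\cC_1(\cS(c,0))$ with $\cO_c^{\mathrm{fin}}$; since the assertion of the corollary is precisely that $\cO_c^{\mathrm{fin}}$ carries the Huang--Lepowsky--Zhang braided tensor structure, it then suffices to verify that the two sufficient conditions (1) and (2) of Theorem \ref{thm:vosa-btc} hold for $\cS(c,0)$.

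Condition (2) is immediate from the work already done. Theorem \ref{thm:mainthm} states exactly that $\cC_1(\cS(c,0))$ coincides with the category of finite-length modules whose composition factors are not isomorphic to Verma modules or their parity reversals; and by Lemma \ref{lemc1}(1) these composition factors---namely the simple quotients $L^\NS(c,h)$ of reducible Verma modules for $h\in H_c$, together with their parity reversals---are exactly the simple $C_1$-cofinite $\cS(c,0)$-modules. Hence $\cC_1(\cS(c,0))$ is the category of finite-length $C_1$-cofinite modules with $C_1$-cofinite composition factors, which is what condition (2) requires.

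For condition (1), I would use the self-contragredience of simple modules established in the construction of $\cS(c,0)$. A simple $C_1$-cofinite module is, by Lemma \ref{lemc1}(1), isomorphic to $L^\NS(c,h)$ or $\Pi(L^\NS(c,h))$ for some $h\in H_c$, and the $v=\omega$ case of the contragredient formula \eqref{eqn:contra} gives $L^\NS(c,h)'\cong L^\NS(c,h)$ and $\Pi(L^\NS(c,h))'\cong\Pi(L^\NS(c,h))$. Thus the contragredient of any simple $C_1$-cofinite module is again $C_1$-cofinite, so condition (1) holds. With both conditions verified, Theorem \ref{thm:vosa-btc} yields the braided tensor category structure on $\cC_1(\cS(c,0))=\cO_c^{\mathrm{fin}}$, completing the argument.

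Since all the genuine analytic input (convergence and extension of intertwining operators, the associativity and coherence axioms) has already been absorbed into Theorem \ref{thm:vosa-btc}, and the hard structural input (finite length and $C_1$-cofiniteness of composition factors) into Theorem \ref{thm:mainthm} via Proposition \ref{prop:mainprop}, there is no substantial obstacle at this final assembly stage. The only point that truly merits attention is that closure of $\cC_1(\cS(c,0))$ under contragredients is needed to run the argument of Theorem \ref{thm:vosa-btc}, and this closure reduces---via closure under quotients together with the finite-length filtrations of Proposition \ref{prop:mainprop}---to the self-contragredience of the simple modules, which is exactly the content of condition (1).
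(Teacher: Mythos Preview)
Your proposal is correct and follows exactly the same route as the paper: invoke Theorem \ref{thm:mainthm} to identify $\cC_1(\cS(c,0))$ with $\cO_c^{\mathrm{fin}}$ (giving condition (2)), note that every simple $\cS(c,0)$-module is self-contragredient (giving condition (1)), and then apply Theorem \ref{thm:vosa-btc}. The paper compresses this into a single sentence immediately preceding the corollary, but the logical content is identical to what you wrote.
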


In the following subsections, we discuss some of the detailed structure of $\cO_c^{\mathrm{fin}}$ for central charges at which semisimple tensor categories occur.

\subsection{Tensor structure at central charge \texorpdfstring{$c^\NS(t)$}{cNS(t)} for irrational \texorpdfstring{$t$}{t}}
In this subsection we consider the tensor category $\ocfin$ when
\begin{align} \label{genc}
c=c^\NS(t)=\frac{15}{2}-3(t+t^{-1}), \quad t\in \mathbb{C}\setminus \mathbb{Q}.    
\end{align}
For these central charges, the Verma module embedding diagrams are particularly simple (see Proposition \ref{vermadiag}(3)). 

We start by proving that $\ocfin$ for $c=c^\NS(t)$, $t\notin\QQ$,
is semisimple, similar to \cite[Theorem 5.1.2]{CJORY}. First note that the simple objects in $\ocfin$ are the irreducible modules $\cS_{r,s}:=L^\NS(c, h^\NS_{r,s})$ with $r,s\in \mathbb{Z}_{\geq 1}$ such that $r-s\in 2\mathbb{Z}$, together with their parity reversals. Since $t\notin\QQ$, $h^\NS_{r,s} = h^\NS_{r',s'}$ for $r,s,r',s'\in\ZZ_{\geq 1}$ if and only if $r=r'$ and $s=s'$.
\begin{theorem}\label{thm:t-irrational-simple}
If $c=c^\NS(t)$ for $t\notin\QQ$, then $\ocfin$ is semisimple with distinct simple objects $\Pi^n(\cS_{r,s})$ for $n=0,1$ and $r,s\in \mathbb{Z}_+$ such that $r-s\in 2\mathbb{Z}$.
\end{theorem}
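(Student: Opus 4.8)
The plan is to show that $\Ext^1$ between any two simple objects of $\ocfin$ vanishes; since every object of $\ocfin$ has finite length (Theorem \ref{thm:mainthm}) and $\ocfin$ is closed under extensions, this forces semisimplicity by the usual induction on length. First I would pin down the simple objects. By Lemma \ref{lemc1} the simple $C_1$-cofinite modules are exactly the $\Pi^n(\cS_{r,s})$ with $h^\NS_{r,s}\in H_c$, and for $t\notin\QQ$ Proposition \ref{vermadiag}(3) shows that each reducible Verma module $M^\NS_{r,s}$ has a unique nonzero proper submodule $M^\NS_{-r,s}$; since $h^\NS_{-r,s}=h^\NS_{r,s}+\tfrac{rs}{2}$ is not of the form $h^\NS_{r',s'}$ with $r',s'\geq 1$ (again because $t\notin\QQ$), this submodule is itself an \emph{irreducible} Verma module and $\cS_{r,s}=M^\NS_{r,s}/M^\NS_{-r,s}$. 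Distinctness of the $\Pi^n(\cS_{r,s})$ follows from the parity of the lowest-weight space together with the injectivity of $(r,s)\mapsto h^\NS_{r,s}(t)$ for irrational $t$.

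Next I would dispose of the extensions that split for formal reasons. Consider $0\to S_2\to W\to S_1\to 0$ with $S_1,S_2$ simple of lowest conformal weights $h_1,h_2$. Lifting the generating lowest-weight vector of $S_1$ to $\hat u\in W$, every element of $\NS^+\hat u$ lands in $S_2$ at conformal weight strictly below $h_1$. Using self-duality of simple modules to reduce to $\mathrm{Re}(h_1)\leq\mathrm{Re}(h_2)$, if $h_1\neq h_2$ then $(S_2)_{[h_1]}=0$, so $\hat u$ is a genuine lowest-weight vector; the submodule it generates meets the simple socle $S_2$ in $0$ (the alternative would force $W$ to be a quotient of $M^\NS_{r_1,s_1}$ containing a Verma submodule, impossible in $\ocfin$), and the sequence splits. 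The same lifting argument, combined with the fact that $L_0$ preserves parity, also splits every extension between $\cS_{r,s}$ and $\Pi(\cS_{r,s})$, since there $(L_0-h)\hat u$ would be an even vector lying in the odd space $(S_2)_{[h]}$.

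This leaves the genuine self-extensions $0\to\cS_{r,s}\to W\to\cS_{r,s}\to 0$ of a fixed parity, which I expect to be the main obstacle: here $W_{[h]}$ is two-dimensional and a priori $L_0$ acts by a nontrivial Jordan block $L_0u=hu+v$, with $v$ the lowest-weight vector generating the submodule $\cS_{r,s}$. Writing $P=P_{r,s}\in\U(\NS^-)$ for the unique (by Theorem \ref{AsTh}) singular operator with $P\vac_{c,h}$ generating $M^\NS_{-r,s}$, so that $Pv=0$ and $Pu\in S_2$, a short computation using $[L_0,P]=-\tfrac{rs}{2}P$ gives $(h+\tfrac{rs}{2})Pu=(h-\tfrac{rs}{2})Pu$, hence $Pu=0$. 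The crux is then to show that $v$ must lie in the cyclic module $\langle u\rangle$, so that the extension collapses and $v=0$. Equivalently, in the universal generalized Verma module built from a rank-two $L_0$-Jordan block, one must show $\tilde v\in\langle P\tilde u\rangle$; I would establish this by evaluating the logarithmic Shapovalov pairing $P^{\dagger}P\tilde u=\mu\,\tilde v$ and proving $\mu\neq 0$. Since $\mu$ is a nonzero multiple of the $h$-derivative of the relevant factor of the Kac--Wakimoto determinant, $\mu\neq 0$ is exactly the statement that $h^\NS_{r,s}(t)$ is a \emph{simple} root of that determinant, which holds precisely because $t\notin\QQ$ makes $(r,s)\mapsto h^\NS_{r,s}(t)$ injective. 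Thus no nontrivial self-extension exists, all $\Ext^1$ groups between simples vanish, and $\ocfin$ is semisimple. The delicate step requiring genuine computation is the nonvanishing of this logarithmic Shapovalov number $\mu$; everything else is formal.
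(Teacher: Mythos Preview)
Your overall strategy and your treatment of extensions between non-isomorphic simples match the paper's proof closely: the paper also reduces to $\Ext^1=0$, uses the half-integer weight-shift observation to dispose of the case $h_1-h_2\notin\frac12\ZZ$, applies contragredient duality to reduce $h_1-h_2\in\frac12\ZZ\setminus\{0\}$ to a single sign, and then argues that the cyclic submodule generated by a lifted highest-weight vector must be the simple module (since for irrational $t$ the only other quotient of $M^\NS_{r,s}$ is the Verma itself, excluded from $\ocfin$). One cosmetic point: your reduction via ``$\mathrm{Re}(h_1)\le\mathrm{Re}(h_2)$'' should really be phrased as $h_1-h_2\in\frac12\ZZ_{\le 0}$, since for complex irrational $t$ the weights $h^\NS_{r,s}(t)$ need not be real.

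The genuine divergence is the self-extension case $h_1=h_2$. The paper simply invokes \cite[Lemma 6.2.2]{GK}, which directly gives the absence of nontrivial self-extensions for simple highest-weight modules. Your attempted direct argument contains an error: since $P\in\U(\NS^-)$ \emph{raises} conformal weight by $\frac{rs}{2}$, one has $[L_0,P]=+\frac{rs}{2}P$, not $-\frac{rs}{2}P$. With the correct sign, computing $L_0Pu$ via the commutator and via $Pu\in S_2$ both give $(h+\frac{rs}{2})Pu$, so the deduction $Pu=0$ is unfounded. The next step is also not right as stated: showing $v\in\langle u\rangle$ only makes $W$ cyclic; it does not force $v=0$. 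What you actually need is that in the universal generalized Verma $\tilde M$ there is \emph{no} submodule $K$ with $\tilde M/K$ a length-two self-extension of $\cS_{r,s}$ in $\ocfin$, and this is where the Shapovalov analysis enters. Your final intuition is correct---the obstruction is exactly that $h^\NS_{r,s}(t)$ is a \emph{simple} zero of the relevant Kac--Wakimoto determinant factor for irrational $t$---and this is precisely what drives the Gorelik--Kac result. But the passage from ``simple zero'' to ``$\Ext^1=0$'' requires more care than your sketch provides (one must analyze $\NS^+\cdot P\tilde u$ modulo $\langle P\tilde v\rangle$, not just $L_0$); the cleanest route is to cite \cite{GK} as the paper does.
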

\begin{proof}
    Since every module in $\ocfin$ has finite length, it is enough to show that any short exact sequence
    \begin{align*}
    0\longrightarrow \Pi^{n_1}(L^\NS(c,h_1))\longrightarrow M \longrightarrow \Pi^{n_2}(L^\NS(c,h_2))\longrightarrow 0
\end{align*}
such that $M$ is a module in $\ocfin$, $n_1,n_2\in\ZZ$, and $h_1,h_2\in H_c$ splits. Because operators in $\NS$ change conformal weights by half-integers, any such exact sequence splits if $h_1-h_2\notin\frac{1}{2}\ZZ$. Moreover, by \cite[Lemma 6.2.2]{GK}, simple modules in $\ocfin$ do not have non-split self-extensions, so the exact sequence splits when $h_1=h_2$. 

If $h_1-h_2\in\frac{1}{2}\ZZ_{\geq 1}$, then there is a highest-weight vector $v_{h_2}\in M$ of conformal weight $h_2$. If $N$ is the submodule of $M$ generated by $v_{h_2}$, then $N$ is a quotient of $\Pi^{n_2}(M^\NS(c, h_2))$. As $t$ is irrational, the embedding diagram in Proposition \ref{vermadiag}(3) shows that $N$ is isomorphic to either $\Pi^{n_2}(M^\NS(c, h_2))$ or $\Pi^{n_2}(L^\NS(c,h_2))$. Since Verma modules are not objects of $\ocfin$, we get $N\cong \Pi^{n_2}(L^\NS(c,h_2))$ and the sequence splits.

If $h_2-h_1\in\frac{1}{2}\ZZ_{\geq 1}$, we take contragredient duals to get a short exact sequence
\begin{align*}
    0\longrightarrow \Pi^{n_2}(L^\NS(c,h_2))\longrightarrow M' \longrightarrow \Pi^{n_1}(L^\NS(c,h_1))\longrightarrow 0.
\end{align*}
Then by the previous paragraph, $M'\cong \Pi^{n_1}(L^\NS(c,h_1))\oplus \Pi^{n_2}(L^\NS(c,h_2))$, and thus $M\cong \Pi^{n_1}(L^\NS(c,h_1))\oplus \Pi^{n_2}(L^\NS(c,h_2))$ as well.

\end{proof}

Our next goal is to determine the fusion rules for the simple modules in the semisimple tensor category $\ocfin$ for $c=c^\NS(t)$, $t\notin\QQ$. Actually, these fusion rules were already determined in \cite[Example 7.11]{CMY-completions} for the $\Pi$-supercategory version of $\ocfin$ (where we allow odd as well as even morphisms), except that it was not established in \cite{CMY-completions} that the results apply to every central charge $c^\NS(t)$, $t\notin\QQ$. To understand why, we next recall the methods of \cite[Example 7.11]{CMY-completions}.

For $\ell \in \CC$, let $L(c_\ell,0)$ be the simple Virasoro vertex operator algebra at central charge 
\begin{align*}
c_{\ell}:=13-6(\ell+\ell^{-1}).
\end{align*}
Its $C_1$-cofinite simple modules $L(c_\ell,h_{r,s}(\ell))$ for $r,s\in\ZZ_{\geq 1}$ have lowest conformal weights
\begin{equation*}
 h_{r,s}(\ell) =\frac{r^2-1}{4} \ell-\frac{rs-1}{2}+\frac{s^2-1}{4}\ell^{-1}.
\end{equation*}
By \cite[Theorem~4.2.6]{CJORY}, the category of finite-length $L(c_\ell,0)$-modules with composition factors $L(c_\ell,h_{r,s}(\ell))$ for $r,s\in\ZZ_{\geq 1}$ is a braided tensor category.

For all $t\neq 0,-1$, let $a=\frac{1}{2}(t+1)$ and $b=\frac{1}{2}(t^{-1}+1)$. The Virasoro vertex operator algebras $L(c_a, 0)$ and $L(c_b,0)$ form a commuting pair of subalgebras in $\cS(c^\NS(t), 0) \otimes \cF(1)$, where $\cF(1)$ is the rank $1$ free fermion vertex superalgebra of central charge $\frac{1}{2}$. See Appendix \ref{N1FERM} for explicit expressions (involving $\sqrt{t}$) for the conformal vectors of $L(c_a,0)$ and $L(c_b,0)$ in $\cS(c^{\NS}(t),0)\otimes\cF(1)$. We can replace $t$ with a formal variable $x$ and view $\cS(c^{\NS}(x),0)$, $\cF(1)$, $L(c_a,0)$, and $L(c_b,0)$ as vertex operator (super)algebras over the field $\CC(x^{1/2})$; the explicit expressions in Appendix \ref{N1FERM} show that we still have an embedding
\begin{equation*}
    L(c_a,0)\otimes L(c_b,0)\hookrightarrow\cS(c^{\NS}(x),0)\otimes\cF(1).
\end{equation*}
Moreover, it follows from Corollary 2.6 and Theorem 2.10  of \cite{CGL} (see also the proof of \cite[Lemma 2.11]{CGL}) that
\begin{align}
\label{dec1}  \cS(c^\NS(x), 0) \otimes \cF(1) &\cong \bigoplus_{n=1}^\infty L(c_{a}, h_{1, n}(a))\otimes L(c_b, h_{1, n}(b))
\end{align}
as $L(c_a,0)\otimes L(c_b,0)$-modules over $\CC(x^{1/2})$. This means that the decomposition still holds after specializing $x\mapsto t$ for all but countably many $t\in\CC$. The methods of \cite[Example 7.11]{CMY-completions} yield rigidity and fusion rules of $\cO^{\mathrm{fin}}_{c^{\NS}(t)}$ for all irrational $t$ such that \eqref{dec1} holds, so we just need to prove this decomposition for all $t\notin\QQ$:
\begin{prop}\label{prop:decomp}
    For all $t\in\CC\setminus\QQ$,
    \begin{equation}\label{eqn:irrational-dec}
        \cS(c^\NS(t), 0) \otimes \cF(1) \cong \bigoplus_{n=1}^\infty L(c_{a}, h_{1, n}(a))\otimes L(c_b, h_{1, n}(b)) 
    \end{equation}
as $L(c_a,0)\otimes L(c_b,0)$-modules, where $a=\frac{1}{2}(t+1)$ and $b=\frac{1}{2}(t^{-1}+1)$.
\end{prop}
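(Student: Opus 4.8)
The plan is to upgrade the generic specialization of \eqref{dec1} to \emph{every} irrational $t$ by combining a $t$-independent character identity with an explicit injective $L(c_a,0)\ot L(c_b,0)$-module map assembled from the singular vectors of the generic decomposition. Throughout, fix an irrational $t$ together with a square root $t^{1/2}$; since $t\notin\QQ$ both $a=\tfrac12(t+1)$ and $b=\tfrac12(t^{-1}+1)$ are irrational, and $t\neq 0,-1$, so the conformal vectors $\omega_a,\omega_b$ of Appendix \ref{N1FERM} and the numbers $a^{\pm1},b^{\pm1}$ all specialize. Write $X=\cS(c^\NS(t),0)\ot\cF(1)$, $A=L(c_a,0)\ot L(c_b,0)$, and $Y=\bigoplus_{n\geq1}L(c_a,h_{1,n}(a))\ot L(c_b,h_{1,n}(b))$ for the conjectured right-hand side, viewing $X$ as an $A$-module via the embedding $A\hookrightarrow X$, which is valid for all $t\neq0,-1$.

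First I would record the identity $\operatorname{ch}Y=\operatorname{ch}X$ with respect to the total conformal grading $L_0=L_0^a+L_0^b=L_0^{\cS}+L_0^{\cF}$. The character of $X$ is manifestly independent of $t$, since $\cS(c,0)$ and $\cF(1)$ are freely generated. On the other side, for irrational $a$ the Virasoro Verma module $M(c_a,h_{1,n}(a))$ has a single singular vector, at level $n$, so $L(c_a,h_{1,n}(a))$ has character $q^{h_{1,n}(a)}(1-q^n)/\prod_{k\geq1}(1-q^k)$, and likewise for $b$; because $h_{1,n}(a)+h_{1,n}(b)=\tfrac{(n-1)^2}{2}$ is independent of $t$, the character of $Y$ is also $t$-independent. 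As these two $t$-independent characters agree at the generic $t$ where \eqref{dec1} is already known, they agree for our $t$ as well.

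The technical core is to produce, for each $n\geq1$, a nonzero singular vector $v_n\in X$ of bidegree $(h_{1,n}(a),h_{1,n}(b))$ generating a quotient of $L(c_a,h_{1,n}(a))\ot L(c_b,h_{1,n}(b))$. I would work over $K=\CC(x^{1/2})$, where by \eqref{dec1} such a singular vector $\tilde v_n$ exists in the degree-$\tfrac{(n-1)^2}{2}$ subspace of $X$, a finite-dimensional $\CC$-space independent of $t$ tensored up to $K$. Since the modes $L_m^a,L_m^b$ act with coefficients in $\CC[x^{\pm1/2}]$ by Appendix \ref{N1FERM}, I would clear denominators and divide out the $\gcd$ of the coordinates to obtain a primitive representative $v_n(x)$ with coordinates in $\CC[x^{1/2}]$; primitivity guarantees $v_n(t)\neq0$ for every $t$. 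The equations $L_m^av_n=L_m^bv_n=0$ for $m\geq1$, the eigenvalue equations (after clearing the factors $x+1$ and $x$ appearing in $h_{1,n}(a),h_{1,n}(b)$), and the relations $s^a\cdot v_n=s^b\cdot v_n=0$ coming from the unique Verma singular vectors $s^a,s^b$ all hold over $K$, hence as polynomial identities in $x^{1/2}$, and therefore specialize at $x^{1/2}=t^{1/2}$. Thus $v_n(t)$ is a nonzero singular vector annihilated by $s^a,s^b$, so it generates a quotient of the simple $A$-module $L(c_a,h_{1,n}(a))\ot L(c_b,h_{1,n}(b))$.

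Finally I would assemble these into a map. Sending the lowest-weight vector of the $n$-th summand of $Y$ to $v_n(t)$ and extending $A$-linearly defines $\Phi\colon Y\to X$, well defined precisely because $v_n(t)$ satisfies every defining relation of $L(c_a,h_{1,n}(a))\ot L(c_b,h_{1,n}(b))$. Since $a,b$ are irrational, the summands of $Y$ are simple $A$-modules with pairwise distinct lowest weights $(h_{1,n}(a),h_{1,n}(b))$, so $Y$ is multiplicity-free semisimple; as $\Phi$ is nonzero, hence injective, on each summand, $\ker\Phi=0$. An injective grading-preserving map between spaces with equal finite-dimensional graded pieces is an isomorphism, which yields \eqref{eqn:irrational-dec}. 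The main obstacle is exactly this specialization step: guaranteeing that the generic singular vectors stay nonzero and keep satisfying their defining relations at \emph{every} irrational $t$, which is why the pointwise primitivity/clearing-denominators argument, rather than mere generic validity, is essential.
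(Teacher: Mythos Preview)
Your approach is correct and shares with the paper the key specialization idea: normalize the generic highest-weight vectors $v_n(x)$ to be primitive so that $v_n(t)\neq0$ at every $t$. The two proofs diverge after that. You additionally specialize the null-vector relations $s^a\cdot v_n=s^b\cdot v_n=0$ to conclude that each $M_n$ is already the simple $L(c_a,h_{1,n}(a))\otimes L(c_b,h_{1,n}(b))$, and then assemble an injective map $Y\hookrightarrow X$; the paper instead only records that $M_n$ is a quotient of the tensor of Vermas, lists its at-most-four composition factors from the embedding diagrams, notes these are pairwise disjoint across $n$ when $t\notin\QQ$, and deduces that $Y$ occurs as a subquotient of $X$. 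For the character equality, you use the slick observation that both $\operatorname{ch}X$ and $\operatorname{ch}Y$ are $t$-independent (the latter because $h_{1,n}(a)+h_{1,n}(b)=\tfrac{(n-1)^2}{2}$), so generic agreement suffices; the paper carries out the computation explicitly, reducing it to the Jacobi triple product identity.

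One small point in your write-up needs tightening: the coefficients of the $L^a_m,L^b_m$-action on $X$ lie in $\CC[x^{\pm1/2},(x+1)^{-1}]$ rather than $\CC[x^{\pm1/2}]$ (see Appendix~\ref{N1FERM}), and the coefficients of the singular-vector elements $s^a,s^b$ are rational in $a,b$ rather than polynomial in $x^{1/2}$. So your phrase ``hence as polynomial identities in $x^{1/2}$'' should be replaced by the observation that each coordinate of $s^a\cdot v_n$ is a rational function of $x^{1/2}$ whose denominator (a product of powers of $x$, $x+1$, and $a$, $b$) is nonzero at every irrational $t$, so the vanishing specializes. The paper's composition-factor route sidesteps this bookkeeping entirely, at the cost of the explicit character calculation; your route trades that calculation for a mild regularity check on the singular vectors.
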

\begin{proof}
    The decomposition \eqref{dec1} implies that for each $n\in\ZZ_{\geq 1}$, $\cS(c^{\NS}(x),0)\otimes\cF(1)$ contains a non-zero vector $v_n$ of conformal weight $h_{1,n}(a)+h_{1,n}(b)=\frac{(n-1)^2}{2}$ which is highest weight for both $L(c_a,0)$ and $L(c_b,0)$. Each $v_n$ is a $\CC(x^{1/2})$-linear combination of basis elements
    \begin{equation}\label{eqn:PBW-basis-for-N1-times-Ferm}
L_{-i_1-2}\cdots L_{-i_l-2} G_{-j_1-\frac{3}{2}}\cdots G_{-j_m-\frac{3}{2}}\vac_\NS\otimes\psi_{-k_1-\frac{1}{2}}\cdots\psi_{-k_n-\frac{1}{2}}\vac_{\cF(1)},
    \end{equation}
    where $i_1\geq\cdots\geq i_l\geq 0$, $j_1>\cdots > j_m\geq 0$, $k_1>\cdots >k_n\geq 0$, and $\psi$ is the strong generator of $\cF(1)$ of conformal weight $\frac{1}{2}$. By clearing denominators, we may assume that the coefficients of each basis element in the linear combination for $v_n$ are polynomials in $x^{1/2}$. Then by dividing out common factors, we may assume that for each $v_n$, these polynomial coefficients have no common roots. In particular, each $v_n$ is in the $\CC[x^{\pm 1/2},(x+1)^{-1}]$-form of $\cS(c^{\NS}(x),0)\otimes\cF(1)$ spanned over $\CC[x^{\pm 1/2},(x+1)^{-1}]$ by the basis elements \eqref{eqn:PBW-basis-for-N1-times-Ferm}. Note from the formulas in Appendix \ref{N1FERM} that $L(c_a,0)\otimes L(c_b,0)$ preserves this $\CC[x^{\pm 1/2},(x+1)^{-1}]$-form.

    Specializing $x^{1/2}\mapsto t^{1/2}$ for $t\in\CC\setminus\lbrace 0,-1\rbrace$ (using either square root of $t$), yields a vertex operator superalgebra homomorphism from the $\CC[x^{\pm 1/2},(x+1)^{-1}]$-form of $\cS(c^{\NS}(x),0)\otimes\cF(1)$ to $\cS(c^{\NS}(t),0)\otimes\cF(1)$. By our choice of normalization for $v_n$, the image of $v_n$ under this homomorphism is non-zero since the elements \eqref{eqn:PBW-basis-for-N1-times-Ferm} also provide a basis for $\cS(c^{\NS}(t),0)\otimes\cF(1)$. Thus for $t\neq 0,-1$, $\cS(c^{\NS}(t),0)\otimes\cF(1)$ contains a highest-weight $L(c_a,0)\otimes L(c_b,0)$-vector of conformal weights $(h_{1,n}(a),h_{1,n}(b))$ for each $n\in\ZZ_{\geq 1}$; let $M_n$ be the $L(c_a,0)\otimes L(c_b,0)$-submodule of $\cS(c^{\NS}(t),0)\otimes\cF(1)$ generated by this highest-weight vector.

    Each $M_n$ is a quotient of the tensor product of two Virasoro Verma modules, and thus by the embedding diagrams for Virasoro Verma modules (see for example \cite[Section 5.3.1]{IK-Virasoro}), the composition factors of $M_n$ are contained among
    \begin{align*}
       & L(c_a,h_{1,n}(a))\otimes L(c_a,h_{1,n}(b)),\quad L(c_a,h_{1,n}(a)+n)\otimes L(c_a,h_{1,n}(b)),\nonumber\\
       & L(c_a,h_{1,n}(a))\otimes L(c_a,h_{1,n}(b)+n), \quad L(c_a,h_{1,n}(a)+n)\otimes L(c_a,h_{1,n}(b)+n).
    \end{align*}
    For $t\notin\QQ$, the composition factors of $M_m$ and $M_n$ are disjoint unless $m=n$, and this implies $\cS(c^{\NS}(t),0)\otimes\cF(1)$ contains $\bigoplus_{n=1}^\infty M_n$ as a submodule. Indeed, otherwise there would be a subset $\lbrace n_i\rbrace_{i=0}^I\subset\ZZ_{\geq 0}$ such that $M_{n_0}\cap\sum_{i=1}^I M_{n_i}\neq 0$, and this would imply $M_{n_0}$ shares a composition factor with some $n_i$ for $i\geq 1$. Thus for $t\notin\QQ$, $\cS(c^{\NS}(t),0)\otimes\cF(1)$ contains $\bigoplus_{n=1}^\infty L(c_a, h_{1,n}(a))\otimes L(c_b, h_{1,n}(b))$ as an $L(c_a,0)\otimes L(c_b,0)$-module subquotient.

    To complete the proof, it now suffices to show that both sides of \eqref{eqn:irrational-dec} have the same character. The basis \eqref{eqn:PBW-basis-for-N1-times-Ferm} implies that
    \begin{equation*}
        \mathrm{ch}[\cS(c^{\NS}(t),0)\otimes\cF(1)] = q^{-(8-3t-3t^{-1})/24}\prod_{m=1}^\infty \frac{(1+q^{m+1/2})(1+q^{m-1/2})}{1-q^{m+1}},
    \end{equation*}
    while the embedding diagrams for Virasoro Verma modules for $a,b\notin\QQ$ imply
    \begin{equation*}
        \mathrm{ch}\left[\bigoplus_{n=1}^\infty L(c_{a}, h_{1, n}(a))\otimes L(c_b, h_{1, n}(b))\right] = q^{-(8-3t-3t^{-1})/24}\sum_{n=1}^\infty \frac{q^{(n-1)^2/2}(1-q^n)^2}{\prod_{m=1}^\infty (1-q^m)^2}.
    \end{equation*}
    Multiplying both expressions by $q^{(8-3t-3t^{-1})/24}(1+q^{1/2})\prod_{m=1}^\infty (1-q^{m+1})(1-q^m)$, we see it is enough to prove
    \begin{equation}\label{eqn:N1-ferm-char-calc}
        \prod_{m=1}^\infty (1-q^m)(1+q^{m-1/2})^2 =\sum_{n=1}^\infty \frac{q^{(n-1)^2/2}(1-q^n)^2}{1-q^{1/2}}.
    \end{equation}
    The right side is
    \begin{align*}
        \sum_{n=0}^\infty \frac{q^{n^2/2}(1-q^{n+1})^2}{1-q^{1/2}} & =\sum_{n=0}^\infty \frac{(q^{n^2/2}-q^{(n+1)^2/2+1/2})(1-q^{n+1})}{1-q^{1/2}}\nonumber\\
        &= \sum_{n=0}^\infty \frac{q^{n^2/2}(1-q^{n+1})}{1-q^{1/2}} -\sum_{n=1}^\infty \frac{q^{n^2/2} q^{1/2}(1-q^n)}{1-q^{1/2}}\nonumber\\
        & = \frac{1-q}{1-q^{1/2}} +\sum_{n=1}^\infty \frac{q^{n^2/2}(1-q^{1/2}+q^{n+1/2}-q^{n+1})}{1-q^{1/2}}\nonumber\\
        & = 1+q^{1/2}+\sum_{n=1}^\infty q^{n^2/2}(1+q^{n+1/2})\nonumber\\
        & = \sum_{n=0}^\infty (q^{n^2/2}+q^{(n+1)^2/2}) =\sum_{n=-\infty}^\infty q^{n^2/2}.
    \end{align*}
    Now \eqref{eqn:N1-ferm-char-calc} follows from the Jacobi triple product identity, completing the proof.
    
\end{proof}

Now that we have the decomposition \eqref{dec1} for all irrational $t$, the same arguments as in \cite[Example~7.11]{CMY-completions} prove rigidity and determine fusion rules for the category $\ocfin$. To explain in more detail, let $\cC_{c_a}^{Vir}$ and $\cC_{c_b}^{Vir}$ be the $C_1$-cofinite module categories for the Virasoro vertex operator algebras $L(c_a,0)$ and $L(c_b,0)$. Then the category of $C_1$-cofinite $L(c_a,0)\otimes L(c_b,0)$-modules is braided tensor equivalent to the Deligne tensor product $\cC_{c_a}^{Vir} \boxtimes \cC_{c_{b}}^{Vir}$ \cite[Theorem 1.2]{McR-Deligne}, and there is a monoidal induction functor
\[
F: \cC_{c_a}^{Vir} \boxtimes \cC_{c_{b}}^{Vir} \longrightarrow {\rm Rep}\; S(c^\NS(t),0) \otimes \cF(1).
\]
Here Rep $S(c^\NS(t),0) \otimes \cF(1)$ is the category of not necessarily local $S(c^\NS(t),0) \otimes \cF(1)$-modules which,  as $L(c_a,0)\otimes L(c_b,0)$-modules, are objects of the direct limit completion of $\cC_{c_a}^{Vir} \boxtimes \cC_{c_{b}}^{Vir}$ (see \cite{CKM-exts, CMY-completions} for the definitions of these categories).

  By \cite[Proposition~4.4]{CKM-exts}, the module $F(L(c_{a}, h_{r, 1}(a))\otimes L(c_b, h_{s, 1}(b)))$ is simple for $r,s\in\ZZ_{\geq 1}$, and as in \cite[Example~7.11]{CMY-completions}, this simple module is local if and only if $r+s\in 2\ZZ$. (If $r+s\in 2\ZZ+1$, then it is a Ramond sector parity-twisted $S(c^\NS(t),0) \otimes \cF(1)$-module.) Also,
\begin{equation}\label{eqn:induction-decomp}
    F(L(c_{a}, h_{r, 1}(a))\otimes L(c_b, h_{s, 1}(b))) \cong \bigoplus_{n=1}^\infty L(c_{a}, h_{r, n}(a))\otimes L(c_b, h_{s, n}(b))
\end{equation}
as $L(c_a,0)\otimes L(c_b,0)$-modules, with lowest conformal weight $h_{r,s}^{\NS}(t)$ occurring in the $n=\frac{r+s}{2}$ summand. In this decomposition, the $n$th summand has the parity of $n-1$, and thus
\begin{equation}\label{eqn:identify-induction}
 F(L(c_{a}, h_{r, 1}(a))\otimes L(c_b, h_{s, 1}(b))) \cong \Pi^{(r+s-2)/2}(\cS_{r,s})\otimes \cF(1) .
\end{equation}
Let  $(\cC_{c_a}^{Vir} \boxtimes \cC_{c_{b}}^{Vir})_0\subset\cC_{c_a}^{Vir} \boxtimes \cC_{c_{b}}^{Vir}$ be the semisimple tensor subcategory with simple objects $L(c_{a}, h_{r, 1}(a))\otimes L(c_b, h_{s, 1}(b))$ such that $r+s\in 2\ZZ$. Then \eqref{eqn:identify-induction} implies that the essential image of the restriction of $F$ to $(\cC_{c_a}^{Vir} \boxtimes \cC_{c_{b}}^{Vir})_0$ is contained in the full subcategory $\widetilde{\mathcal{O}}\subset {\rm Rep}\; S(c^\NS(t),0) \otimes \cF(1)$ consisting of all modules isomorphic to $W\otimes\cF(1)$ for some $W$ in $\cO^{\mathrm{fin}}_{c^{\NS}(t)}$. Note that the functor $\cO^{\mathrm{fin}}_{c^{\NS}(t)}\rightarrow\widetilde{\cO}$ given on objects by $W\mapsto W\otimes\cF(1)$ and on morphisms by $f\mapsto f\otimes\id_{\cF(1)}$ is a braided tensor equivalence by properties of the Deligne product braided tensor category.

As in \cite[Example 7.11]{CMY-completions}, we now have a fully faithful braided tensor functor
\begin{equation}\label{eqn:Vir-Vir-0-embedding}
(\cC_{c_a}^{Vir} \boxtimes \cC_{c_{b}}^{Vir})_0  \xrightarrow{F} \widetilde{\cO} \xrightarrow{\sim} \cO_{c^\NS(t)}^{\rm fin}
\end{equation}
which by \eqref{eqn:induction-decomp} and \eqref{eqn:identify-induction} maps
\begin{equation}\label{eqn:embedding-on-objects}
    L(c_{a}, h_{r, 1}(a))\otimes L(c_b, h_{s, 1}(b)) \longmapsto \Pi^{(r+s-2)/2}(\cS_{r,s}).
\end{equation}
Using the rigidity and fusion rules of $(\cC_{c_a}^{Vir} \boxtimes \cC_{c_{b}}^{Vir})_0$, we then obtain the following theorem as in \cite[Example 7.11]{CMY-completions}, but here we present the $\Pi$-category rather than $\Pi$-supercategory version of the result:

\begin{theorem}\label{thm:t-irrational-properties}
    Let $c=c^{\NS}(t)$ for $t\notin \QQ$. Then:
    \begin{itemize}
        \item[(1)] The semisimple tensor subcategory of $\cO_{c}^{\rm fin}$ with simple objects $\Pi^{(r+s-2)/2}(\cS_{r,s})$ for $r,s\in\ZZ_{\geq 1}$ such that $r+s\in 2\ZZ$ is braided tensor equivalent to $(\cC_{c_a}^{Vir} \boxtimes \cC_{c_{b}}^{Vir})_0$.
        \item[(2)] The category $\cO_{c}^{\rm fin}$ is rigid and slightly degenerate, that is, its M\"{u}ger center is semisimple with simple objects $\cS_{1,1}$ and $\Pi(\cS_{1,1})$.
        \item[(3)] For $r,s, r',s' \in \ZZ_{\geq 1}$ such that $r+s, r'+s' \in 2\ZZ$,
        \begin{equation}\label{eqn:irr-fus-rules}
        \cS_{r,s}\boxtimes \cS_{r',s'} \cong \bigoplus_{\substack{r'' = |r-r'|+1\\ r+r'+r''\, {\rm odd}}}^{r+r'-1}\bigoplus_{\substack{s'' = |s-s'|+1\\ s+s'+s''\, {\rm odd}}}^{s+s'-1} \Pi^{(r+r'-r''+s+s'-s''-2)/2}(\cS_{r'',s''}).
        \end{equation}
        \item[(4)] For $r,s\in\ZZ_{\geq 1}$ such that $r+s\in 2\ZZ$,
        \begin{equation}\label{branch} 
          \cS_{r,s} \otimes \cF(1) \cong \bigoplus_{n=1}^\infty L(c_{a}, h_{r, n}(a))\otimes L(c_b, h_{s, n}(b)).   
         \end{equation}
         as an $L(c_a,0)\otimes L(c_b,0)$-module.
        \end{itemize}
    \end{theorem}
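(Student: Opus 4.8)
The plan is to derive all four parts from the fully faithful braided tensor functor \eqref{eqn:Vir-Vir-0-embedding}, transporting the already-understood structure of the Virasoro categories $\cC_{c_a}^{Vir}$, $\cC_{c_b}^{Vir}$ at irrational central charge (from \cite{CJORY}) across to $\ocfin$, exactly as in \cite[Example 7.11]{CMY-completions}. First I would observe that since \eqref{eqn:Vir-Vir-0-embedding} is fully faithful and braided monoidal with semisimple source $(\cC_{c_a}^{Vir}\boxtimes\cC_{c_b}^{Vir})_0$, it is a braided tensor equivalence onto its essential image; write $\Phi$ for this equivalence and $\cO_0$ for the image. By \eqref{eqn:embedding-on-objects} the simple objects of $\cO_0$ are the $\Pi^{(r+s-2)/2}(\cS_{r,s})$ with $r+s\in 2\ZZ$, and since $\ocfin$ is semisimple (Theorem \ref{thm:t-irrational-simple}), $\cO_0$ is precisely the semisimple tensor subcategory generated by those simples, giving part (1). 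Part (4) is then immediate from \eqref{eqn:induction-decomp} and \eqref{eqn:identify-induction}, using that $\cS_{r,s}\otimes\cF(1)$ and $\Pi^{(r+s-2)/2}(\cS_{r,s})\otimes\cF(1)$ have identical underlying $L(c_a,0)\otimes L(c_b,0)$-module structure, the Virasoro subalgebras being purely even and so not detecting the parity shift.

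For part (3) I would transport the Virasoro fusion rules through $\Phi$. In $\cC_{c_a}^{Vir}$ for irrational $a$ one has $L(c_a,h_{r,1}(a))\boxtimes L(c_a,h_{r',1}(a))\cong\bigoplus L(c_a,h_{r'',1}(a))$ with $r''$ running over $|r-r'|+1,\dots,r+r'-1$ in steps of $2$ \cite{CJORY}, and similarly for $b$; the Deligne product gives the double sum, and one checks directly that the summands again satisfy $r''+s''\in 2\ZZ$. Writing $L_{r,s}=L(c_a,h_{r,1}(a))\otimes L(c_b,h_{s,1}(b))$ and applying $\Phi$ together with Proposition \ref{prop:Pi-and-fusion} to pull the parity shifts outside each tensor product, I would compare $\Pi^{(r+s+r'+s'-4)/2}(\cS_{r,s}\boxtimes\cS_{r',s'})$ with $\bigoplus\Pi^{(r''+s''-2)/2}(\cS_{r'',s''})$ and solve for the exponent, using $\Pi^2=\id$, to recover exactly the shift $\Pi^{(r+r'-r''+s+s'-s''-2)/2}$ appearing in \eqref{eqn:irr-fus-rules}.

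For part (2), rigidity transports from $(\cC_{c_a}^{Vir}\boxtimes\cC_{c_b}^{Vir})_0$: the irrational-central-charge Virasoro categories are rigid \cite{CJORY} with self-dual simples, hence so is their Deligne product and this dual-closed subcategory, and then via $\Phi$ every $\Pi^{(r+s-2)/2}(\cS_{r,s})$ is rigid and self-dual. Since $\ocfin$ is semisimple and closed under $\Pi$, which preserves rigidity, all its simple objects, hence all objects, are rigid. For the M\"uger center I would first verify directly from the braiding formula \eqref{eqn:braiding} that the parity-reversed unit $\Pi(\cS_{1,1})=\Pi(\cS(c,0))$ has self-braiding $-\id$ but trivial monodromy with every object (its twist being $1$ since $L_0$ acts as $0$), so that $\{\cS(c,0),\Pi(\cS(c,0))\}$ forms a copy of the category of super vector spaces inside the M\"uger center and $\ocfin$ is at worst slightly degenerate. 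To see there is nothing more, note that every simple of $\ocfin$ is $\Pi^\epsilon(X_0)$ for some simple $X_0\in\cO_0$ and $\epsilon\in\{0,1\}$; because $\Pi(\cS(c,0))$ is transparent, $\Pi^\epsilon(X_0)$ is transparent if and only if $X_0$ lies in the M\"uger center of $\cO_0$, so it remains to prove this M\"uger center is trivial.

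The main obstacle is precisely this last transparency computation. Because we have restricted to the subcategory cut out by $r+s\in 2\ZZ$, a priori more objects could become transparent than in the full Deligne product, so I cannot simply invoke triviality of the M\"uger centers of $\cC_{c_a}^{Vir}$ and $\cC_{c_b}^{Vir}$. Instead I would show that for any simple $L_{r,s}$ with $(r,s)\neq(1,1)$ and $r+s\in 2\ZZ$ there is a test object in $(\cC_{c_a}^{Vir}\boxtimes\cC_{c_b}^{Vir})_0$ against which the monodromy is a nontrivial scalar: the monodromy factors as a product of the two Virasoro monodromies, and on the relevant summand $L(c_a,h_{r'',1}(a))$ the exponent $h_{r'',1}(a)-h_{r,1}(a)-h_{r',1}(a)$ has $a$-coefficient $\tfrac14\big((r'')^2-r^2-(r')^2+1\big)$, which for a suitable summand is a nonzero rational, hence the exponent is irrational and the scalar is $\neq 1$ since $a\notin\QQ$. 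Concretely, $L_{3,1}$ detects every $L_{r,s}$ with $r>1$ (the $b$-factor monodromy being trivial against the vacuum $L(c_b,h_{1,1}(b))$), and by symmetry $L_{1,3}$ detects every $L_{r,s}$ with $s>1$. This forces the M\"uger center of $\cO_0$ to consist of $\cS(c,0)$ alone, whence the M\"uger center of $\ocfin$ is exactly $\{\cS(c,0),\Pi(\cS(c,0))\}$, completing the proof of slight degeneracy.
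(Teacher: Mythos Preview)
Your proof is correct and follows essentially the same path as the paper for parts (1), (3), (4), and the rigidity half of (2): transport structure through the fully faithful braided tensor functor \eqref{eqn:Vir-Vir-0-embedding}, invoke the Virasoro fusion rules and rigidity from \cite{CJORY}, and use Proposition~\ref{prop:Pi-and-fusion} to sort out parity shifts.

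The only substantive difference is in the M\"uger center computation. The paper works directly in $\ocfin$ with the single test object $\cS_{2,2}$: it writes down $\cS_{r,s}\boxtimes\cS_{2,2}$ from the fusion rules and computes the monodromy eigenvalue on each summand $\Pi^{(1-\delta\varepsilon)/2}(\cS_{r+\delta,s+\varepsilon})$ as $e^{\pi i((r\delta-1)t-(r\varepsilon+s\delta-2)+(s\varepsilon-1)t^{-1})/2}$, then argues case by case that for $(r,s)\neq(1,1)$ at least one of these is not $1$. You instead stay in the Virasoro Deligne product and use the pair $L_{3,1}$, $L_{1,3}$, exploiting that the monodromy factorizes and that the $a$-coefficient $(r-1)$ (on the top summand against $L_{3,1}$) is a nonzero integer times the irrational $a$ when $r>1$. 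Your approach is arguably cleaner in that it isolates one variable at a time and avoids the case analysis over $(\delta,\varepsilon)$; the paper's approach has the mild advantage of needing only one test object and of being phrased entirely in the super Virasoro category. Both rely on the same underlying balancing formula $\cR^2=e^{2\pi i(L_0-h-h')}$, and both verify transparency of $\Pi(\cS_{1,1})$ by the same kind of direct check.
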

    \begin{proof}
(1) follows from \eqref{eqn:Vir-Vir-0-embedding} and \eqref{eqn:embedding-on-objects}, and then (1) implies $\cO^{\mathrm{fin}}_c$ is rigid since $(\cC_{c_a}^{Vir} \boxtimes \cC_{c_{b}}^{Vir})_0$ is rigid \cite[Section 5]{CJORY} and parity reversals of rigid modules are rigid.  For (3), the monoidal embedding $(\cC_{c_a}^{Vir} \boxtimes \cC_{c_{b}}^{Vir})_0\hookrightarrow\cO_c^{\mathrm{fin}}$ and the fusion rules from \cite{CJORY} imply
        \begin{equation*}
            \Pi^{(r+s-2)/2}(\cS_{r,s})\boxtimes\Pi^{(r'+s'-2)/2}(\cS_{r',s'}) \cong \bigoplus_{\substack{r'' = |r-r'|+1\\ r+r'+r''\, {\rm odd}}}^{r+r'-1}\bigoplus_{\substack{s'' = |s-s'|+1\\ s+s'+s''\, {\rm odd}}}^{s+s'-1} \Pi^{(r''+s''-2)/2}(\cS_{r'',s''}).
        \end{equation*}
        Then Proposition \ref{prop:Pi-and-fusion} yields \eqref{eqn:irr-fus-rules}.
        
        For the slight degeneracy statement in (2), suppose $\Pi^n(\cS_{r,s})$ is in the M\"{u}ger center of $\cO^{\mathrm{fin}}_c$. Then in particular the double braiding
        \begin{equation*}
            \cR^2: \Pi^n(\cS_{r,s})\boxtimes\Pi^n(\cS_{2,2}) \xrightarrow{\sim} \Pi^n(\cS_{2,2})\boxtimes\Pi^n(\cS_{r,s}) \xrightarrow{\sim} \Pi^n(\cS_{r,s})\boxtimes\Pi^n(\cS_{2,2})
        \end{equation*}
        is the identity, and by \eqref{eqn:irr-fus-rules} and Proposition \ref{prop:Pi-and-fusion}, 
        \begin{align*}
        \Pi^n(\cS_{r,s})\boxtimes\Pi^n(\cS_{2,2}) &\cong \cS_{r+1,s+1}\oplus\Pi(\cS_{r+1,s-1})\oplus\Pi(\cS_{r-1,s+1})\oplus\cS_{r-1,s-1}\\
        & = \bigoplus_{\delta,\varepsilon\in\lbrace\pm 1\rbrace} \Pi^{(1-\delta\varepsilon)/2}(\cS_{r+\delta,s+\varepsilon}),
        \end{align*}
with the convention $\cS_{r',s'}=0$ if $r'=0$ or $s'=0$. By \eqref{eqn:braiding} and the $L_0$-conjugation property for intertwining operators,
\begin{align*}
    \cR^2(\left(\cY_{\Pi^n(\cS_{r,s}),\Pi^n(\cS_{2,2})}(w_{r,s},x)w_{2,2}\right) & = \cY_{\Pi^n(\cS_{r,s}),\Pi^n(\cS_{2,2})}(w_{r,s},e^{2\pi i}x)w_{2,2}\nonumber\\
    &= e^{2\pi i(L_0-\mathrm{wt}\,w_{r,s}-\mathrm{wt}\,w_{2,2})}\cY_{\Pi^n(\cS_{r,s}),\Pi^n(\cS_{2,2})}(w_{r,s},x)w_{2,2}
\end{align*}
for homogeneous $w_{r,s}\in\Pi(\cS_{r,s}), w_{2,2}\in\Pi(\cS_{2,2})$. So because the tensor product intertwining operator $\cY_{\Pi^n(\cS_{r,s}),\Pi^n(\cS_{2,2})}$ is even, this means
\begin{equation*}
    \cR^2\vert_{\Pi^{(1-\delta\varepsilon)/2}(\cS_{r+\delta,s+\varepsilon})} = e^{2\pi i(h_{r+\delta,s+\varepsilon}^\NS+(1-\delta\varepsilon)/4-h_{r,s}^\NS-h_{2,2}^\NS)} =e^{\pi i\left((r\delta-1)t-(r\varepsilon+ s\delta-2)+(s\varepsilon-1)t^{-1}\right)/2}.
\end{equation*}
As in \cite[Example 7.11]{CMY-completions}, when $r=1$ and $\delta=1$, this is not the identity unless $s=1$. If $r>1$, then $\cR^2\vert_{\cS_{r+1,s+1}}$ could be the identity, but then $\cR^2\vert_{\Pi(\cS_{r-1,s+1})}$ will not be. Thus $\Pi^n(\cS_{r,s})$ is not in the M\"{u}ger center of $\cO^{\mathrm{fin}}_c$ if $(r,s)\neq(1,1)$. Conversely, $\Pi^n(\cS_{1,1})$ is in the M\"{u}ger center for $n=0,1$ because the double braiding on $\Pi^m(\cS_{r,s})\boxtimes\Pi^n(\cS_{1,1})\cong\Pi^{m+n}(\cS_{r,s})$ is given by
\begin{equation*}
    e^{2\pi i(h_{r,s}^{\NS}+(m+n)/2-h^{\NS}_{r,s}-m/2-h^\NS_{1,1}-n/2)} =1
\end{equation*}
for all $r,s\in\ZZ_{\geq 1}$ and $m=0,1$.

        Finally, (4) follows from \eqref{eqn:induction-decomp} and \eqref{eqn:identify-induction} since $\cS_{r,s}\otimes\cF(1)$ has the same $L(c_a,0)\otimes L(c_b,0)$-module decomposition as $\Pi(\cS_{r,s})\otimes\cF(1)=\Pi(\cS_{r,s}\otimes\cF(1))$.
        
    \end{proof}

    For later use, we identify the tensor product of highest-weight vectors in $L(c_a, h_{2,n}(a))\otimes L(c_b, h_{2,n}(b))$ for $n=1,2$ with vectors in $\cS_{2,2} \otimes \mathcal{F}(1)$. First, from \eqref{branch}, 
\begin{align} \label{modex2}
\cS_{2,2} \otimes \mathcal{F}(1)\cong \bigoplus_{n=1}^{\infty} L(c_a, h_{2,n}(a))\otimes L(c_b, h_{2,n}(b))
\end{align}
as an $L(c_a, 0)\otimes L(c_b,0)$-module. Since 
\begin{align*}
h_{2,1}(a)+h_{2,1}(b)=h^{\NS}_{2,2}(t)+\frac{1}{2},
\end{align*}
the image of the tensor product of the highest-weight vectors $u_{2,1}^a\in L(c_a, h_{2,1}(a))$ and $u_{2,1}^b\in L(c_b, h_{2,1}(b))$ under the isomorphism \eqref{modex2}) is a linear combination of $G_{-\frac{1}{2}} v_{2,2} \otimes \vac_{\mathcal{F}(1)}$ and $v_{2,2}\otimes \psi_{-\frac{1}{2}}\vac_{\mathcal{F}(1)}$, where $\psi$ is the strong generator of $\cF(1)$. Specifically, we show in Appendix \ref{N1FERM} that we can take
\begin{align}\label{u21timesu21}
u_{2,1}^a\otimes u_{2,1}^b=\frac{2e^{\pi i/4}\sqrt{t}}{t-1} G_{-\frac{1}{2}}v_{2,2}\otimes\vac_{\cF(1)} +e^{-\pi i/4}\, v_{2,2}\otimes\psi_{-\frac{1}{2}}\vac_{\cF(1)} .  
\end{align}
On the other hand, we identify $u_{2,2}^a\otimes u_{2,2}^b = v_{2,2}\otimes \vac_{\cF(1)}$ since $h_{2,2}(a)+h_{2,2}(b)=h^{\NS}_{2,2}(t)$.

\subsection{Tensor structure at central charge \texorpdfstring{$\frac{3}{2}$}{3/2}}

Now we consider the module category of the ``degenerate minimal model'' vertex operator superalgebra $\cS(c^\NS(1),0)=\cS(\frac{3}{2},0)$ studied by Milas in \cite{M}. For central charge $\frac{3}{2}$, reducible Verma modules have lowest conformal weights
\begin{align*}
   H_{3/2} & =\lbrace h_{r,s}^\NS(1)\mid r,s\in\ZZ_{\geq 1},\,r-s\in2\ZZ\rbrace\\
   &=\left\lbrace \frac{(r-s)^2}{8}\mid r,s\in\ZZ_{\geq 1},\,r-s\in 2\ZZ\right\rbrace =\lbrace h_{2n+1,1}^\NS(1)\mid n\in\ZZ_{\geq 0}\rbrace.
\end{align*}
Thus the simple modules in $ \mathcal{O}_{3/2}^{\mathrm{fin}}$ are $\lbrace\cS_{2n+1,1}\mid n\in\ZZ_{\geq 0}\rbrace$. 
The embedding diagrams for the corresponding Verma modules are given in Proposition \ref{vermadiag}(1)(d), and they show that 
\begin{align*}
\cS_{2n+1,1}\cong M^\NS_{2n+1,1}/M^\NS_{2n+3,1}
\end{align*}
for $n\in\ZZ_{\geq 0}$. Although $\cO^{\mathrm{fin}}_{3/2}$ is not semisimple, we will show that it has a semisimple tensor subcategory that contains all simple objects.

To do so, we use the fact that $\cS(\frac{3}{2}, 0)$ is the $SO(3)$-fixed point subalgebra of the vertex operator superalgebra $\cF(3)$ of three free fermions. Note that the full automorphism group of $\cF(3)$ is $O(3)$, generated by $SO(3)$ together with the parity automorphism. The decomposition of $\cF(3)$ as an $SO(3) \otimes \cS(\frac{3}{2}, 0)$-module is easily computed:
\begin{lemma}\label{lem:F_decomposition}
    As an $SO(3) \otimes \cS(\frac{3}{2}, 0)$-module, 
    \[
    \cF(3) \cong \bigoplus_{n=0 }^\infty \rho_{2n+1} \otimes \Pi^n\left(\cS_{2n+1,1}\right),
    \]
    where $\rho_{2n+1}$ denotes the $(2n+1)$-dimensional simple $SO(3)$-module. 
\end{lemma}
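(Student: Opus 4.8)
The plan is to realize $\cF(3)$ as the Fock space of three free fermions $\psi^1,\psi^2,\psi^3$ of conformal weight $\tfrac12$, with $SO(3)$ acting through the defining vector representation $\rho_3$. Passing to the complexified weight basis $\psi^\pm=\tfrac1{\sqrt2}(\psi^1\pm i\psi^2)$, $\psi^0=\psi^3$, the modes $\psi^\pm_{-m-\frac12}$ carry $\mathfrak{so}(3)$-Cartan weight $\pm1$ and $\psi^0_{-m-\frac12}$ weight $0$, all of conformal weight $m+\tfrac12$. The approach has three stages: (i) decompose $\cF(3)$ into $SO(3)$-isotypic components, each of which is a module for $\cF(3)^{SO(3)}=\cS(\tfrac32,0)$; (ii) exhibit a distinguished super Virasoro singular vector generating each component; and (iii) pin down the isomorphism type of every component by a refined character computation.

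First I would write $\cF(3)=\bigoplus_{\lambda}\rho_\lambda\otimes\cF(3)_{[\lambda]}$, where $\rho_\lambda$ runs over the simple $SO(3)$-modules and $\cF(3)_{[\lambda]}=\Hom_{SO(3)}(\rho_\lambda,\cF(3))$; since each conformal weight space is finite dimensional and $SO(3)$-stable, this is standard, and each $\cF(3)_{[\lambda]}$ is a $\cS(\tfrac32,0)$-module because the fixed-point modes commute with $SO(3)$. As $\cF(3)$ is generated from $\vac$ by the spin-$1$ fields $\psi^i$, only integer-spin (odd-dimensional) modules $\rho_{2n+1}$ occur. For each $n$ I consider
\[
v_n=\psi^+_{-\frac12}\psi^+_{-\frac32}\cdots\psi^+_{-\frac{2n-1}{2}}\vac,
\]
which has $\mathfrak{so}(3)$-weight $n$, conformal weight $\tfrac12(1+3+\cdots+(2n-1))=\tfrac{n^2}{2}=h^\NS_{2n+1,1}(1)$, and parity $n\bmod2$. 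The key point is that $v_n$ is, up to scalar, the unique vector of $\mathfrak{so}(3)$-weight $n$ of minimal conformal weight: any weight-$n$ vector uses at least $n$ distinct $\psi^+$-modes, and the lowest available levels are exactly $\tfrac12,\dots,\tfrac{2n-1}{2}$. Since the $\mathfrak{so}(3)$ raising operator and the positive super Virasoro modes $L_m,G_{m-\frac12}$ ($m\ge1$) are all $SO(3)$-invariant and strictly lower the conformal weight, minimality forces $v_n$ to be an $\mathfrak{so}(3)$-highest-weight vector generating $\rho_{2n+1}$ and simultaneously a super Virasoro singular vector. Hence $\langle v_n\rangle\subseteq\cF(3)_{[\rho_{2n+1}]}$ is a quotient of $\Pi^n(M^\NS(\tfrac32,\tfrac{n^2}{2}))$ surjecting onto $\Pi^n(\cS_{2n+1,1})$.

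Next I would match characters, with $z$ tracking the $\mathfrak{so}(3)$-Cartan grading. On one hand,
\[
\mathrm{ch}_z[\cF(3)]=q^{-1/16}\prod_{m\ge1}(1+zq^{m-\frac12})(1+q^{m-\frac12})(1+z^{-1}q^{m-\frac12}),
\]
while the embedding diagram of Proposition \ref{vermadiag}(1)(d) gives $\cS_{2n+1,1}\cong M^\NS_{2n+1,1}/M^\NS_{2n+3,1}$, so that
\[
\mathrm{ch}[\cS_{2n+1,1}]=\big(q^{n^2/2}-q^{(n+1)^2/2}\big)\,q^{-1/16}\prod_{j\ge1}\frac{1+q^{j-\frac12}}{1-q^j}.
\]
Cancelling the common factors and applying the Jacobi triple product on the left, the identity $\mathrm{ch}_z[\cF(3)]=\sum_{n\ge0}\chi_{\rho_{2n+1}}(z)\,\mathrm{ch}[\cS_{2n+1,1}]$ reduces to
\[
\sum_{k\in\ZZ}z^kq^{k^2/2}=\sum_{n\ge0}\Big(\sum_{j=-n}^{n}z^j\Big)\big(q^{n^2/2}-q^{(n+1)^2/2}\big),
\]
which holds because the coefficient of $z^k$ on the right telescopes to $q^{k^2/2}$. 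Comparing with the isotypic expansion $\mathrm{ch}_z[\cF(3)]=\sum_n\chi_{\rho_{2n+1}}(z)\,\mathrm{ch}[\cF(3)_{[\rho_{2n+1}]}]$ and invoking linear independence of the characters $\chi_{\rho_{2n+1}}(z)$ yields $\mathrm{ch}[\cF(3)_{[\rho_{2n+1}]}]=\mathrm{ch}[\cS_{2n+1,1}]$ for every $n$.

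Finally, the squeeze $\mathrm{ch}[\Pi^n(\cS_{2n+1,1})]\le\mathrm{ch}[\langle v_n\rangle]\le\mathrm{ch}[\cF(3)_{[\rho_{2n+1}]}]=\mathrm{ch}[\cS_{2n+1,1}]$ forces equality throughout, so $\cF(3)_{[\rho_{2n+1}]}=\langle v_n\rangle\cong\Pi^n(\cS_{2n+1,1})$, giving the claimed decomposition. I expect the main obstacle to be confirming that the fermionic monomials $v_n$ are genuine super Virasoro singular vectors: a direct verification using the cubic supercurrent $G\propto{:}\psi^1\psi^2\psi^3{:}$ would be unpleasant, so the crux is the conformal-weight-minimality argument above, which makes the singular-vector property automatic and isolates the remaining content in the combinatorial character identity.
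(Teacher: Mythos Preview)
Your proposal is correct, and its overall architecture---isotypic decomposition plus character matching via the Jacobi triple product---is the same as the paper's. The meaningful difference lies in how you establish that each multiplicity space $\cF(3)_{[\rho_{2n+1}]}$ is the simple module $\Pi^n(\cS_{2n+1,1})$. The paper imports simplicity from \cite[Theorem 1.1]{KR}, which says directly that the multiplicity spaces in a compact-group decomposition of a simple vertex operator superalgebra are simple over the fixed-point subalgebra; once simplicity is known, the lowest conformal weight $\tfrac{n^2}{2}$ and parity $n$ pin down the isomorphism type immediately. You instead build the explicit highest-weight vectors $v_n=\psi^+_{-1/2}\cdots\psi^+_{-(2n-1)/2}\vac$, verify they are super Virasoro singular vectors by a minimality argument, and then squeeze $\Pi^n(\cS_{2n+1,1})\twoheadleftarrow\langle v_n\rangle\subseteq\cF(3)_{[\rho_{2n+1}]}$ with equal characters. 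Your route is longer but entirely self-contained and avoids the external reference; it also makes the singular vectors explicit, which could be useful elsewhere. The paper's route is slicker but black-boxes the hardest step.
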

\begin{proof}
As $\cF(3)$ is a continuous $SO(3)$-module with finite-dimensional conformal weight spaces,
\[
\cF(3) \cong \bigoplus_{n=0  }^\infty \rho_{2n+1} \otimes L_{2n}
\]
as an $SO(3)$-module, where by \cite[Theorem 1.1]{KR}, the multiplicity spaces $L_{2n}$ are simple $\cS(\frac{3}{2}, 0)$-modules.
For $m\in\frac{1}{2}\ZZ$ and $n\in\ZZ$, let $F_{m, n}$ be the subspace of $\cF(3)$ of conformal weight $m$ and $\mathfrak{so}(3)$-weight $n$. Since the three fermion strong generators of $\cF(3)$ have $\mathfrak{so}(3)$-weights $\pm 2$ and $0$, the graded dimension of $\cF(3)$ has the form
\begin{equation*}
    \mathrm{ch}[\cF(3)](z, q)=  \sum_{m\in\frac{1}{2}\ZZ,\,n \in 2\mathbb Z} \dim(F_{m, n}) z^n q^m
\end{equation*}
 Let 
 \begin{equation*}
 F_{2n}(q) = \sum\limits_{ m \in \frac{1}{2} \mathbb Z} \text{dim}(F_{m,2n})  q^m
 \end{equation*}
 be the coefficient of $z^{2n}$, so that $\text{ch}[L_{2n}]= F_{2n}(q) - F_{2n+2}(q)$.

Now, $\text{ch}[\cF(3)](z, q)$ has the nice product form
\[
\text{ch}[\cF(3)](z, q)= \prod_{m=1}^\infty (1 + z^2 q^{m-1/2} )(1 +  q^{m-1/2} )(1 + z^{-2} q^{m-1/2} ).
\]
Using the Jacobi triple product identity
\[
\sum_{n \in \mathbb Z} z^{2n} q^{n^2/2} = \prod_{m=1}^\infty (1 + z^2 q^{m-1/2} )(1 -  q^{m} )(1 + z^{-2} q^{m-1/2} ),
\]
this simplifies to
\[
\text{ch}[\cF(3)](z, q)=  \prod_{m=1}^\infty \frac{1+q^{m-1/2}}{1-q^m} \sum_{n \in \mathbb Z } z^{2n} q^{n^2/2},  
\]
and so
\begin{equation*} 
\begin{split}
\text{ch}[L_{2n}] &=(q^{n^2/2} -   q^{(n+1)^2/2}) \prod_{m=1}^\infty \frac{1+q^{m-1/2}}{1-q^m}\\
&= q^{n^2/2} (1  -   q^{n+1/2}) \prod_{m=1}^\infty \frac{1+q^{m-1/2}}{1-q^m}
= q^{n^2/2}\cdot \frac{1  -   q^{2n+1}}{1 + q^{n+1/2}} \prod_{m=1}^\infty \frac{1+q^{m-1/2}}{1-q^m}.
\end{split}
\end{equation*} 
Since $h_{2n+1,1}=\frac{n^2}{2}$, the lowest conformal weights of $L_{2n}$ and $\cS_{2n+1,1}$ coincide; moreover, the lowest conformal weight space of $L_{2n}$ has the same parity as $n$ since the even part of $\cF(3)$ is precisely its $\ZZ$-graded part. Thus because $L_{2n}$ is simple, we get $L_{2n}\cong \Pi^n\left(\cS_{2n+1,1}\right)$. 

\end{proof}
We record the character formula for $\cS_{2n+1,1}$ derived in the preceding proof as a corollary:
\begin{cor}
For $n\in\ZZ_{\geq 0}$, the $\cS(\frac{3}{2},0)$-module $\cS_{2n+1,1}$ has graded dimension
    \[ \mathrm{ch}\left[\cS_{2n+1,1}\right] = q^{n^2/2}\cdot \frac{1  -   q^{2n+1}}{1 + q^{n+1/2}} \prod_{m=1}^\infty \frac{1+q^{m-1/2}}{1-q^m}. \]
\end{cor}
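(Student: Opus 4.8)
The plan is to extract the character formula directly from the computation already carried out in the proof of Lemma~\ref{lem:F_decomposition}. In that proof, the graded dimension of the multiplicity space $L_{2n}$ was computed explicitly as
\[
\mathrm{ch}[L_{2n}] = q^{n^2/2}\cdot\frac{1-q^{2n+1}}{1+q^{n+1/2}}\prod_{m=1}^\infty\frac{1+q^{m-1/2}}{1-q^m},
\]
and it was established that $L_{2n}\cong\Pi^n(\cS_{2n+1,1})$. The only remaining point is that the character is unaffected by parity reversal. Indeed, from the definition $\Pi(W)^{\bar i}=W^{\overline{i+1}}$ with $Y_{\Pi(W)}=Y_W$, the functor $\Pi$ leaves the $L_0$-grading (and hence each graded dimension $\dim W_{[h]}$) unchanged, so $\mathrm{ch}[\Pi^n(W)]=\mathrm{ch}[W]$ for any lower-bounded generalized module $W$ and any $n$.

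Applying this observation with $W=\cS_{2n+1,1}$ gives
\[
\mathrm{ch}[\cS_{2n+1,1}] = \mathrm{ch}[\Pi^n(\cS_{2n+1,1})] = \mathrm{ch}[L_{2n}],
\]
which is exactly the asserted formula. There is no genuine obstacle here: the corollary is a direct bookkeeping consequence of the character identity already derived inside the proof of Lemma~\ref{lem:F_decomposition}, combined with the elementary fact that $\Pi$ preserves graded dimensions. I would therefore present it as essentially a one-line deduction rather than as an independent argument, simply recording for later use the closed form that fell out of the free-fermion decomposition.
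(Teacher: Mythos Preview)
Your proposal is correct and matches the paper's approach exactly: the paper simply records this formula as a corollary with the remark ``We record the character formula for $\cS_{2n+1,1}$ derived in the preceding proof as a corollary,'' offering no separate argument. Your only addition is making explicit that $\Pi$ preserves graded dimensions, which is a reasonable point to spell out.
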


We will now use the $SO(3)$-action on $\cF(3)$ and the decomposition of Lemma \ref{lem:F_decomposition} to show that the symmetric tensor category $\mathrm{Rep}\,SO(3)$ embeds into $\mathcal{O}^{\mathrm{fin}}_{3/2}$. To do so, we first recall the main result of \cite{Mc}. Let $V$ be a simple vertex operator superalgebra, $G$ a compact Lie group that acts continuously on $V$ by automorphisms and contains the parity automorphism of $V$, and $\mathcal{C}$ a braided tensor category of modules for the fixed-point vertex operator algebra $V^G$ that contains all simple modules which appear in the decomposition of $V$ as a $V^G$-module. Then \cite[Corollary 4.8]{Mc} says that there is a braided tensor embedding $\Phi: (\mathrm{Rep}\,G)^\Omega\rightarrow\cC$ where $(\mathrm{Rep}\,G)^\Omega$ agrees with the category $\mathrm{Rep}\,G$ of finite-dimensional continuous $G$-modules as a tensor category but has modified braiding isomorphisms (related to the fact that $V^G$ is a vertex operator algebra but $V$ is a superalgebra).

Our setting is slightly different from that of \cite{Mc}, since $SO(3)$ does not contain the parity automorphism of $\cF(3)$, and thus the fixed-point subalgebra $\cF(3)^{SO(3)}=\cS(\frac{3}{2},0)$ is a vertex operator superalgebra, not a vertex operator algebra. By slightly modifying the arguments in \cite{Mc} to this setting, we obtain a braided tensor embedding with no need to modify the braiding isomorphisms of $\mathrm{Rep}\,SO(3)$:
\begin{theorem}\label{thm:t=1_tensor_embedding}
    There is a fully faithful braided tensor functor $\Phi:\mathrm{Rep}\,SO(3)\rightarrow\mathcal{O}^{\mathrm{fin}}_{3/2}$ such that $\Phi(\rho_{2n+1})\cong \Pi^n\left(\cS_{2n+1,1}\right)$ for all $n\geq 0$.
\end{theorem}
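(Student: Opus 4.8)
The plan is to adapt the general machinery of \cite{Mc} for fixed-point subalgebras to the present superalgebra setting, in which the parity automorphism lies \emph{outside} the acting group $SO(3)$. Recall that $\cF(3)$ is a simple vertex operator superalgebra which, by Lemma \ref{lem:F_decomposition}, decomposes as an $SO(3)\otimes\cS(\frac{3}{2},0)$-module with pairwise non-isomorphic simple multiplicity spaces $\Pi^n(\cS_{2n+1,1})$, each a simple object of $\mathcal{O}^{\mathrm{fin}}_{3/2}$. I would define the functor on objects by the multiplicity-space assignment $\Phi(M)=\Hom_{SO(3)}(M,\cF(3))$, regarding $\cF(3)$ as an $SO(3)\otimes\cS(\frac{3}{2},0)$-module; since $\Hom_{SO(3)}(\rho_{2m+1},\rho_{2n+1})=\delta_{mn}\CC$, Lemma \ref{lem:F_decomposition} gives $\Phi(\rho_{2n+1})\cong\Pi^n(\cS_{2n+1,1})$ directly, and $\Phi$ acts on morphisms by post-composition.

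First I would check that $\Phi$ lands in $\mathcal{O}^{\mathrm{fin}}_{3/2}$ and is fully faithful. Because $\cF(3)$ is simple and the $\Pi^n(\cS_{2n+1,1})$ are distinct simple objects appearing with multiplicity exactly $\dim\rho_{2n+1}$, the double-commutant (Schur--Weyl) argument of \cite{Mc} identifies $\Hom_{\cS(\frac{3}{2},0)}(\Phi(M),\Phi(N))$ with $\Hom_{SO(3)}(M,N)$. None of this uses that the parity automorphism belongs to the group, so it transfers verbatim. Next I would construct the monoidal structure: the tensorator $\Phi(M)\boxtimes\Phi(N)\to\Phi(M\otimes N)$ is built from the vertex operator of $\cF(3)$ together with $SO(3)$-equivariance, exactly as in \cite{Mc}, and associativity and unit compatibility follow from the coherence of the intertwining operators of $\cF(3)$ in the direct-limit completion of $\mathcal{O}^{\mathrm{fin}}_{3/2}$. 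Here it is essential that $\mathcal{O}^{\mathrm{fin}}_{3/2}$ contains every simple module occurring in $\cF(3)$, which is precisely Lemma \ref{lem:F_decomposition} together with Corollary \ref{cor:tensor}.

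The main obstacle, and the only genuine point of divergence from \cite{Mc}, is the braiding. In \cite{Mc} the source must be equipped with the modified braiding $(\mathrm{Rep}\,G)^\Omega$, where $\Omega$ is the parity automorphism realized as a central element of $G$; this twist compensates for the super signs $(-1)^{|w_1||w_2|}$ in the braiding of the superalgebra $V$, whose fixed-point subalgebra $V^G$ is an ordinary (bosonic) vertex operator algebra. In our situation $SO(3)$ is connected with trivial center, so no such parity element exists and no twist of $\mathrm{Rep}\,SO(3)$ is available. Crucially, none is needed: the target $\mathcal{O}^{\mathrm{fin}}_{3/2}$ is itself the module category of the \emph{superalgebra} $\cS(\frac{3}{2},0)$, so the super signs are already carried by the parities of the objects $\Pi^n(\cS_{2n+1,1})$. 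I would verify braiding compatibility by a direct sign computation: on the lowest-weight vectors of $\Phi(\rho_{2m+1})$ and $\Phi(\rho_{2n+1})$, which have parities $m$ and $n$, the braiding \eqref{eqn:braiding} of $\mathcal{O}^{\mathrm{fin}}_{3/2}$ contributes a factor $(-1)^{mn}$, and this is exactly the sign by which the super-flip on $\cF(3)$ differs from the ordinary flip underlying the symmetric braiding of $\mathrm{Rep}\,SO(3)$.

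Thus the tensorator, defined through the equivariant multiplication of $\cF(3)$, absorbs the sign $(-1)^{mn}$ on the target side, and $\Phi$ intertwines the plain symmetric braiding of $\mathrm{Rep}\,SO(3)$ with the braiding of $\mathcal{O}^{\mathrm{fin}}_{3/2}$ without any $\Omega$-modification. Tracking these signs carefully through the $SO(3)$-equivariant intertwining operators of $\cF(3)$ is the technical heart of the argument; once it is in place, full faithfulness, monoidality, and braiding compatibility combine to give the desired fully faithful braided tensor functor with $\Phi(\rho_{2n+1})\cong\Pi^n(\cS_{2n+1,1})$.
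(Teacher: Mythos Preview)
Your proposal is correct and follows essentially the same approach as the paper: adapt the machinery of \cite{Mc} to the situation where the fixed-point subalgebra $\cF(3)^{SO(3)}=\cS(\frac{3}{2},0)$ is itself a vertex operator superalgebra, construct $\Phi$ via multiplicity spaces, build the tensorator from the $SO(3)$-equivariant vertex operator of $\cF(3)$, and observe that no $\Omega$-twist on $\mathrm{Rep}\,SO(3)$ is needed because the super signs are already present in the braiding \eqref{eqn:braiding} of the target category. The paper's definition $\Phi(M)=(M\otimes\cF(3))^{SO(3)}$ is naturally isomorphic to your $\Hom_{SO(3)}(M,\cF(3))$ since all $SO(3)$-modules are self-dual, so this is only a cosmetic difference; your identification of the braiding compatibility as the one genuine point of departure from \cite{Mc}, and your explanation of why the parity signs cancel, match the paper's reasoning exactly.
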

\begin{proof}
As in \cite[Section 3.2]{Mc}, the functor $\Phi$ is defined on objects by $\Phi(M)=(M\otimes \cF(3))^{SO(3)}$ for $M$ a finite-dimensional simple continuous $SO(3)$-module. Here, we are taking $SO(3)$-fixed points with respect to the tensor product action on $M\otimes \cF(3)$, and the parity decomposition of $\Phi(M)$ is determined by the parity decomposition of $\cF(3)$ (with $M$ taken to be purely even). Then for a morphism $f: M_1\rightarrow M_2$ in $\mathrm{Rep}\,SO(3)$, we define $\Phi(f)=(f\otimes\id_{\cF(3)})\vert_{(M_1\otimes \cF(3))^{SO(3)}}$; note that the image of $\Phi(f)$ is indeed contained in $(M_2\otimes \cF(3))^{SO(3)}$, and that $\Phi(f)$ thus defined is an even $\cS(\frac{3}{2},0)$-module homomorphism. As in \cite[Proposition 3.5]{Mc}, it is easy to show that $\Phi(\rho_{2n+1})\cong\Pi^n\left(\cS_{2n+1,1}\right)$, and it then follows that $\Phi$ is fully faithful since it takes the distinct simple objects of the semisimple category $\mathrm{Rep}\,SO(3)$ to distinct simple objects of $\mathcal{O}^{\mathrm{fin}}_{3/2}$.

To show that $\Phi$ is a braided tensor functor, let $M_1$ and $M_2$ be objects of $\mathrm{Rep}\,SO(3)$. Then we can define an intertwining operator $\mathcal{Y}$ of type $\binom{M_1\otimes M_2\otimes \cF(3)}{M_1\otimes \cF(3)\,\,M_2\otimes \cF(3)}$ by
\begin{equation*}
\cY(m_1\otimes v_1,x)(m_2\otimes v_2) = m_1\otimes m_2\otimes Y(v_1,x)v_2
    \end{equation*}
    for $m_1\in M_1$, $m_2\in M_2$, and $v_1,v_2\in \cF(3)$. Since $SO(3)$ acts on $\cF(3)$ by vertex operator superalgebra automorphisms, $\cY$ restricts to an intertwining operator of type $\binom{\Phi(M_1\otimes M_2)}{\Phi(M_1)\,\Phi(M_2)}$, and then this intertwining operator induces an even $\cS(\frac{3}{2},0)$-module homomorphism
    \begin{equation*}
J_{M_1,M_2}: \Phi(M_1)\boxtimes\Phi(M_2)\rightarrow\Phi(M_1\otimes M_2)
    \end{equation*}
by the universal property of tensor products in $\mathcal{O}^{\mathrm{fin}}_{3/2}$. The homomorphisms $J_{M_1,M_2}$ determine a natural transformation, and as in \cite[Theorem 4.5]{Mc}, this natural transformation is compatible with the unit, associativity, and braiding isomorphisms in $\mathrm{Rep}\,SO(3)$ and $\mathcal{O}^{\mathrm{fin}}_{3/2}$ (for compatibility of $J_{M_1,M_2}$ with the right unit and braiding isomorphisms, some sign factors need to be added in the calculations in \cite{Mc}). Finally, each $J_{M_1,M_2}$ is an isomorphism exactly as in Proposition 4.3 and Theorem 4.7 of \cite{Mc}. Thus $J$ is a braided tensor functor.

\end{proof}

From the preceding theorem, we recover the fusion rules obtained in \cite[Theorem 10.2]{M}, which shows that the semisimple subcategory of $\mathcal{O}^{\mathrm{fin}}_{3/2}$ containing all simple modules is a tensor subcategory with the same Grothendieck ring as $\mathrm{Rep}\,\mathfrak{osp}(1|2)$. We can also show that $\mathcal{O}^{\mathrm{fin}}_{3/2}$ is rigid:
\begin{theorem}\label{thm:t=1-properties}
The tensor category $\mathcal{O}^{\mathrm{fin}}_{3/2}$ is rigid, and tensor products of simple objects in $\mathcal{O}^{\mathrm{fin}}_{3/2}$ are given by 
  \begin{equation*}
     \cS_{2n+1,1}\boxtimes \cS_{2n'+1,1}\cong\bigoplus_{n''=\vert n-n'\vert}^{n+n'} \Pi^{n+n'+n''}\left(\cS_{2n''+1,1}\right)
  \end{equation*}
 for $n,n'\in\ZZ_{\geq 0}$.
\end{theorem}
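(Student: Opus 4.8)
The plan is to deduce both assertions from the fully faithful braided tensor functor $\Phi:\mathrm{Rep}\,SO(3)\to\mathcal{O}^{\mathrm{fin}}_{3/2}$ of Theorem \ref{thm:t=1_tensor_embedding}, which sends $\rho_{2n+1}\mapsto\Pi^n(\cS_{2n+1,1})$. I would first read off the fusion rules. Since $\Phi$ is a tensor functor, it carries the $SO(3)$ Clebsch--Gordan decomposition
\[
\rho_{2n+1}\otimes\rho_{2n'+1}\cong\bigoplus_{n''=|n-n'|}^{n+n'}\rho_{2n''+1}
\]
to an isomorphism $\Pi^n(\cS_{2n+1,1})\boxtimes\Pi^{n'}(\cS_{2n'+1,1})\cong\bigoplus_{n''=|n-n'|}^{n+n'}\Pi^{n''}(\cS_{2n''+1,1})$. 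Proposition \ref{prop:Pi-and-fusion} lets me move parity reversals across $\boxtimes$, rewriting the left-hand side as $\Pi^{n+n'}(\cS_{2n+1,1}\boxtimes\cS_{2n'+1,1})$; applying $\Pi^{n+n'}$ to both sides (and using $\Pi^2=\mathrm{id}$) then yields exactly the stated formula. Rigidity of the simple objects is equally immediate: every finite-dimensional $SO(3)$-module is rigid and self-dual, so the braided tensor functor $\Phi$ carries these to rigid self-dual objects, whence each $\Pi^n(\cS_{2n+1,1})$, and therefore each $\cS_{2n+1,1}$, is rigid and self-dual in $\mathcal{O}^{\mathrm{fin}}_{3/2}$.

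The substantive step is to promote rigidity from the simple objects to all of $\mathcal{O}^{\mathrm{fin}}_{3/2}$. This is genuinely necessary, since the essential image of $\Phi$ is only the semisimple subcategory generated by the simple objects, whereas $\mathcal{O}^{\mathrm{fin}}_{3/2}$ is not semisimple and contains non-split extensions not accounted for by $\Phi$. The approach is to show that the class of rigid objects is closed under extensions and then induct on composition length, which is finite for every object of $\mathcal{O}^{\mathrm{fin}}_{3/2}$. Given a short exact sequence $0\to A\to B\to S\to 0$ with $A$ rigid and $S$ a rigid simple, I would take the contragredient $B'$ as candidate dual, use the canonical evaluation $B'\boxtimes B\to\cS(\tfrac{3}{2},0)$ coming from the natural pairing, and construct the coevaluation $\cS(\tfrac{3}{2},0)\to B\boxtimes B'$ together with a verification of the triangle identities by a diagram chase built from the duality data of $A$ and $S$ and the two exact sequences $0\to A\to B\to S\to 0$ and $0\to S'\to B'\to A'\to 0$. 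The key enabling fact is that tensoring by a rigid object is exact: a rigid $W$ makes $-\boxtimes W$ both a left and a right adjoint, hence exact, so tensoring by $A$ and by $S$ is exact and the chases defining the coevaluation and checking the zigzags can be carried out.

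The main obstacle is precisely this last step, namely establishing closure of rigidity under extensions in $\mathcal{O}^{\mathrm{fin}}_{3/2}$. The fusion rules and rigidity of simples are formal consequences of $\Phi$, but the non-semisimplicity of $\mathcal{O}^{\mathrm{fin}}_{3/2}$ forces one to produce duality data for honestly non-split extensions; the braiding and the exactness of tensoring by rigid objects are exactly what make the inductive construction close, and organizing this cleanly (rather than the routine Clebsch--Gordan bookkeeping) is where the care is required.
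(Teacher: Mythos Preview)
Your proposal is correct and matches the paper's proof essentially line for line: both use the tensor embedding $\Phi$ together with the Clebsch--Gordan rules and Proposition \ref{prop:Pi-and-fusion} for the fusion rules, and both deduce rigidity of the simple objects directly from rigidity in $\mathrm{Rep}\,SO(3)$. For the passage from rigid simples to rigidity of all of $\mathcal{O}^{\mathrm{fin}}_{3/2}$, the paper invokes \cite[Theorem 4.4.1]{CMY-singlet} (in its superalgebra form) as a black box, whereas you sketch its content---namely the induction on composition length using exactness of tensoring with rigid objects; this is the same argument, just unpacked.
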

\begin{proof}
Since $\mathrm{Rep}\,SO(3)$ is rigid, Theorem \ref{thm:t=1_tensor_embedding} implies that $\Pi^n(\cS_{2n+1,1})$ is rigid for all $n\in\ZZ_{\geq 0}$. Then all simple objects of $\mathcal{O}^{\mathrm{fin}}_{3/2}$ are rigid since parity reversals of rigid modules are rigid. Finally, $\mathcal{O}^{\mathrm{fin}}_{3/2}$ is rigid by the straightforward superalgebra generalization of \cite[Theorem 4.4.1]{CMY-singlet}. Moreover, from Theorem \ref{thm:t=1_tensor_embedding} and the fusion rules in $\mathrm{Rep}\,SO(3)$, we get
    \begin{equation*}
\Pi^n\left(\cS_{2n+1,1}\right)\boxtimes \Pi^{n'}\left(\cS_{2n'+1,1}\right)\cong\bigoplus_{n''=\vert n-n'\vert}^{n+n'} \Pi^{n''}\left(\cS_{2n''+1,1}\right).
        \end{equation*}
The formula for $\cS_{2n+1,1}\boxtimes\cS_{2n'+1,1}$ then follows from Proposition \ref{prop:Pi-and-fusion}.

\end{proof}

\begin{remark}
    The $N=1$ super Virasoro algebra admits a coset realization, namely
    \[
    \cS(c^{\NS}(t), 0) \cong \text{Com}\left( V^{k+2}(\mathfrak{so}_3), V^k(\mathfrak{so}_3) \otimes \cF(3) \right)
    \]
    for generic $t$, where the level $k$ is related to $t$ via $k = -4\psi-2$ and 
    \[
    \frac{1}{\psi} + \frac{2}{t} = 2.
    \]
    This is a special case of the orthosymplectic triality \cite{CL}. In the large level limit the parameter $t$ goes to $1$ and the coset in the limit becomes the $SO(3)$-orbifold of $\cF(3)$ \cite{CL2}. If $k$ is an admissible level, that is, $k = -2 + \frac{u}{v}$ with $u,v$ co-prime positive integers and $u > 1$, then this coset realization still holds if one replaces the universal affine VOAs and the universal $N=1$ super Virasoro algebra by their simple quotients. One then obtains the well-known rational super Virasoro minimal models \cite{A1}. 
    Using this coset realization to study $\cS(c^{\NS}(t),0)$ for other rational values of $t$ might be possible if one understands the branching rules. This would be of particular interest for $t = \frac{1}{n}$  and $n$ an integer. 
\end{remark}

.

\section{Rigidity of \texorpdfstring{$\cS_{2,2}$}{S22}}\label{sec:rigidity}

In this section, we show that the simple $\cS(c(t),0)$-module $\cS_{2,2}$ in $\cO^{\mathrm{fin}}_{c(t)}$ is rigid and self-dual for all $t\in\CC^\times$ except for $t^{\pm1}\in\ZZ_{\leq -1}$ and $t=\frac{p}{q}$ for $p,q\in\ZZ_{\geq 2}$ such that $p-q\in 2\ZZ$ and $\gcd(\frac{p-q}{2},q)=1$ (recall that $\cS(c(t),0)$ is not simple in the latter case). We begin by using the Zhu algebra of $\cS(c(t),0)$ and its (bi)modules to determine the possible lowest conformal weight spaces in fusion tensor products of the form $\cS_{2,2}\boxtimes\cS_{r,s}$.

\subsection{Zhu bimodules and intertwining operators involving \texorpdfstring{$\cS_{2,2}$}{S22}}

The Zhu algebra of a vertex operator algebra $V$ was defined in \cite{Z} as a tool to study $V$-modules. In \cite[Section 1.2]{KW2}, the Zhu algebra was generalized to $\frac{1}{2}\ZZ$-graded vertex operator superalgebras $V$ such that $V^{\bar{i}}=\bigoplus_{n\in\frac{i}{2}+\ZZ} V_{(n)}$ for $i=0,1$. In particular, $A(V)$ is a unital associative algebra structure on the vector space $V/O(V)$, where $O(V)$ is a certain $\ZZ/2\ZZ$-graded subspace which contains $V^{\bar{1}}$. In fact, $A(V)$ is a quotient of the Zhu algebra $A(V^{\bar{0}})$ of the vertex operator algebra $V^{\bar{0}}$. For an element $v\in V$, we use $[v]$ to denote its image $v+O(V)\in A(V)$.

If $W=\bigoplus_{n\in\frac{1}{2}\NN} W(n)$ is a $\frac{1}{2}\NN$-gradable weak $V$-module, then $W(0)$ is an $A(V)$-module such that $[v]\cdot w=o(v)w$ for $v\in V$, $w\in W(0)$, where $o(v)=\mathrm{Res}_x\,x^{-1}Y_W(x^{L_0} v,x)$ is the conformal weight preserving coefficient of the vertex operator for $v$ \cite[Theorem 1.2]{KW2}. Also, it is shown in \cite[Section 1.3]{KW2} that there is an $A(V)$-bimodule $A(W) = W/O(W)$ where $O(W)$ is a certain $\ZZ/2\ZZ$-graded subspace of $W$. For $w\in W$, we use $[w]$ to denote its image $w+O(W)\in A(W)$.

Now suppose that for $i=1,2,3$, $W_i$ is a lower-bounded generalized $V$-module whose conformal weights are contained in $h_i+\frac{1}{2}\ZZ_{\geq0}$ for some $h_i\in\CC$ (for example, each $W_i$ could be indecomposable). Then as in Remark \ref{rem:half-N-grading}, each $W_i$ is $\frac{1}{2}\NN$-gradable, with $W_i(n)=(W_i)_{[h_i+n]}$ for $n\in\frac{1}{2}\ZZ_{\geq 0}$. Let $\cY$ be an intertwining operator of type $\binom{W_3}{W_1\,W_2}$. Then for $w_1\in W_1$, $w_2\in W_2$, substituting $x\mapsto 1$ in $\cY(w_1,x)w_2$ using the real-valued branch of logarithm, $\ln 1=0$, yields a well-defined element
\begin{equation*}
    \cY(w_1,1)w_2\in\prod_{n\in\frac{1}{2}\NN} W_3(n).
\end{equation*}
Let $\pi_0: \prod_{n\in\frac{1}{2}\NN} W_3(n)\rightarrow W_3(0)$ denote the projection. Then by \cite[Theorem 1.5(1)]{KW2}, 
\begin{align*}
    \pi(\cY): A(W_1)\otimes_{A(V)} W_2(0) & \longrightarrow W_3(0)\\
    [w_1]\otimes_{A(V)} w_2 & \longmapsto \pi_0(\cY(w_1,1)w_2)
\end{align*}
is a well-defined $A(V)$-module homomorphism. The following easy proposition is the superalgebra version of \cite[Proposition 2.5]{MY1}, which is based on \cite[Proposition 24]{TW}, and has essentially the same proof:
\begin{prop}  \label{surprop}
Let $V$ be a $\frac{1}{2}\ZZ$-graded vertex operator superalgebra such that $V^{\bar{i}}=\bigoplus_{n\in\frac{i}{2}+\ZZ} V_{(n)}$ for $i=0,1$, and for $i=1,2,3$, let $W_i$ be a lower-bounded generalized $V$-module with conformal weights contained in $h_i+\frac{1}{2}\ZZ_{\geq 0}$ for some $h_i\in\CC$. If $\cY$ is a surjective intertwining operator of type $\binom{W_3}{W_1\,W_2}$ and $W_2$ is generated by $W_2(0)$ as a $V$-module, then the $A(V)$-module homomorphism $\pi(\cY): A(W_1)\otimes_{A(V)} W_2(0) \rightarrow W_3(0)$ is surjective.
\end{prop}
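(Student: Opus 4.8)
The plan is to show that the image $U:=\mathrm{Im}\,\pi(\cY)$, which is an $A(V)$-submodule of $W_3(0)$, is all of $W_3(0)$. Since $\cY$ is surjective, $W_3$ is spanned by the coefficients of $\cY(w_1,x)w_2$ for $w_1\in W_1$, $w_2\in W_2$, so $W_3(0)=(W_3)_{[h_3]}$ is spanned by those coefficients of conformal weight $h_3$. For homogeneous $w_1,w_2$ the value $\pi_0(\cY(w_1,1)w_2)$ is exactly the weight-$h_3$ coefficient $(w_1)^{\cY}_{h;0}w_2$ carrying no $\log x$ (since $\log 1=0$), so the first goal is to prove that all such $k=0$ coefficients lie in $U$, and then to recover the $k\geq 1$ coefficients.

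The main step is to prove $\pi_0(\cY(w_1,1)w_2)\in U$ for every $w_1\in W_1$ and every homogeneous $w_2\in W_2(n)$, by induction on $n\in\frac12\ZZ_{\geq 0}$; the base case $n=0$ is the definition of $U$. Because $W_2$ is generated by $W_2(0)$, for $n>0$ the space $W_2(n)$ is spanned by vectors $v_m w_2'$ with $v\in V$ homogeneous, $w_2'\in W_2(n')$ homogeneous of degree $n'<n$, and $\wt v-m-1=n-n'>0$, so $m<\wt v-1$. Rearranging the (anti)commutator formula and setting $x=1$ gives $\cY(w_1,1)v_m w_2'=(-1)^{|v||w_1|}v_m\,\cY(w_1,1)w_2'-(-1)^{|v||w_1|}\sum_{i\geq 0}\binom{m}{i}\cY(v_iw_1,1)w_2'$. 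Applying $\pi_0$, the first term vanishes: since $\wt v-m-1>0$, the operator $v_m$ raises conformal weight by $n-n'>0$, so the only component of $\cY(w_1,1)w_2'$ it could send into $(W_3)_{[h_3]}$ sits in weight $h_3-(n-n')<h_3$, which is zero as $h_3$ is minimal. The remaining finite sum lies in $U$ by the inductive hypothesis, applied to $v_iw_1\in W_1$ and $w_2'\in W_2(n')$ with $n'<n$. This closes the induction and shows every weight-$h_3$ coefficient with $k=0$ lies in $U$.

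To capture the coefficients $(w_1)^{\cY}_{h;k}w_2$ of weight $h_3$ with $k\geq 1$, I would use the $L_0$-bracket property $[L_0,\cY(w_1,x)]=\cY(L_0w_1,x)+x\tfrac{d}{dx}\cY(w_1,x)$. Comparing coefficients of $(\log x)^k$ in the weight-$h_3$ part, for homogeneous $w_1,w_2$, yields the recursion $(w_1)^{\cY}_{h;k+1}w_2=\tfrac{1}{k+1}\mathcal N\,(w_1)^{\cY}_{h;k}w_2$, where $\mathcal N=o(\omega)-h_3\,\id$ is the nilpotent part of $L_0$ on $W_3(0)$. Since $\mathcal N$ is the action of $[\omega]-h_3[\one]\in A(V)$ and $U$ is an $A(V)$-submodule, all higher coefficients lie in $U$ once the $k=0$ ones do. Together with the surjectivity of $\cY$ this gives $U=W_3(0)$, i.e.\ $\pi(\cY)$ is surjective. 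The only genuinely delicate points are the parity signs, which propagate harmlessly through every use of the Jacobi identity (and disappear in the $L_0$-recursion since $\omega$ is even), and the reduction to genuine $L_0$-eigenvectors in the last step, where generalized eigenvectors only contribute further terms absorbed by the $A(V)$-action. I expect the grading argument that kills the $v_m\,\cY(w_1,1)w_2'$ term to be the crux, as it is the only place where both hypotheses—surjectivity of $\cY$ and generation of $W_2$ by $W_2(0)$—are essential.
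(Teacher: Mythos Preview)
Your argument is correct and matches the standard proof: induct on the degree $n$ of $w_2$ using the commutator formula (the $v_m\cY(w_1,1)w_2'$ term dies because $h_3$ is the minimal conformal weight of $W_3$), then recover the $k\geq 1$ coefficients from the nilpotent part of $L_0$ acting as $[\omega]-h_3[\one]\in A(V)$. The paper does not spell out a proof but defers to \cite[Proposition 2.5]{MY1} and \cite[Proposition 24]{TW}, which use exactly this approach; the only new content in the superalgebra setting is the parity signs you correctly track.
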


We now take $V=\cS(c,0)$ for any $c\in\CC$. By \cite[Lemma 3.1]{KW2}, there is an algebra isomorphism $A(\cS(c,0))\xrightarrow{\sim}\CC[x]$ such that $[\omega]\mapsto x$. Using this identification, it is observed in \cite[Section 7.3]{M} that for $h\in\CC$,
\begin{align} \label{isomVerma}
A(M^\NS(c,h))\cong \CC[x,y]\cdot v_{\bar{0}} \oplus \CC[x,y]\cdot v_{\bar{1}}    
\end{align}
as a superspace, where $v_{\bar{0}}=[\vac_{c,h}]$ and $v_{\bar{1}}=[G_{-\frac{1}{2}}\vac_{c,h}]$. The left and right actions of $A(\cS(c,0))\cong\CC[x]$ on $A(M^\NS(c,h))$ are given by multiplication by $x$ and $y$, respectively:
    \begin{align*}
[\omega]\cdot(f(x,y)v_{\bar{0}}+g(x,y)v_{\bar{1}}) =x f(x,y)v_{\bar{0}}+x g(x,y)v_{\bar{1}}, \\
(f(x,y)v_{\bar{0}}+g(x,y)v_{\bar{1}})\cdot[\omega] =y f(x,y)v_{\bar{0}}+y g(x,y)v_{\bar{1}}.
\end{align*}
Thus $A(M^\NS(c,h))$ is generated by $v_{\bar{0}}=[\vac_{c,h}]$ and $v_{\bar{1}}=[G_{-\frac{1}{2}}\vac_{c,h}]$ as an $A(\cS(c,0))$-bimodule.

We now fix $c=c^{\NS}(t)$ for some $t\in\CC^\times$, and we fix $h=h^\NS_{2,2}(t)$. One can check that the non-trivial singular vector of lowest conformal weight in
 $M^\NS_{2,2} = M^\NS(c, h)$ is
\begin{equation}\label{singularvector}
w_{2,2} = \left(L_{-1}^2 - \frac{4}{3}h^\NS_{2,2}(t)L_{-2} - G_{-\frac{3}{2}}G_{-\frac{1}{2}}\right)v_{2,2}
\end{equation}
up to a non-zero scalar multiple, where $v_{2,2}=\vac_{c,h}$. We set $\mathcal{J}_{2,2}=\langle w_{2,2}\rangle$, the $\cS(c,0)$-submodule generated by $w_{2,2}$, and define $\mathcal{M}_{2,2}=M^\NS_{2,2}/\mathcal{J}_{2,2}$. Note that $\mathcal{M}_{2,2}$ is simple (and isomorphic to $\cS_{2,2}$) if and only if $t\neq\frac{p}{q}$ for $p, q \in\ZZ_{\geq 2}$ such that $p-q\in 2\ZZ$ and $\gcd(\frac{p-q}{2},q)=1$. 
By \cite[Proposition 1.2(1)]{KW2}, $A(\mathcal{M}_{2,2})\cong A(M_{2,2}^\NS)/A(\mathcal{J}_{2,2})$, 
where $A(\mathcal{J}_{2,2})$ denotes the image of $\mathcal{J}_{2,2}$ in $A(M_{2,2}^\NS)$. Since $\cJ_{2,2}$ is a Verma submodule, \eqref{isomVerma} shows that $A(\cJ_{2,2})$ is generated by $[w_{2,2}]$ and $[G_{-\frac{1}{2}}w_{2,2}]$ as an $A(\cS(c,0))$-bimodule, and therefore
\begin{align} \label{ZhuQuoM}
A(\mathcal{M}_{2,2}) \cong  \CC[x,y]/\langle f_{2, 2}(x,y)\rangle\cdot v_{\bar{0}}\oplus \CC[x,y]/\langle g_{2,2}(x,y)\rangle\cdot v_{\bar{1}}
\end{align}
for polynomials $f_{2,2}(x,y)$ and $g_{2,2}(x,y)$ such that $f_{2,2}(x,y)\cdot v_{\bar{0}}=[w_{2,2}]$ and $g_{2,2}(x,y)\cdot v_{\bar{1}}=[G_{-\frac{1}{2}}w_{2,2}]$. We need to find $f_{2,2}(x,y)$ and $g_{2,2}(x,y)$ explicitly.

 First, by the definitions in \cite[Section 1.3]{KW2},
\begin{align} 
&[\omega]\cdot [v]=[(L_0+2L_{-1}+L_{-2})v],\qquad [v]\cdot[\omega]=[(L_{-2}+L_{-1})v] \label{L2}
\end{align}
in $A(M_{2,2}^\NS)$ for any $v\in M_{2,2}^\NS$, which implies that 
\begin{align} \label{Lz}
[L_{-1} v]=[\omega]\cdot[v]-[v]\cdot[\omega]-{\rm wt}(v)[v].
\end{align}
The definitions also imply (see especially the proof sketch of \cite[Theorem 1.4]{KW2}) that $(G_{-n-\frac{3}{2}}+G_{-n-\frac{1}{2}})v\in O(M_{2,2}^\NS)$ for $n\in\ZZ_{\geq 0}$, so that in particular for any $v\in M_{2,2}^\NS$,

\begin{align} \label{Gz}
[G_{-\frac{5}{2}}v]=-[G_{-\frac{3}{2}}v] = [G_{-\frac{1}{2}} v].
\end{align}
Using \eqref{singularvector}, \eqref{L2}, \eqref{Lz}, and \eqref{Gz}, we can calculate $f_{2,2}(x,y)$ to be
\begin{equation*}
f_{2,2}(x,y) = (x-y)^2 - \frac{2}{3}h\left(x+y+\frac{1}{2}h\right).
\end{equation*}
To calculate $g_{2,2}(x,y)$, we use
\begin{equation*}
    G_{-\frac{1}{2}}w_{2,2} = \left(L_{-1}^2 G_{-\frac{1}{2}} + L_{-1}G_{-\frac{3}{2}} - 2\left(\frac{2}{3}h+1\right) L_{-2}G_{-\frac{1}{2}} -\left(\frac{2}{3}h+1\right)G_{-\frac{5}{2}}\right)v_{2,2}
\end{equation*}
and  then \eqref{L2}, \eqref{Lz}, and \eqref{Gz} again to obtain

\begin{equation*}
    g_{2,2}(x,y) = (x-y)^2-\left(\frac{2}{3}h+1\right)\left(x+y+\frac{1}{2}h-\frac{1}{4}\right).
\end{equation*}

We want to use $f_{2,2}(x,y)$ and $g_{2,2}(x,y)$ to obtain information about surjective intertwining operators of type $\binom{W}{\mathcal{M}_{2,2}\,\cS_{r,s}}$ for $W$ an indecomposable module in $\cO_c^{\mathrm{fin}}$ and $r,s\in\ZZ_{\geq 1}$ such that $r-s\in 2\ZZ$. By Proposition \ref{surprop}, the $A(\cS(c,0))\cong\CC[x]$-module $W(0)$ is a homomorphic image of
\begin{equation}\label{eqn:Zhu-bimod-times-mod}
    A(\mathcal{M}_{2,2})\otimes_{A(\cS(c,0))} \cS_{r,s}(0)\cong \CC[x]/\langle f_{2,2}(x,h_{r,s}^\NS)\rangle \oplus \CC[x]/\langle g_{2,2}(x, h_{r,s}^\NS)\rangle,
\end{equation}
where one can check that
\begin{align}
f_{2,2}(x,h_{r,s}^\NS) & = (x-h_{r+1,s+1}^\NS)(x-h_{r-1,s-1}^\NS),\label{f22}\\
g_{2,2}(x,h_{r,s}^\NS)
& = (x-h_{r+1,s-1}^\NS)(x-h_{r-1,s+1}^\NS).\label{g22}
\end{align}
Thus $\dim W(0)^{\bar{i}}\leq 2$ for $i=0,1$, and the (generalized) eigenvalue(s) of $L_0$ on $W(0)^{\bar{0}}$ are $h_{r+1,s+1}^\NS$ and/or $h_{r-1,s-1}^\NS$, while the (generalized) eigenvalue(s) of $L_0$ on $W(0)^{\bar{1}}$ are $h_{r+1,s-1}^\NS$ and/or $h_{r-1,s+1}^\NS$. Using these results, we prove:

\begin{prop}\label{prop:upperbound}
Let $W$ be a module in $\ocfin$ and let $r,s\in \mathbb{Z}_{\geq 1}$ such that $r-s\in 2\ZZ$. If there exists a surjective intertwining operator of type $\binom{W}{\cS_{2,2} \, \cS_{r,s}}$, then the conformal weights of $W^{\bar{0}}$ are contained in 
\begin{equation*}
    \lbrace h^\NS_{r+1,s+1}+  \ZZ_{\geq 0}\rbrace \cup \lbrace h^\NS_{r-1, s-1}+ \ZZ_{\geq 0}\rbrace \cup \left\lbrace h^\NS_{r+1, s-1}+\frac{1}{2}+\ZZ_{\geq 0}\right\rbrace\cup  \left\lbrace h^\NS_{r-1, s+1}+\frac{1}{2}+\ZZ_{\geq 0}\right\rbrace,
\end{equation*}
and the conformal weights of $W^{\bar{1}}$ are contained in 
\begin{equation*}
    \left\lbrace h^\NS_{r+1,s+1}+\frac{1}{2}+  \ZZ_{\geq 0}\right\rbrace \cup \left\lbrace h^\NS_{r-1, s-1}+\frac{1}{2} +\ZZ_{\geq 0}\right\rbrace \cup \lbrace h^\NS_{r+1, s-1}+\ZZ_{\geq 0}\rbrace\cup  \lbrace h^\NS_{r-1, s+1}+\ZZ_{\geq 0}\rbrace.
\end{equation*}
This conclusion holds in particular for $W=\cS_{2,2}\boxtimes \cS_{r,s}$.
\end{prop}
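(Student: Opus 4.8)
The plan is to read off the constraints on $W$ from the explicitly computed Zhu bimodule $A(\mathcal{M}_{2,2})$, together with the fact that $\cS(c,0)$ has correct statistics. First I would put the hypothesis into a convenient form. Precomposing the given surjective intertwining operator $\cY$ with the canonical surjection $\mathcal{M}_{2,2}\twoheadrightarrow\cS_{2,2}$ produces a surjective intertwining operator of type $\binom{W}{\mathcal{M}_{2,2}\,\cS_{r,s}}$, so that the bimodule $A(\mathcal{M}_{2,2})$ of \eqref{ZhuQuoM} becomes available. Then, decomposing $W=\bigoplus_{i}W_i$ into its conformal-weight cosets as in Remark \ref{rem:half-N-grading} and composing with the projections $W\twoheadrightarrow W_i$ (which are $\cS(c,0)$-module maps), I obtain for each $i$ a surjective intertwining operator onto a module $W_i$ whose conformal weights lie in a single coset $h_i+\frac12\ZZ_{\geq0}$.

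Next I would apply Proposition \ref{surprop}: since $\cS_{r,s}$ is generated by its one-dimensional even lowest weight space $\cS_{r,s}(0)$, each $W_i(0)$ is a quotient of $A(\mathcal{M}_{2,2})\otimes_{A(\cS(c,0))}\cS_{r,s}(0)$. By \eqref{eqn:Zhu-bimod-times-mod} together with the factorizations \eqref{f22} and \eqref{g22}, this $A(\cS(c,0))\cong\CC[x]$-module is $\CC[x]/\langle f_{2,2}(x,h_{r,s}^\NS)\rangle\oplus\CC[x]/\langle g_{2,2}(x,h_{r,s}^\NS)\rangle$ with $L_0$ acting as $x$; its even summand has eigenvalues $h^\NS_{r+1,s+1},h^\NS_{r-1,s-1}$ and its odd summand has eigenvalues $h^\NS_{r+1,s-1},h^\NS_{r-1,s+1}$. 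Hence the lowest weight $h_i$ of each $W_i$ is one of these four numbers, and $W_i(0)$ is even when $h_i\in\{h^\NS_{r+1,s+1},h^\NS_{r-1,s-1}\}$ and odd when $h_i\in\{h^\NS_{r+1,s-1},h^\NS_{r-1,s+1}\}$.

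Finally I would propagate this to all conformal weights. Because $\cS(c,0)$ has correct statistics, its even modes $L_{-n}$ shift conformal weight by integers while preserving parity, and its odd modes $G_{-m}$ shift conformal weight by half-odd-integers while reversing parity. Consequently, acting on a purely even (resp. purely odd) lowest weight space produces even vectors exactly at integer shifts and odd vectors exactly at half-odd-integer shifts (resp.\ the reverse). Carrying this out in each $W_i$ and taking the union over $i$ yields precisely the four sets asserted for the conformal weights of $W^{\bar0}$ and the four sets asserted for $W^{\bar1}$. The final statement then follows because the tensor product intertwining operator $\cY_{\cS_{2,2},\cS_{r,s}}$ is surjective.

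The crux — and the step I expect to be the main obstacle — is the input required by the last paragraph: that $W$ is generated as an $\cS(c,0)$-module by its lowest weight spaces $\bigoplus_i W_i(0)$. Only then does the Zhu-algebra computation of the \emph{lowest} weights pin down the parity of \emph{every} conformal weight space; a priori a quotient of the fusion product could acquire a ``wrong-parity'' composition factor at a strictly higher, integer-shifted weight, which would violate the stated bound. I would establish this generation by the standard reduction using the intertwining-operator commutator and associator formulas \eqref{eqn:comm1}--\eqref{eqn:asso2}: peeling $\NS^-$-modes off the second argument and then the first expresses every coefficient of $\cY$ as the $\NS^-$-action on the coefficients of $\cY(v_{2,2},x)v_{r,s}$, and the $L_{-1}$-derivative property then rewrites those in turn as the $\NS^-$-action on the finitely many lowest-weight coefficients lying in $\bigoplus_i W_i(0)$. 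The bookkeeping of the logarithmic terms and of the half-integer weight shifts introduced by the odd modes $G_{-m}$ is where the argument demands the most care.
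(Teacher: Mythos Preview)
Your overall setup is correct and matches the paper's strategy: pass to $\mathcal{M}_{2,2}$, invoke Proposition~\ref{surprop}, and read off the four possible lowest weights with their parities from $A(\mathcal{M}_{2,2})\otimes_{A(\cS(c,0))}\cS_{r,s}(0)$ via \eqref{eqn:Zhu-bimod-times-mod}, \eqref{f22}, \eqref{g22}. The gap is exactly where you suspect, but it is a genuine one: the intermediate claim that $W$ (or each coset summand $W_i$) is generated by its lowest weight space is \emph{false} in general, and the $L_{-1}$-derivative argument does not rescue it. For a concrete obstruction, suppose $h^\NS_{r+1,s+1}-h^\NS_{r-1,s-1}\in\ZZ$ and take $W$ a length-two module with composition factors $\cS_{r-1,s-1}$ and $\cS_{r+1,s+1}$ (both even-bottom), which is a legitimate quotient of $\cS_{2,2}\boxtimes\cS_{r,s}$ by the fusion rules. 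Then the entire module lies in a single coset, $W_i(0)$ is one-dimensional, and the submodule it generates misses one composition factor. The commutator/associator reduction you describe does show that $W$ is generated by the coefficients of $\cY(v_{2,2},x)v_{r,s}$ and $\cY(G_{-1/2}v_{2,2},x)v_{r,s}$, but the singular-vector recursion gives a second-order relation, so two initial coefficients (not one) are needed in each coset; when two of the four Zhu eigenvalues fall in the same coset, the higher one is \emph{not} in $W_i(0)$.

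The paper sidesteps this entirely by applying the Zhu argument not once to $W$ but repeatedly to the successive quotients $W/W_{i-1}$ coming from the highest-weight filtration of Proposition~\ref{prop:mainprop}. Each induced intertwining operator $\mathcal{M}_{2,2}\otimes\cS_{r,s}\to (W/W_{i-1})[\log x]\{x\}$ is still surjective, so Proposition~\ref{surprop} forces the highest-weight vector of $W_i/W_{i-1}$ to lie in the image of $A(\mathcal{M}_{2,2})\otimes_{A(\cS(c,0))}\cS_{r,s}(0)$. Hence every subquotient $W_i/W_{i-1}$ is a quotient of one of $M^\NS_{r+1,s+1}$, $M^\NS_{r-1,s-1}$, $\Pi(M^\NS_{r+1,s-1})$, $\Pi(M^\NS_{r-1,s+1})$, and the weight/parity constraints follow immediately without ever needing $W$ to be bottom-generated. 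Your argument becomes correct if you replace the single application of Proposition~\ref{surprop} by this iterated one.
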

\begin{proof}
    By Proposition \ref{prop:mainprop}(1), $W$ has a filtration
    \begin{equation*}
        0 = W_0 \subset W_1 \subset\cdots \subset W_{n-1}\subset W_n =W
    \end{equation*}
    such that $W_i/W_{i-1}$ is a highest-weight $\cS(c,0)$-module for $i=1,\ldots, n$. Moreover, from the proof of Proposition \ref{prop:mainprop}(1), the highest-weight vector generating $W_i/W_{i-1}$ may be taken from $(W/W_{i-1})(0)$, where the $\frac{1}{2}\NN$-grading on $W/W_{i-1}$ is chosen as in Remark \ref{rem:half-N-grading}. Now, any surjective intertwining operator $\cY$ of type $\binom{W}{\cS_{2,2}\,\cS_{r,s}}$ induces another surjective intertwining operator
    \begin{equation*}
        \cM_{2,2}\otimes\cS_{r,s}\twoheadrightarrow \cS_{2,2}\otimes\cS_{r,s} \xrightarrow{\cY} W[\log x]\lbrace x\rbrace \twoheadrightarrow (W/W_{i-1})[\log x]\lbrace x\rbrace.
    \end{equation*}
    So by Proposition \ref{surprop}, \eqref{eqn:Zhu-bimod-times-mod}, \eqref{f22}, and \eqref{g22}, each $W_i/W_{i-1}$ is a quotient of one of $M_{r+1,s+1}^\NS$, $M_{r-1,s-1}^\NS$, $\Pi(M_{r+1,s-1}^\NS)$, or $\Pi(M_{r-1,s+1}^\NS)$. Thus the even and odd parts of each $W_i/W_{i-1}$ have conformal weights contained in the indicated sets, and then so do the even and odd parts of $W$.
    
\end{proof}

\begin{remark}
    Note that Proposition \ref{prop:upperbound} is consistent with the $(r',s')=(2,2)$ case of \eqref{eqn:irr-fus-rules} when $t\notin\QQ$.
\end{remark}

\subsection{Evaluation and coevaluation candidates}

We continue to fix a central charge $c=c^\NS(t)$ and conformal weight $h=h^\NS_{2,2}(t)$ for some $t\in\CC^\times$, as well as a highest-weight vector $v_{2,2}\in\cS_{2,2}$. We also fix an even non-degenerate invariant bilinear form $\langle\cdot,\cdot\rangle$ on $\cS_{2,2}$ such that $\langle v_{2,2},v_{2,2}\rangle=1$. By Lemma \ref{lem:bilinear-form-symmetric}, $\langle\cdot,\cdot\rangle$ is symmetric.

To show that the $\cS(c,0)$-module $\cS_{2,2}$ is rigid and self-dual, we first need evaluation and coevaluation candidates
\begin{equation*}
    \mathrm{ev}: \cS_{2,2}\boxtimes\cS_{2,2}\longrightarrow\cS_{1,1},\qquad \mathrm{coev}: \cS_{1,1}\longrightarrow\cS_{2,2}\boxtimes\cS_{2,2},
\end{equation*}
and then we need to show that the rigidity composition
\begin{align}\label{eqn:rig-comp}
    \mathfrak{R}: \cS_{2,2}\xrightarrow{r^{-1}} & \cS_{2,2}\boxtimes\cS_{1,1}\xrightarrow{\id\boxtimes\mathrm{coev}} \cS_{2,2}\boxtimes(\cS_{2,2}\boxtimes\cS_{2,2})\nonumber\\
    &\,\xrightarrow{\cA}(\cS_{2,2}\boxtimes\cS_{2,2})\boxtimes\cS_{2,2}\xrightarrow{\mathrm{ev}\boxtimes\id} \cS_{1,1}\boxtimes\cS_{2,2}\xrightarrow{l}\cS_{2,2}
\end{align}
is non-zero. Since $\cS_{2,2}$ is simple, we will then be able to rescale either $\mathrm{ev}$ or $\mathrm{coev}$ so that $\mathfrak{R}$ becomes the identity, and it will then follow (from \cite[Corollary 4.2.2]{CMY3}, for example), that $\cS_{2,2}$ is rigid. 

To define the evaluation candidate, we need an intertwining operator $\cE$ of type 
$\binom{\cS_{1,1}}{\cS_{2,2}\, \cS_{2,2}}$. We start with the vertex operator map $Y_{\cS_{2,2}}$ and then apply the superalgebra versions of the skew-symmetry operator $\Omega_0$ from \cite[Equation 3.77]{HLZ2} and the contragredient operator $A_0$ from \cite[Equation~3.87]{HLZ2} to obtain an intertwining operator $A_0(\Omega_0(Y_{\cS_{2,2}}))$ of type $\binom{\cS_{1,1}'}{\cS_{2,2}\; \cS_{2,2}}$. If $t\neq\frac{p}{q}$ for $p,q\in\ZZ_{\geq 2}$ such that $\gcd(\frac{p-q}{2},q)=1$, then the vertex operator superalgebra $\cS(c,0) = \cS_{1,1}$ is simple, so $\cS_{1,1}'\cong\cS_{1,1}$. We fix the even non-degenerate invariant (and symmetric by Lemma \ref{lem:bilinear-form-symmetric}) bilinear form $\langle\cdot, \cdot \rangle$ on $\cS_{1,1}$ such that $\langle\one, \one\rangle = 1$. This allows us to define an intertwining operator $\cE=e^{-\pi i h}\cdot A_0(\Omega_0(Y_{\cS_{2,2}}))$ of type $\binom{\cS_{1,1}}{\cS_{2,2}\; \cS_{2,2}}$, which from the definitions satisfies 
\begin{align}\label{eqn:E}
  \langle v, \cE(w', x)w\rangle & = e^{-\pi i h}\langle A_0(\Omega_0(Y_{\cS_{2,2}}))(w',x)w, v\rangle\nonumber\\
  & =(-1)^{\vert w\vert\vert w'\vert}\langle w, \Omega_0(Y_{\cS_{2,2}})(e^{xL_1}e^{\pi i(L_0-h)} x^{-2L_0}w',x^{-1})v\rangle\nonumber\\
& = (-1)^{\vert w'\vert(\vert v\vert+\vert w\vert)}\langle e^{x^{-1} L_1} w, Y_{\cS_{2,2}}(v, -x^{-1})e^{xL_1}x^{-2 L_0}e^{\pi i (L_0-h)}w'\rangle \nonumber\\
& = \langle e^{x^{-1} L_1} w, Y_{\cS_{2,2}}(v, -x^{-1})e^{xL_1}x^{-2 L_0}e^{-\pi i (L_0-h)}w'\rangle
\end{align}
for $v \in \cS_{1,1}$, and $w, w' \in \cS_{2,2}$. The last step in the calculation uses the evenness of $\langle\cdot,\cdot\rangle$ together with $(-1)^{\vert w'\vert} w'= e^{-2\pi i(L_0-h)}w'$. In particular,
\begin{equation*}
    \langle\vac,\cE(v_{2,2},x)v_{2,2}\rangle =\langle v_{2,2},v_{2,2}\rangle x^{-2h} =x^{-2h}=\langle\vac,\vac\rangle x^{-2h},
\end{equation*}
and thus
\begin{equation}\label{eqn:E-v22-v22}
    \cE(v_{2,2},x)v_{2,2}\in x^{-2h}(\vac+x\cS_{1,1}[[x]]).
\end{equation}
We now define $\mathrm{ev}:\cS_{2,2}\boxtimes\cS_{2,2}\rightarrow\cS_{1,1}$ to be the unique $\cS(c,0)$-module homomorphism such that $\mathrm{ev}\circ\cY_\boxtimes=\cE$, where for brevity we now use $\cY_\boxtimes$ to denote all tensor product intertwining operators.

For the coevaluation candidate, we first fix some notation. If $\cY$ is an intertwining operator of type $\binom{W_3}{W_1\,W_2}$, where $W_1$, $W_2$, $W_3$ are modules in $\cO_c^{\mathrm{fin}}$, then substituting $x\mapsto 1$ in $\cY$ using the real-valued branch of logarithm $\ln 1 = 0$ yields an element
\begin{equation*}
    \cY(w_1,1)w_2\in \overline{W}_3=\prod_{h\in\CC}\prod_{i=0,1} (W_3)_{[h]}\cap W_3^{\bar{i}}
\end{equation*}
for any $w_1\in W_1$, $w_2\in W_2$. Then for $h\in\CC$ and $i=0,1$, we fix $\pi_h^{\bar{i}}: \overline{W}_3\rightarrow (W_3)_{[h]}\cap W_3^{\bar{i}}$ to be the projection. In the next lemma we take $W_3=\cS_{2,2}\boxtimes\cS_{2,2}$:
\begin{lemma}\label{lem:projections_zero}
If $t^{\pm 1}\notin\ZZ_{\leq -1}\cup\left\lbrace\frac{-(n-2)\pm \sqrt{n^2-4n}}{2}\mid n\in\ZZ_{\geq 1}\right\rbrace$, then
\begin{equation}\label{lowerboundcondition}
    \pi_1^{\bar{0}}(L_{-2-i}v)= \pi_0^{\bar{0}}(L_{-1-i}v) = \pi_1^{\bar{0}}(G_{-\frac{3}{2}-i}v) = 0
\end{equation}
for all $i\in\ZZ_{\geq 0}$ and all $v \in \cS_{2,2}\btimes \cS_{2,2}$.   
\end{lemma}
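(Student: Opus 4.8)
The plan is to reduce the three identities to a statement about which conformal weight spaces of $W:=\cS_{2,2}\btimes\cS_{2,2}$ can be nonzero, and then to read off the answer from Proposition \ref{prop:upperbound}. First I would observe that all three operators are homogeneous for the $\frac{1}{2}\ZZ$-grading and definite with respect to parity: $L_{-2-i}$ and $L_{-1-i}$ are even and raise conformal weight by $2+i$ and $1+i$, while $G_{-\frac{3}{2}-i}$ is odd and raises conformal weight by $\frac{3}{2}+i$. Since each $\pi_h^{\bar{j}}$ is a grading component, by linearity it suffices to treat homogeneous $v$; then $\pi_1^{\bar{0}}(L_{-2-i}v)$ and $\pi_0^{\bar{0}}(L_{-1-i}v)$ can be nonzero only if $v\in W^{\bar{0}}_{[-1-i]}$, and $\pi_1^{\bar{0}}(G_{-\frac{3}{2}-i}v)$ only if $v\in W^{\bar{1}}_{[-\frac{1}{2}-i]}$. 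Thus the lemma is equivalent to the vanishing $W^{\bar{0}}_{[n]}=0$ for every $n\in\ZZ_{\leq -1}$ together with $W^{\bar{1}}_{[n]}=0$ for every $n\in\{-\frac{1}{2}-i : i\in\ZZ_{\geq 0}\}$.

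Next I would invoke Proposition \ref{prop:upperbound} with $r=s=2$. Computing from \eqref{hrs} that $h^\NS_{1,1}=0$, $h^\NS_{3,3}=t+t^{-1}-2$, $h^\NS_{3,1}=t-\tfrac{1}{2}$, and $h^\NS_{1,3}=t^{-1}-\tfrac{1}{2}$, the proposition gives that the conformal weights of $W^{\bar{0}}$ lie in
\[ \ZZ_{\geq 0}\ \cup\ \bigl(t+t^{-1}-2+\ZZ_{\geq 0}\bigr)\ \cup\ \bigl(t+\ZZ_{\geq 0}\bigr)\ \cup\ \bigl(t^{-1}+\ZZ_{\geq 0}\bigr), \]
while those of $W^{\bar{1}}$ lie in
\[ \bigl(\tfrac{1}{2}+\ZZ_{\geq 0}\bigr)\ \cup\ \bigl(t+t^{-1}-\tfrac{3}{2}+\ZZ_{\geq 0}\bigr)\ \cup\ \bigl(t-\tfrac{1}{2}+\ZZ_{\geq 0}\bigr)\ \cup\ \bigl(t^{-1}-\tfrac{1}{2}+\ZZ_{\geq 0}\bigr). \]
Since $W$ has finite length, it is lower bounded and grading restricted, so all weight spaces are finite dimensional and the operators act within $W$ itself; the whole argument is then pure bookkeeping on these four families.

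The final step is to verify that under the hypothesis none of the forbidden weights occurs. The crucial observation is that $x=\frac{-(n-2)\pm\sqrt{n^2-4n}}{2}$ is exactly the solution of $x^2+(n-2)x+1=0$, i.e. $x+x^{-1}=2-n$; as $n$ runs over $\ZZ_{\geq 1}$ this value runs over $\ZZ_{\leq 1}$, so the excluded set for $t^{\pm1}$ is precisely $\ZZ_{\leq -1}\cup\{x : x+x^{-1}\in\ZZ_{\leq 1}\}$. Because $x+x^{-1}$ is invariant under $x\mapsto x^{-1}$, the hypothesis says exactly that $t\notin\ZZ_{\leq -1}$, $t^{-1}\notin\ZZ_{\leq -1}$, and $t+t^{-1}\notin\ZZ_{\leq 1}$. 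Now a negative integer $-1-i$ can lie in an even family only via $t+t^{-1}-2+k=-1-i$, which forces $t+t^{-1}=1-i-k\in\ZZ_{\leq 1}$; via $t+k=-1-i$, which forces $t\in\ZZ_{\leq -1}$; or via $t^{-1}+k=-1-i$, which forces $t^{-1}\in\ZZ_{\leq -1}$ (the family $\ZZ_{\geq 0}$ contains no negative integer). All of these are excluded, so $W^{\bar{0}}_{[-1-i]}=0$. Likewise a weight $-\frac{1}{2}-i$ can lie in an odd family only via $t+t^{-1}-\frac{3}{2}+k=-\frac{1}{2}-i$ (forcing $t+t^{-1}\in\ZZ_{\leq 1}$), via $t-\frac{1}{2}+k=-\frac{1}{2}-i$ (forcing $t=-(i+k)$, hence $t\in\ZZ_{\leq -1}$ as $t\neq 0$), or via $t^{-1}-\frac{1}{2}+k=-\frac{1}{2}-i$ (forcing $t^{-1}\in\ZZ_{\leq -1}$), while $\frac{1}{2}+\ZZ_{\geq 0}$ is strictly positive; again all are excluded, so $W^{\bar{1}}_{[-\frac{1}{2}-i]}=0$. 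Combining with the first paragraph yields \eqref{lowerboundcondition}.

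I expect the only real obstacle to be making the matching tight in both directions: one must check not merely that an excluded $t$ can produce a forbidden weight, but that every way a forbidden weight could appear in one of the eight families forces $t$, $t^{-1}$, or $t+t^{-1}$ into the excluded set, and conversely that for every allowed $t$ no forbidden weight slips through. This is why I would take care to translate the somewhat opaque set $\{\tfrac{-(n-2)\pm\sqrt{n^2-4n}}{2}\}$ into the transparent condition $t+t^{-1}\in\ZZ_{\leq 1}$ at the outset, since that is precisely the form in which the constraint from the $h^\NS_{3,3}$ family arises.
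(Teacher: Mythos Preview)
Your proof is correct and follows essentially the same route as the paper: reduce the vanishing statements to the nonexistence of certain conformal weight spaces in $\cS_{2,2}\boxtimes\cS_{2,2}$, then invoke Proposition~\ref{prop:upperbound} with $r=s=2$ and check that the excluded values of $t^{\pm 1}$ are exactly those for which $h^\NS_{3,1}$, $h^\NS_{1,3}$, or $h^\NS_{3,3}$ could produce a forbidden weight. Your extra step of rewriting the quadratic condition as $t+t^{-1}\in\ZZ_{\leq 1}$ makes the case analysis more transparent than the paper's terse version, but the argument is the same.
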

\begin{proof}
If one of the expressions in \eqref{lowerboundcondition} is non-zero, then there exists a non-zero $v$ in either $(\cS_{2,2}\boxtimes\cS_{2,2})_{[-i-1]}^{\bar{0}}$ or $(\cS_{2,2}\boxtimes\cS_{2,2})_{[-i-\frac{1}{2}]}^{\bar{1}}$ for some $i\in\ZZ_{\geq 0}$. By Proposition \ref{prop:upperbound}, such non-zero $v$ can exist only if $h_{3,1}=-n+\frac{1}{2}$, $h_{1,3}=-n+\frac{1}{2}$, or $h_{3,3}=-n$ for some $n\in\ZZ_{\geq 1}$.
Since
    \[
h^\NS_{3,1}= t-\frac{1}{2},\qquad h^\NS_{1,3}= t^{-1}-\frac{1}{2},\qquad  h^\NS_{3,3}= t-2+t^{-1},
    \]
     $h^\NS_{1,3}$ or $h^\NS_{3,1}$ is a negative strict half-integer only when $t^{\pm1}\in\ZZ_{\leq -1}$, and $h^\NS_{3,3}$ is a negative integer only when $t =\frac{-(n-2)\pm \sqrt{n^2-4n}}{2}$ for some $n \in \ZZ_{\geq 1}$. Thus if $t^{\pm 1}$ do not fall in these two cases, then all expressions in \eqref{lowerboundcondition} vanish.
 
\end{proof}

\begin{prop}\label{prop:coev}
If $t^{\pm 1}\notin\ZZ_{\leq -1}\cup\left\lbrace\frac{-(n-2)\pm \sqrt{n^2-4n}}{2}\mid n\in\ZZ_{\geq 1}\right\rbrace$, then there is an $\cS(c^\NS(t),0)$-module homomorphism $\mathrm{coev}: \cS(c^\NS(t),0) \rightarrow \cS_{2,2}\btimes \cS_{2,2}$ such that
    \begin{equation}\label{coev22}
    \mathrm{coev}(\one) = \pi_0^{\bar{0}}\left(\cY_\boxtimes(L_{-1}v_{2,2},1)v_{2,2}-\frac{2}{3}h_{2,2}^\NS(t) \cY_\btimes(v_{2,2},1)v_{2,2}\right).
    \end{equation}
    In particular, $\mathrm{coev}$ exists when $t^{\pm 1}\in\QQ\setminus\ZZ_{\leq 0}$.
\end{prop}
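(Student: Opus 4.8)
The plan is to identify $\cS(c^\NS(t),0)$ with the highest-weight module $M^\NS(c^\NS(t),0)/\langle G_{-\frac12}\one\rangle$. Under this identification, giving an $\cS(c^\NS(t),0)$-module homomorphism out of $\cS(c^\NS(t),0)$ is the same as choosing the image $v_0$ of $\one$, where $v_0$ must be an even highest-weight vector of conformal weight $0$ satisfying $G_{-\frac12}v_0=0$; the universal property of this quotient then produces a unique homomorphism $\mathrm{coev}$ with $\mathrm{coev}(\one)=v_0$. So it suffices to verify that the right-hand side $v_0:=\pi_0^{\bar0}(u)$ of \eqref{coev22}, with $u:=\cY_\boxtimes(L_{-1}v_{2,2},1)v_{2,2}-\tfrac23 h^\NS_{2,2}(t)\,\cY_\boxtimes(v_{2,2},1)v_{2,2}$, is such a vector. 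Each conformal-weight component of $\cY_\boxtimes(w_1,1)w_2$ is a \emph{finite} sum by grading restriction, so $v_0$ is a well-defined even vector of conformal weight $0$, with no convergence issues.

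The highest-weight condition is automatic. By Proposition \ref{prop:upperbound} with $(r,s)=(2,2)$ (equivalently, by the weight analysis in the proof of Lemma \ref{lem:projections_zero}), the hypothesis $t^{\pm1}\notin\ZZ_{\leq-1}\cup\{\frac{-(n-2)\pm\sqrt{n^2-4n}}{2}\mid n\in\ZZ_{\geq1}\}$ is exactly what forces $\cS_{2,2}\boxtimes\cS_{2,2}$ to have no nonzero even vectors of negative integer conformal weight and no nonzero odd vectors of negative (strict) half-integer conformal weight. Consequently, for every $n\in\ZZ_{\geq1}$ the vector $L_n v_0$ lies in the even weight-$(-n)$ space and for every $m\in\tfrac12+\ZZ_{\geq0}$ the vector $G_m v_0$ lies in the odd weight-$(-m)$ space; all of these spaces vanish, so $\NS^+ v_0=0$ and $v_0$ is highest weight.

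The main obstacle is the remaining relation $G_{-\frac12}v_0=0$, since $G_{-\frac12}v_0$ has conformal weight $\tfrac12$ and hence lies in a space that is generically nonzero. As $G_{-\frac12}$ shifts conformal weight by $\tfrac12$ and reverses parity, only the even weight-$0$ part of $u$ can feed the odd weight-$\tfrac12$ component, so $G_{-\frac12}v_0=\pi_{1/2}^{\bar1}(G_{-\frac12}u)$. Applying the commutator formula \eqref{eqn:comm2} with $n=0$ (only the $i=0$ term survives) and using $[G_{-\frac12},L_{-1}]=0$, I would rewrite, with $g:=G_{-\frac12}v_{2,2}$,
\[
G_{-\tfrac12}u=\cY_\boxtimes(L_{-1}v_{2,2},1)g+\cY_\boxtimes(L_{-1}g,1)v_{2,2}-\tfrac23 h^\NS_{2,2}(t)\bigl(\cY_\boxtimes(v_{2,2},1)g+\cY_\boxtimes(g,1)v_{2,2}\bigr).
\]
The vanishing of the weight-$\tfrac12$ odd projection should then follow from the null-vector relation $w_{2,2}=0$ in $\cS_{2,2}$ (equation \eqref{singularvector}) and its consequence $G_{-\frac12}w_{2,2}=0$: expanding $\cY_\boxtimes(w_{2,2},1)v_{2,2}=0$ and $\cY_\boxtimes(G_{-\frac12}w_{2,2},1)v_{2,2}=0$ by means of the associator formulas \eqref{eqn:asso1} and \eqref{eqn:asso2}, and projecting onto conformal weight $\tfrac12$ (where the vanishing weight spaces established above discard most of the resulting terms), produces precisely the linear relations among the four terms above that force $\pi_{1/2}^{\bar1}(G_{-\frac12}u)=0$. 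The coefficient $\tfrac23 h^\NS_{2,2}(t)$ in $u$ is exactly what makes this cancellation occur. This BPZ-type manipulation with the super-Virasoro relations is the technical heart of the argument and the step most likely to require care with sign factors.

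Finally, the ``in particular'' claim follows by checking that the hypotheses hold whenever $t^{\pm1}\in\QQ\setminus\ZZ_{\leq0}$. The condition $t^{\pm1}\notin\ZZ_{\leq-1}$ is immediate since $t,t^{-1}\notin\ZZ_{\leq0}$. Moreover, the numbers $\frac{-(n-2)\pm\sqrt{n^2-4n}}{2}$ are the roots of $t^2+(n-2)t+1=0$, so by the rational root theorem any rational root equals $\pm1$; the only such value occurring in the exceptional set is $t=-1$ (at $n=4$), which is already excluded as a negative integer. Hence no non-integer rational number lies in the exceptional set, neither $t$ nor $t^{-1}$ meets it, and $\mathrm{coev}$ exists.
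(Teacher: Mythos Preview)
Your outline is correct in principle and would succeed, but it takes a harder route than the paper and glosses over one subtlety. The paper does not attack $G_{-\frac12}v_0=0$ directly. Instead it first carries out the null-vector calculation to prove $L_{-1}v_0=0$ (this is the ``technical heart''), and then obtains both remaining conditions for free via commutator identities: $L_0v_0=\tfrac12[L_1,L_{-1}]v_0=0$ and $G_{-\frac12}v_0=[G_{\frac12},L_{-1}]v_0=0$, using only that $\NS^+v_0=0$. Your proposed direct computation of $\pi_{1/2}^{\bar1}(G_{-\frac12}u)$ starts with four terms rather than two and requires expanding $\cY_\boxtimes(G_{-\frac12}w_{2,2},1)v_{2,2}$, which already has five summands before any associator manipulation; the $L_{-1}$ route is strictly shorter. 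Your approach does still close logically, since $G_{-\frac12}v_0=0$ implies $L_{-1}v_0=G_{-\frac12}^2v_0=0$ and hence $L_0v_0=0$, but you should say this explicitly: as written, you claim $v_0$ is ``of conformal weight $0$'' and treat this as settling the highest-weight condition, but $\pi_0^{\bar0}$ projects only to the \emph{generalized} $L_0$-eigenspace $W_{[0]}$, so $L_0v_0=0$ genuinely requires an argument. The paper addresses this point separately; you can too, but you should not skip it.

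Your treatment of the ``in particular'' clause via the rational root theorem is correct and in fact slightly cleaner than the paper's parity argument; both reach the conclusion that the only rational member of the exceptional set is $-1$.
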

\begin{proof}
We first show that $L_{-1}\mathrm{coev}(\vac)=0$ using the singular vector \eqref{singularvector} in $M_{2,2}^\NS$, the commutator and associator formulas \eqref{eqn:comm1}, \eqref{eqn:comm2}, \eqref{eqn:asso1}, and \eqref{eqn:asso2}, as well as \eqref{lowerboundcondition}:
\begin{align*}
&   L_{-1}  \pi_0^{\bar{0}}\cY_\btimes(L_{-1}v_{2,2},1)v_{2,2}\\
   & = \pi_1^{\bar{0}}\left(\cY_\btimes(L_{-1}^2v_{2,2},1) v_{2,2}+\cY_\btimes(L_{-1}v_{2,2},1) L_{-1}v_{2,2}\right)\\
   & = \pi_1^{\bar{0}}\left(\frac{4}{3}h_{2,2}^\NS(t)\cY_\btimes(L_{-2}v_{2,2},1)v_{2,2} + \cY_\btimes(G_{-\frac{3}{2}}G_{-\frac{1}{2}}v_{2,2},1) v_{2,2}\right)\\
   & \quad + L_{-1}\pi_0^{\bar{0}}\cY_\btimes(v_{2,2},1) L_{-1}v_{2,2} - \pi_1^{\bar{0}}\cY_\btimes(v_{2,2},1) L_{-1}^2v_{2,2}\\
   & = \frac{4}{3}h_{2.2}^\NS(t)\pi_1^{\bar{0}}\left(\cY_\btimes(v_{2,2},1)L_{-1}v_{2,2} + h_{2,2}^\NS(t)\cY_\btimes(v_{2,2},1)v_{2,2}\right)-\pi_1^{\bar{0}}\cY_\btimes(G_{-\frac{1}{2}}v_{2,2},1)G_{-\frac{1}{2}}v_{2,2}\\
   &\quad + L_{-1}\pi_0^{\bar{0}}\cY_\btimes(v_{2,2},1)L_{-1}v_{2,2} -\pi_1^{\bar{0}}\left( \frac{4}{3}h_{2,2}^\NS(t)\cY_\btimes(v_{2,2},1)L_{-2}v_{2,2}+\cY_\boxtimes(v_{2,2},1)G_{-\frac{3}{2}}G_{-\frac{1}{2}}v_{2,2}\right)\\
   & = \pi_1^{\bar{0}}\left(\frac{4}{3}h_{2,2}^\NS(t)\cY_\btimes(v_{2,2},1)L_{-1}v_{2,2} + \frac{4}{3}h_{2,2}^\NS(t)^2\cY_\btimes(v_{2,2},1)v_{2,2}-\cY_\btimes(G_{-\frac{1}{2}}v_{2,2},1)G_{-\frac{1}{2}}v_{2,2}\right)\\
   &\quad + L_{-1}\pi_0^{\bar{0}}\cY_\btimes(v_{2,2},1)L_{-1}v_{2,2} + \frac{4}{3}h^\NS_{2,2}(t)\pi_1^{\bar{0}}\left(\cY_\boxtimes(L_{-1}v_{2,2},1)v_{2,2}-h^\NS_{2,2}(t)\cY_\btimes(v_{2,2},1)v_{2,2}\right)\\
   &\quad +\pi_1^{\bar{0}}\cY_\btimes(G_{-\frac{1}{2}}v_{2,2},1)G_{-\frac{1}{2}}v_{2,2}\\
   &=\frac{4}{3}h_{2,2}^\NS(t) L_{-1}\pi_0^{\bar{0}}\cY_\btimes(v_{2,2},1)v_{2,2}-L_{-1}\pi_0^{\bar{0}}\cY_\btimes(L_{-1}v_{2,2},1)v_{2,2},
\end{align*}
which implies
\begin{equation*}
    L_{-1}\pi_0^{\bar{0}}\left(\cY_\boxtimes(L_{-1}v_{2,2},1)v_{2,2}-\frac{2}{3}h_{2,2}^\NS(t) \cY_\btimes(v_{2,2},1)v_{2,2}\right) = 0,
\end{equation*}
as required.

We now show that $\mathrm{coev}(\vac)$ is (if non-zero) a highest-weight vector. Indeed, 
\[
L_n\,\mathrm{coev}(\vac)=G_{n-\frac{1}{2}}\mathrm{coev}(\vac)=0
\]
for all $n\in\ZZ_{\geq 1}$ because the conditions on $t$ imply that $(\cS_{2,2}\boxtimes\cS_{2,2})^{\bar{0}}$ has no negative integer conformal weights and $(\cS_{2,2}\boxtimes\cS_{2,2})^{\bar{1}}$ has no negative strict half-integer conformal weights, as in the proof of Lemma \ref{lem:projections_zero}. Moreover,
\begin{equation*}
    L_0\,\mathrm{coev}(\vac)=\frac{1}{2}[L_1,L_{-1}]\,\mathrm{coev}(\vac)=0,
\end{equation*}
so $\mathrm{coev}(\vac)$ is (if non-zero) an $L_0$-eigenvector. Thus by the universal property of Verma modules, there is a unique homomorphism $M_{1,1}^\NS\rightarrow\cS_{2,2}\boxtimes\cS_{2,2}$ sending $\vac$ to $\mathrm{coev}(\vac)$. 
Then because 
\begin{equation*}
    G_{-\frac{1}{2}}\mathrm{coev}(\vac) = [G_{\frac{1}{2}},L_{-1}]\,\mathrm{coev}(\vac)=0,
\end{equation*}
this homomorphism descends to the quotient $\cS(c^\NS(t),0)=M_{1,1}^\NS/\langle G_{-\frac{1}{2}}\vac\rangle$.

For the final statement, we need to show that $\left\lbrace \frac{-(n-2)\pm\sqrt{n^2-4n}}{2}\mid n\in\ZZ_{\geq 1}\right\rbrace\cap\QQ \subset\ZZ_{\leq -1}$. Indeed, $\frac{-(n-2)\pm\sqrt{n^2-4n}}{2}$ is rational if and only if $\sqrt{n^2-4n}\in\ZZ$, and in this case, $\sqrt{n^2-4n}=\sqrt{(n-2)^2-4}$ is smaller in absolute value then $n-2$ and also has the same parity as $n-2$. Thus $\frac{-(n-2)+\sqrt{n^2-4n}}{2}$ and $\frac{-(n-2)-\sqrt{n^2-4n}}{2}$ are negative integers in this case.

\end{proof}

We can now return to the rigidity composition $\mathfrak{R}$ in \eqref{eqn:rig-comp}. We fix $c=c^\NS(t)$ and $h=h^\NS_{2,2}(t)$ for $t$ such that $\mathrm{ev}$ and $\mathrm{coev}$ are both defined, that is, for
\begin{align*}
    t^{\pm 1}\notin\ZZ_{\leq -1}& \cup\left\lbrace\frac{-(n-2)\pm\sqrt{n^2-4n}}{2}\mid n\in\ZZ_{\geq 1}\right\rbrace\nonumber\\
    &\cup\left\lbrace\frac{p}{q}\mid p,q\in\ZZ_{\geq 2},\, p-q\in 2\ZZ,\,\gcd\left(\frac{p-q}{2},q\right)=1\right\rbrace.
\end{align*}
It is enough to show that $\langle v,\mathfrak{R}(v)\rangle
\neq 0$ for some $v\in\cS_{2,2}$. The definition \eqref{eqn:right_unit} of the right unit isomorphism $r:\cS_{2,2}\btimes\cS_{1,1}\rightarrow\cS_{2,2}$ implies that
\begin{equation*}
    \overline{r}(\cY_\btimes(v,1)\vac)= e^{L_{-1}} Y_{\cS_{2,2}}(\vac,-1)v =e^{L_{-1}}v,
\end{equation*}
where $\overline{r}$ is the extension of $r$ to algebraic completions. Thus if $v$ is homogeneous, then
\begin{equation*}
    r^{-1}(v) = (\pi_{\mathrm{wt}\,v}^{\vert v\vert}\cY_\btimes(v,1)\vac),
\end{equation*}
and then \eqref{coev22} implies that
\begin{equation*}
    (\id\btimes\mathrm{coev})\circ r^{-1}(v) =\pi_{\mathrm{wt}\,v}^{\vert v\vert} \cY_\btimes(v,1)\pi_0^{\bar{0}}\left(\cY_\btimes(L_{-1}v_{2,2},1)v_{2,2}-\frac{2}{3}h\cY_\btimes(v_{2,2},1)v_{2,2}\right).
\end{equation*}
This is equivalently the coefficient of $x^{-2h}(\log x)^0$ in
\begin{align*}
    \pi_{\mathrm{wt}\,v}^{\vert v\vert} \cY_\btimes(v,1) & \left(x\cY_\btimes(L_{-1}v_{2,2},x)v_{2,2}-\frac{2}{3}h\cY_\btimes(v_{2,2},x)v_{2,2}\right)\nonumber\\
    &= \left(x\frac{d}{dx}-\frac{2}{3}h\right)\pi_{\mathrm{wt}\,v}^{\vert v\vert}\cY_\btimes(v,1)\cY_\btimes(v_{2,2},x)v_{2,2}.
\end{align*}
Thus from the evenness and invariance of the bilinar form $\langle\cdot,\cdot\rangle$ on $\cS_{2,2}$, the definition \eqref{eqn:assoc} of the associativity isomorphisms in $\cO_c^{\mathrm{fin}}$, the definition of $\mathrm{ev}$, and the definition of the left unit isomorphism \eqref{eqn:left_unit}, we find that $\langle v,\mathfrak{R}(v)\rangle$ is the coefficient of $x^{-2h}(\log x)^0$ in
\begin{align}\label{eqn:rig-comp-series}
    \left(x\frac{d}{dx}-\frac{2}{3}h\right) & \left\langle v, \pi_{\mathrm{wt}\,v}^{\vert v\vert}(l\circ(\mathrm{ev}\btimes\id)\circ\cA\circ\cY_\btimes)(v,1)\cY_\btimes(v_{2,2},x)v_{2,2}\right\rangle\nonumber\\
    & = \left(x\frac{d}{dx}-\frac{2}{3}h\right)\left\langle v, (l\circ(\mathrm{ev}\btimes\id)\circ\cY_\btimes)(\cY_\btimes(v,1-x)v_{2,2},x)v_{2,2}\right\rangle\nonumber\\
    & =\left(x\frac{d}{dx}-\frac{2}{3}h\right)\left\langle v, Y_{\cS_{2,2}}(\cE(v,1-x)v_{2,2},x)v_{2,2}\right\rangle\nonumber\\
    & = \left(x\frac{d}{dx}-\frac{2}{3}h\right) x^{-2h}\left\langle v, Y_{\cS_{2,2}}\left(\cE\left(v,\frac{1-x}{x}\right)v_{2,2},1\right)v_{2,2}\right\rangle.
\end{align}
The right hand side should be viewed as a series in powers of $\frac{1-x}{x}$ which converges absolutely for real numbers $x$ such that $x > 1-x>0$. Thus we need to re-expand \eqref{eqn:rig-comp-series} as a series in powers of $x$ and $\log x$ in the region $1>x>0$ and extract the coefficient of $x^{-2h}(\log x)^0$.

\subsection{Analysis of matrix coefficients}

It seems difficult to explicitly compute the right side of \eqref{eqn:rig-comp-series} directly, since computations using the singular vector \eqref{singularvector} indicate that $\langle v, Y_{\cS_{2,2}}(\cE(v,1-x)v_{2,2},x)v_{2,2}\rangle$ satisfies at best a complicated fourth-order differential equation, even in the simplest cases $v=v_{2,2}$ and $G_{-\frac{1}{2}}v_{2,2}$. Thus instead, we will exploit the decomposition \eqref{modex2} of $\cS_{2,2}\otimes\cF(1)$ as an $L(c_a,0)\otimes L(c_b,0)$-module and use explicit formulas for Virasoro correlation functions to compute \eqref{eqn:rig-comp-series} in the case of $t\notin\QQ$. Then we will use analytic continuation of matrix coefficients to compute \eqref{eqn:rig-comp-series} for suitable $t\in\QQ$.

First we need to check that the coefficients of powers of $\frac{1-x}{x}$ on the right side of \eqref{eqn:rig-comp-series} are analytic functions of $t$:
\begin{prop}\label{analyticcoefficents}
Assume $t\neq\frac{p}{q}$ for $p,q\in\ZZ_{\geq 2}$ such that $p-q\in 2\ZZ$ and $\gcd(\frac{p-q}{2},q)=1$, and for $i=0,1,2,3$, fix $w_i\in\cS_{2,2}$ of the form $L_{-i_1-1}\cdots L_{-i_k-1}G_{-j_1-\frac{1}{2}}\cdots G_{-j_l-\frac{1}{2}} v_{2,2}$ where $i_1,\ldots,i_k,j_1,\ldots, j_l\in\ZZ_{\geq 0}$. Then as a series in powers of $\frac{1-x}{x}$,
    \begin{equation}\label{eqn:analytic}
        \langle w_0,Y_{\cS_{2,2}}(\cE(w_1,1-x)w_2,x)w_3\rangle = \sum_{n\in\ZZ} q_n(t)\left(\frac{1-x}{x}\right)^{-2h_{2,2}^\NS(t)+n}
    \end{equation}
    for certain rational functions $q_n(t) \in \CC(t)$ depending on $w_0,w_1,w_2,w_3$. 
\end{prop}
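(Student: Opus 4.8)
The plan is to show that every coefficient $q_n(t)$ is produced by a finite algebraic procedure whose only inputs are rational functions of $t$ together with fixed roots of unity, so that $q_n(t)\in\CC(t)$. First I would strip off the elementary $x$-dependence. Writing $z=\tfrac{1-x}{x}$, so that $x=(1+z)^{-1}$, the $L_0$-conjugation property $r^{L_0}\cY(w,y)r^{-L_0}=\cY(r^{L_0}w,ry)$ applied to both $\cE$ and $Y_{\cS_{2,2}}$ gives
\[
\langle w_0,Y_{\cS_{2,2}}(\cE(w_1,1-x)w_2,x)w_3\rangle = x^{\wt w_0-\wt w_1-\wt w_2-\wt w_3}\,\langle w_0,Y_{\cS_{2,2}}(\cE(w_1,z)w_2,1)w_3\rangle
\]
for the homogeneous vectors $w_i$. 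Since the even part of $\cS(c,0)$ has integer conformal weights and $\cS_{2,2}$ has lowest weight $h:=h^\NS_{2,2}(t)$, a parity-and-weight count shows the prefactor exponent lies in $-2h+\ZZ$; hence $x^{\wt w_0-\cdots}=(1+z)^{-(\wt w_0-\cdots)}$ expands in integer powers of $z$ with coefficients of the form $\binom{2h+m}{k}$, which are polynomial in $h$ and therefore rational in $t$. Thus it suffices to prove that the reduced correlator $\Phi(z)=\langle w_0,Y_{\cS_{2,2}}(\cE(w_1,z)w_2,1)w_3\rangle$ equals $\sum_\beta c_\beta z^\beta$ with $\beta\in -2h+\ZZ$ and every $c_\beta\in\CC(t)$.

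Two structural features come for free. Because $\cS_{2,2}$ and $\cS_{1,1}$ are simple modules, $L_0$ acts semisimply throughout, so both $\cE$ (whose target is $\cS_{1,1}$) and the module vertex operator $Y_{\cS_{2,2}}$ are non-logarithmic; this eliminates $\log x$ terms and makes $\Phi(z)$ an honest series in powers of $z$. Moreover the coefficient $u_\beta\in\cS_{1,1}$ of $z^\beta$ in $\cE(w_1,z)w_2$ lies in the parity component $\cS_{1,1}^{\overline{|w_1|+|w_2|}}$ and has conformal weight $\wt w_1+\wt w_2+\beta$; comparing with the allowed weights of that component forces $\beta\in -2h+\ZZ$, as claimed.

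It remains to prove $c_\beta\in\CC(t)$, and here the key is that all defining data are rational in $t$. By Theorem \ref{AsTh} the singular vector $w_{2,2}$ of \eqref{singularvector} and its descendants have coefficients polynomial in $(c,h)$, so—using that $t\neq\tfrac pq$ makes $\cS_{2,2}=\cM_{2,2}=M^\NS_{2,2}/\langle w_{2,2}\rangle$ simple—the action of $\NS$ on a fixed PBW basis of $\cS_{2,2}$ is defined over $\CC(t)$; likewise the even invariant (hence symmetric, by Lemma \ref{lem:bilinear-form-symmetric}) forms on $\cS_{2,2}$ and $\cS_{1,1}$, normalized by $\langle v_{2,2},v_{2,2}\rangle=\langle\one,\one\rangle=1$, are Shapovalov forms whose Gram matrices have entries polynomial in $(c,h)$ and are invertible by simplicity, so all form values lie in $\CC(t)$. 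Using \eqref{eqn:E}, the matrix coefficients of $\cE$ are expressed through those of $Y_{\cS_{2,2}}$, the forms, and the operators $e^{-\pi i(L_0-h)}$; inverting the rational Gram matrix then recovers the coordinates of $u_\beta$ in a fixed $\CC(t)$-basis of $\cS_{1,1}$. Finally, repeatedly applying the commutator and associator formulas \eqref{eqn:comm1}--\eqref{eqn:asso2}, whose structure constants are polynomial in $c$, I would reduce $c_\beta=\langle w_0,Y_{\cS_{2,2}}(u_\beta,1)w_3\rangle$ to a $\CC(t)$-combination of form values, completing the argument that $c_\beta$, and hence $q_n(t)$, lies in $\CC(t)$.

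The main obstacle will be the intertwining operator $\cE$ together with the genuine product of two intertwining operators. I must ensure that the complex-analytic ingredients of its construction—the skew-symmetry and contragredient maps in \eqref{eqn:E}, carrying the $e^{\pm\pi i h}$ and $e^{\pm\pi i L_0}$ factors—contribute only $t$-independent roots of unity; this works precisely because every relevant conformal-weight gap lies in $\tfrac12\ZZ$, so $e^{-\pi i(L_0-h)}$ acts as the scalar $e^{-\pi i n}$ on weight-$(h+n)$ vectors and introduces no transcendental dependence on $t$. The remaining care is to organize the reduction of the iterate so that it terminates with coefficients manifestly in $\CC(t)$ rather than in a transcendental extension; once this bookkeeping is in place, the rationality of the $q_n(t)$ follows uniformly for all admissible $t$.
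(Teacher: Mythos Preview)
Your proposal is correct and follows essentially the same route as the paper's proof: both first use $L_0$-conjugation to reduce to the correlator $\langle w_0,Y_{\cS_{2,2}}(\cE(w_1,z)w_2,1)w_3\rangle$ with $z=\frac{1-x}{x}$ and a prefactor $(1+z)^{2h+N}$ with binomial coefficients rational in $t$, then expand the intermediate $\cS_{1,1}$-channel in a PBW basis with its Gram-dual, invoke \eqref{eqn:E} to rewrite $\cE$ in terms of $Y_{\cS_{2,2}}$, and observe that every remaining ingredient---mode matrix elements, form values normalized by $\langle\one,\one\rangle=\langle v_{2,2},v_{2,2}\rangle=1$, and the phases $e^{-\pi i(L_0-h)}$ acting as fixed roots of unity---is polynomial in $(c,h)$ and hence rational in $t$. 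Your additional explicit remarks on the absence of logarithms and on the exponent set $-2h+\ZZ$ are correct and implicit in the paper's argument.
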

\begin{proof}
    For brevity, write $z=\frac{1-x}{x}$. Then similar to the last step of the calculation \eqref{eqn:rig-comp-series},
    \begin{equation*}
        \langle w_0,Y_{\cS_{2,2}}(\cE(w_1,1-x)w_2,x)w_3\rangle =(1+z)^{\mathrm{wt}\,w_1+\mathrm{wt}\,w_2+\mathrm{wt}\,w_3-\mathrm{wt}\,w_0}\langle w_0, Y_{\cS_{2,2}}(\cE(w_1,z)w_2,1)w_3\rangle.
    \end{equation*}
    The first factor on the right side has the form
    \begin{equation*}
        (1+z)^{2h+N} =\sum_{i=0}^\infty \binom{2h+N}{i} z^i
    \end{equation*}
    for some fixed $N\in\frac{1}{2}\ZZ_{\geq 0}$ depending on $w_0,w_1,w_2,w_3$. The coefficients $\binom{2h+N}{i}$ are polynomials in $h=h_{2,2}^\NS(t)=\frac{3}{8}(t-2+t^{-1})$ and thus rational functions in $t$. 
    
    For the second factor, let $\lbrace v_i\rbrace_{i\in I}$ be the basis of $\cS(c,0)$ consisting of PBW monomials $L_{-i_1-2}\cdots L_{-i_k-2}G_{-j_1-\frac{3}{2}}\cdots G_{-j_l-\frac{3}{2}}\vac$, where $i_1\geq \cdots\geq i_k\geq 0$ and $j_1>\cdots >j_l\geq 0$. Our assumption on $t$ guarantees that $\cS(c,0)$ has a non-degenerate invariant bilinear form $\langle\cdot,\cdot\rangle$ such that $\langle\vac,\vac\rangle=1$. Note from \eqref{eqn:contra} and the PBW theorem that $\langle v_i,v_j\rangle$ is a polynomial in $c$ for each $i,j\in I$. Thus each element of the dual basis $\lbrace v_i'\rbrace_{i\in I}$ of $\cS(c,0)$ with respect to $\langle\cdot,\cdot\rangle$ has the form $v_i'=\sum_j f_{i,j}(c)v_j$ for certain rational functions $f_{i,j}(c)$. Note that $f_{i,j}$ is also a rational function of $t$ since $c=c^\NS(t)=\frac{15}{2}-3(t+t^{-1})$.
    
    Now using \eqref{eqn:E}, we have
    \begin{align*}
        \langle w_0, Y_{\cS_{2,2}}( & \cE(w_1,z)w_2,1)w_3\rangle  =\sum_{i\in I} \langle w_0,Y_{\cS_{2,2}}(v_i,1)w_3\rangle\langle v_i',\cE(w_1,z)w_2\rangle\nonumber\\
        & =\sum_{i\in I} \langle w_0,Y_{\cS_{2,2}}(v_i,1)w_3\rangle \langle e^{z^{-1} L_1} w_2, Y_{\cS_{2,2}}(v_i', -z^{-1})e^{zL_1}z^{-2 L_0}e^{-\pi i (L_0-h)}w_1\rangle .
    \end{align*}
    This is a series in powers of $z$ of the form $\sum_{k=K}^\infty g_k\, z^{-2h+k}$ for some $K\in\ZZ$, where each coefficient $g_k$ is a finite linear combination of terms of the form
    \begin{align*}
        f_{i,j}(c)e^{-\pi i(\mathrm{wt}\,w_1-h)}\langle w_0, (v_i)_m w_3\rangle\langle L_1^r w_2, (v_j)_n L_1^s w_1\rangle.
    \end{align*}
    By the vertex operator formula \eqref{operator}, the invariance property \eqref{eqn:contra} of $\langle\cdot,\cdot\rangle$, and the assumption that $\langle v_{2,2},v_{2,2}\rangle=1$ (independent of $c$ and $h$), we conclude that $\langle w_0, (v_i)_m w_3\rangle$ and $\langle L_1^r w_2, (v_j)_n L_1^s w_1\rangle$ are polynomials in $c$ and $h$, and thus rational functions of $t$. Thus
    \begin{equation*}
       \langle w_0, Y_{\cS_{2,2}}(\cE(w_1,z)w_2,1)w_3\rangle =\sum_{k=K}^\infty g_k(t) z^{-2h+k}
    \end{equation*}
    where the coefficients $g_k(t)$ are rational functions of $t$. Now \eqref{eqn:analytic} holds with $q_n(t)=\sum_{i+k=n} \binom{2h^\NS_{2,2}(t)+N}{i} g_k(t)$.
    
\end{proof}

Since the coefficients $q_n(t)$ in \eqref{eqn:analytic} are rational functions, it is sufficient to compute them for $t\notin\QQ$. For this, we use the decomposition \eqref{modex2} of $\cS_{2,2}\otimes\cF(1)$ as an $L(c_a,0)\otimes L(c_b,0)$-module, where $a=\frac{1}{2}(t+1)$ and $b=\frac{1}{2}(t^{-1}+1)$. Since the free fermion vertex operator superalgebra $\cF(1)$ is simple and self-contragredient, it has a unique non-degenerate invariant bilinear form $\langle\cdot,\cdot\rangle$ such that $\langle\vac_{\cF(1)},\vac_{\cF(1)}\rangle=1$. Similar to the proof of Lemma \ref{lem:bilinear-form-symmetric}, $\langle\cdot,\cdot\rangle$ is also symmetric, and by \eqref{eqn:contra},
\begin{equation*}
    \langle \psi_n v',v\rangle = i(-1)^{\vert v'\vert}\langle v',\psi_{-n}v\rangle
\end{equation*}
for $v,v'\in\cF(1)$ and $n\in\ZZ+\frac{1}{2}$. In particular,
\begin{equation*}
    \langle\psi_{-\frac{1}{2}}\vac_{\cF(1)},\psi_{-\frac{1}{2}}\vac_{\cF(1)}\rangle = i\langle\vac_{\cF(1)},\psi_{\frac{1}{2}}\psi_{-\frac{1}{2}}\vac_{\cF(1)}\rangle = i\langle\vac_{\cF(1)},\vac_{\cF(1)}\rangle =i.
\end{equation*}
We now fix the non-degenerate symmetric invariant bilinear form on $\cS_{2,2}\otimes\cF(1)$ such that
\begin{equation*}
    \langle w'\otimes v', w\otimes v\rangle =\langle w',w\rangle\langle v',v\rangle
\end{equation*}
for $w,w'\in\cS_{2,2}$ and $v,v'\in\cF(1)$.

Now recall from \cite{FHL} that $Y_{\cS_{2,2}\otimes\cF(1)}=Y_{\cS_{2,2}}\otimes Y_{\cF(1)}$. We also define the intertwining operator $\widetilde{\cE}=\cE\otimes Y_{\cF(1)}$ of type $\binom{\cS_{1,1}\otimes\cF(1)}{\cS_{2,2}\otimes\cF(1)\,\,\cS_{2,2}\otimes\cF(1)}$. Recall also the $L(c_a,0)\otimes L(c_b,0)$-highest weight vector $u_{2,1}^a\otimes u_{2,1}^b$ from \eqref{u21timesu21}, as well as $u_{2,2}^a\otimes u_{2,2}^b=v_{2,2}\otimes\vac_{\cF(1)}$. Then using the definitions, we have
\begin{align*}
    &\left\langle u_{2,1}^a\otimes u_{2,1}^b, Y_{\cS_{2,2}\otimes\cF(1)}(\widetilde{\cE}(u_{2,1}^a\otimes u_{2,1}^b, 1-x)(u_{2,2}^a\otimes u_{2,2}^b),x)(u_{2,2}^a\otimes u_{2,2}^b)\right\rangle\nonumber\\
    & =\frac{4it}{(t-1)^2}\langle G_{-\frac{1}{2}}v_{2,2}, Y_{\cS_{2,2}}(\cE(G_{-\frac{1}{2}} v_{2,2},1-x)v_{2,2},x)v_{2,2}\rangle\langle\vac,Y_{\cF(1)}(Y_{\cF(1)}(\vac,1-x)\vac,x)\vac\rangle\nonumber\\
    &\qquad +\frac{2\sqrt{t}}{t-1}\langle G_{-\frac{1}{2}}v_{2,2}, Y_{\cS_{2,2}}(\cE( v_{2,2},1-x)v_{2,2},x)v_{2,2}\rangle\langle\vac,Y_{\cF(1)}(Y_{\cF(1)}(\psi_{-\frac{1}{2}}\vac,1-x)\vac,x)\vac\rangle\nonumber\\
    &\qquad +\frac{2\sqrt{t}}{t-1}\langle v_{2,2}, Y_{\cS_{2,2}}(\cE(G_{-\frac{1}{2}} v_{2,2},1-x)v_{2,2},x)v_{2,2}\rangle\langle\psi_{-\frac{1}{2}}\vac,Y_{\cF(1)}(Y_{\cF(1)}(\vac,1-x)\vac,x)\vac\rangle\nonumber\\
    & \qquad -i\langle v_{2,2}, Y_{\cS_{2,2}}(\cE( v_{2,2},1-x)v_{2,2},x)v_{2,2}\rangle\langle\psi_{-\frac{1}{2}}\vac,Y_{\cF(1)}(Y_{\cF(1)}(\psi_{-\frac{1}{2}}\vac,1-x)\vac,x)\vac\rangle.
\end{align*}
The second and third terms on the right vanish because the bilinear forms on $\cS_{2,2}$ and $\cF(1)$ are even, and in the last term we have
\begin{align*}
    \langle\psi_{-\frac{1}{2}}\vac,Y_{\cF(1)}(Y_{\cF(1)}(\psi_{-\frac{1}{2}}\vac,1-x)\vac,x)\vac\rangle & =\left\langle \psi_{-\frac{1}{2}}\vac,Y_{\cF(1)}\left(Y_{\cF(1)}\left(\psi_{-\frac{1}{2}}\vac,\frac{1-x}{x}\right)\vac,1\right)\vac\right\rangle\nonumber\\
    & =\left\langle\psi_{-\frac{1}{2}}\vac, e^{L_{-1}}e^{(\frac{1-x}{x})L_{-1}}\psi_{-\frac{1}{2}}\vac\right\rangle\nonumber\\
    & =\langle\psi_{-\frac{1}{2}}\vac,\psi_{-\frac{1}{2}}\vac\rangle =i.
\end{align*}
Thus
\begin{align}\label{eqn:SxF-to-S}
    &\left\langle u_{2,1}^a\otimes u_{2,1}^b, Y_{\cS_{2,2}\otimes\cF(1)}(\widetilde{\cE}(u_{2,1}^a\otimes u_{2,1}^b, 1-x)(u_{2,2}^a\otimes u_{2,2}^b),x)(u_{2,2}^a\otimes u_{2,2}^b)\right\rangle\nonumber\\
    &\qquad\qquad =  \langle v_{2,2}, Y_{\cS_{2,2}}(\cE( v_{2,2},1-x)v_{2,2},x)v_{2,2}\rangle\nonumber\\
    &\qquad\qquad\qquad +\frac{4it}{(t-1)^2}\langle G_{-\frac{1}{2}}v_{2,2}, Y_{\cS_{2,2}}(\cE(G_{-\frac{1}{2}} v_{2,2},1-x)v_{2,2},x)v_{2,2}\rangle\nonumber\\
    &\qquad\qquad = \langle v,Y_{\cS_{2,2}}(\cE(v,1-x)v_{2,2},x)v_{2,2}\rangle
\end{align}
for
\begin{equation*}
    v=v_{2,2}+ \frac{2e^{\pi i/4}\sqrt{t}}{t-1}G_{-\frac{1}{2}}v_{2,2} ,
\end{equation*}
where the second equality follows because the bilinear form on $\cS_{2,2}$ is even. Note that although we have been assuming $t\notin\QQ$, everything in the calculation \eqref{eqn:SxF-to-S}, including the $L(c_a,0)\otimes L(c_b,0)$-highest weight vectors $u_{2,1}^a\otimes u_{2,1}^b$ and $u_{2,2}^a\otimes u_{2,2}^b$, in fact makes sense for all $t$ except for $t=0,\pm 1$ and $t=\frac{p}{q}$ for $p,q\in\ZZ_{\geq 2}$ such that $p-q\in 2\ZZ$ and $\gcd(\frac{p-q}{2},q)=1$.

To compute the left side of \eqref{eqn:SxF-to-S} explicitly when $t\notin\QQ$, we view $\widetilde{\cE}$ and $Y_{\cS_{2,2}\otimes\cF(1)}$ as $L(c_a,0)\otimes L(c_b,0)$-module intertwining operators. First, by \cite[Theorem 5.2.5]{CJORY}, the image of the restriction of $\widetilde{\cE}$ to the submodules $L(c_a,h_{2,1}(a))\otimes L(c_b,h_{2,1}(b))$ and $L(c_a,h_{2,2}(a))\otimes L(c_b,h_{2,2}(b))$ is a quotient of
    \begin{align*}
        &(L(c_a,h_{2,1}(a))  \otimes L(c_b,h_{2,1}(b)))\btimes(L(c_a,h_{2,2}(a))\otimes L(c_b,h_{2,2}(b)))\nonumber\\
        &\qquad \cong L(c_a,h_{1,2}(a))\otimes L(c_b,h_{1,2}(b)) \oplus L(c_a,h_{1,2}(a))\otimes L(c_b,h_{3,2}(b))\nonumber\\
        &\qquad\qquad \oplus L(c_a,h_{3,2}(a))\otimes L(c_b,h_{1,2}(b))\oplus L(c_a,h_{3,2}(a))\otimes L(c_b,h_{3,2}(b)).
    \end{align*}
    Since only the first of these four direct summands is a submodule of $\cS_{1,1}\otimes\cF(1)$ by \eqref{eqn:irrational-dec}, we can identify the restriction of $\widetilde{\cE}$ with a tensor product intertwining operator $\cY_2^a\otimes\cY_2^b$, where $\cY_2^a$ is an $L(c_a,0)$-module intertwining operator of type $\binom{L(c_a,h_{1,2}(a))}{L(c_a,h_{2,1}(a))\,L(c_a,h_{2,2}(a))}$ and $\cY_2^b$ is an $L(c_b,0)$-module intertwining operator of type $\binom{L(c_b,h_{1,2}(b))}{L(c_b,h_{2,1}(b))\,L(c_b,h_{2,2}(b))}$.

    Next, we need to restrict $Y_{\cS_{2,2}}$ to $L(c_a,h_{1,2}(a))\otimes L(c_b,h_{1,2}(b))$ and $L(c_a,h_{2,2}(a))\otimes L(c_b,h_{2,2}(b))$, and then we need to project to $L(c_a,h_{2,1}(a))\otimes L(c_b,h_{2,1}(b))$. The result is a tensor product intertwining operator $\cY_1^a\otimes\cY_1^b$ where $\cY_1^a$ is an $L(c_a,0)$-module intertwining operator of type $\binom{L(c_a,h_{2,1}(a))}{L(c_a,h_{1,2}(a))\,L(c_a,h_{2,2}(a))}$ and $\cY_1^b$ is an $L(c_b,0)$-module intertwining operator of type $\binom{L(c_b,h_{2,1}(b))}{L(c_b,h_{1,2}(b))\,L(c_b,h_{2,2}(b))}$.

    Finally, the restriction of the non-degenerate invariant bilinear form on $\cS_{2,2}\otimes\cF(1)$ to $L(c_a,h_{2,1}(a))\otimes L(c_b,h_{2,1}(b))$ is the product of non-degenerate invariant bilinear forms on $L(c_a,h_{2,1}(a))$ and $L(c_b,h_{2,1}(b))$. Thus the left side of \eqref{eqn:SxF-to-S} factors as a product of Virasoro correlation functions,
    \begin{equation*}
\langle u_{2,1}^a,\cY_1^a(\cY_2^a(u_{2,1}^a,1-x)u_{2,2}^a,x)u_{2,2}^a\rangle \langle u_{2,1}^b,\cY_1^b(\cY_2^b(u_{2,1}^b,1-x)u_{2,2}^b,x)u_{2,2}^b\rangle.
    \end{equation*}
    By Theorem \ref{thmhyper}, this product of Virasoro correlation functions is some multiple of
    \begin{align*}
        &(1-x)^{-3(a+b-2)/2} x^{a+b-2-2(h_{2,2}(a)+h_{2,2}(b))} \cdot\nonumber\\
        &\qquad\qquad\cdot{}_2F_1\left(a,1-a;3-2a;-\frac{1-x}{x}\right) {}_2F_1\left(b,1-b;3-2b;-\frac{1-x}{x}\right)\nonumber\\
        & \qquad =(1-x)^{-2h_{2,2}^\NS(t)} x^{-2h_{2,2}^\NS(t)/3}\cdot\nonumber\\
        &\qquad\qquad \cdot {}_2F_1\left(\frac{1+t}{2},\frac{1-t}{2};2-t;-\frac{1-x}{x}\right) {}_2F_1\left(\frac{1+t^{-1}}{2},\frac{1-t^{-1}}{2};2-t^{-1};-\frac{1-x}{x}\right)\nonumber\\
        & \qquad =\left(\frac{1-x}{x}\right)^{-2h_{2,2}^\NS(t)} \left(1+\frac{1-x}{x}\right)^{8h_{2,2}^\NS(t)/3}\cdot\nonumber\\
        &\qquad \qquad \cdot {}_2F_1\left(\frac{1+t}{2},\frac{1-t}{2};2-t;-\frac{1-x}{x}\right) {}_2F_1\left(\frac{1+t^{-1}}{2},\frac{1-t^{-1}}{2};2-t^{-1};-\frac{1-x}{x}\right).
    \end{align*}
The multiple is just the coefficient of $(\frac{1-x}{x})^{-2h^\NS_{2,2}(t)}$ in the series expansion of 
\begin{align*}
    \langle v_{2,2}, Y_{\cS_{2,2}}(\cE( v_{2,2},1-x)v_{2,2},x)v_{2,2}\rangle +\frac{4it}{(t-1)^2}\langle G_{-\frac{1}{2}}v_{2,2}, Y_{\cS_{2,2}}(\cE(G_{-\frac{1}{2}} v_{2,2},1-x)v_{2,2},x)v_{2,2}\rangle.
\end{align*}
By \eqref{eqn:E-v22-v22} and the normalization $\langle v_{2,2},v_{2,2}\rangle =1$, the first term contributes $1$ to this coefficient. The second term contributes nothing since by the $L_0$-conjugation formula,
\begin{align*}
    &\langle G_{-\frac{1}{2}}v_{2,2},  Y_{\cS_{2,2}}(\cE(G_{-\frac{1}{2}} v_{2,2},1-x)v_{2,2},x)v_{2,2}\rangle\nonumber\\
    & =\left(1+\frac{1-x}{x}\right)^{2h_{2,2}^\NS(t)}\bigg\langle G_{-\frac{1}{2}}v_{2,2},Y_{\cS_{2,2}}\bigg(\left(\frac{1-x}{x}\right)^{L_0-2h_{2,2}^\NS(t)-\frac{1}{2}}\cE(G_{-\frac{1}{2}}v_{2,2},1)v_{2,2},1\bigg)v_{2,2}\bigg\rangle,
\end{align*}
and the lowest conformal weight of $\cS(c,0)^{\bar{1}}$ is $\frac{3}{2}$. Thus recalling \eqref{eqn:SxF-to-S}, we conclude that
\begin{align}\label{eqn:explicit-NS-corr-fun}
    &\langle v, Y_{\cS_{2,2}}( \cE(v,1-x)v_{2,2},x)v_{2,2}\rangle =\left(\frac{1-x}{x}\right)^{-2h_{2,2}^\NS(t)} \left(1+\frac{1-x}{x}\right)^{8h_{2,2}^\NS(t)/3}\cdot\nonumber\\
        & \qquad\cdot {}_2F_1\left(\frac{1+t}{2},\frac{1-t}{2};2-t;-\frac{1-x}{x}\right) {}_2F_1\left(\frac{1+t^{-1}}{2},\frac{1-t^{-1}}{2};2-t^{-1};-\frac{1-x}{x}\right)
\end{align}
as a series in powers of $\frac{1-x}{x}$, for $v=v_{2,2}+\frac{2e^{\pi i/4}\sqrt{t}}{t-1}G_{-\frac{1}{2}}v_{2,2}$ and $t\notin\QQ$.

Note that the left side of \eqref{eqn:explicit-NS-corr-fun} is defined for all $t\in\CC^\times$ except for $t=1$ and $t=\frac{p}{q}$ for $p,q\in\ZZ_{\geq 2}$ such that $p-q\in 2\ZZ$ and $\gcd(\frac{p-q}{2},q)=1$. Moreover, by Proposition \ref{analyticcoefficents}, the coefficients of powers of $\frac{1-x}{x}$ on the left side of \eqref{eqn:explicit-NS-corr-fun} are rational functions of $t$ whose poles can occur only at these exceptional values of $t$. On the other hand, the coefficients of powers of $\frac{1-x}{x}$ on the right side of \eqref{eqn:explicit-NS-corr-fun} are also rational functions of $t$, since $h_{2,2}^\NS(t)$ is a Laurent polynomial and since
\begin{equation*}
    {}_2F_1(\alpha,\beta;\gamma;z) =\sum_{n=0}^\infty \frac{(\alpha)_n(\beta)_n}{(\gamma)_n n!} z^n
\end{equation*}
for $\alpha,\beta,\gamma\in\CC$, where $(q)_n=q(q+1)\cdots(q+n-1)$ is the rising Pochhammer symbol for $q\in\CC$. Observe that the coefficients of powers of $\frac{1-x}{x}$ on the right side of \eqref{eqn:explicit-NS-corr-fun} can possibly have poles only at values of $t$ such that $(2-t^{\pm1})_n=0$ for some $n\in\ZZ_{\geq 1}$, that is, only when $t^{\pm 1}\in\ZZ_{\geq 2}$. However, there are actually no poles when $t^{\pm1}=2m+1$, $m\in\ZZ_{\geq 1}$, is an odd integer, since then the problematic factor in the Pochhammer symbol $(2-t^{\pm1})_n$, $n\geq 2m$, is canceled by a factor in $(\frac{1-t^{\pm1}}{2})_n$.

By the above discussion, \eqref{eqn:explicit-NS-corr-fun} holds for $t\in\QQ_{<0}$ as well as for all $t\notin\QQ$. Further, \eqref{eqn:explicit-NS-corr-fun} holds when $t^{\pm1}=2m+1$ for $m\in\ZZ_{\geq 1}$, as long as we interpret the hypergeometric function
\begin{equation*}
    {}_2 F_1\left(\frac{1+t^{\pm1}}{2},\frac{1-t^{\pm1}}{2};2-t;z\right) = {}_2 F_1(m+1,-m;-2m+1;z)
\end{equation*}
appropriately, where $z=-\frac{1-x}{x}$. Specifically, note that
\begin{equation*}
    \frac{(m+1)_n (-m)_n}{(-2m+1)_n n!} =\left\lbrace\begin{array}{ccc}
(-1)^n\binom{m}{n}\frac{(m+1)_n}{(-2m+1)_n} & \text{if} & 0\leq n\leq m\\
0 & \text{if} & m<n<2m\\
    \end{array}\right. ,
\end{equation*}
while if $n\geq 2m$, then the $\frac{1-t^{\pm 1}}{2}+m$ factor in $(\frac{1-t^{\pm1}}{2})_n$ cancels with the $2-t^{\pm1}+2m-1$ factor in $(2-t^{\pm1})_n$ to yield $\frac{1}{2}$, and thus 
\begin{align*}
    \left.\frac{(\frac{1+t^{\pm1}}{2})_n (\frac{1-t^{\pm1}}{2})_n}{(2-t^{\pm1})_n n!}\right\vert_{t^{\pm1}=2m+1} & =\frac{1}{2}\frac{(m+1)_n (-m)\cdots (-1)(1)\cdots(-m+n-1)}{(-2m+1)\cdots(-1)(1)\cdots(-2m+n) n!}\nonumber\\
    & \hspace{-4em} = -\frac{(-1)^m}{2}\frac{(m+1)_n m! (n-m-1)!}{(2m-1)!(n-2m)!n!}\nonumber\\
    & \hspace{-4em} = -\frac{(-1)^m}{2}\frac{(m+1)\cdots(3m)m!(m-1)!}{(2m-1)!(2m)!}\frac{(3m+1)_{n-2m}(m)_{n-2m}}{(2m+1)_{n-2m}(n-2m)!}.
\end{align*}
Thus when $t^{\pm1}=2m+1$, $m\in\ZZ_{\geq 1}$, we should replace ${}_2 F_1(\frac{1+t^{\pm1}}{2},\frac{1-t^{\pm1}}{2};2-t^{\pm1};-\frac{1-x}{x})$ with
\begin{align}\label{eqn:explicit-2m+1}
    \sum_{n=0}^m \binom{m}{n} & \frac{(m+1)_n}{(-2m+1)_n} \left(\frac{1-x}{x}\right)^n\nonumber\\
    &-\frac{(-1)^m}{2}\frac{(3m)!(m-1)!}{(2m-1)!(2m)!}\left(\frac{1-x}{x}\right)^{2m}{}_2F_1\left(3m+1,m;2m+1;-\frac{1-x}{x}\right)
\end{align}
in \eqref{eqn:explicit-NS-corr-fun}. By \cite[Equation 15.8.12]{DLMF}, we can simplify the hypergeometric series in this expression as follows:
\begin{align*}
    {}_2F_1\left(3m+1,m;2m+1;-\frac{1-x}{x}\right) & =\left(1+\frac{1-x}{x}\right)^{-2m} {}_2F_1\left(m+1,-m;2m+1;-\frac{1-x}{x}\right)\nonumber\\
    & = x^{2m} \sum_{n=0}^m \frac{(m+1)_n(-m)_n}{(2m+1)_n n!}\left(-\frac{1-x}{x}\right)^n\nonumber\\
    & =x^{2m}\sum_{n=0}^m \binom{m}{n}\frac{(m+1)_n}{(2m+1)_n}x^{-n}(1-x)^n.
\end{align*}
Thus \eqref{eqn:explicit-2m+1} becomes
\begin{align}\label{eqn:explicit-2m+1-better}
    x^{-m}(1-x)^{2m}&\sum_{n=0}^m \binom{m}{n}(m+1)_n x^{m-n}\cdot\nonumber\\
    &\cdot\left(\frac{(1-x)^{n-2m}}{(-2m+1)_n}-\frac{(-1)^m}{2}\frac{(3m)!(m-1)!}{(2m-1)!(2m)!}\frac{(1-x)^{n}}{(2m+1)_n}\right).
\end{align}

\subsection{Computing the rigidity composition}

We now complete the proof that $\cS_{2,2}$ is rigid:
\begin{theorem}\label{thm:S22-rigid}
    Assume that $t^{\pm1}\notin\ZZ_{\leq 0}$ and that $t\neq\frac{p}{q}$ for some $p,q\in\ZZ_{\geq 2}$ such that $p-q\in 2\ZZ$ and $\gcd(\frac{p-q}{2},q)=1$. Then $\cS_{2,2}$ is rigid and self-dual in $\cO_{c^\NS(t)}^{\mathrm{fin}}$.
\end{theorem}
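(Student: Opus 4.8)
The plan is to prove that the rigidity composition $\mathfrak{R}$ of \eqref{eqn:rig-comp} is non-zero. Since $\cS_{2,2}$ is simple, $\mathfrak{R}=\lambda\,\mathrm{id}_{\cS_{2,2}}$ for a scalar $\lambda$, so $\langle v,\mathfrak{R}(v)\rangle=\lambda\langle v,v\rangle$ and it suffices to exhibit a single $v$ with $\langle v,\mathfrak{R}(v)\rangle\neq0$; once $\lambda\neq0$ we rescale $\mathrm{coev}$ so that $\mathfrak{R}=\mathrm{id}$, and rigidity and self-duality of $\cS_{2,2}$ (whose dual pairing is the unit-valued $\mathrm{ev}$) then follow from \cite[Corollary 4.2.2]{CMY3} together with the symmetry of $\langle\cdot,\cdot\rangle$. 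For the distinguished vector $v=v_{2,2}+\frac{2e^{\pi i/4}\sqrt t}{t-1}G_{-\frac12}v_{2,2}$, the computation \eqref{eqn:rig-comp-series} and the explicit formula \eqref{eqn:explicit-NS-corr-fun} identify $\langle v,\mathfrak{R}(v)\rangle$ with the coefficient of $x^{-2h}(\log x)^0$ in
\[
\left(x\frac{d}{dx}-\frac{2}{3}h\right)\Phi_t(x),
\]
where $h=h^\NS_{2,2}(t)=\frac{3(t-1)^2}{8t}$ and $\Phi_t(x)$ is the product of two Gauss hypergeometric functions on the right-hand side of \eqref{eqn:explicit-NS-corr-fun}. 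As $\Phi_t$ is given as a series in $z=\frac{1-x}{x}$ convergent for $x>1-x>0$, the whole theorem reduces to a connection problem: re-expand $\Phi_t$ in powers of $x$ and $\log x$ near $x=0$ and read off this coefficient.

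First I would dispose of $t=1$, where $h=0$ forces $\cS_{2,2}=L^\NS(\tfrac32,0)=\cS_{1,1}$, the (trivially rigid, self-dual) unit object. For $t\notin\QQ$ rigidity is already contained in Theorem \ref{thm:t-irrational-properties}(2), so the new content is $t\in\QQ$, and a short analysis of the minimal-model condition shows that every admissible rational $t\neq1$ is either a negative rational with $t^{\pm1}\notin\ZZ$ or else satisfies $t^{\pm1}=2m+1$ for some $m\in\ZZ_{\geq1}$, all other positive rationals being minimal-model central charges. For the negative rationals, where $t,t^{-1}\notin\ZZ$, I would apply the standard $z\to\infty$ connection formula to each hypergeometric factor: each splits into pieces scaling like $z^{-(1\pm t)/2}$, respectively $z^{-(1\pm t^{-1})/2}$, so $\Phi_t$ breaks into four channels whose leading powers of $x$ are $2+\frac{2h}{3}$, $1-\frac{2h}{3}\pm\frac{t-t^{-1}}{2}$, and $-2h$ (using $\frac{t+t^{-1}}{2}=\frac{4h}{3}+1$). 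The differences of the first three exponents from $-2h$ are $t+t^{-1}$, $t$, and $t^{-1}$; none lies in $\ZZ_{\leq0}$ under the hypotheses (for rational $t\neq\pm1$ one has $t+t^{-1}\notin\ZZ$), so only the last channel reaches $x^{-2h}$, and $t,t^{-1}\notin\ZZ$ rules out logarithms. Its coefficient is
\[
B(t)=\frac{\Gamma(2-t)\,\Gamma(t)}{\Gamma(\tfrac{1+t}{2})\,\Gamma(\tfrac{3-t}{2})}\cdot\frac{\Gamma(2-t^{-1})\,\Gamma(t^{-1})}{\Gamma(\tfrac{1+t^{-1}}{2})\,\Gamma(\tfrac{3-t^{-1}}{2})},
\]
so $\langle v,\mathfrak{R}(v)\rangle=-\frac{8h}{3}B(t)$. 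This is non-zero: $h\neq0$ since $t\neq1$, the numerator $\Gamma$-values are finite and non-zero because $t^{\pm1},2-t^{\pm1}\notin\ZZ_{\leq0}$, and the denominator arguments avoid $\ZZ_{\leq0}$ under the hypotheses, so the reciprocals are non-zero.

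The main obstacle is the remaining admissible family $t^{\pm1}=2m+1$, $m\in\ZZ_{\geq1}$. Here $c=2-t^{\pm1}$ is a non-positive integer, the corresponding hypergeometric factor degenerates, and the naive connection formula develops a spurious pole in $B(t)$ while threatening a logarithm. I would handle this exactly as flagged before Section \ref{sec:rigidity}, substituting the correct degenerate expression \eqref{eqn:explicit-2m+1-better} for the offending factor. Since \eqref{eqn:explicit-2m+1-better} is a finite combination of monomials in $x$ and $1-x$, it is log-free near $x=0$; re-expanding $\Phi_t$ with this substitution, one checks that $\Phi_t$ remains log-free at $x=0$, that again only the distinguished channel reaches $x^{-2h}$ (the contributions being confined, by the power-counting above, to a single surviving term of the terminating part of \eqref{eqn:explicit-2m+1-better} paired with the $z^{-(1-t^{-1})/2}$ piece of the other factor), and that its coefficient equals the finite value $\lim_{t\to2m+1}B(t)$. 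A short pole-cancellation computation evaluates this limit as $(-1)^m$ times the non-zero factor coming from $t^{-1}$, giving $\langle v,\mathfrak{R}(v)\rangle\neq0$ in this case as well. I expect the bookkeeping here — tracking which terms of \eqref{eqn:explicit-2m+1-better} survive, confirming the absence of logarithms, and verifying that no contribution to $x^{-2h}$ is missed — to be the most delicate step of the argument.
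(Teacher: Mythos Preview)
Your proposal is correct and follows essentially the same approach as the paper: the same test vector $v$, the same reduction to extracting the $x^{-2h}(\log x)^0$ coefficient from \eqref{eqn:explicit-NS-corr-fun}, the same hypergeometric connection formula for the negative rational case (yielding exactly the paper's $B(t)$ after the identities $\Gamma(2-t)=(1-t)\Gamma(1-t)$ and $\Gamma(\tfrac{3-t}{2})=\tfrac{1-t}{2}\Gamma(\tfrac{1-t}{2})$), and the same use of the degenerate expression \eqref{eqn:explicit-2m+1-better} at $t^{\pm1}=2m+1$. The paper carries out the odd-integer bookkeeping you flag as delicate by direct computation, arriving at $-\tfrac{(t-1)^2}{t\sin(\pi t/2)\sin(\pi t^{-1}/2)}$ in both cases, which is precisely $-\tfrac{8h}{3}B(t)$ continued to $t=2m+1$ as you anticipated.
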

\begin{proof}
    For $t\notin\QQ$, this is already proved in Theorem \ref{thm:t-irrational-properties}, and for $t=1$, this is already proved in Theorem \ref{thm:t=1-properties}. For the remaining cases, it is enough to show that the coefficient of $x^{-2h^\NS_{2,2}(t)}(\log x)^0$ in the expansion of 
    \begin{equation}\label{eqn:corr-fun-t-in-Q}
        \left(x\frac{d}{dx}-\frac{2}{3}h_{2,2}^\NS(t)\right)\langle v, Y_{\cS_{2,2}}(\cE(v,1-x)v_{2,2},x)v_{2,2}\rangle
    \end{equation}
    as a series in powers of $x$ and $\log x$ is non-zero, where we take $v=v_{2,2}+\frac{2e^{\pi i/4}\sqrt{t}}{t-1}G_{-\frac{1}{2}}v_{2,2}$. As previously, set $h=h_{2,2}^\NS(t)$ for brevity.

    For $t\in\QQ_{<0}$ such that $t^{\pm1}\notin\ZZ_{\leq-1}$, we use the explicit formula \eqref{eqn:explicit-NS-corr-fun}. The hypergeometric connection formula \cite[Equation 15.10.18]{DLMF} says that
    \begin{align}\label{eqn:connection-formula}
        &x^{-(1-t^{\pm1})/2}  (1-x)^{1-t^{\pm1}}  {}_2F_1\left(\frac{1+t^{\pm1}}{2},\frac{1-t^{\pm1}}{2};2-t^{\pm1};-\frac{1-x}{x}\right)\nonumber\\
        &\qquad= \frac{\Gamma(t^{\pm1})\Gamma(2-t^{\pm1})}{\Gamma(\frac{1+t^{\pm1}}{2})\Gamma(\frac{3-t^{\pm1}}{2})} {}_2F_1\left(\frac{1-t^{\pm1}}{2},-\frac{1-t^{\pm1}}{2};1-t^{\pm1};x\right)\nonumber\\
        &\qquad\qquad +\frac{\Gamma(-t^{\pm1})\Gamma(2-t^{\pm1})}{\Gamma(\frac{1-t^{\pm1}}{2})\Gamma(\frac{3-3t^{\pm1}}{2})} x^{t^{\pm1}} {}_2F_1\left(\frac{1+t^{\pm1}}{2},-\frac{1-3t^{\pm1}}{2};1+t^{\pm1};x\right);
    \end{align}
    note all gamma function values and hypergeometric series here are defined since $t^{\pm1}\notin\ZZ$. Thus using \eqref{eqn:explicit-NS-corr-fun}, the expansion of \eqref{eqn:corr-fun-t-in-Q} as a series in $x$ is a linear combination of four terms, whose leading powers of $x$ are:
    \begin{equation*}
        x^{-2h/3+(1-t)/2+(1-t^{-1})/2+C} = x^{-2h+C},\qquad C=0,t,t^{-1},t+t^{-1}.
    \end{equation*}
    Since $t^{\pm1},t+t^{-1}\notin\ZZ_{\leq 0}$ by our assumptions on $t$ (recall the end of the proof of Proposition \ref{prop:coev}), only the first term in the linear combination contributes to the coefficient of $x^{-2h}(\log x)^0$ in \eqref{eqn:corr-fun-t-in-Q}. In particular, we need the coefficient of the lowest power of $x$ in
    \begin{align*}
      \frac{\Gamma(t)\Gamma(2-t)}{\Gamma(\frac{1+t}{2})\Gamma(\frac{3-t}{2})} & \frac{\Gamma(t^{-1})\Gamma(2-t^{-1})}{\Gamma(\frac{1+t^{-1}}{2})\Gamma(\frac{3-t^{-1}}{2})}  \left(x\frac{d}{dx}-\frac{2h}{3}\right) x^{-2h}(1-x)^{2h/3}\cdot\nonumber\\
      &\cdot{}_2F_1\left(\frac{1-t}{2},-\frac{1-t}{2};1-t;x\right){}_2F_1\left(\frac{1-t^{-1}}{2},-\frac{1-t^{-1}}{2};1-t^{-1};x\right).
    \end{align*}
    Using gamma function properties (recorded in \cite[Section 5.5]{DLMF}), this coefficient is
    \begin{align*}
        \left(-2h-\frac{2h}{3}\right)& \frac{(1-t)\Gamma(t)\Gamma(1-t)}{\frac{1-t}{2}\Gamma(\frac{1+t}{2})\Gamma(\frac{1-t}{2})}\frac{(1-t^{-1})\Gamma(t^{-1})\Gamma(1-t^{-1})}{\frac{1-t^{-1}}{2}\Gamma(\frac{1+t^{-1}}{2})\Gamma(\frac{1-t^{-1}}{2})} \nonumber\\
        &=-\frac{4(t-1)^2}{t} \frac{\sin(\frac{\pi}{2}(1+t))}{\sin(\pi t)}\frac{\sin(\frac{\pi}{2}(1+t^{-1}))}{\sin(\pi t^{-1})}\nonumber\\
        & =-\frac{(t-1)^2}{t\sin(\pi t/2)\sin(\pi t^{-1}/2)}.
    \end{align*}
    This coefficient is defined and non-zero as long as $t\neq 1$ and $t^{\pm1}\notin 2\ZZ$. In particular, when $t\in\QQ_{<0}\setminus\ZZ_{\leq -1}$,
    \begin{equation*}
        \langle v,\mathfrak{R}(v)\rangle = -\frac{(t-1)^2}{t\sin(\pi t/2)\sin(\pi t^{-1}/2)}\neq 0,
    \end{equation*}
    where $\mathfrak{R}$ is the rigidity composition defined using $\mathrm{ev}$ and $\mathrm{coev}$. Since $\cS_{2,2}$ is simple, it follows that $\mathfrak{R}$ is a non-zero scalar multiple of $\id_{\cS_{2,2}}$, and then we can rescale either $\mathrm{ev}$ or $\mathrm{coev}$ to get the identity. Hence $\cS_{2,2}$ is rigid and self-dual in this case.

    For $t=2m+1$, $m\in\ZZ_{\geq 1}$, we use \eqref{eqn:explicit-NS-corr-fun} and \eqref{eqn:explicit-2m+1-better}. In this case $t^{-1}=\frac{1}{2m+1}\notin\ZZ$, so the $t^{-1}$ case of \eqref{eqn:connection-formula} is valid. Thus by \eqref{eqn:explicit-NS-corr-fun}, \eqref{eqn:explicit-2m+1-better}, and \eqref{eqn:connection-formula}, the expansion of \eqref{eqn:corr-fun-t-in-Q} as a series in powers of $x$ is a linear combination of two terms, whose leading powers of $x$ are:
    \begin{equation*}
        x^{-2h/3-m+(1-t^{-1})/2+C} = x^{-2h+C},\qquad C=0,t^{-1}.
    \end{equation*}
    Since $t^{-1}\notin\ZZ$, only the first term contributes to the power of $x^{-2h}(\log x)^0$ in \eqref{eqn:corr-fun-t-in-Q}. In particular, we need the coefficient of the lowest power of $x$ in
    \begin{align*}
        &\frac{\Gamma(t^{-1})\Gamma(2-t^{-1})}{\Gamma(\frac{1+t^{-1}}{2})\Gamma(\frac{3-t^{-1}}{2})}  \left(x\frac{d}{dx}-\frac{2h}{3}\right) x^{-2h}(1-x)^{2h/3}{}_2F_1\left(\frac{1-t^{-1}}{2},-\frac{1-t^{-1}}{2};1-t^{-1};x\right)\cdot\nonumber\\
        &\qquad\cdot\sum_{n=0}^m \binom{m}{n}(m+1)_n x^{m-n}\left(\frac{(1-x)^{n-2m}}{(-2m+1)_n}-\frac{(-1)^m}{2}\frac{(3m)!(m-1)!}{(2m-1)!(2m)!}\frac{(1-x)^{n}}{(2m+1)_n}\right).
    \end{align*}
    This coefficient is
    \begin{align*}
        &-\frac{8h}{3}\frac{(1-t^{-1})\Gamma(t^{-1})\Gamma(1-t^{-1})}{\frac{1-t^{-1}}{2}\Gamma(\frac{1+t^{-1}}{2})\Gamma(\frac{1-t^{-1}}{2})}(m+1)_m\cdot\nonumber\\
        &\qquad\qquad\cdot\left(\frac{1}{(-2m+1)_m}-\frac{(-1)^m}{2}\frac{(3m)!(m-1)!}{(2m-1)!(2m)!(2m+1)_m}\right)\nonumber\\
        &\qquad =-\frac{2(t-1)^2}{t}\frac{\sin(\frac{\pi}{2}(1+t^{-1}))}{\sin(\pi t^{-1})}\left((-1)^m\frac{(m+1)_m}{(m)_m}-\frac{(-1)^m}{2}\frac{(m+1)_m(m-1)!}{(2m-1)!}\right)\nonumber\\
        &\qquad =-\frac{(t-1)^2}{t\sin(\pi t^{-1}/2)}(-1)^m = -\frac{(t-1)^2}{t\sin(\pi t/2)\sin(\pi t^{-1}/2)},
    \end{align*}
    where the last equality follows because $t=2m+1$. As in the previous case, this coefficient is non-zero, and thus $\cS_{2,2}$ is rigid and self-dual when $t^{\pm1}=2m+1$, $m\in\ZZ_{\geq 1}$.
    
\end{proof}

Since $\cS_{2,2}$ is simple and self-dual (at least for most values of $t$), and since the unit object $\cS_{1,1}$ of $\cO_{c^\NS(t)}^{\mathrm{fin}}$ is simple, composing an evaluation of $\cS_{2,2}$ with a coevaluation yields a unique scalar multiple of $\id_{\cS_{1,1}}$. This scalar multiple is called the \textit{intrinsic dimension} of $\cS_{2,2}$ in $\cO_{c^\NS(t)}^{\mathrm{fin}}$. For $t=1$, the intrinsic dimension of $\cS_{2,2}$ is $1$ because $\cS_{2,2}=\cS_{1,1}$. For all other values of $t$ covered in Theorem \ref{thm:S22-rigid}, we use the proof of Theorem \ref{thm:S22-rigid} (as well as Theorem \ref{thm:t-irrational-properties} for the case $t\notin\QQ$) to calculate the intrinsic dimension of $\cS_{2,2}$:
\begin{prop}\label{prop:intrinsic-dim}
    Assume that $t^{\pm1}\notin\ZZ_{\leq 1}$ and that $t\neq\frac{p}{q}$ for some $p,q\in\ZZ_{\geq 2}$ such that $p-q\in 2\ZZ$ and $\gcd(\frac{p-q}{2},q)=1$. Then the intrinsic dimension of the self-dual object $\cS_{2,2}$ in $\cO^{\mathrm{fin}}_{c^\NS(t)}$ is $4\sin(\pi t/2)\sin(\pi t^{-1}/2)$.
\end{prop}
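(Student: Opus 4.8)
The plan is to compute the intrinsic dimension as the ratio of two scalars attached to the \emph{same} evaluation and coevaluation candidates $\mathrm{ev}$ and $\mathrm{coev}$ from Section \ref{sec:rigidity}, so that no new analytic input beyond the proof of Theorem \ref{thm:S22-rigid} is needed. Since $\cS_{2,2}$ and $\cS_{1,1}$ are simple, both $\mathrm{ev}\circ\mathrm{coev}\colon\cS_{1,1}\to\cS_{1,1}$ and the rigidity composition $\mathfrak{R}\colon\cS_{2,2}\to\cS_{2,2}$ of \eqref{eqn:rig-comp} are scalars; write $\mathrm{ev}\circ\mathrm{coev}=d_0\,\id_{\cS_{1,1}}$ and $\mathfrak{R}=\lambda\,\id_{\cS_{2,2}}$. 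The first observation I would record is that $d_0/\lambda$ is invariant under the rescalings $\mathrm{ev}\mapsto\nu\,\mathrm{ev}$, $\mathrm{coev}\mapsto\mu\,\mathrm{coev}$, because $\mathfrak{R}$ and $\mathrm{ev}\circ\mathrm{coev}$ each involve exactly one evaluation and one coevaluation, so both $\lambda$ and $d_0$ scale by $\mu\nu$. For the rescaling making $\mathfrak{R}=\id$ (which exists by the proof of Theorem \ref{thm:S22-rigid} and gives genuine duality data by \cite[Corollary 4.2.2]{CMY3}), we have $\lambda=1$ and the intrinsic dimension is exactly $\mathrm{ev}\circ\mathrm{coev}=d_0$. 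Hence the intrinsic dimension equals $d_0/\lambda$ for the original, unnormalized candidates, and it suffices to compute each scalar.

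For $\lambda$, since $\mathfrak{R}=\lambda\,\id$ we have $\langle v,\mathfrak{R}(v)\rangle=\lambda\langle v,v\rangle$ for $v=v_{2,2}+\tfrac{2e^{\pi i/4}\sqrt t}{t-1}G_{-\frac12}v_{2,2}$. The value $\langle v,\mathfrak{R}(v)\rangle=-\tfrac{(t-1)^2}{t\sin(\pi t/2)\sin(\pi t^{-1}/2)}$ is precisely what the proof of Theorem \ref{thm:S22-rigid} computes (and Theorem \ref{thm:t-irrational-properties} for $t\notin\QQ$), so only $\langle v,v\rangle$ remains. Since the form on $\cS_{2,2}$ is even and graded, the two cross terms vanish and $\langle v,v\rangle=1+\tfrac{4it}{(t-1)^2}\langle G_{-\frac12}v_{2,2},G_{-\frac12}v_{2,2}\rangle$; applying the contragredient formula \eqref{eqn:contra} for $\tau$ together with $G_{\frac12}G_{-\frac12}v_{2,2}=\{G_{\frac12},G_{-\frac12}\}v_{2,2}=2L_0v_{2,2}=2h\,v_{2,2}$ yields $\langle G_{-\frac12}v_{2,2},G_{-\frac12}v_{2,2}\rangle=-2ih$, and then $\tfrac{8th}{(t-1)^2}=3$ since $h=h_{2,2}^\NS(t)=\tfrac{3(t-1)^2}{8t}$, giving $\langle v,v\rangle=4$. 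Thus $\lambda=-\tfrac{(t-1)^2}{4t\sin(\pi t/2)\sin(\pi t^{-1}/2)}$.

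For $d_0$, I would evaluate $\langle\one,\mathrm{ev}(\mathrm{coev}(\one))\rangle$ using $\mathrm{ev}\circ\cY_\boxtimes=\cE$ and the coevaluation formula \eqref{coev22}. Because $\langle\one,\cdot\rangle$ already selects the weight-zero even part, the projection $\pi_0^{\bar0}$ drops out and $d_0=\langle\one,\cE(L_{-1}v_{2,2},1)v_{2,2}\rangle-\tfrac23 h\,\langle\one,\cE(v_{2,2},1)v_{2,2}\rangle$. By \eqref{eqn:E-v22-v22} the second matrix coefficient equals $1$ at $x=1$, while the $L_{-1}$-derivative property of $\cE$ gives $\langle\one,\cE(L_{-1}v_{2,2},x)v_{2,2}\rangle=\tfrac{d}{dx}x^{-2h}=-2h\,x^{-2h-1}$, equal to $-2h$ at $x=1$. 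Hence $d_0=-2h-\tfrac23 h=-\tfrac{8h}{3}=-\tfrac{(t-1)^2}{t}$, and therefore $d_0/\lambda=4\sin(\pi t/2)\sin(\pi t^{-1}/2)$, as claimed.

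The argument is uniform across the stated range because $\langle v,\mathfrak{R}(v)\rangle$, $\langle v,v\rangle$, and $d_0$ were all established for exactly these $t$ in Section \ref{sec:rigidity}. I do not expect a genuine obstacle here, since the hard analytic step—the explicit correlation function \eqref{eqn:explicit-NS-corr-fun} and the hypergeometric connection formula producing the $\sin$ factors—is already carried out in the proof of Theorem \ref{thm:S22-rigid}; the only delicate points are bookkeeping the normalization-independence of $d_0/\lambda$ and making sure the \emph{same} vector $v$ enters both computations so that the factor $\langle v,v\rangle=4$ is applied correctly.
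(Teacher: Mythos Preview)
Your computation coincides with the paper's for the rational values of $t$ covered by the proof of Theorem~\ref{thm:S22-rigid}: both argue that the intrinsic dimension is the unnormalized $\mathrm{ev}\circ\mathrm{coev}$ divided by the rigidity scalar $\lambda$, and both obtain $\langle v,v\rangle=4$ and $\mathrm{ev}\circ\mathrm{coev}(\one)=-\tfrac{8}{3}h_{2,2}^{\NS}(t)$ in exactly the same way. The difference is in the irrational case. For $t\notin\QQ$ the paper does \emph{not} run your $d_0/\lambda$ argument; instead it uses the braided tensor embedding of Theorem~\ref{thm:t-irrational-properties}(1) to identify the intrinsic dimension of $\cS_{2,2}$ with that of $L(c_a,h_{2,1}(a))\otimes L(c_b,h_{2,1}(b))$, and then reads off $(-2\cos\pi a)(-2\cos\pi b)=4\sin(\pi t/2)\sin(\pi t^{-1}/2)$ from the quantum-$\mathfrak{sl}_2$ description of the Virasoro categories. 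Your parenthetical ``(and Theorem~\ref{thm:t-irrational-properties} for $t\notin\QQ$)'' is therefore off: that theorem supplies rigidity, not the value of $\langle v,\mathfrak{R}(v)\rangle$, and your $d_0/\lambda$ computation depends on the explicit coevaluation of Proposition~\ref{prop:coev}, which is only constructed when $t^{\pm1}\notin\bigl\{\tfrac{-(n-2)\pm\sqrt{n^2-4n}}{2}\mid n\geq1\bigr\}$. For $n\geq5$ that set contains irrational values (e.g.\ $\tfrac{-3\pm\sqrt5}{2}$) not excluded by the hypothesis $t^{\pm1}\notin\ZZ_{\leq1}$, so your ``uniform across the stated range'' claim has a small gap there which the paper's structural argument for $t\notin\QQ$ sidesteps.
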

\begin{proof}
    For $t\notin\QQ$, \eqref{eqn:embedding-on-objects} and Theorem \ref{thm:t-irrational-properties}(1) imply that the intrinsic dimension of $\cS_{2,2}$ is the product of the intrinsic dimensions of $L(c_a,h_{2,1}(a))$ and $L(c_b,h_{2,1}(b))$ in the Virasoro tensor categories $\cC_{c_a}^{Vir}$ and $\cC_{c_b}^{Vir}$, where $a=\frac{1}{2}(t+1)$ and $b=\frac{1}{2}(t^{-1}+1)$. The $L(c_a,0)$-module $L(c_a,h_{2,1}(a))$ generates a tensor subcategory of $\cC_{c_a}^{Vir}$ which is tensor equivalent to the category of weight modules for the quantum group $U_q(\mathfrak{sl}_2)$ at $q=e^{\pi i a}$ (see \cite[Remark 8.4]{MY2}, or \cite[Proposition 5.5.2]{CJORY} where the equivalence was given earlier but less explicitly). The intrinsic dimension of the tensor generator of $\mathrm{Rep}\,U_q(\mathfrak{sl}_2)$ is $-q-q^{-1}$ (see for example \cite[Exercise 8.18.8]{EGNO}), so we see that the intrinsic dimension of $\cS_{2,2}$ in $\cO^{\mathrm{fin}}_{c^\NS(t)}$ for $t\notin\QQ$ is
    \begin{align*}
        (-e^{\pi i a}-e^{-\pi i a})(-e^{\pi i b}-e^{-\pi ib}) & = \left(-2\cos\left(\frac{\pi}{2}(t+1)\right)\right)\left(-2\cos\left(\frac{\pi}{2}(t^{-1}+1)\right)\right)\nonumber\\
        & = 4\sin(\pi t/2)\sin(\pi t^{-1}/2),
    \end{align*}
    as required.

    For $t^{\pm1}\in\QQ_{<0}\setminus\ZZ_{\leq -1}$ or $t^{\pm1}=2m+1$, $m\in\ZZ_{\geq 1}$, the proof of Theorem \ref{thm:S22-rigid} shows that the composition of correctly normalized evaluation and coevaluation morphisms for $\cS_{2,2}$ is
    \begin{equation*}
        -\frac{\langle v,v\rangle\, t\sin(\pi t/2)\sin(\pi t^{-1}/2)}{(t-1)^2} \mathrm{ev}\circ\mathrm{coev},
    \end{equation*}
    where $v=v_{2,2}+\frac{2e^{\pi i/4}\sqrt{t}}{t-1} G_{-\frac{1}{2}} v_{2,2}$. Thus
    \begin{align*}
        \langle v, v\rangle & = \langle v_{2,2},v_{2,2}\rangle +\frac{4it}{(t-1)^2}\langle G_{-\frac{1}{2}} v_{2,2},G_{-\frac{1}{2}}v_{2,2}\rangle\nonumber\\
        & = 1 +\frac{4it}{(t-1)^2}(-i\langle v_{2,2}, G_{\frac{1}{2}}G_{-\frac{1}{2}}v_{2,2}\rangle) = 1 +\frac{3}{2h^\NS_{2,2}(t)} 2h^\NS_{2,2}(t)\langle v_{2,2},v_{2,2}\rangle = 4.
    \end{align*}
    Also, the definitions of $\mathrm{ev}$ and $\mathrm{coev}$ imply that $\mathrm{ev}\circ\mathrm{coev}(\vac)$ is the coefficient of $x^{-2h^\NS_{2,2}(t)}$ in
    \begin{align*}
        \left(x\dfrac{d}{dx}-\frac{2}{3}h^\NS_{2,2}(t)\right)\cE(v_{2,2},x)v_{2,2}),
    \end{align*}
    and this coefficient is $-\frac{8}{3}h^\NS_{2,2}(t)$ by \eqref{eqn:E-v22-v22}. It follows that the intrinsic dimension of $\cS_{2,2}$ is
    \begin{equation*}
        \left(-\frac{4t\sin(\pi t/2)\sin(\pi t^{-1}/2)}{(t-1)^2}\right)\left(-\frac{8}{3}h^\NS_{2,2}(t)\right) =4 \sin(\pi t/2)\sin(\pi t^{-1}/2),
    \end{equation*}
    as required.
    
\end{proof}

\appendix
\section{The image of \texorpdfstring{$u_{2,1}^a\otimes u_{2,1}^b$}{ua(2,1) x ub(2,1)} in \texorpdfstring{$\cS_{2,2}\otimes \mathcal{F}(1)$}{S(2,2) x F(1)}} \label{N1FERM}

Let $t\neq 0,\pm1$, and set $a=\frac{1}{2}(t+1)$ and $b=\frac{1}{2}(t^{-1}+1)$. Here we show explicitly how the Virasoro algebras of central charge $c_a$ and $c_b$ embed as commuting subalgebras of $\cS(c^\NS(t),0)\otimes\cF(1)$. We will then identify a simultaneous highest-weight vector for these commuting Virasoro algebras of highest weights $(h_{2,1}(a),h_{2,1}(b))$ in $\cS_{2,2}\otimes\cF(1)$. This will determine an embedding
\begin{equation*}
 L(c_a,h_{2,1}(a))\otimes L(c_b,h_{2,1}(b))\hookrightarrow\cS_{2,2}\otimes\cF(1)   
\end{equation*}
of $L(c_a,0)\otimes L(c_b,0)$-modules at least when $t\notin\QQ$.

First, the conformal weight $2$ space of $S(c^\NS_t, 0)\otimes \mathcal{F}(1)$ has a basis given by the $\cS(c^\NS(t),0)$-conformal vector $L^\NS=L_{-2}\vac_\NS\otimes\vac_{\cF(1)}$, the normal-ordered product $:\psi G: =-G_{-\frac{3}{2}}\vac_\NS\otimes\psi_{-\frac{1}{2}}\vac_{\cF(1)}$, and the $\mathcal{F}(1)$-conformal vector $L^\psi=\vac_\NS\otimes\frac{1}{2}\psi_{-\frac{3}{2}}\psi_{-\frac{1}{2}}\vac_{\cF(1)}$. Thus the conformal vectors $L^a, L^b\in\cS(c^\NS(t),0)\otimes\cF(1)$ which generate $L(c_a,0)$ and $L(c_b,0)$ are given by linear combinations
\begin{align*}
&L^a=a_1(t)L^\NS+a_2(t):\psi G:+a_3(t)L^\psi\\
&L^b=b_1(t)L^\NS+b_2(t):\psi G:+b_3(t)L^\psi
\end{align*}
for certain functions $\{a_i(t), b_i(t)| 1\leq i\leq 3\}$ of $t$. Using Thielemans' \texttt{Mathematica} package \cite{T}, one can show that 
\begin{align*}
&a_1(t)=\frac{t}{1 + t},\;\;\; a_2(t)=\pm \frac{\sqrt{t}}{\sqrt{-1 - 2 t - t^2}},\;\;\; a_3(t)=\frac{2 -  t}{1 + t},\\
&b_1(t)=\frac{1}{1+t},\;\;\; b_2(t)= \mp \frac{\sqrt{t}}{\sqrt{-1-2t-t^2}},\;\;\; b_3(t) =\frac{2t-1}{1+t},
\end{align*}
where we need $a_2=-b_2$ for the conformal vectors $L^a$ and $L^b$ to commute and add to $L^\NS+L^\psi$. Thus we fix choices for the square roots $\sqrt{t}$ and $\sqrt{-1}=i$, set $\sqrt{-(t+1)^2}=i(t+1)$, and then set $a_2(t)=+\frac{\sqrt{t}}{i(t+1)}=-b_2(t)$.

Now consider $\cS_{2,2}\otimes\cF(1)$ and fix a highest-weight vector $v_{2,2}\in\cS_{2,2}$. Let
\begin{align*}
v_1=G_{-\frac{1}{2}} v_{2,2} \otimes \vac_{\mathcal{F}(1)}, \qquad v_2=v_{2,2}\otimes \psi_{-\frac{1}{2}}\vac_{\mathcal{F}(1)},
\end{align*}
so that $\lbrace v_1,v_2\rbrace$ is a basis for the conformal weight $h^\NS_{2,2}(t)+\frac{1}{2}$ space of $\cS_{2,2}\otimes\cF(1)$. 
We want to find a linear combination of $v_1$ and $v_2$ that is a simultaneous eigenvector for $L^a_0$ and $L^b_0$ with eigenvalues $h_{2,1}(a)$ and $h_{2,1}(b)$, respectively.
We calculate
\begin{align*}
&L^a_0v_1=\left(a_1(t)\left(h_{2,2}(t)+\frac{1}{2}\right)\right)v_1+ 2a_2(t)h_{2,2}(t)v_2\\
&L^a_0v_2=- a_2(t)v_1+\left(a_1(t)h_{2,2}(t)+\frac{1}{2}a_3(t)\right)v_2\\
&L^b_0v_1=\left(b_1(t)\left(h_{2,2}(t)+\frac{1}{2}\right)\right)v_1+ 2b_2(t)h_{2,2}(t)v_2\\
&L^b_0v_2=-b_2(t)v_1+ \left(b_1(t)h_{2,2}(t)+\frac{1}{2}b_3(t)\right)v_2,
\end{align*}
so that the matrices of $L_0^a$ and $L_0^b$ in the basis $\lbrace v_1,v_2\rbrace$ are 
\begin{align*}
&[L_0^a] = \left[ \begin{array}{cc} 
a_1(t)\left(h_{2,2}(t)+\frac{1}{2}\right) & -a_2(t)  \\
 2a_2(t)h_{2,2}(t)  & a_1(t)h_{2,2}(t)+\frac{1}{2}a_3(t)  
\end{array} \right], \\
& [L_0^b] = \left[ \begin{array}{cc} 
b_1(t)\left(h_{2,2}(t)+\frac{1}{2}\right) & -b_2(t)  \\
 2b_2(t)h_{2,2}(t)  & b_1(t)h_{2,2}(t)+\frac{1}{2}b_3(t)  
\end{array} \right].
\end{align*}
Then \texttt{Mathematica} calculates the eigenvalues of $[L^a_0]$ to be $h_{2,1}(a)$ and $h_{2,3}(a)$ as expected, with respective eigenvectors
\begin{equation*}
    w_{2,1} =\left[\begin{array}{c}
\frac{2i\sqrt{t}}{t-1}\\
1\\
        \end{array}\right],\qquad w_{2,3} =\left[\begin{array}{c}
-\frac{2i\sqrt{t}}{3(t-1)}\\
1\\
        \end{array}\right].
\end{equation*}
Similarly, the eigenvalues of $[L^b_0]$ are $h_{2,1}(b)$ and $h_{2,3}(b)$ with the same two eigenvectors. For technical reasons that are relevant in Section \ref{sec:rigidity},  we multiply $w_{2,1}$ by $e^{-\pi i/4}$, so that when $t\neq 0,1,-1$,
\begin{equation*}
 \frac{2e^{\pi i/4}\sqrt{t}}{t-1} G_{-\frac{1}{2}}v_{2,2}\otimes\vac_{\cF(1)} +e^{-\pi i/4}\, v_{2,2}\otimes\psi_{-\frac{1}{2}}\vac_{\cF(1)}   
\end{equation*}
is a simultaneous highest-weight vector in $\cS_{2,2}\otimes\cF(1)$ for the two commuting Virasoro algebras, of highest weights $(h_{2,1}(a), h_{2,1}(b))$.

\section{Differential equations for Virasoro correlation functions}

In this appendix, we assume $\ell \notin \QQ$. From Theorem 5.2.2 and Theorem 5.2.5 in \cite{CJORY}, there are non-zero intertwining operators $\cY_1$ and $\cY_2$ of types
\[
\binom{L(c_\ell, h_{2,1}(\ell))}{L(c_\ell, h_{1,2}(\ell))\; L(c_\ell, h_{2,2}(\ell))}\qquad\text{and}\qquad\binom{L(c_\ell, h_{1,2}(\ell))}{L(c_\ell, h_{2,1}(\ell))\; L(c_\ell, h_{2,2}(\ell))},
\]
respectively. Let $u_{2,1}$ and $u_{2,2}$ be highest-weight vectors in $L(c_\ell, h_{2,1}(\ell))$ and $L(c_\ell, h_{2,2}(\ell))$, respectively, let $\langle\cdot,\cdot\rangle$ be an invariant bilinear form on $L(c_\ell,h_{2,1}(\ell))$, and define 
\[
\psi(z)=\langle u_{2,1}, \cY_1(\cY_2(u_{2,1}, 1-z)u_{2,2}, z)u_{2,2}\rangle,
\]
a multivalued analytic function defined on the region $\vert z\vert>\vert 1-z\vert>0$.
For brevity, denote $h_{2,2}(\ell)$ by $h$. We will prove:
\begin{theorem} \label{teodiff} 
The analytic function $\psi(z)$ satisfies the differential equation
\begin{align}\label{eqn:psi-diff-eqn}
&z(1-z) \psi''(z)+\left((4h+2-\ell)(1-z)-\ell\right)\psi'(z)\nonumber\\
& \ \ \ \ \ +\frac{1}{z(1-z)}\left((2h(2h+1)-3\ell h)(1-z)^{2}-\ell h\right)\psi(z)=0.
\end{align}
\end{theorem}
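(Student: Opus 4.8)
The plan is to derive \eqref{eqn:psi-diff-eqn} exactly as one derives a BPZ equation, by feeding the level-two null vector of the degenerate module $L(c_\ell,h_{2,1}(\ell))$ into the correlation function. Since $h_{2,1}(\ell)=\tfrac34\ell-\tfrac12$, applying $L_1$ and $L_2$ to the ansatz $L_{-1}^2u_{2,1}+aL_{-2}u_{2,1}$ shows that $u_{2,1}$ satisfies $(L_{-1}^2-\ell L_{-2})u_{2,1}=0$ in $L(c_\ell,h_{2,1}(\ell))$; the coefficient $-\ell$ is forced by the $L_1$-relation, and the $L_2$-relation holds automatically precisely because $c=c_\ell$. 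Because $\cY_2((L_{-1}^2-\ell L_{-2})u_{2,1},\zeta)=0$, inserting this vanishing vector into the first slot of $\cY_2$ produces a linear identity that I will convert into a differential equation. To keep the two occurrences of the variable separate, I would first work with the two-variable function $\Phi(\zeta,z)=\langle u_{2,1},\cY_1(\cY_2(u_{2,1},\zeta)u_{2,2},z)u_{2,2}\rangle$, so that $\psi(z)=\Phi(1-z,z)$.

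The $L_{-1}^2$ term is handled immediately by the $L_{-1}$-derivative property of $\cY_2$, which turns it into $\partial_\zeta^2\Phi$. For the $L_{-2}$ term I would use the associativity formula \eqref{eqn:asso1} for $\cY_2(L_{-2}u_{2,1},\zeta)$ to trade $L_{-2}u_{2,1}$ for modes $L_{-2-i}$ on the $L(c_\ell,h_{1,2}(\ell))$-output and modes $L_{i-1}$ on $u_{2,2}$; the latter truncate at $i\le 1$ since $u_{2,2}$ is highest weight with $L_0u_{2,2}=h\,u_{2,2}$. Each resulting $L_{-m}$ ($m\ge1$) acting on the input of $\cY_1$ is then pushed through $\cY_1$ via \eqref{eqn:asso1} and \eqref{eqn:comm1}: the terms in which a mode $L_{-m-j}$ reaches the left slot vanish because $\langle u_{2,1},L_{-m-j}X\rangle=\langle L_{m+j}u_{2,1},X\rangle=0$, while the terms reaching $u_{2,2}$ again truncate and, by the $L_{-1}$-derivative property, reduce to $\Phi$ and $\partial_z\Phi$. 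Summing the resulting geometric series in $i$ produces poles at $\zeta=0$ and $\zeta+z=0$, and I expect the outcome to be the partial differential equation
\begin{equation*}
\frac{1}{\ell}\partial_\zeta^2\Phi = -\frac{\partial_z\Phi}{z+\zeta}+\frac{h\,\Phi}{(z+\zeta)^2}+\frac{\partial_z\Phi-\partial_\zeta\Phi}{\zeta}+\frac{h\,\Phi}{\zeta^2}.
\end{equation*}

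To pass from this to an ODE I would use global scale covariance: the $L_0$-conjugation property of intertwining operators, together with $L_0$-self-adjointness of the invariant form and $L_0u_{2,1}=h_{2,1}u_{2,1}$, $L_0u_{2,2}=h\,u_{2,2}$, gives the homogeneity $\Phi(\lambda\zeta,\lambda z)=\lambda^{-2h}\Phi(\zeta,z)$. Writing $\Phi(\zeta,z)=z^{-2h}\phi(\zeta/z)$ reduces the PDE to a second-order ODE for $\phi$ in the single variable $u=\zeta/z$, with regular singular points at $u=0,-1,\infty$; substituting $u=(1-z)/z$ and $\psi(z)=z^{-2h}\phi((1-z)/z)$ then yields a Fuchsian equation in $z$ with singularities at $z=0,1,\infty$, which I expect to simplify to \eqref{eqn:psi-diff-eqn}. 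As a consistency check before matching coefficients, the indicial equation at $u=0$ should read $\rho^2+(\ell-1)\rho-\ell h=0$, whose roots $\tfrac32(1-\ell)$ and $\tfrac12(\ell-1)$ are exactly $h_{1,2}-h_{2,1}-h_{2,2}$ and $h_{3,2}-h_{2,1}-h_{2,2}$, i.e. the two fusion channels of $\phi_{2,1}\times\phi_{2,2}$.

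The main obstacle is the bookkeeping in the middle step: correctly applying the associativity and commutator identities for $\cY_1$ and $\cY_2$, tracking which Virasoro mode acts on which slot, resumming the geometric series to obtain the precise rational coefficients of the PDE, and then carrying out the change of variable and prefactor to land on the exact form of \eqref{eqn:psi-diff-eqn}. A secondary point is to justify that these manipulations, which are a priori formal at the level of intertwining operators, are valid for the genuine multivalued analytic function $\psi$; this follows from the convergence and regular-singular-point results of \cite{H2, HLZ7}, applicable here because $\ell\notin\QQ$ places us in the $C_1$-cofinite setting in which the relevant matrix coefficients converge on $|z|>|1-z|>0$.
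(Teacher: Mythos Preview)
Your proposal is correct and follows essentially the same route as the paper: the paper first proves the two-variable PDE for $\Psi(x_0,x_2)=\Phi(\zeta,z)$ (its Lemma~\ref{diffeqv1}), obtaining exactly the equation you display, and then uses the homogeneity $\Psi(x_0,x_2)=x_2^{-2h}\widetilde\psi(x_0/x_2)$ together with $\psi(z)=z^{-2h}\widetilde\psi((1-z)/z)$ to reduce to \eqref{eqn:psi-diff-eqn}. Your identification of the vanishing of the $\langle u_{2,1},L_{-m-j}\,\cdot\,\rangle$ terms via adjointness of $L_n$ is precisely the mechanism (left implicit in the paper) by which only the $L_{j-1}u_{2,2}$ contributions survive in the second application of \eqref{eqn:asso1}.
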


We prove Theorem \ref{teodiff} using the following lemma. First define 
\begin{align*}
  \Psi(x_0,x_2) & = \langle u_{2,1}, \cY_1(\cY_2(u_{2,1}, x_0)u_{2,2}, x_2) u_{2,2}\rangle,
\end{align*}
which we can view as a formal series in powers of $x_0$ and $x_2$.
\begin{lemma} \label{diffeqv1}
The formal series $\Psi(x_0,x_2)$ satisfies the formal differential equation
\begin{align}
\partial^2_{x_0}\Psi(x_0, x_2)=&\ell(x_0^{-1}-(x_2+x_0)^{-1})\partial_{x_2}\Psi(x_0, x_2) \nonumber\\
&\ \ \ -\ell x_0^{-1}\partial_{x_0}\Psi(x_0, x_2) + \ell h(x_0^{-2}+(x_2+x_0)^{-2})\Psi(x_0, x_2). \label{equlem1}
\end{align}
\end{lemma}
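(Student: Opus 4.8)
The plan is to derive \eqref{equlem1} from the level-two singular vector of $L(c_\ell,h_{2,1}(\ell))$, exploiting the fact that the left argument of $\cY_2$ is the highest-weight vector $u_{2,1}$. Since $h_{2,1}(\ell)=\tfrac34\ell-\tfrac12$ satisfies $2h_{2,1}(\ell)+1=\tfrac32\ell$, the generic level-two Virasoro singular vector $\bigl(L_{-2}-\tfrac{3}{2(2h+1)}L_{-1}^2\bigr)v_h$ specializes at $h=h_{2,1}(\ell)$ to $\bigl(L_{-2}-\tfrac1\ell L_{-1}^2\bigr)u_{2,1}$, which vanishes in $L(c_\ell,h_{2,1}(\ell))$. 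First I would insert this relation into the first slot of $\cY_2$ in $\Psi(x_0,x_2)=\langle u_{2,1},\cY_1(\cY_2(u_{2,1},x_0)u_{2,2},x_2)u_{2,2}\rangle$. Applying the $L_{-1}$-derivative property twice gives $\cY_2(L_{-1}^2u_{2,1},x_0)=\partial_{x_0}^2\cY_2(u_{2,1},x_0)$, so vanishing of the singular vector yields $\partial_{x_0}^2\Psi=\ell\,\langle u_{2,1},\cY_1(\cY_2(L_{-2}u_{2,1},x_0)u_{2,2},x_2)u_{2,2}\rangle$, which accounts for the overall factor $\ell$ on the right side of \eqref{equlem1}. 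It then remains to show that the matrix coefficient of $\cY_2(L_{-2}u_{2,1},x_0)$ equals the bracketed combination of $\partial_{x_2}\Psi$, $\partial_{x_0}\Psi$, and $\Psi$.

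To do so I would expand $\cY_2(L_{-2}u_{2,1},x_0)u_{2,2}$ using the associativity formula \eqref{eqn:asso1} with $n=-2$, obtaining $\sum_{i\ge0}\bigl(x_0^i L_{-2-i}\cY_2(u_{2,1},x_0)u_{2,2}+x_0^{-1-i}\cY_2(u_{2,1},x_0)L_{i-1}u_{2,2}\bigr)$. Because $u_{2,2}$ is highest weight, the right-moving half collapses to the terms $i=0,1$, namely $x_0^{-1}\cY_2(u_{2,1},x_0)L_{-1}u_{2,2}+h\,x_0^{-2}\cY_2(u_{2,1},x_0)u_{2,2}$; the second contributes $h\,x_0^{-2}\Psi$ directly. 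For the $L_{-1}u_{2,2}$ term I would move $L_{-1}$ off the second slot of $\cY_2$ via the commutator formula \eqref{eqn:comm1}, writing $\cY_2(u_{2,1},x_0)L_{-1}=L_{-1}\cY_2(u_{2,1},x_0)-\partial_{x_0}\cY_2(u_{2,1},x_0)$; feeding the result into $\cY_1$ and using the $L_{-1}$-derivative property for $\cY_1$ together with invariance of the bilinear form (so that $\langle u_{2,1},L_{-1}(\,\cdot\,)\rangle=\langle L_1u_{2,1},\cdot\rangle=0$) reduces this term to $x_0^{-1}\partial_{x_2}\Psi-x_0^{-1}\partial_{x_0}\Psi$.

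Finally, for the left-moving half $\sum_{i\ge0}x_0^i\langle u_{2,1},\cY_1(L_{-2-i}\cY_2(u_{2,1},x_0)u_{2,2},x_2)u_{2,2}\rangle$ I would apply \eqref{eqn:asso1} once more to each $\cY_1(L_{-2-i}\,\cdot\,,x_2)$: every left-moving mode $L_{-2-i-j}$ annihilates the bra since $\langle u_{2,1},L_{-2-i-j}(\,\cdot\,)\rangle=\langle L_{2+i+j}u_{2,1},\cdot\rangle=0$, while the right-moving modes $L_{j-1}u_{2,2}$ again truncate at $j=0,1$, producing (using $\langle u_{2,1},\cY_1(w,x_2)L_{-1}u_{2,2}\rangle=-\partial_{x_2}\Psi$ and $L_0u_{2,2}=hu_{2,2}$) the two geometric series $\sum_{i\ge0}(-1)^ix_0^ix_2^{-1-i}$ and $\sum_{i\ge0}(i+1)(-1)^ix_0^ix_2^{-2-i}$. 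In the iterate expansion $|x_2|>|x_0|$ these sum to $(x_2+x_0)^{-1}$ and $(x_2+x_0)^{-2}$, yielding $-(x_2+x_0)^{-1}\partial_{x_2}\Psi+h(x_2+x_0)^{-2}\Psi$, and collecting all contributions reproduces \eqref{equlem1} exactly. The main obstacle is the careful bookkeeping in this last step: one must check that the infinite associator sum interacts with the two highest-weight conditions so that precisely the two geometric series survive, and that they resum to the asserted rational coefficients $x_0^{-1}-(x_2+x_0)^{-1}$ and $x_0^{-2}+(x_2+x_0)^{-2}$ consistently with the formal-series expansion; the remaining manipulations are routine applications of \eqref{eqn:comm1}, \eqref{eqn:asso1}, the $L_{-1}$-derivative property, and invariance of $\langle\cdot,\cdot\rangle$.
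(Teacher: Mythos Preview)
Your proposal is correct and follows essentially the same route as the paper: start from the level-two singular vector $(L_{-1}^2-\ell L_{-2})u_{2,1}=0$, apply the associator formula \eqref{eqn:asso1} to $\cY_2(L_{-2}u_{2,1},x_0)$, truncate via the highest-weight property of $u_{2,2}$, then apply \eqref{eqn:asso1} again to $\cY_1(L_{-2-i}\,\cdot\,,x_2)$ and kill left-moving modes using invariance of the form and the highest-weight property of $u_{2,1}$. The handling of the $L_{-1}u_{2,2}$ terms via \eqref{eqn:comm1} and the resummation of the two geometric series to $(x_2+x_0)^{-1}$ and $(x_2+x_0)^{-2}$ are exactly as in the paper's proof.
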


\begin{proof}
It is easy to check that $(L_{-1}^2-\ell L_{-2}) u_{2,1}$ is a singular vector in the Verma $L(c_\ell,0)$-module with lowest conformal weight $h_{2,1}(\ell)$ and thus vanishes in $L(c_\ell, h_{2,1}(\ell))$. Thus
\begin{align} \label{rhlll}
\partial^2_{x_0} \Psi(x_0,x_2)= \ell\langle u_{2,1}, \cY_1(\cY_2(L_{-2}u_{2,1}, x_0)u_{2,2}, x_2)u_{2,2}\rangle.
\end{align} 
by the $L_{-1}$-derivative property.
Using the associator formula \eqref{eqn:asso1},
\begin{align} 
\langle u_{2,1}, \cY_1(\cY_2(L_{-2}u_{2,1}, x_0)u_{2,2}, x_2)u_{2,2}\rangle & =\sum_{i \geq 0}\left(x_0^i\langle u_{2,1}, \cY_1(L_{-2-i}\cY_2(u_{2,1},x_0)u_{2,2}, x_2)u_{2,2}\rangle\right.\nonumber\\
&\quad\quad \left.+ x_0^{-1-i} \langle u_{2,1}, \cY_1(\cY_2(u_{2,1}, x_0)L_{i-1}u_{2,2}, x_2) u_{2,2}\rangle\right). \label{rhs1}
\end{align}
Since $L_{i-1}u_{2,2}=0$ for $i\geq 2$ and $L_0 u_{2,2}= hu_{2,2}$, the right side of \eqref{rhs1} equals
\begin{align}
&\sum_{i \geq 0}x_0^i\langle u_{2,1}, \cY_1(L_{-2-i} \cY_2(u_{2,1},x_0)u_{2,2}, x_2)u_{2,2}\rangle + x_0^{-1}\langle u_{2,1}, \cY_1(\cY_2(u_{2,1}, x_0)L_{-1}u_{2,2}, x_2) u_{2,2} \rangle \nonumber\\
&\qquad  + hx_0^{-2} \langle u_{2,1}, \cY_1(\cY_2(u_{2,1}, x_0)u_{2,2}, x_2) u_{2,2}\rangle. \label{fito}
\end{align}
Using the associator formula \eqref{eqn:asso1} on the first term of the equation (\ref{fito}) together with the fact that $u_{2,2}$ is a highest-weight vector of conformal weight $h$, we obtain 
\begin{align}
\sum_{i \geq 0}\,&x_0^i\langle u_{2,1}, \cY_1(L_{-2-i} \cY_2(u_{2,1},x_0), x_2)u_{2,2}\rangle \nonumber \\ 
&=\sum_{i\geq 0}\sum_{j \geq 0}(-1)^{i+j}\binom{-i-1}{j}x_0^ix_2^{-i-j-1}\langle u_{2,1}, \cY_1(\cY_2(u_{2,1}, x_0)u_{2,2}, x_2)L_{j-1}u_{2,2}\rangle \nonumber \\
&=\sum_{i\geq 0}  (-1)^{i}x_0^ix_2^{-i-1}\langle u_{2,1},\cY_1(\cY_2(u_{2,1}, x_0)u_{2,2}, x_2)L_{-1}u_{2,2}\rangle \nonumber\\
&\ \ \ \ + \sum_{i\geq 0} (-1)^i(i+1)x_0^ix_2^{-i-2}\langle u_{2,1}, \cY_1(\cY_2(u_{2,1},x_0)u_{2,2}, x_2)L_0u_{2,2}\rangle \nonumber\\
&=(x_2+x_0)^{-1} \langle u_{2,1}, \cY_1(\cY_2(u_{2,1},x_0)u_{2,2}, x_2)L_{-1}u_{2,2}\rangle \nonumber \\
&\ \ \ \ + h(x_2+x_0)^{-2} \langle u_{2,1}, \cY_1(\cY_2(u_{2,1},x_0)u_{2,2}, x_2)u_{2,2}\rangle. \label{caj}
\end{align}
Moreover, the $L_{-1}$-commutator formula (the $n=-1$ case of \eqref{eqn:comm1}) implies that the second term of equation \eqref{fito} equals
\begin{align}
x_0^{-1} \left(\langle u_{2,1}, \cY_1(L_{-1}\cY_2(u_{2,1}, x_0)u_{2,2}, x_2) u_{2,2} \rangle-\langle u_{2,1}, \cY_1(\cY_2(L_{-1}u_{2,1}, x_0)u_{2,2}, x_2) u_{2,2} \rangle\right), \label{can1}
\end{align}
and that the first term of the right side of \eqref{caj} can be rewritten as
\begin{align}
& - (x_2+x_0)^{-1} \langle u_{2,1}, \cY_1(L_{-1} \cY_2(u_{2,1}, x_0)u_{2,2}, x_2)u_{2,2}\rangle \label{can2}
\end{align}
Thus, using equations \eqref{fito} through \eqref{can2} in \eqref{rhlll}, we obtain 
\begin{align*}
& \partial^2_{x_0} \Psi(x_0,x_2)\nonumber \\
& = -\ell (x_2+x_0)^{-1} \langle u_{2,1}, \cY_1(L_{-1} \cY_2(u_{2,1}, x_0)u_{2,2}, x_2)u_{2,2}\rangle \nonumber \\
& \ \ \ \ \ + \ell h (x_2+x_0)^{-2} \langle u_{2,1}, \cY_1(\cY_2(u_{2,1}, x_0)u_{2,2}, x_2)u_{2,2}\rangle \\
& \ \ \ \ \ +\ell x_0^{-1} \left(\langle u_{2,1}, \cY_1(L_{-1}\cY_2(u_{2,1}, x_0)u_{2,2}, x_2) u_{2,2} \rangle - \langle u_{2,1}, \cY_1(\cY_2(L_{-1}u_{2,1}, x_0)u_{2,2}, x_2) u_{2,2} \rangle \right)\nonumber\\
& \ \ \ \ \ + \ell h x_0^{-2}  \langle u_{2,1}, \cY_1(\cY_2(u_{2,1}, x_0) u_{2,2}, x_2) u_{2,2} \rangle\\
&=-\ell (x_2+x_0)^{-1}\partial_{x_2} \Psi (x_0, x_2)+ \ell h (x_2+x_0)^{-2} \Psi (x_0, x_2)\\
& \ \ \ \ \ + \ell x_0^{-1}\partial_{x_2} \Psi(x_0, x_2)
 - \ell x_0^{-1}\partial_{x_0}\Psi(x_0, x_2)+ \ell h x_0^{-2} \Psi(x_0, x_2)\\
 &= \ell (x_0^{-1}-(x_2+x_0)^{-1}) \partial_{x_2} \Psi(x_0, x_2)-\ell x_0^{-1} \partial_{x_0}\Psi(x_0, x_2)\\
 & \ \ \ \ \ + \ell h (x_0^{-2}+(x_2+x_0)^{-2}) \Psi(x_0, x_2),
\end{align*}
which proves the Lemma.

\end{proof}

Next, we define the auxiliary function (or formal series)
\begin{equation*}
    \widetilde{\psi}(x) = \langle u_{2,1},\cY_1(\cY_2(u_{2,1},x)u_{2,2},1)u_{2,2}\rangle.
\end{equation*}
It follows from the $L_0$-conjugation formula \cite[Equation 5.2.37]{FHL} that
\begin{equation}\label{eqn:Psitilde-relations}
    \Psi(x_0,x_2) = x_2^{-2h}\widetilde{\psi}\left(\frac{x_0}{x_2}\right),\qquad \psi(z) = z^{-2h}\widetilde{\psi}\left(\frac{1-z}{z}\right).
\end{equation}
We use these relations to prove Theorem \ref{teodiff}:
\begin{proof}
    We use the second relation in \eqref{eqn:Psitilde-relations} as well as the product and chain rules to get
    \begin{align*}
        \psi'(z) & =-2h z^{-2h-1}\widetilde{\psi}\left(\frac{1-z}{z}\right)- z^{-2h-2}\widetilde{\psi}'\left(\frac{1-z}{z}\right),\nonumber\\
        \psi''(z) & =2h(2h+1)z^{-2h-2}\widetilde{\psi}\left(\frac{1-z}{z}\right) +(4h+2)z^{-2h-3}\widetilde{\psi}'\left(\frac{1-z}{z}\right) + z^{-2h-4}\widetilde{\psi}''\left(\frac{1-z}{z}\right).
    \end{align*}
Solving these equations for $\widetilde{\psi}'\left(\frac{1-z}{z}\right)$ and $\widetilde{\psi}''\left(\frac{1-z}{z}\right)$ yields
\begin{align*}
    \widetilde{\psi}'\left(\frac{1-z}{z}\right) & = -z^{2h+1}\left(z\psi'(z)+2h\psi(z)\right),\nonumber\\
    \widetilde{\psi}''\left(\frac{1-z}{z}\right) & = z^{2h+2}\left(z^2\psi''(z)+(4h+2)z\psi'(z)+2h(2h+1)\psi(z)\right).
\end{align*}
Combining these relations with the first relation in \eqref{eqn:Psitilde-relations} yields
\begin{align*}
    (\partial_{x_0}\Psi)\big\vert_{(x_0,x_2)=(1-z,z)} & = x_2^{-2h-1}\widetilde{\psi}'\left(\frac{x_0}{x_2}\right)\bigg\vert_{(x_0,x_2) =(1-z,z)} =-z\psi'(z)-2h\psi(z),\nonumber\\
    (\partial_{x_0}^2\Psi)\big\vert_{(x_0,x_2)=(1-z,z)} & = x_2^{-2h-2}\widetilde{\psi}''\left(\frac{x_0}{x_2}\right)\bigg\vert_{(x_0,x_2)=(1-z,z)}\nonumber\\
    &=z^2\psi''(z)+(4h+2)\psi'(z)+2h(2h+1)\psi(z)\nonumber\\
    (\partial_{x_2}\Psi)\big\vert_{(x_0,x_2)=(1-z,z)} & =\left(-2h x_2^{-2h-1}\widetilde{\psi}\left(\frac{x_0}{x_2}\right)-x_0x_2^{-2h-2}\widetilde{\psi}'\left(\frac{x_0}{x_2}\right)\right)\bigg\vert_{(x_0,x_2)=(1-z,z)}\nonumber\\
    & = (1-z)\psi'(z)-2h\psi(z).
\end{align*}
Thus substituting $(x_0,x_2)\mapsto(1-z,z)$ in \eqref{equlem1} yields
\begin{align*}
    z^2\psi''(z) & +(4h+2)z\psi'(z) +2h(2h+1)\psi(z)\nonumber\\
    & = \ell\left((1-z)^{-1}-1\right)\left((1-z)\psi'(z)-2h\psi(z)\right)\nonumber\\
    &\qquad -\ell(1-z)^{-1}(-z\psi'(z)-2h\psi(z))+\ell h\left((1-z)^{-2}+1\right)\psi(z),
\end{align*}
which simplifies to \eqref{eqn:psi-diff-eqn}. 

\end{proof}

To solve the differential equation \eqref{eqn:psi-diff-eqn}, straightforward but tedious calculations using the identity $h\ell =\frac{3}{4}(\ell-1)^2$ show:
\begin{cor} \label{corab}
Let $f(z)$ be the analytic function on the region $\vert z\vert>\vert 1-z\vert>0$ such that
\begin{align*}
\psi(z)=(1-z)^{(\ell-1)/2}z^{-2h}f(z).
\end{align*} 
Then $f(z)$ satisfies the hypergeometric differential equation
\begin{align} \label{feq}
z(1-z)f''(z)+(\gamma-(\a+\b+1)z) f'(z)
-\a\b f(z)=0,
\end{align}
where $\a = -\b = 1-\ell$ and $\gamma = 2-2\ell$.
\end{cor}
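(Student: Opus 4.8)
The plan is to read the substitution $\psi(z)=(1-z)^{(\ell-1)/2}z^{-2h}f(z)$ as the removal of the indicial singularities of \eqref{eqn:psi-diff-eqn}, after which the equation for $f$ must be hypergeometric, and then to verify this by a direct computation. Writing $\phi(z)=(1-z)^{(\ell-1)/2}z^{-2h}$ so that $\psi=\phi f$, the exponents $\tfrac{\ell-1}{2}$ and $-2h$ are precisely the local exponents of \eqref{eqn:psi-diff-eqn} at the regular singular points $z=1$ and $z=0$; this is what guarantees the resulting equation for $f$ is regular enough to be hypergeometric. (For instance, $\rho=-2h$ solves the indicial equation $\rho(\rho-1)+(4h+2-2\ell)\rho+(2h(2h+1)-4\ell h)=0$ at $z=0$, using $h\ell=\tfrac34(\ell-1)^2$.)

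First I would record the logarithmic derivatives of $\phi$:
\[
\frac{\phi'}{\phi}=-\frac{\ell-1}{2(1-z)}-\frac{2h}{z},\qquad
\frac{\phi''}{\phi}=\left(\frac{\phi'}{\phi}\right)'+\left(\frac{\phi'}{\phi}\right)^2
=\frac{(\ell-1)(\ell-3)}{4(1-z)^2}+\frac{2h(2h+1)}{z^2}+\frac{2h(\ell-1)}{z(1-z)}.
\]
Substituting $\psi=\phi f$, $\psi'=\phi\bigl(f'+\tfrac{\phi'}{\phi}f\bigr)$, and $\psi''=\phi\bigl(f''+2\tfrac{\phi'}{\phi}f'+\tfrac{\phi''}{\phi}f\bigr)$ into \eqref{eqn:psi-diff-eqn} and dividing by $\phi$ gives a second-order equation for $f$ whose leading coefficient is unchanged, namely $z(1-z)f''$. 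Next I would treat the coefficient of $f'$, which is $2z(1-z)\tfrac{\phi'}{\phi}+(4h+2-\ell)(1-z)-\ell$; a short computation requiring no identity among $h$ and $\ell$ collapses this to $(2-2\ell)-z$, matching $\gamma-(\alpha+\beta+1)z$ with $\gamma=2-2\ell$ and $\alpha+\beta+1=1$, as expected from $\alpha=-\beta=1-\ell$.

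The bulk of the work, and the main obstacle, is the coefficient of $f$, namely
\[
z(1-z)\frac{\phi''}{\phi}+\left[(4h+2-\ell)(1-z)-\ell\right]\frac{\phi'}{\phi}+\frac{1}{z(1-z)}\left[(2h(2h+1)-3\ell h)(1-z)^2-\ell h\right].
\]
After expanding each term in partial fractions with respect to $z$ and $1-z$, I would collect the coefficient of $\tfrac{1}{1-z}$, the coefficient of $\tfrac{1}{z}$, and the constant part. The coefficient of $\tfrac{1}{1-z}$ equals $\tfrac{(\ell-1)(\ell-3)}{4}+\tfrac{\ell(\ell-1)}{2}-\ell h$, and the coefficient of $\tfrac{1}{z}$ is a similar expression; both vanish identically once one inserts $h\ell=\tfrac34(\ell-1)^2$. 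The surviving constant then simplifies, again by $h\ell=\tfrac34(\ell-1)^2$, to $(\ell-1)^2=-\alpha\beta$. Hence $f$ satisfies \eqref{feq}. The only genuine difficulty is organizing the partial-fraction bookkeeping so that the two pole cancellations and the final constant are transparent; no conceptual input beyond the weight identity $h\ell=\tfrac34(\ell-1)^2$ is required.
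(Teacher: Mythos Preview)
Your proposal is correct and matches the paper's approach exactly: the paper simply states that the corollary follows from ``straightforward but tedious calculations using the identity $h\ell=\tfrac34(\ell-1)^2$,'' and you have carried out precisely those calculations. Your organization via logarithmic derivatives of $\phi$ and partial fractions is a clean way to execute the bookkeeping the paper leaves implicit.
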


If $\gamma-\a-\b$ is not an integer, that is, $2\ell\notin\ZZ$, then by \cite[Equation 15.10.4]{DLMF}, the hypergeometric differential equation \eqref{feq} has the following basis of solutions on the region $1>\vert 1-z\vert >0$:
\begin{align*}
&f_1(z)={}_2F_1(\a, \b; \a+\b+1-\gamma; 1-z),\\
&f_2(z)=(1-z)^{\gamma-\a-\b}{}_2F_1(\gamma-\a, \gamma-\b; \gamma-\a-\b+1; 1-z).
\end{align*}
By \cite[Equations~15.10.13 and 15.10.14]{DLMF}, these basis solutions agree with the following basis of solutions on the overlapping region $\vert z\vert>\vert 1-z\vert >0$:
\begin{align*}
&g_1(z)=z^{-\a}{}_2F_1\left(\a, \a-\gamma+1; \a+\b -\gamma+1; -\frac{1-z}{z}\right),\\
&g_2(z)=z^{\a-\gamma}(1-z)^{\gamma-\a-\b} {}_2F_1\left(1-\a, \gamma-\a; \gamma-\a-\b+1; -\frac{1-z}{z}\right).
\end{align*}

\begin{theorem} \label{thmhyper}
There exists $\mu_\ell\in\CC$ such that 
\begin{align*}
\psi (z)
=\mu_\ell \,(1-z)^{-3(\ell-1)/2}z^{\ell-1-2h}{}_2F_1\left(\ell, 1-\ell; 3-2\ell; -\frac{1-z}{z}\right)
\end{align*}
\end{theorem}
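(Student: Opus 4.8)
The plan is to use Corollary \ref{corab} to reduce the claim to identifying which solution of the hypergeometric equation \eqref{feq} the function $\psi(z)$ is. By that corollary, $f(z) = (1-z)^{-(\ell-1)/2} z^{2h}\psi(z)$ solves \eqref{feq} on the region $\vert z\vert>\vert 1-z\vert>0$ with $\alpha=-\beta=1-\ell$ and $\gamma=2-2\ell$, so it is a linear combination $f = A\,g_1 + B\,g_2$ of the two basis solutions displayed just above the theorem. First I would substitute these parameter values into $g_1$ and $g_2$: a short computation gives $g_1(z) = z^{\ell-1}\,{}_2F_1(1-\ell,\ell;2\ell-1;-\frac{1-z}{z})$ and $g_2(z) = z^{\ell-1}(1-z)^{2-2\ell}\,{}_2F_1(\ell,1-\ell;3-2\ell;-\frac{1-z}{z})$. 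Multiplying by the prefactor $(1-z)^{(\ell-1)/2}z^{-2h}$, the $(1-z)$-exponent becomes $\frac{\ell-1}{2}+2-2\ell = -\frac{3(\ell-1)}{2}$ and the $z$-exponent becomes $\ell-1-2h$, so that $(1-z)^{(\ell-1)/2}z^{-2h}g_2(z)$ is exactly the expression on the right side of the theorem. Thus the theorem amounts to proving that $A=0$, i.e. that $\psi$ carries no $g_1$-component.

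To pin down $A$, I would compare leading exponents in the variable $w=\frac{1-z}{z}$, writing $z=(1+w)^{-1}$ and $1-z=w(1+w)^{-1}$. In this variable, $(1-z)^{(\ell-1)/2}z^{-2h}g_1(z)$ is a series whose exponents all lie in $\frac{\ell-1}{2}+\ZZ_{\geq 0}$, while $(1-z)^{(\ell-1)/2}z^{-2h}g_2(z)$ has exponents in $-\frac{3(\ell-1)}{2}+\ZZ_{\geq 0}$. On the other hand, the relations \eqref{eqn:Psitilde-relations} give $\psi(z)=z^{-2h}\widetilde\psi(w)$, where $\widetilde\psi(x)=\langle u_{2,1},\cY_1(\cY_2(u_{2,1},x)u_{2,2},1)u_{2,2}\rangle$ is, by the grading properties of intertwining operators between lower-bounded Virasoro modules, a series in $x$ with exponents in $\bigl(h_{1,2}(\ell)-h_{2,1}(\ell)-h_{2,2}(\ell)\bigr)+\ZZ_{\geq 0}$. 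A direct computation from $h_{r,s}(\ell)=\frac{r^2-1}{4}\ell-\frac{rs-1}{2}+\frac{s^2-1}{4}\ell^{-1}$ yields $h_{1,2}(\ell)-h_{2,1}(\ell)-h_{2,2}(\ell)=-\frac{3(\ell-1)}{2}$, and since $z^{-2h}=(1+w)^{2h}$ contributes only non-negative integer powers of $w$, every exponent of $\psi$ as a series in $w$ lies in $-\frac{3(\ell-1)}{2}+\ZZ_{\geq 0}$.

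Finally, I would invoke the hypothesis $\ell\notin\QQ$. The two exponent cosets $\frac{\ell-1}{2}+\ZZ_{\geq 0}$ and $-\frac{3(\ell-1)}{2}+\ZZ_{\geq 0}$ differ by $2(\ell-1)$, which is not an integer precisely because $2\ell\notin\ZZ$; hence these sets are disjoint. Since the expansion of $\psi$ is supported entirely in the second coset, the $g_1$-component can contribute nothing, forcing $A=0$. Therefore $\psi(z)=B\,(1-z)^{(\ell-1)/2}z^{-2h}g_2(z)=\mu_\ell\,(1-z)^{-3(\ell-1)/2}z^{\ell-1-2h}\,{}_2F_1(\ell,1-\ell;3-2\ell;-\frac{1-z}{z})$ with $\mu_\ell=B$, as claimed. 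The one genuinely delicate point is the exponent bookkeeping: one must verify that the powers occurring in $\psi$ are confined to a single coset of $\ZZ$ modulo the irrational shift $\frac{\ell-1}{2}$, and it is exactly this confinement, together with $\ell\notin\QQ$, that eliminates the spurious solution $g_1$ and yields the clean hypergeometric formula.
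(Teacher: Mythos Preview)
Your proof is correct and follows essentially the same route as the paper's: both invoke Corollary~\ref{corab} to write $\psi$ as a combination of the two hypergeometric solutions $g_1,g_2$, then use the $L_0$-conjugation formula for $\cY_2$ to see that $\psi$ expands in powers of $\frac{1-z}{z}$ lying in $-\frac{3(\ell-1)}{2}+\ZZ_{\geq 0}$, and finally use $\ell\notin\QQ$ (so $2\ell-2\notin\ZZ$) to eliminate the $g_1$-component. The only cosmetic difference is that you phrase the exponent comparison in the variable $w=\frac{1-z}{z}$, while the paper leaves it in terms of $\frac{1-z}{z}$ directly.
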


\begin{proof}
 Since $\ell\notin\QQ$, $\gamma-\a-\b= 2-2\ell$ is not an integer. Thus by Corollary \ref{corab}, $\psi(z)$ is a linear combination of 
\begin{align}
&\psi_1(z)=(1-z)^{(\ell-1)/2}z^{\ell-1-2h}{}_2F_1\left(1-\ell, \ell; 2\ell -1; -\frac{1-z}{z}\right), \\
&\psi_2(z)=(1-z)^{-3(\ell-1)/2}z^{\ell-1-2h} {}_2F_1\left(\ell , 1-\ell ; 3-2\ell ; -\frac{1-z}{z}\right). \label{phi2a}
\end{align}
Since $\cY_2$ is an intertwining operator of type $\binom{L(c_\ell , h_{1,2}(\ell ))}{L(c_\ell , h_{2,1}(\ell ))\,\, L(c_\ell , h_{2,2}(\ell ))}$,  the $L_0$-conjugation formula \cite[Equation 5.2.37]{FHL} implies that
\[
\cY_2 (u_{2,1},x)u_{2,2}\in x^{h_{1,2}(\ell )-h_{2,1}(\ell )-h_{2,2}(\ell )} L(c_\ell , h_{1,2}(\ell ))[[x]],
\]
and then because $z^{-2h}=\left(1+\frac{1-z}{z}\right)^{2h}\in\CC[[\frac{1-z}{z}]]$,
\begin{align*}
\psi(z)&=\langle u_{2,1}, \cY_1(\cY_2(u_{2,1}, 1-z)u_{2,2}, z)u_{2,2}\rangle\\
&=z^{-2h}\bigg{\langle} u_{2,1}, \cY_1\left(\cY_2\left(u_{2,1}, \frac{1-z}{z}\right)u_{2,2}, z\right)u_{2,2}\bigg{\rangle}\\
&\in \left(\frac{1-z}{z}\right)^{h_{1,2}(\ell)-h_{2,1}(\ell )-h_{2,2}(\ell )}\CC \left[\left[\frac{1-z}{z}\right]\right].
\end{align*}
Since $h_{1,2}(\ell )-h_{2,1}(\ell )-h_{2,2}(\ell)=-\frac{3(\ell -1)}{2}$, and since $\frac{\ell-1}{2}-\left(-\frac{3(\ell-1)}{2}\right)=2\ell-2\notin\ZZ$, $\psi(z)$ must be a multiple of \eqref{phi2a}.

\end{proof}

\section*{Declarations}
The authors have no relevant financial or non-financial interests to disclose.
\section*{Data availability statement}
All data supporting the findings of this work are presented within the paper and its supplementary materials.
\bibliographystyle{plain}

\end{document}